\newcommand{\sy}{\boldsymbol{\Psi}}
\newcommand{\py}{\boldsymbol{\Phi}}
\newcommand{\N}{\mathbb{N}}									
\newcommand{\R}{\mathbb{R}}
\newcommand{\vertiii}[1]{{\left\vert\kern-0.25ex\left\vert\kern-0.25ex\left\vert #1 
    \right\vert\kern-0.25ex\right\vert\kern-0.25ex\right\vert}}
\newcommand{\inner}[2]{\left\langle #1, #2 \right\rangle}
\DeclarePairedDelimiter\abs{\lvert}{\rvert}					
 \newcommand{\norm}[1]{\left\Vert #1 \right\Vert}
\newtheorem{theorem}{Theorem}[section]
\newtheorem{corollary}{Corollary}[theorem]
\newtheorem{lemma}[theorem]{Lemma}
\newtheorem{proposition}[theorem]{Proposition}
\newtheorem{remark}[theorem]{Remark}
\newtheorem{definition}[theorem]{Definition}
\newtheorem{assumption}[theorem]{Assumption}
\begin{document}
	\title{Weak and Strong Solutions to Nonlinear SPDEs with Unbounded Noise}
	\author{Daniel Goodair \footnote{Imperial College London, daniel.goodair16@imperial.ac.uk}}
	\date{\today} 
	\maketitle
\setcitestyle{numbers}	
	\begin{abstract}
We introduce an extended variational framework for nonlinear SPDEs with unbounded noise, defining three different solution types of increasing strength along with criteria to establish their existence. The three notions can be understood as probabilistically and analytically weak, probabilistically strong and analytically weak, as well as probabilistically and analytically strong. Our framework facilitates several well-posedness results for the Navier-Stokes Equation with transport noise, equipped with the no-slip and Navier boundary conditions.
     
	\end{abstract}

\tableofcontents
\thispagestyle{empty}
\newpage

\setcounter{page}{1}
\addcontentsline{toc}{section}{Introduction}

\section*{Introduction}

The theoretical analysis of stochastic partial differential equations, abbreviated to SPDEs, has been recently established as one of mathematics' most exciting prospects. Attention has long been given to these equations due to their rich applications in areas such as turbulence, population dynamics, finance and neuroscience, of which more can be understood from the monographs [\cite{liu2015stochastic}, \cite{lototsky2017stochastic}, \cite{pardoux2021stochastic}] and references therein. The area's newfound spike in popularity can be attributed to developments across both analytical techniques and modelling. Indeed, Martin Hairer's Fields Medal winning work on \textit{regularity structures}, [\cite{hairer2014theory}], gave rigorous meaning to a drastically extended class of SPDEs which are ill-defined in standard function spaces due to spatial irregularities. At a similar time, Gubinelli, Imkeller and Perkowski developed a theory of \textit{paracontrolled distributions}, [\cite{gubinelli2015paracontrolled}],  which again provided a toolkit for the seemingly intractable theory of distribution-valued SPDEs. These techniques are of a very different flavour to the more classical \textit{variational approach} pioneered by \'{E}tienne Pardoux, [\cite{pardoux1975equations}], whereby the equation is formatted in a Gelfand Triple for a noise valued in the Hilbert Space.\\

From the modelling perspective, it is in applications to fluid dynamics and turbulence where the literature has been thriving. The choice of noise to best encapsulate physical properties of fluid equations is in constant study, now yielding a strong argument for transport type stochastic perturbations (where the stochastic integral depends on the gradient of the solution). The paper of Brze\'{z}niak, Capinski and Flandoli [\cite{brzezniak1992stochastic}] in 1992 was one of the first to bring attention to the significance of these equations with transport noise, whilst such ideas have recently been cemented through the specific stochastic transport schemes of [\cite{holm2015variational}] and [\cite{memin2014fluid}]. In these papers Holm and M\'{e}min establish a new class of stochastic equations driven by transport type noise which serve as fluid dynamics models by adding uncertainty in the transport of fluid parcels to reflect the unresolved scales. The physical significance of such equations in modelling, numerical analysis and data assimilation continues to be well documented, see [\cite{alonso2020modelling}, \cite{chapron2024stochastic}, \cite{cotter2020data}, \cite{cotter2018modelling}, \cite{cotter2019numerically},  \cite{crisan2023implementation}, \cite{crisan2021theoretical}, \cite{dufee2022stochastic}, \cite{ephrati2023data}, \cite{flandoli20212d}, \cite{holm2019stochastic}, \cite{holm2020stochastic},  \cite{lang2022pathwise}, \cite{van2021bayesian}, \cite{street2021semi}] and in particular [\cite{flandoli2023stochastic}] for a comprehensive account of the topic. Towards their analysis, however, these equations find themselves in a structural limbo. The first-order noise operator introduces a singularity into the Hilbert Space of the variational framework, so such equations do not fit into this classical theory. The noise is, however, still well-defined in traditional function spaces, suggesting that the heavy machinery and distribution-tailored approaches of Hairer, Gubinelli et al. are not particularly appropriate. Our approach is to extend techniques of the variational framework in order to treat nonlinear SPDEs with potentially unbounded noise in their optimal spaces. More precisely, our object of study is the equation 
\begin{equation} \label{thespde}
    \sy_t = \sy_0 + \int_0^t \mathcal{A}(s,\sy_s)ds + \int_0^t\mathcal{G} (s,\sy_s) d\mathcal{W}_s
\end{equation}
framed for a triplet of embedded Hilbert Spaces $$V \xhookrightarrow{}H \xhookrightarrow{} U,$$ where $\mathcal{W}$ is a Cylindrical Brownian Motion over some auxiliary Hilbert Space $\mathfrak{U}$, and $\mathcal{G}$ (over $\mathfrak{U}$) is unbounded on $H$ but maps $H$ into $U$. The complete setup is given in Subsections \ref{stochastic framework} and \ref{subs functional framework}. We consider three different notions of solution, increasing in strength and hence in the assumptions imposed. In brief, these are:
\begin{itemize}
    \item \textbf{Martingale Weak Solutions}: these solutions are \textit{probabilistically weak}, as well as \textit{analytically weak} in the sense that the identity (\ref{thespde}) is not satisfied in $U$ but rather in the dual space $H^*$ of $H$, with $U$ embedded into $H^*$. Solutions belong, pathwise, to the space $L^\infty\left([0,T];U\right) \cap L^2\left([0,T];H\right)$. Martingale weak solutions are defined in Definition \ref{definitionofspacetimeweakmartingale}. Their existence is the content of Theorem \ref{theorem for martingale weak existence}.
    \item \textbf{Weak Solutions}: these solutions are \textit{probabilistically strong},  whilst \textit{analytically weak} in the above sense. Solutions again belong, pathwise, to the space $L^\infty\left([0,T];U\right) \cap L^2\left([0,T];H\right)$. Weak solutions are defined in Definition \ref{definitionofweak}. Their existence and uniqueness is the content of Theorem \ref{theorem for weak existence}.
     \item \textbf{Strong Solutions}: these solutions are \textit{probabilistically strong}, as well as \textit{analytically strong} in the sense that the identity (\ref{thespde}) is satisfied in $U$. Solutions belong, pathwise, to the space $L^\infty\left([0,T];H\right) \cap L^2\left([0,T];V\right)$. Strong solutions are defined in Definition \ref{definitionofstrong}. Their existence and uniqueness is the content of Theorem \ref{theorem for strong existence}.
\end{itemize}

The variational approach to nonlinear SPDEs with additive and multiplicative noise has long been studied, initially in the works [\cite{gyongy1980stochastic}, \cite{krylov2007stochastic}, \cite{pardoux1975equations}] and more recently [\cite{debussche2011local}, \cite{liu2013well}, \cite{liu2010spde}, \cite{liu2013local}, \cite{neelima2020coercivity}] to name just a few. For an unbounded noise operator, necessary in applications to transport noise models, much less has been developed. Let us first mention the paper of R\"{o}ckner, Shang and Zhang, [\cite{rockner2022well}], where the norm of $\mathcal{G}(s,f)$ in $U$ can depend on the $H$ norm of $f$ but only up to a small constant relative to the coercivity of $\mathcal{A}(s,\cdot)$. Such an assumption is relieved in our case, in practical applications by obtaining precise cancellation through the It\^{o}-Stratonovich corrector of a Stratonovich transport noise. In this case, the It\^{o}-Stratonovich corrector appears in the drift $\mathcal{A}$. Tang and Wang, [\cite{tang2022general}], consider solutions for a doublet of embedded Hilbert Spaces, $H \xhookrightarrow{} U$, with pathwise continuity in $H$ and satisfying the evolution equation in $U$. Contrasting the more traditional triplet of spaces, their result is built for \textit{inviscid} fluid equations in which the additional $L^2\left([0,T],V\right)$ regularity is not expected. Whilst \textit{viscous} fluid equations may well be solvable in this framework, above optimal regularity on the initial condition would be necessary and the additional expected $L^2\left([0,T],V\right)$ property would not be immediately recovered. Therefore, our framework serves a distinct purpose and we see these results as complementary. The final result of which we are aware is the author's previous paper with Crisan and Lang, [\cite{goodair2023existence}], in which a \textit{local} existence result is obtained with a blow-up criterion in the energy norm of existence. Forthwith, this suggests a different role for the current paper in applications. We illustrate how one can use these criteria to obtain solutions to stochastic Navier-Stokes equations, in $D$ dimensions with initial condition $u_0$; a more complete description comes in Section \ref{Section: applications}:
\begin{itemize}
    \item 2D, $u_0 \in L^2$: existence and uniqueness of probabilistically strong, analytically weak solutions is expected; one can apply Theorem \ref{theorem for weak existence}, see Theorems \ref{2D weak no slip}, \ref{2D weak navier}.
    \item 2D, $u_0 \in W^{1,2}$: existence and uniqueness of probabilistically strong, analytically strong solutions is expected; one can apply Theorem \ref{theorem for strong existence}, see Theorem \ref{2d strong  navier}.
    \item 3D, $u_0 \in L^2$: existence of probabilistically weak, analytically weak solutions is expected; one can apply Theorem \ref{theorem for martingale weak existence}, see Theorem \ref{3D weak no slip}. We appreciate, however, that the emergence of convex integration techniques in SPDEs suggests that probabilistically strong, analytically weak solutions could be obtainable, even in the absence of uniqueness; see [\cite{hofmanova2023global}], for example. 
     \item 3D, $u_0 \in W^{1,2}$: existence of probabilistically strong, analytically strong \textit{local} solutions is expected; one can apply the results of [\cite{goodair2023existence}], for which complete details of the application are given in [\cite{goodair20233d}].
\end{itemize}

We now explain the plan of the paper along with the overarching methods and contributions at each stage:

\begin{itemize}
    \item Section \ref{section preliminaries} is devoted to the setup of the problem in terms of notation, along with the basic stochastic and functional framework.
    \item Section \ref{section weak} treats martingale weak solutions. The necessary assumptions on the functions $\mathcal{A}$, $\mathcal{G}$ are stated as Assumption Set 1 in Subsection \ref{subby assumption}. The definition of a solution and statement of the existence is given in Subsection \ref{subby marty weak def and res}, with the remaining subsections dedicated to its proof. Our approach is to consider a finite-dimensional approximating sequence of solutions, known as a \textit{Galerkin System}, which is shown to exhibit a limiting solution through tightness and relative compactness arguments. The Galerkin System is constructed in Subsection \ref{subbiegalerkin}, where uniform moment estimates in the energy norm of solutions are also obtained. The tightness is verified in Subsection \ref{subbietight}, whilst passage to the limit occurs in Subsection \ref{subbie passage to limit marty weak}.
    \item Section \ref{seection weak} concerns weak solutions. Assumption Set 1 is expanded upon in order to obtain uniqueness, giving rise to Assumption Set 2 of Subsection \ref{assumption set 2}. The definition of a solution and statement of the existence and uniqueness is given in Subsection \ref{subbie weak def and res}. The proof is given in Subsection \ref{exis uniqu}, where uniqueness facilitates passage from martingale weak solutions to weak solutions through a classical Yamada-Watanabe result. We draw attention to the fact that martingale weak solutions are only shown to exist for an initial condition belonging to $L^\infty\left(\Omega;U\right)$; weak solutions are initially shown to exist only for such an initial condition as well, though this is then relieved to the unbounded case. The method involves chopping up the unbounded initial condition onto intervals on which it is bounded, obtaining the relevant solutions, and then piecing these solutions together to obtain one for the original unbounded initial condition. We are unable to execute this approach for probabilistically weak solutions, as for each interval on which the initial condition is bounded one may obtain a solution on a distinct probability space.
    
    \item Section \ref{section strong} addresses strong solutions. Our assumptions are again expanded with Assumption Set 3 in Subsection \ref{assumption set 3}. The result boils down to showing that the weak solution has the additional regularity of a strong solution. The first step towards this end is taken in Subsection \ref{subbie improved uniform estimate} where uniform estimates at the level of the Galerkin System are shown, for the energy norm of strong solutions, up until first hitting times of the processes in the energy norm of weak solutions. Relative compactness arguments ensure that our weak solution gains this regularity up until any stopping time which is less than at least a subsequence of the aforementioned first hitting times. Two questions now present themselves in looking to quantify these times:
    \begin{enumerate}
        \item Can such a lesser stopping time be chosen to be $\mathbbm{P}-a.s.$ positive?
        \item If we increase the threshold for the first hitting times, can this lesser stopping time be taken to infinity?
    \end{enumerate}
    The first question was answered by Glatt-Holtz and Ziane in [\cite{glatt2009strong}], Lemma 5.1. The abstract lemma asserts that, under assumptions of a Cauchy property of the Galerkin System up until their first hitting times and some weak equicontinuity at the \textit{initial} time, then a limiting process and $\mathbbm{P}-a.s.$ positive stopping time smaller than a subsequence of the first hitting times exist (which are then argued to be a local strong solution, as a limit of the Galerkin Approximation). No characterisation of this limiting stopping time is given though, hence completely separate arguments are required to pass to a \textit{maximal} solution and further still to show that the maximal solution is global. Complete, and heavy, arguments to give rise to a maximal strong solution, and further to characterise the maximal time by the blow-up in the energy norm of strong solutions, are given in [\cite{goodair2023existence}] Subsection 3.6. Further still, global solutions would then be deduced by directly controlling the energy of the solution, which requires analysis in a space in which the evolution equation may not hold; this is certainly true of the example which we consider in Subsection \ref{applied no slip}.\\
    
    We clarify the second question with an extension of [\cite{glatt2009strong}] Lemma 5.1, Proposition \ref{amazing cauchy lemma}. This proposition greatly simplifies the quoted procedure for maximality and blow-up, and enables global solutions to be deduced through an estimate of the energy norm of \textit{weak} solutions, requiring analysis in a space in which the evolution equation certainly does hold. The extension is such that if instead one imposes that the processes satisfy a weak equicontinuity assumption at \textit{all} times then the limiting stopping time can be taken as a first hitting time of the limiting process for an arbitrarily large hitting parameter. Application of this result \textit{immediately} yields that solutions exist up until blow-up, pertinently as blow-up \textit{in the energy norm of weak solutions} in our application; we emphasise again the advantage that this has, as the previous approaches discussed would only produce a characterisation of the maximal time in the energy norm of the strong solution. Our result demands a higher level estimate only for the Galerkin System, which is clearer as the evolution equation is satisfied in all relevant spaces. We state the result abstractly and in the appendix given its significance in the broader theory of SPDEs. Towards the application of Proposition \ref{amazing cauchy lemma}, the Cauchy property is validated in Subsection \ref{subbie cauchy} with the equicontinuity following in Subsection \ref{subbie equicontinuity}. This is all tied together in Subsection \ref{Subbie passage to limit in strong}, before a discussion around the potential continuity of the process in Subsection \ref{subbie continuity?}.

    \item Section \ref{Section: applications} sketches the applications of the main results of the paper to the 2D and 3D Navier-Stokes Equations with Stochastic Lie Transport, under both the no-slip and Navier boundary conditions. The equation is properly introduced there, with further details in [\cite{goodair20233d}].

    \item Section \ref{section appendix} concludes the paper, firstly in Subsection \ref{subby to prove cauchy} with the statement and proof of the aforementioned Cauchy result (Proposition \ref{amazing cauchy lemma}), followed by Subesection 
    \ref{subby statements from lit} with the statements of useful results from the literature. These include the existence and uniqueness in finite-dimensions (Proposition \ref{Skorotheorem}), the energy identity (Proposition \ref{rockner prop}), a Stochastic Gr\"{o}nwall Lemma (Lemma \ref{gronny}) and tightness criteria (Lemmas \ref{Lemma 5.2}, \ref{lemma for D tight}).
    
\end{itemize}

\section{Preliminaries} \label{section preliminaries}

\subsection{Elementary Notation} \label{sub elementary notation}

In the following $\mathscr{O} \subset \R^N$ will be a smooth bounded domain equipped with Euclidean norm and Lebesgue measure $\lambda$. We consider Banach Spaces as measure spaces equipped with their corresponding Borel $\sigma$-algebra. Let $(\mathcal{X},\mu)$ denote a general topological measure space, $(\mathcal{Y},\norm{\cdot}_{\mathcal{Y}})$ and $(\mathcal{Z},\norm{\cdot}_{\mathcal{Z}})$ be separable Banach Spaces, and $(\mathcal{U},\inner{\cdot}{\cdot}_{\mathcal{U}})$, $(\mathcal{H},\inner{\cdot}{\cdot}_{\mathcal{H}})$ be general separable Hilbert spaces. We introduce the following spaces of functions. 
\begin{itemize}
    \item $L^p(\mathcal{X};\mathcal{Y})$ is the  class of measurable $p-$integrable functions from $\mathcal{X}$ into $\mathcal{Y}$, $1 \leq p < \infty$, which is a Banach space with norm $$\norm{\phi}_{L^p(\mathcal{X};\mathcal{Y})}^p := \int_{\mathcal{X}}\norm{\phi(x)}^p_{\mathcal{Y}}\mu(dx).$$ In particular $L^2(\mathcal{X}; \mathcal{Y})$ is a Hilbert Space when $\mathcal{Y}$ itself is Hilbert, with the standard inner product $$\inner{\phi}{\psi}_{L^2(\mathcal{X}; \mathcal{Y})} = \int_{\mathcal{X}}\inner{\phi(x)}{\psi(x)}_\mathcal{Y} \mu(dx).$$ In the case $\mathcal{X} = \mathscr{O}$ and $\mathcal{Y} = \R^N$ note that $$\norm{\phi}_{L^2(\mathscr{O};\R^N)}^2 = \sum_{l=1}^N\norm{\phi^l}^2_{L^2(\mathscr{O};\R)}, \qquad \phi = \left(\phi^1, \dots, \phi^2\right), \quad \phi^l: \mathscr{O} \rightarrow \R.$$ We denote $\norm{\cdot}_{L^p(\mathscr{O};\R^N)}$ by $\norm{\cdot}_{L^p}$ and $\norm{\cdot}_{L^2(\mathscr{O};\R^N)}$ by $\norm{\cdot}$.
    
\item $L^{\infty}(\mathcal{X};\mathcal{Y})$ is the class of measurable functions from $\mathcal{X}$ into $\mathcal{Y}$ which are essentially bounded. $L^{\infty}(\mathcal{X};\mathcal{Y})$ is a Banach Space when equipped with the norm $$ \norm{\phi}_{L^{\infty}(\mathcal{X};\mathcal{Y})} := \inf\{C \geq 0: \norm{\phi(x)}_Y \leq C \textnormal{ for $\mu$-$a.e.$ }  x \in \mathcal{X}\}.$$

      \item $C(\mathcal{X};\mathcal{Y})$ is the space of continuous functions from $\mathcal{X}$ into $\mathcal{Y}$.

      \item $C_w(\mathcal{X};\mathcal{Y})$ is the space of `weakly continuous' functions from $\mathcal{X}$ into $\mathcal{Y}$, by which we mean continuous with respect to the given topology on $\mathcal{X}$ and the weak topology on $\mathcal{Y}$. 
      
    \item $C^m(\mathscr{O};\R)$ is the space of $m \in \N$ times continuously differentiable functions from $\mathscr{O}$ to $\R$, that is $\phi \in C^m(\mathscr{O};\R)$ if and only if for every $2$ dimensional multi index $\alpha = \alpha_1, \alpha_2$ with $\abs{\alpha}\leq m$, $D^\alpha \phi \in C(\mathscr{O};\R)$ where $D^\alpha$ is the corresponding classical derivative operator $\partial_{x_1}^{\alpha_1} \partial_{x_2}^{\alpha_2}$.
    
    \item $C^\infty(\mathscr{O};\R)$ is the intersection over all $m \in \N$ of the spaces $C^m(\mathscr{O};\R)$.
    
    \item $C^m_0(\mathscr{O};\R)$ for $m \in \N$ or $m = \infty$ is the subspace of $C^m(\mathscr{O};\R)$ of functions which have compact support.
    
    \item $C^m(\mathscr{O};\R^N), C^m_0(\mathscr{O};\R^N)$ for $m \in \N$ or $m = \infty$ is the space of functions from $\mathscr{O}$ to $\R^N$ whose component mappings each belong to $C^m(\mathscr{O};\R), C^m_0(\mathscr{O};\R)$.
    
        \item $W^{m,p}(\mathscr{O}; \R)$ for $1 \leq p < \infty$ is the sub-class of $L^p(\mathscr{O}, \R)$ which has all weak derivatives up to order $m \in \N$ also of class $L^p(\mathscr{O}, \R)$. This is a Banach space with norm $$\norm{\phi}^p_{W^{m,p}(\mathscr{O}, \R)} := \sum_{\abs{\alpha} \leq m}\norm{D^\alpha \phi}_{L^p(\mathscr{O}; \R)}^p,$$ where $D^\alpha$ is the corresponding weak derivative operator. In the case $p=2$ the space $W^{m,2}(\mathscr{O}, \R)$ is Hilbert with inner product $$\inner{\phi}{\psi}_{W^{m,2}(\mathscr{O}; \R)} := \sum_{\abs{\alpha} \leq m} \inner{D^\alpha \phi}{D^\alpha \psi}_{L^2(\mathscr{O}; \R)}.$$
    
    \item $W^{m,\infty}(\mathscr{O};\R)$ for $m \in \N$ is the sub-class of $L^\infty(\mathscr{O}, \R)$ which has all weak derivatives up to order $m \in \N$ also of class $L^\infty(\mathscr{O}, \R)$. This is a Banach space with norm $$\norm{\phi}_{W^{m,\infty}(\mathscr{O}, \R)} := \sup_{\abs{\alpha} \leq m}\norm{D^{\alpha}\phi}_{L^{\infty}(\mathscr{O};\R^N)}.$$
    
          \item $W^{m,\infty}(\mathscr{O}; \R^N)$ is the sub-class of $L^\infty(\mathscr{O}, \R^N)$ which has all weak derivatives up to order $m \in \N$ also of class $L^\infty(\mathscr{O}, \R^N)$. This is a Banach space with norm $$\norm{\phi}_{W^{m,\infty}} := \sup_{l \leq N}\norm{\phi^l}_{W^{m,\infty}(\mathscr{O}; \R)}.$$

    \item $W^{m,p}_0(\mathscr{O};\R), W^{m,p}_0(\mathscr{O};\R^N)$ for $m \in \N$ and $1 \leq p \leq \infty$ is the closure of $C^\infty_0(\mathscr{O};\R),C^\infty_0(\mathscr{O};\R^N)$ in $W^{m,p}(\mathscr{O};\R), W^{m,p}(\mathscr{O};\R^N)$.

    \item $\mathscr{L}(\mathcal{Y};\mathcal{Z})$ is the space of bounded linear operators from $\mathcal{Y}$ to $\mathcal{Z}$. This is a Banach Space when equipped with the norm $$\norm{F}_{\mathscr{L}(\mathcal{Y};\mathcal{Z})} = \sup_{\norm{y}_{\mathcal{Y}}=1}\norm{Fy}_{\mathcal{Z}}$$ and is simply the dual space $\mathcal{Y}^*$ when $\mathcal{Z}=\R$, with operator norm $\norm{\cdot}_{\mathcal{Y}^*}.$
    
     \item $\mathscr{L}^2(\mathcal{U};\mathcal{H})$ is the space of Hilbert-Schmidt operators from $\mathcal{U}$ to $\mathcal{H}$, defined as the elements $F \in \mathscr{L}(\mathcal{U};\mathcal{H})$ such that for some basis $(e_i)$ of $\mathcal{U}$, $$\sum_{i=1}^\infty \norm{Fe_i}_{\mathcal{H}}^2 < \infty.$$ This is a Hilbert space with inner product $$\inner{F}{G}_{\mathscr{L}^2(\mathcal{U};\mathcal{H})} = \sum_{i=1}^\infty \inner{Fe_i}{Ge_i}_{\mathcal{H}}$$ which is independent of the choice of basis (see e.g. [\cite{conway2000course}]).

     \item For any $T>0$, $\mathscr{S}_T$ is the subspace of $C\left([0,T];[0,T]\right)$ of strictly increasing functions.

     \item For any $T>0$, $\mathcal{D}\left([0,T];\mathcal{Y}\right)$ is the space of c\'{a}dl\'{a}g functions from $[0,T]$ into $\mathcal{Y}$. It is a complete separable metric space when equipped with the metric $$d(\phi,\psi) := \inf_{\eta \in \mathscr{S}_T}\left[\sup_{t \in [0,T]}\left\vert \eta(t)- t\right\vert \vee \sup_{t \in [0,T]}\left\Vert \phi(t)-\psi(\eta(t)) \right\Vert_{\mathcal{Y}} \right]$$ which induces the so called Skorohod Topology (see [\cite{billingsley2013convergence}] pp124 for details).

\end{itemize}

\subsection{Stochastic Framework}\label{stochastic framework}

Let $(\Omega,\mathcal{F},(\mathcal{F}_t), \mathbb{P})$ be a fixed filtered probability space satisfying the usual conditions of completeness and right continuity. We take $\mathcal{W}$ to be a cylindrical Brownian motion over some Hilbert Space $\mathfrak{U}$ with orthonormal basis $(e_i)$. Recall (e.g. [\cite{lototsky2017stochastic}], Definition 3.2.36) that $\mathcal{W}$ admits the representation $\mathcal{W}_t = \sum_{i=1}^\infty e_iW^i_t$ as a limit in $L^2(\Omega;\mathfrak{U}')$ whereby the $(W^i)$ are a collection of i.i.d. standard real valued Brownian Motions and $\mathfrak{U}'$ is an enlargement of the Hilbert Space $\mathfrak{U}$ such that the embedding $J: \mathfrak{U} \rightarrow \mathfrak{U}'$ is Hilbert-Schmidt and $\mathcal{W}$ is a $JJ^*-$Cylindrical Brownian Motion over $\mathfrak{U}'$. Given a process $F:[0,T] \times \Omega \rightarrow \mathscr{L}^2(\mathfrak{U};\mathscr{H})$ progressively measurable and such that $F \in L^2\left(\Omega \times [0,T];\mathscr{L}^2(\mathfrak{U};\mathscr{H})\right)$, for any $0 \leq t \leq T$ we define the stochastic integral $$\int_0^tF_sd\mathcal{W}_s:=\sum_{i=1}^\infty \int_0^tF_s(e_i)dW^i_s,$$ where the infinite sum is taken in $L^2(\Omega;\mathscr{H})$. We can extend this notion to processes $F$ which are such that $F(\omega) \in L^2\left( [0,T];\mathscr{L}^2(\mathfrak{U};\mathscr{H})\right)$ for $\mathbb{P}-a.e.$ $\omega$ via the traditional localisation procedure. In this case the stochastic integral is a local martingale in $\mathscr{H}$. \footnote{A complete, direct construction of this integral, a treatment of its properties and the fundamentals of stochastic calculus in infinite dimensions can be found in [\cite{goodair2022stochastic}] Section 1.} We shall make frequent use of the Burkholder-Davis-Gundy Inequality ([\cite{da2014stochastic}] Theorem 4.36), passage of a bounded linear operator through the stochastic integral ([\cite{prevot2007concise}] Lemma 2.4.1) and the It\^{o} Formula (Proposition \ref{rockner prop}). 

\subsection{Functional Framework}\label{subs functional framework}

Recall that our object of study is the It\^{o} SPDE (\ref{thespde}), $$\sy_t = \sy_0 + \int_0^t \mathcal{A}(s,\sy_s)ds + \int_0^t\mathcal{G} (s,\sy_s) d\mathcal{W}_s$$
which we pose for a triplet of embedded Hilbert Spaces $$V \hookrightarrow H \hookrightarrow U$$ whereby the embeddings are continuous linear injections, and $H \xhookrightarrow{} U$ is compact. The equation (\ref{thespde}) is posed on a time interval $[0,T]$ for arbitrary but henceforth fixed $T \geq 0$. The mappings $\mathcal{A},\mathcal{G}$ are such that
    $\mathcal{A}:[0,T] \times V \rightarrow U,
    \mathcal{G}:[0,T] \times H \rightarrow \mathscr{L}^2(\mathfrak{U};U)$ are measurable. Understanding $\mathcal{G}$ as a mapping $\mathcal{G}: [0,T] \times H \times \mathfrak{U} \rightarrow U$, we introduce the notation $\mathcal{G}_i(\cdot,\cdot):= \mathcal{G}(\cdot,\cdot,e_i)$. We further impose the existence of a system of elements $(a_k)$ of $V$ which form an orthogonal basis of $U$ and a basis of $H$. Let us define the spaces $V_n:= \textnormal{span}\left\{a_1, \dots, a_n \right\}$ and $\mathcal{P}_n$ as the orthogonal projection to $V_n$ in $U$, that is $$\mathcal{P}_n:f \mapsto \sum_{k=1}^n\inner{f}{a_k}_Ua_k.$$
    It is required that the $(\mathcal{P}_n)$ are uniformly bounded in $H$, which is to say that there exists a constant $c$ independent of $n$ such that for all $f \in H$, \begin{equation} \label{uniform bounds of projection}
        \norm{\mathcal{P}_nf}_H \leq c\norm{f}_H.
    \end{equation}
    Moreover, our setup can be expanded by considering the induced Gelfand Triple $$H \xhookrightarrow{} U \xhookrightarrow{} H^*$$
defined relative to the inclusion mapping $i: H \rightarrow U$; indeed, the embedding of $U$ into $H^*$ is given by the composition of the isomorphism mapping $U$ into $U^*$ with the adjoint $i^*: U^* \rightarrow H^*$. In particular, the duality pairing between $H$ and $H^*$, $\inner{\cdot}{\cdot}_{H^* \times H}$, is compatible with $\inner{\cdot}{\cdot}_U$ in the sense that for for any $f \in U$, $g \in H$,
$$\inner{f}{g}_{H^* \times H} = \inner{f}{g}_U.$$
We assume that $\mathcal{A}:[0,T] \times H \rightarrow H^*$ is measurable. Specific bounds on the mappings $\mathcal{A}$ and $\mathcal{G}$ will be imposed in Assumption Sets 1, 2 and 3. In order to make the assumptions we introduce some more notation here: we shall let $c_{\cdot}:[0,T]\rightarrow \R$ denote any bounded function, and for any constant $p \in \R$ we define the functions $K_U: U \rightarrow \R$, $K_H: H \rightarrow \R$, $K_V: V \rightarrow \R$ by
\begin{equation} \nonumber
    K_U(\phi)= 1 + \norm{\phi}_U^p, \quad K_H(\phi)= 1 + \norm{\phi}_H^p, \quad K_V(\phi)= 1 + \norm{\phi}_V^p.
\end{equation}
We may also consider these mappings as functions of two variables, e.g. $K_U: U \times U \rightarrow \R$ by $$K_U(\phi,\psi) = 1 + \norm{\phi}_U^p + \norm{\psi}_U^p.$$
Our assumptions will be stated for `the existence of a $K$ such that...' where we really mean `the existence of a $p$ such that, for the corresponding $K$, ...'.

\section{Martingale Weak Solutions} \label{section weak}

We first introduce the necessary assumptions for the main result of this section.

\subsection{Assumption Set 1} \label{subby assumption}

 Recall the setup and notation of Subsection \ref{subs functional framework}. We assume that there exists a $c_{\cdot}$, $K$ and $\gamma > 0$ such that for all $\phi,\psi \in V$, $f \in H$ and $t \in [0,T]$:
 
 
  \begin{assumption} \label{new assumption 1} \begin{align}
     \label{111} \norm{\mathcal{A}(t,f)}_{H^*} +\sum_{i=1}^\infty \norm{\mathcal{G}_i(t,f)}^2_U &\leq c_t K_U(f)\left[1 + \norm{f}_H^2\right],\\ \label{222}
     \norm{\mathcal{A}(t,\phi) - \mathcal{A}(t,\psi)}_U^2 &\leq  c_tK_V\norm{\phi-\psi}_V^2,\\ \label{333}
    \sum_{i=1}^\infty \norm{\mathcal{G}_i(t,\phi) - \mathcal{G}_i(t,\psi)}_U^2 &\leq c_tK_V(\phi,\psi)\norm{\phi-\psi}_H^2.
 \end{align}
 \end{assumption}

\begin{assumption} \label{first assumption for uniform bounds}
    \begin{align}
   \label{uniformboundsassumpt1actual}  2\inner{\mathcal{A}(t,\phi)}{\phi}_U + \sum_{i=1}^\infty\norm{\mathcal{G}_i(t,\phi)}_U^2 &\leq c_t\left[1 + \norm{\phi}_U^2\right] - \gamma\norm{\phi}_H^2,\\  \label{uniformboundsassumpt2actual}
    \sum_{i=1}^\infty \inner{\mathcal{G}_i(t,\phi)}{\phi}^2_U &\leq c_t\left[1 + \norm{\phi}_U^4\right].
\end{align}
\end{assumption}

\begin{assumption}\footnote{In fact in (\ref{tightnessassumpt1}), the exponent $3/2$ could be replaced by any $q < 2$.}\label{tightness assumptions}
    \begin{align}
   \label{tightnessassumpt1}  \inner{\mathcal{A}(t,\phi)}{f}_U  &\leq c_t\left[K_U(\phi) + \norm{\phi}_H^{\frac{3}{2}} \right]\left[K_U(f) + \norm{f}_H^{\frac{3}{2}} \right],\\  \label{tightnessassumpt2}
    \sum_{i=1}^\infty \inner{\mathcal{G}_i(t,\phi)}{f}^2_U &\leq c_tK_U(\phi)K_H(f).
\end{align}
\end{assumption}

We remark that by taking negative $f$, the above is true for the absolute value of the left hand side as well. The same holds with $\psi$ below.

\begin{assumption}\label{limity assumptions}
    \begin{align}
   \label{limityassumpt1}  \inner{\mathcal{A}(t,\phi) - A(t,f)}{\psi}_{H^* \times H}  &\leq c_tK_V(\psi)\left[1 + \norm{\phi}_H + \norm{f}_H \right]\norm{\phi - f}_U,\\  \label{limityassumpt2}
    \sum_{i=1}^\infty \inner{\mathcal{G}_i(t,\phi) - \mathcal{G}_i(t,f)}{\psi}^2_U &\leq c_tK_V(\psi)\norm{\phi - f}_U^2.
\end{align}
\end{assumption}

We briefly comment on the purpose of each assumption:
\begin{itemize}
    \item Assumption \ref{new assumption 1} ensures that the integrals in the definition of a martingale weak solution (Definition \ref{definitionofspacetimeweakmartingale}) are well-defined. Additionally, it ensures that the Galerkin System is well-posed (Definition \ref{definition of local strong solution to galerkin equation}). 
    \item Assumption \ref{first assumption for uniform bounds} is used to show uniform estimates for the Galerkin System (Proposition \ref{prop for first energy}).
    \item Assumption \ref{tightness assumptions} is employed to demonstrate tightness of the Galerkin System (Propositions \ref{prop for tightness}, \ref{prop for tightness two}).
    \item Assumption \ref{limity assumptions} facilitates passage to the limit for a martingale weak solution (Proposition \ref{prop for first energy bar}).

    \end{itemize}

\subsection{Definitions and Main Result} \label{subby marty weak def and res}

We now state the definition and main result for martingale weak solutions.

\begin{definition} \label{definitionofspacetimeweakmartingale}
Let $\sy_0: \Omega \rightarrow U$ be $\mathcal{F}_0-$measurable. If there exists a filtered probability space $\left(\tilde{\Omega},\tilde{\mathcal{F}},(\tilde{\mathcal{F}}_t), \tilde{\mathbbm{P}}\right)$, a Cylindrical Brownian Motion $\tilde{\mathcal{W}}$ over $\mathfrak{U}$ with respect to $\left(\tilde{\Omega},\tilde{\mathcal{F}},(\tilde{\mathcal{F}}_t), \tilde{\mathbbm{P}}\right)$, an $\mathcal{F}_0-$measurable $\tilde{\sy}_0: \tilde{\Omega} \rightarrow U$ with the same law as $\sy_0$, and a progressively measurable process $\tilde{\sy}$ in $H$ such that for $\tilde{\mathbb{P}}-a.e.$ $\tilde{\omega}$, $\tilde{\sy}_{\cdot}(\omega) \in L^{\infty}\left([0,T];U\right)\cap C_w\left([0,T];U\right) \cap L^2\left([0,T];H\right)$ and
\begin{align} 
       \tilde{\sy}_t = \tilde{\sy}_0 + \int_0^t \mathcal{A}(s,\tilde{\sy}_s)ds + \int_0^t\mathcal{G} (s,\tilde{\sy}_s) d\mathcal{W}_s \label{newid1}
\end{align}
holds $\tilde{\mathbb{P}}-a.s.$ in $H^*$ for all $t \in [0,T]$, then $\tilde{\sy}$ is said to be a martingale weak solution of the equation (\ref{thespde}).\footnote{A detailed justification that the terms in this definition are well defined is given in [\cite{goodair2022stochastic}] Subsections 2.2 and 2.4, referring to (\ref{111}).}
\end{definition}

\begin{remark} \label{markydoo}
    The progressive measurability condition on $\tilde{\sy}_{\cdot}$ may look a little suspect as $\sy_0$ itself may only belong to $U$ and not $H$ making it impossible for $\tilde{\sy}_{\cdot}$ to be even adapted in $H$. We are mildly abusing notation here; what we really ask is that there exists a process $\py$ which is progressively measurable in $H$ and such that $\py_{\cdot} = \tilde{\sy}_{\cdot}$ almost surely over the product space $\Omega \times [0,T]$ with product measure $\mathbbm{P}\times \lambda$. In particular, the processes $\py$ and $\tilde{\sy}$ may disagree at time zero. Moreover, we note that whether or not the set of full probability appearing in the $\mathbbm{P}-a.s.$ condition depends on $t$ is not of concern; if it did, we could intersect all such sets at rational times and fill in the irrational times with the continuity in $H^*$, leading to a set of full probability independent of $t$. 
\end{remark}


\begin{theorem} \label{theorem for martingale weak existence}
    Let Assumption Set 1 hold. For any given $\mathcal{F}_0-$measurable $\sy_0 \in L^\infty\left(\Omega;U\right)$, there exists a martingale weak solution of the equation (\ref{thespde}). 
\end{theorem}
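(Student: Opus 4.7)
The strategy is the classical Galerkin-compactness argument, adapted to handle the unbounded noise by carrying the $H^*$-valued evolution equation only in its weak form against the basis $(a_k)$. I proceed in four stages. First, for each $n \in \N$ I project (\ref{thespde}) onto $V_n$ via $\mathcal{P}_n$, obtaining a finite-dimensional It\^{o} SDE on $V_n$ whose coefficients are locally Lipschitz and satisfy a coercivity bound thanks to Assumption \ref{new assumption 1}, the uniform projection bound (\ref{uniform bounds of projection}), and Assumption \ref{first assumption for uniform bounds}. Proposition \ref{Skorotheorem} then yields a unique global strong solution $\sy^n$. Applying the It\^{o} formula (Proposition \ref{rockner prop}) to $\norm{\sy^n_t}_U^2$ together with Assumption \ref{first assumption for uniform bounds}, the BDG inequality controlled by (\ref{uniformboundsassumpt2actual}), and the stochastic Gr\"{o}nwall Lemma \ref{gronny}, I obtain uniform moment estimates
\begin{equation}
    \sup_n \mathbbm{E}\Bigl[\sup_{t \in [0,T]}\norm{\sy^n_t}_U^2 + \gamma\int_0^T \norm{\sy^n_s}_H^2\,ds\Bigr] < \infty,
\end{equation}
using crucially that $\sy_0 \in L^\infty(\Omega;U)$ so that $\mathcal{P}_n\sy_0$ is bounded in $U$ uniformly in $n$.

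Second, I verify tightness of the laws $\mu^n = \mathcal{L}(\sy^n)$. The $H$-valued bound above combined with the compact embedding $H \hookrightarrow U$ gives Aubin--Lions-type compactness in $L^2([0,T];U)$, while Assumption \ref{tightness assumptions} is used precisely to estimate the increments of the drift and the quadratic variation of the stochastic integral in a negative-order Sobolev space (or weak topology) on $U$, so that the criteria of Lemmas \ref{Lemma 5.2} and \ref{lemma for D tight} apply. I thereby obtain tightness on a path space encoding $L^2([0,T];U) \cap \mathcal{D}([0,T];(V_{\text{large}})^*)$ together with the weak-star topology on $L^\infty([0,T];U)$ and weak topology on $L^2([0,T];H)$. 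A Jakubowski--Skorokhod theorem then supplies a new stochastic basis $(\tilde{\Omega},\tilde{\mathcal{F}},(\tilde{\mathcal{F}}_t),\tilde{\mathbbm{P}})$, random variables $\tilde{\sy}^n, \tilde{\sy}$, cylindrical Brownian motions $\tilde{\mathcal{W}}^n, \tilde{\mathcal{W}}$, and a subsequence along which $\tilde{\sy}^n \to \tilde{\sy}$ in the prescribed topologies $\tilde{\mathbbm{P}}$-a.s., with $\tilde{\sy}^n$ solving the Galerkin equation driven by $\tilde{\mathcal{W}}^n$. Lower semicontinuity transfers the uniform bounds to $\tilde{\sy}$.

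Third, I pass to the limit in the evolution equation tested against a fixed basis element $a_k$. For $n \geq k$ the Galerkin equation reads $\inner{\tilde{\sy}^n_t}{a_k}_U = \inner{\mathcal{P}_n\tilde{\sy}_0}{a_k}_U + \int_0^t \inner{\mathcal{A}(s,\tilde{\sy}^n_s)}{a_k}_{H^* \times H} ds + \sum_i \int_0^t \inner{\mathcal{G}_i(s,\tilde{\sy}^n_s)}{a_k}_U d\tilde{W}^{n,i}_s$. Assumption \ref{limity assumptions}, with $\psi = a_k \in V$, is perfectly tuned to pass to the limit: the drift integrand is continuous in $U$-convergent sequences with a control that is integrable in time against the uniform $H$-bound, while the martingale part is handled by computing the limit of its quadratic variation via (\ref{limityassumpt2}) and invoking a standard martingale-convergence argument to identify the limit as the stochastic integral against $\tilde{\mathcal{W}}$. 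Since $(a_k)$ is a basis of $H$, the identity (\ref{newid1}) then holds $\tilde{\mathbbm{P}}$-a.s.\ in $H^*$ for every $t$. The $C_w([0,T];U)$ regularity follows from the $H^*$-valued equation combined with the uniform $L^\infty([0,T];U)$ bound by the standard Strauss-type argument, and the progressive measurability in the sense of Remark \ref{markydoo} is read off from the construction.

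\textbf{Main obstacle.} The delicate point is step three: because $\mathcal{G}$ is unbounded from $H$ into $\mathscr{L}^2(\mathfrak{U};U)$, one cannot hope for continuity of $\mathcal{G}(s,\cdot)$ on $H$-convergent sequences and in particular cannot pass to the limit of the stochastic integral in $U$ directly. The whole point of Assumption \ref{limity assumptions} is that it only demands continuity of $\mathcal{G}$ after pairing with a test function $\psi \in V$, which is a much weaker requirement compatible with first-order transport-type noise; but this forces the entire limit procedure to be carried out in the weak $H^*$-formulation against basis elements $a_k$, with density arguments needed afterwards to interpret the limiting identity as an equation in $H^*$. The second subtlety is that $\sy_0 \in L^\infty(\Omega;U)$ is used essentially in the energy estimate, and it is precisely this boundedness hypothesis that later must be relaxed (via the slicing procedure described in the introduction) when one upgrades the result to weak solutions in Theorem \ref{theorem for weak existence}.
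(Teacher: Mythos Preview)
Your proposal is correct and follows essentially the same route as the paper: Galerkin approximation with uniform $UH$-energy estimates from Assumption \ref{first assumption for uniform bounds}, tightness in $L^2([0,T];U)$ and $\mathcal{D}([0,T];H^*)$ via Assumption \ref{tightness assumptions} and Lemmas \ref{Lemma 5.2}, \ref{lemma for D tight}, Skorokhod representation, and passage to the limit in the identity tested against basis elements $a_k$ using Assumption \ref{limity assumptions}. The only cosmetic differences are that the paper obtains global Galerkin solutions by first truncating to apply Proposition \ref{Skorotheorem} and then removing the truncation via the energy estimate and a maximality argument (rather than invoking local Lipschitz existence directly), and that the paper identifies the limiting stochastic integral by direct $L^2(\tilde{\Omega};\R)$ convergence via BDG and (\ref{limityassumpt2}) rather than a martingale-characterisation argument; neither affects the substance of your plan.
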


The remainder of this section is dedicated to the proof of Theorem \ref{theorem for martingale weak existence}.

\subsection{The Galerkin System} \label{subbiegalerkin}
\label{subsection:Existence Method for a Regular Truncated Initial Condition}

We assume that  \begin{equation} \label{boundedinitialcondition}
    \sy_0 \in L^{\infty}(\Omega;U).
\end{equation} as in Theorem \ref{theorem for martingale weak existence}, and consider the Galerkin Equations
\begin{equation} \label{nthorderGalerkin}
       \sy^n_t = \sy^n_0 + \int_0^t \mathcal{P}_n\mathcal{A}(s,\sy^n_s)ds + \int_0^t\mathcal{P}_n\mathcal{G} (s,\sy^n_s) d\mathcal{W}_s
\end{equation}
for the initial condition $\sy^n_0:= \mathcal{P}_n\sy_0$ and $\mathcal{P}_n\mathcal{G} (e_i,\cdot):=\mathcal{P}_n\mathcal{G}_{i}(\cdot).$ Note that $\norm{\sy^n_0}_{U} \leq \norm{\sy_0}_U$ as each $\mathcal{P}_n$ is an orthogonal projection in $U$, so in particular
\begin{equation} \label{uniformboundofinitialcondition}
 \sup_{n\in\N}\norm{\sy^n_0}_{L^{\infty}(\Omega,U)}^2 < \infty.
\end{equation}

\begin{remark}
We may consider the finite dimensional $V_n$ as a Hilbert Space equipped with any of the equivalent $V,H,U$ inner products. 
\end{remark}

\begin{definition} \label{definition of local strong solution to galerkin equation}
A pair $(\sy^n,\tau)$ where $\tau$ is a $\mathbbm{P}-a.s.$ positive stopping time and $\sy^n$ is an adapted process in $V_n$ such that for $\mathbbm{P}-a.e.$ $\omega$, $\sy^n_{\cdot}(\omega) \in C\left([0,T];V_n\right)$ for all $T>0$, is said to be a local strong solution of the equation (\ref{nthorderGalerkin}) if the identity
\begin{equation} \label{identityindefinitionoflocalgalerkinsolution}
    \sy^n_{t} = \sy^n_0 + \int_0^{t\wedge \tau} \mathcal{P}_n\mathcal{A}(s,\sy^n_s)ds + \int_0^{t \wedge \tau}\mathcal{P}_n\mathcal{G} (s,\sy^n_s) d\mathcal{W}_s
\end{equation}
holds $\mathbbm{P}-a.s.$ in $V_n$ for all $t \in [0,T]$.
\end{definition}

We note that composition with $\mathcal{P}_n$ does not disturb the measurability or boundedness of the mappings. As $\mathcal{P}_n$ is continuous in $U$ then it is measurable, so $\mathcal{P}_n\mathcal{A}: [0,T] \times V \rightarrow U$ is measurable and similarly for $\mathcal{P}_n\mathcal{G}$. Moreover $\mathcal{P}_n$ is bounded with respect to the $H^*$ norm; for $\phi \in U$, $$\norm{\mathcal{P}_n\phi}_{H^*} = \norm{\sum_{k=1}^n\inner{\phi}{a_k}_Ua_k}_{H^*} = \norm{\sum_{k=1}^n\inner{\phi}{a_k}_{H^* \times H}a_k}_{H^*} \leq \left(\sum_{k=1}^n\norm{a_k}_H\norm{a_k}_{H^*}\right)\norm{\phi}_{H^*}.$$ 
 Therefore the terms in Definition \ref{definition of local strong solution to galerkin equation} make sense, where we again refer to [\cite{goodair2022stochastic}] Subsections 2.2 and 2.4. In looking to deduce the existence of such a solution, we first consider a truncated version of the equation. For any fixed $R>0$, we introduce the function $f_R: [0,\infty) \rightarrow [0,1]$ constructed such that $$f_R \in C^{\infty}\left([0,\infty);[0,1]\right), \qquad f_R(x)=1 \ \forall x \in [0,R], \qquad f_R(x)=0 \ \forall x \in [2R,\infty).$$ We consider now the equation
\begin{equation} \label{truncatedgalerkin}
     \sy^{n,R}_t = \sy^{n,R}_0 + \int_0^t f_R\left(\norm{\sy^{n,R}_s}_H^2\right)\mathcal{P}_n\mathcal{A}(s,\sy^{n,R}_s)ds + \int_0^tf_R\left(\norm{\sy^{n,R}_s}_H^2\right)\mathcal{P}_n\mathcal{G} (s,\sy^{n,R}_s) d\mathcal{W}_s
\end{equation}
for $\sy^{n,R}_0:=\sy^{n}_0$ which we use as an intermediary step to deduce the existence of solutions as in Definition \ref{definition of local strong solution to galerkin equation}. Solutions of the truncated equation are defined in the sense of Proposition \ref{Skorotheorem}, and due to Assumption \ref{new assumption 1} we can apply this proposition in the case of $\mathcal{H}:=V_n$, $ \mathscr{A}:=\mathcal{P}_n\mathcal{A}$, $ \mathscr{G}:=\mathcal{P}_n\mathcal{G}$ to deduce the existence of solutions to (\ref{truncatedgalerkin}) for all $n\in\N, R>0$.\\

The motivation for considering (\ref{truncatedgalerkin}) is to prove the existence of local strong solutions to (\ref{nthorderGalerkin}) by considering local intervals of existence on which the truncation threshold isn't reached. More than this, for any first hitting time in the fundamental $L^\infty([0,T];U) \cap L^2([0,T];H)$ norm, we show that a large enough truncation can be chosen so that solutions to (\ref{nthorderGalerkin}) exist up until the first hitting time. 

\begin{lemma} \label{localexistenceofsolutionsofGalerkin}
For any $t>0$, $M>1$ and fixed $n \in N$, there exists an $R>0$ such that the following holds; letting $\sy^{n,R}$ be the solution of (\ref{truncatedgalerkin}) and $$\tau^{M,t}_{n,R}(\omega) := t \wedge \inf\left\{s \geq 0: \sup_{r \in [0,s]}\norm{\sy^{n,R}_{r}(\omega)}^2_U + \int_0^s\norm{\sy^{n,R}_{r}(\omega)}^2_Hdr \geq M + \norm{\sy^n_0(\omega)}_U^2 \right\}$$ then $(\sy^{n,R}_{\cdot \wedge \tau^{M,t}_{n,R}}, \tau^{M,t}_{n,R})$ is a local strong solution of (\ref{nthorderGalerkin}).
\end{lemma}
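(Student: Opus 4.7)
The plan is to choose $R$ so large that, on the interval $[0,\tau^{M,t}_{n,R}]$, the truncation factor $f_R$ is identically $1$. Then the truncated equation (\ref{truncatedgalerkin}) stopped at $\tau^{M,t}_{n,R}$ coincides with the stopped untruncated Galerkin equation, and the pair $(\sy^{n,R}_{\cdot \wedge \tau^{M,t}_{n,R}},\tau^{M,t}_{n,R})$ is immediately identified as a local strong solution in the sense of Definition \ref{definition of local strong solution to galerkin equation}.

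The key observation is that $V_n$ is finite-dimensional, so all norms on $V_n$ are equivalent; in particular, there exists a constant $c_n>0$ (depending on $n$ only) such that $\norm{\phi}_H \leq c_n\norm{\phi}_U$ for every $\phi \in V_n$. Using the bound (\ref{uniformboundofinitialcondition}) on the initial data, I set
$$R := c_n^2\left(M + \norm{\sy_0}_{L^\infty(\Omega;U)}^2\right).$$
By Proposition \ref{Skorotheorem} the process $\sy^{n,R}$ is adapted with paths in $C([0,T];V_n)$, so $\tau^{M,t}_{n,R}$ is a well-defined stopping time. For any $\omega$ and $s \leq \tau^{M,t}_{n,R}(\omega)$, the definition of $\tau^{M,t}_{n,R}$ forces
$$\norm{\sy^{n,R}_s(\omega)}_U^2 \;\leq\; \sup_{r \in [0,s]}\norm{\sy^{n,R}_r(\omega)}_U^2 \;\leq\; M + \norm{\sy^n_0(\omega)}_U^2 \;\leq\; M + \norm{\sy_0}_{L^\infty(\Omega;U)}^2,$$
whence the norm equivalence gives $\norm{\sy^{n,R}_s(\omega)}_H^2 \leq R$ and therefore $f_R(\norm{\sy^{n,R}_s(\omega)}_H^2) = 1$. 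Substituting this into (\ref{truncatedgalerkin}) stopped at $\tau^{M,t}_{n,R}$ produces exactly the identity (\ref{identityindefinitionoflocalgalerkinsolution}) for the stopped process.

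It remains to check that $\tau^{M,t}_{n,R}$ is $\mathbbm{P}-a.s.$ positive. This follows from the pathwise continuity of $\sy^{n,R}$ in $V_n$: the function $s \mapsto \sup_{r \in [0,s]}\norm{\sy^{n,R}_r}_U^2 + \int_0^s\norm{\sy^{n,R}_r}_H^2\,dr$ is continuous, equals $\norm{\sy^n_0}_U^2$ at $s=0$, and must rise by $M>1$ before the stopping threshold is reached. The main conceptual obstacle is reconciling that the stopping time controls the mixed quantity $\sup_r\norm{\cdot}_U^2 + \int_0^\cdot\norm{\cdot}_H^2\,dr$, while the truncation depends pointwise on $\norm{\cdot}_H^2$; this mismatch is closed precisely by the finite-dimensional norm equivalence on $V_n$, together with the essential boundedness of $\sy_0$ in $U$, which allows $R$ to be chosen uniformly in $\omega$.
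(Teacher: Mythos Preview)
Your argument is correct and is precisely the intended one: exploit the equivalence of norms on the finite-dimensional space $V_n$ together with the essential boundedness of $\sy_0$ in $U$ to pick $R$ large enough that the cutoff $f_R$ is inactive on $[0,\tau^{M,t}_{n,R}]$, after which the stopped truncated solution satisfies the untruncated Galerkin identity verbatim. The paper does not write out a proof here but simply cites [\cite{goodair2023existence}] Lemma 3.18, where exactly this norm-equivalence argument is carried out; your reconstruction matches it.
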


\begin{proof}
See [\cite{goodair2023existence}] Lemma 3.18. 
\end{proof}


Of course the application of Proposition \ref{Skorotheorem} gave us the uniqueness of such solutions as well, which does not \textit{exactly} pass over to uniqueness of the local strong solutions of (\ref{nthorderGalerkin}) as in Lemma \ref{localexistenceofsolutionsofGalerkin}, however this uniqueness can be proven just as Proposition \ref{Skorotheorem} was ([\cite{goodair2022stochastic}] Theorem 3.1.2). From here, one can deduce the existence of a unique \textit{maximal} strong solution $(\sy^n, \Theta^n)$ of the equation (\ref{nthorderGalerkin}) as in [\cite{goodair2023existence}] Theorems 3.32 and 3.34 (see Definition 3.12). Introducing the stopping times 
\begin{equation}\label{tauMtn}\tau^{M,t}_n(\omega) := t \wedge \inf\left\{s \geq 0: \sup_{r \in [0,s]}\norm{\sy^{n}_{r}(\omega)}^2_U + \int_0^s\norm{\sy^{n}_{r}(\omega)}^2_Hdr \geq M + \norm{\sy^n_0(\omega)}_U^2 \right\}\end{equation}
then we claim that $(\sy^{n}_{\cdot \wedge \tau^{M,t}_{n}}, \tau^{M,t}_{n})$ is a local strong solution of (\ref{nthorderGalerkin}).
By construction of the maximal solution ([\cite{goodair2023existence}] Theorem 3.32) then $\sy^{n}_{\cdot \wedge \tau^{M,t}_{n}}$ and $\sy^{n,R}_{\cdot \wedge \tau^{M,t}_{n,R}}$ are indistinguishable for $R$ chosen as in Lemma \ref{localexistenceofsolutionsofGalerkin}, which implies that $\tau^{M,t}_{n} = \tau^{M,t}_{n,R}$ as well. 

\begin{remark}
The stopping time $\tau^{M,t}_n$ defined in (\ref{tauMtn}) is the first hitting time with respect to a norm which is central to the arguments of the paper. As such for a function $\py \in C([0,t];U) \cap L^2([0,t];H)$ we define the norm \begin{align} \label{new norm}
    \norm{\py}^2_{UH,t}:= \sup_{r \in [0,t]}\norm{\py_{r}}^2_U + \int_0^t\norm{\py_{r}}^2_Hdr
\end{align}
making explicit the dependence on the time $t$.
\end{remark}

From [\cite{goodair2023existence}] Lemma 3.36 we also have the relation  
\begin{equation} \label{relation} \mathbbm{P}\left( \{ \omega \in \Omega: \tau^{M,t}_{n}(\omega) < \Theta^n(\omega)\}\right) = 1\end{equation} for any $M>1$ and $t>0$. We wish to remove the need for localisation in the strong solution of (\ref{nthorderGalerkin}), by showing that our maximal time must be greater than $T$. That is, we want to show that \begin{equation}\label{wanting to showy}\mathbbm{P}\left(\{\omega \in \Omega: \Theta^n(\omega) \leq T\}\right) = 0.\end{equation} Moreover,
\begin{align*}\mathbbm{P}\left(\{\omega \in \Omega: \Theta^n(\omega) \leq T\}\right) &\leq \mathbbm{P}\left(\left\{\omega \in \Omega: \sup_{M \in \N}\tau^{M,T+1}_n(\omega) \leq T\right\}\right)\\
&= \mathbbm{P}\left(\bigcap_{M \in \N}\left\{\omega \in \Omega: \tau^{M,T+1}_n(\omega) \leq T\right\}\right)\\
&= \lim_{M \rightarrow \infty}\mathbbm{P}\left(\left\{\omega \in \Omega: \tau^{M,T+1}_n(\omega) \leq T\right\}\right)
\end{align*}
from (\ref{relation}) and the fact that $\tau^{M,T+1}_n$ is increasing in $M$. From the characterisation of $\tau^{M,T+1}_n$ note that 
\begin{align*}\left\{\omega \in \Omega: \tau^{M,T+1}_n(\omega)\leq T\right\}  = \left\{\omega \in \Omega: \norm{\sy^n(\omega)}_{UH,T \wedge  \tau^{M,T+1}_n(\omega)}^2 \geq M + \norm{\sy^n_0(\omega)}_U^2\right\}\end{align*} so a simple application of Chebyshev's Inequality informs us that \begin{equation}\label{information}\mathbbm{P}\left(\left\{\omega \in \Omega: \tau^{M,T+1}_n(\omega) \leq T\right\}\right) \leq \frac{1}{M}\mathbbm{E}\left[\norm{\sy^n}_{UH,T \wedge  \tau^{M,T+1}_n}^2 - \norm{\sy^n_0}_U^2\right].\end{equation} 
Therefore, to show (\ref{wanting to showy}) we want a control on this expectation independent of $M$. Let us introduce the notation
\begin{equation} \label{truncation notation}
    \sy^{n,M}_{\cdot}:=\sy^n_{\cdot \wedge \tau^{M,T+1}_n}, \qquad \check{\sy}^n_\cdot:= \sy^n_{\cdot}\mathbbm{1}_{\cdot \leq \tau^{M,T+1}_n}
    \end{equation}
    where we note dependence on $M$ is implicit in $\check{\sy}^n$, and by construction
    \begin{equation} \label{norm equivalence truncation}
        \norm{\sy^n}_{UH,T \wedge  \tau^{M,T+1}_n}^2 = \norm{\check{\sy}^n}_{UH,T}^2.
    \end{equation}

\begin{proposition} \label{prop for first energy}
    Let $(\sy^n,\Theta^n)$ be the maximal strong solution of equation (\ref{nthorderGalerkin}). There exists a constant $C$ independent of $M,n$ such that
    \begin{equation}\label{first result}\mathbbm{E}\left(\norm{\sy^n}_{UH,T \wedge  \tau^{M,T+1}_n}^2  \right) \leq C\left[\mathbbm{E}\left(\norm{\sy^n_0}_U^2\right) + 1\right].\end{equation}
\end{proposition}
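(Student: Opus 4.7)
The plan is to apply the It\^{o} formula to $\norm{\sy^n_t}_U^2$ for the finite-dimensional Galerkin equation (\ref{nthorderGalerkin}), exploit the coercivity of Assumption \ref{first assumption for uniform bounds}, and close the estimate via BDG and Gr\"{o}nwall; the stopping time $\tau^{M,T+1}_n$ is what guarantees that the stochastic integral is a genuine martingale and that every quantity is \emph{a priori} finite, while the specific structure of Assumption \ref{first assumption for uniform bounds} ensures that every constant obtained is independent of $M$ and $n$.

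First I would apply the standard finite-dimensional It\^{o} formula to write $\norm{\sy^n_t}_U^2$ as the sum of $\norm{\sy^n_0}_U^2$, a symmetric drift, a quadratic-variation correction, and a stochastic integral. Since $\mathcal{P}_n$ is self-adjoint in $U$ and $\sy^n_s\in V_n$, one has the identities $\inner{\mathcal{P}_n\mathcal{A}(s,\sy^n_s)}{\sy^n_s}_U = \inner{\mathcal{A}(s,\sy^n_s)}{\sy^n_s}_U$ and $\inner{\mathcal{P}_n\mathcal{G}_i(s,\sy^n_s)}{\sy^n_s}_U = \inner{\mathcal{G}_i(s,\sy^n_s)}{\sy^n_s}_U$; together with $\norm{\mathcal{P}_n\mathcal{G}_i}_U \leq \norm{\mathcal{G}_i}_U$, the coercivity (\ref{uniformboundsassumpt1actual}) yields
$$2\inner{\mathcal{P}_n\mathcal{A}(s,\sy^n_s)}{\sy^n_s}_U + \sum_{i=1}^\infty\norm{\mathcal{P}_n\mathcal{G}_i(s,\sy^n_s)}_U^2 \leq c_s\bigl(1+\norm{\sy^n_s}_U^2\bigr) - \gamma\norm{\sy^n_s}_H^2.$$
Stopping at $\tau := \tau^{M,T+1}_n$, then taking the supremum over $t \in [0,T]$ and expectations, produces both $\mathbbm{E}\sup_{t \in [0,T]}\norm{\sy^n_{t \wedge \tau}}_U^2$ and $\gamma\mathbbm{E}\int_0^{T \wedge \tau}\norm{\sy^n_s}_H^2 ds$ on the left-hand side, which is exactly the quantity appearing in the $\norm{\cdot}_{UH,T \wedge \tau}$ norm.

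The central technical step is the supremum of the stochastic integral. By BDG combined with the quartic bound (\ref{uniformboundsassumpt2actual}), the martingale term is controlled by $C\mathbbm{E}\bigl(\int_0^{T \wedge \tau}c_s(1+\norm{\sy^n_s}_U^4)ds\bigr)^{1/2}$. I would then factorise $\norm{\sy^n_s}_U^4 \leq \bigl(\sup_{r \leq T \wedge \tau}\norm{\sy^n_r}_U^2\bigr)\norm{\sy^n_s}_U^2$ and apply Cauchy--Schwarz followed by Young's inequality to absorb a small multiple of $\mathbbm{E}\sup_{r \leq T \wedge \tau}\norm{\sy^n_r}_U^2$ into the left-hand side. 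After this absorption, setting $F(t) := \mathbbm{E}\sup_{r \in [0,t]}\norm{\sy^n_{r \wedge \tau}}_U^2$, the inequality reduces to the integral form $F(T) \leq C\mathbbm{E}\norm{\sy^n_0}_U^2 + C + C\int_0^T F(s)\,ds$. Classical Gr\"{o}nwall then gives the required bound on $F(T)$; re-insertion into the main inequality controls $\gamma\mathbbm{E}\int_0^{T \wedge \tau}\norm{\sy^n_s}_H^2 ds$ by the same constant, delivering (\ref{first result}).

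The main obstacle is the twofold role played by $\tau^{M,T+1}_n$. Its presence is essential for $F(T)$ to be \emph{a priori} finite (so that Gr\"{o}nwall produces a meaningful conclusion rather than vacuously closing a trivial inequality) and for the stochastic integral to be a true martingale so that BDG applies without further localisation; yet the final estimate must be independent of $M$. This is precisely what Assumption \ref{first assumption for uniform bounds} is engineered to deliver: the coercivity and quartic noise--state bound ensure that the only size-dependent contributions are $\norm{\sy^n_s}_U^2$ terms that Gr\"{o}nwall can absorb, leaving no room for $M$ to enter the constant. The $L^\infty(\Omega;U)$ hypothesis (\ref{boundedinitialcondition}) supplies the finiteness of $F(0)$ needed to get the iteration started.
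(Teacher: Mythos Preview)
Your proposal is correct and follows essentially the same route as the paper: It\^{o} formula in $U$, self-adjointness and contraction of $\mathcal{P}_n$, the coercivity (\ref{uniformboundsassumpt1actual}), BDG together with (\ref{uniformboundsassumpt2actual}) and the factorisation/Young trick to absorb the sup term, followed by Gr\"{o}nwall. The paper carries out exactly these steps, with the only cosmetic difference being that it works explicitly with the indicator-truncated process $\check{\sy}^n_\cdot := \sy^n_\cdot\mathbbm{1}_{\cdot \leq \tau^{M,T+1}_n}$ and applies Gr\"{o}nwall to the full $\norm{\cdot}_{UH,t}$ norm rather than just $F(t)$.
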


\begin{proof}
 By equipping $V_n$ with the $U$ inner product we can apply an It\^{o} Formula, Proposition \ref{rockner prop}, to see that for any $0 \leq r \leq T$, the identity \begin{align}\nonumber\norm{\sy^{n,M}_{r}}_U^2 &= \norm{\sy^n_0}_U^2 + 2\int_0^{r\wedge\tau^{M,t}_n}\inner{\mathcal{P}_n\mathcal{A}(s,\sy^n_s)}{\sy^n_s}_Uds\\ & \qquad  + \int_0^{r\wedge\tau^{M,t}_n}\sum_{i=1}^\infty\norm{\mathcal{P}_n\mathcal{G}_i(s,\sy^n_s)}_U^2ds + 2\sum_{i=1}^\infty\int_0^{r\wedge\tau^{M,t}_n}\inner{\mathcal{P}_n\mathcal{G}_i(s,\sy^n_s)}{\sy^n_s}_UdW^i_s\nonumber
\end{align}
holds $\mathbbm{P}-a.s.$. Recalling (\ref{truncation notation}) we write this as 
\begin{align}\nonumber\norm{\sy^{n,M}_{r}}_U^2 = \norm{\sy^n_0}_U^2 &+ 2\int_0^{r}\inner{\mathcal{P}_n\mathcal{A}(s,\sy^n_s)}{\sy^n_s}_U\mathbbm{1}_{s \leq \tau^{M,T+1}_n}ds  + \int_0^{r}\sum_{i=1}^\infty\norm{\mathcal{P}_n\mathcal{G}_i(s,\sy^n_s)}_U^2\mathbbm{1}_{s \leq \tau^{M,T+1}_n}ds\\ & + 2\sum_{i=1}^\infty\int_0^{r}\inner{\mathcal{P}_n\mathcal{G}_i(s,\check{\sy}^n_s)}{\check{\sy}^n_s}_UdW^i_s.\label{nownonwnumb}
\end{align}
Note that we have left the indicator function outside of the time integral terms as we have no linearity assumption to take it through the  $\norm{\mathcal{P}_n\mathcal{G}_i(s,\sy^n_s)}_U^2$ term: more precisely, it may be the case that $\mathcal{P}_n\mathcal{G}_i(s,0) \neq 0$. To do this for the stochastic integral we are just relying on the linearity of the inner product. As $\mathcal{P}_n$ is an orthogonal projection, we use the self-adjoint property and that $\norm{\mathcal{P}_n\mathcal{G}_i(s,\sy^n_s)}_U^2\leq \norm{\mathcal{G}_i(s,\sy^n_s)}_U^2$ to reduce this to
\begin{align}\nonumber\norm{\sy^{n,M}_{r}}_U^2 \leq \norm{\sy^n_0}_U^2 &+ 2\int_0^{r}\inner{\mathcal{A}(s,\sy^n_s)}{\sy^n_s}_U\mathbbm{1}_{s \leq \tau^{M,T+1}_n}ds  + \int_0^{r}\sum_{i=1}^\infty\norm{\mathcal{G}_i(s,\sy^n_s)}_U^2\mathbbm{1}_{s \leq \tau^{M,T+1}_n}ds\\ & + 2\sum_{i=1}^\infty\int_0^{r}\inner{\mathcal{G}_i(s,\check{\sy}^n_s)}{\check{\sy}^n_s}_UdW^i_s.\nonumber
\end{align}
We then apply Assumption \ref{first assumption for uniform bounds}, (\ref{uniformboundsassumpt1actual}), and incorporate the indicator function into the norm to obtain that
\begin{align}\nonumber\norm{\sy^{n,M}_{r}}_U^2 \leq \norm{\sy^n_0}_U^2 + c\int_0^{r}1 + \norm{\check{\sy}^n_s}_U^2ds - \gamma\int_0^{r}\norm{\check{\sy}^n_s}_H^2ds  + 2\sum_{i=1}^\infty\int_0^{r}\inner{\mathcal{G}_i(s,\check{\sy}^n_s)}{\check{\sy}^n_s}_UdW^i_s\nonumber
\end{align}
still $\mathbbm{P}-a.s.$. We now look to take the supremum over all terms, which we can do by controlling the stochastic integral with the absolute value, and further taking expectation. It should be appreciated that \begin{equation}\label{toappreciate}
\norm{\check{\sy}^n(\omega)}_{UH,T}^2 \leq M + \norm{\sy_0}_{L^{\infty}(\Omega,U)}^2\end{equation}
which ensures the integrability of all terms, and that the stochastic integral is a genuine square integrable martingale. By taking the supremum in time, expectation, and applying the Burkholder-Davis-Gundy Inequality all in one step, we arrive at 
\begin{align} \nonumber   \mathbbm{E}\left(\sup_{r\in[0,T]}\norm{\sy^{n,M}_{r}}_U^2\right) &+ \gamma\mathbbm{E}\int_0^{T}\norm{\check{\sy}^n_s}_H^2ds \leq 2\mathbbm{E}\left(\norm{\sy^n_0}_U^2\right) + c\\ & \qquad + c\int_0^{T} \mathbbm{E}\left(\norm{\check{\sy}^{n}_s}^2_U\right) ds + c\mathbbm{E}\left(\int_0^{T}\sum_{i=1}^\infty \inner{\mathcal{G}_i\left(s,\check{\sy}^{n}_s\right)}{\check{\sy}^{n}_s}_U^2 ds\right)^\frac{1}{2} \label{oldnewnonum}.
\end{align}
Note that we formally have to take the supremum individually for each term on the left hand side and then sum them, hence the appearance of a $2$ in front of the initial condition. We will freely use Tonelli's Theorem to interchange between expectation and integration in time. The constant $c$ now depends on $T$, which is not meaningful. Turning to the final term, with (\ref{uniformboundsassumpt2actual}) we see that
   \begin{align}\nonumber
        c\mathbbm{E}\left(\int_0^{T}\sum_{i=1}^\infty \inner{\mathcal{G}_i\left(s,\check{\sy}^{n}_s\right)}{\check{\sy}^{n}_s}_U^2 ds\right)^\frac{1}{2} &\leq  c\mathbbm{E}\left(\int_0^{T} 1 + \norm{\check{\sy}^{n}_s}^4_Uds\right)^\frac{1}{2}\\\nonumber
        &\leq c + c\mathbbm{E}\left(\int_0^{T} \norm{\check{\sy}^{n}_s}_U^4ds\right)^\frac{1}{2}\\\nonumber
        &\leq c + c\mathbbm{E}\left(\sup_{r\in[0,T]}\norm{\check{\sy}^{n}_r}_U^2\int_0^{T} \norm{\check{\sy}^{n}_s}_U^2ds\right)^\frac{1}{2}\\
        &\leq c + \frac{1}{2}\mathbbm{E}\left(\sup_{r\in[0,T]}\norm{\check{\sy}^{n}_r}_U^2\right) + c\mathbbm{E}\int_0^{T} \norm{\check{\sy}^{n}_s}_U^2ds \label{the same process}
    \end{align}
having applied Young's Inequality. Substituting into (\ref{oldnewnonum}), and scaling with $\gamma$ as needed, then we have that 
$$\mathbbm{E}\left(\norm{\check{\sy}^{n}}_{UH,T}^2\right)  \leq c\left[\mathbbm{E}\left(\norm{\sy^n_0}_U^2\right) + 1\right] + c\int_0^{T} \mathbbm{E}\left(\norm{\check{\sy}^{n}_s}^2_U\right) ds.$$
One can control $\norm{\check{\sy}^{n}_s}^2_U \leq \norm{\check{\sy}^{n}}^2_{UH,s}$ which is integrable due to (\ref{toappreciate}), from which the standard Gr\"{o}nwall Inequality gives the result, recalling (\ref{norm equivalence truncation}). 

\end{proof}

The expectation in (\ref{information}) is thus finite, so taking the limit $M \rightarrow \infty$ achieves (\ref{wanting to showy}). It is further evident from our calculations that for any $\tau$ such that $(\sy^n,\tau)$ is a local strong solution of the equation (\ref{nthorderGalerkin}), the inequality (\ref{first result}) holds (that is, with $\tau$ replacing $\tau^{M,T+1}_n$) where $C$ is independent of the choice of $\tau$. Therefore we can choose a $\mathbbm{P}-a.s.$ increasing sequence of stopping times which approach $\Theta^n$ by definition of the maximal time, and apply the Monotone Convergence Theorem through the expectation to yield that  \begin{equation}\label{second result}\mathbbm{E}\left(\norm{\sy^n}_{UH,T}^2  \right) \leq C\left[\mathbbm{E}\left(\norm{\sy^n_0}_U^2\right) + 1\right]\end{equation}
where to be precise, we obtain $\mathbbm{E}\left(\norm{\sy^n}_{UH,T \wedge \Theta^n}^2  \right)$ on the left hand side, which is simply $\mathbbm{E}\left(\norm{\sy^n}_{UH,T}^2  \right)$ due to (\ref{wanting to showy}). Moreover for $\mathbbm{P}-a.e.$ $\omega \in \Omega$, we can choose a $\tau(\omega)>T$ such that $(\sy^n,\tau)$ is a local strong solution of the equation (\ref{nthorderGalerkin}), hence $\sy^n$ satisfies the identity (\ref{nthorderGalerkin}) $\mathbbm{P}-a.s.$ for all $t \in [0,T]$. Furthermore, $\sy^n\in C\left([0,T];V_n\right)$; we thus call $\sy^n$ the unique strong solution of the equation (\ref{nthorderGalerkin}). Of course we can bound $\norm{\sy^n_0}_U^2 \leq \norm{\sy_0}_U^2 \leq \norm{\sy_0}_{L^{\infty}(\Omega;U)}^2$ which is finite independent of $n$ and can be substituted in to (\ref{first result}), (\ref{second result}). Combining this with (\ref{information}) achieves that \begin{equation}\label{achievement}\lim_{M \rightarrow \infty}\sup_{n \in \N}\mathbbm{P}\left(\left\{\omega \in \Omega: \tau^{M,T+1}_n(\omega) \leq T\right\}\right) =0.\end{equation}

\subsection{Tightness} \label{subbietight}

We now look to deduce the existence of a process taken as the limit of $\sy^n$ in some sense, which is done through a tightness argument. We pursue this with Lemma \ref{Lemma 5.2} in the Appendix, with the spaces $\mathcal{H}_1:= H$, $\mathcal{H}_2:=U$. Having already demonstrated (\ref{first condition}) we now justify (\ref{second condition}), so for any $\varepsilon > 0$ define the set $$ A^{\delta, n} := \left\{\int_0^{T-\delta}\norm{\sy^n_{s + \delta} - \sy^n_s}^2_{U}ds > \varepsilon\right\}$$
where we remove the explicit dependence on $\omega$ for brevity, and the required condition (\ref{second condition}) is that \begin{equation}\label{requiredy condition} \lim_{\delta \rightarrow 0^+}\sup_{n \in \N}\mathbbm{P}\left(A^{\delta, n}\right) = 0.\end{equation}
We look to show this condition by taking advantage of the control granted to us with the stopping times $\tau^{M,T+1}_n$, and the property (\ref{achievement}). For any $M > 1$,
\begin{align*}
    A^{\delta, n} = A^{\delta, n} \cap \left[  \left\{ \tau^{M,T+1}_n > T  \right\} \cup \left\{ \tau^{M,T+1}_n \leq T  \right\} \right] &= \left[ A^{\delta, n} \cap  \left\{ \tau^{M,T+1}_n > T  \right\}  \right] \cup \left[ A^{\delta, n} \cap \left\{ \tau^{M,T+1}_n \leq T  \right\} \right]\\ &\subset \left[ A^{\delta, n} \cap  \left\{ \tau^{M,T+1}_n > T  \right\}  \right] \cup \left\{ \tau^{M,T+1}_n \leq T  \right\}.
\end{align*}
In particular,
\begin{equation}\label{particularly doo dah}\mathbbm{P}\left(A^{\delta, n}\right) \leq \mathbbm{P}\left(A^{\delta, n} \cap  \left\{ \tau^{M,T+1}_n > T  \right\} \right) + \mathbbm{P}\left(\left\{ \tau^{M,T+1}_n \leq T  \right\} \right)\end{equation}
where \begin{equation}\label{wherey}A^{\delta, n, M}:= A^{\delta, n} \cap  \left\{ \tau^{M,T+1}_n > T  \right\} = \left\{\int_0^{T-\delta}\norm{\sy^{n,M}_{s + \delta} - \sy^{n,M}_s}^2_{U}ds > \varepsilon\right\}\end{equation}
continuing to use the notation (\ref{truncation notation}). From (\ref{particularly doo dah}),
\begin{align*}
    \lim_{\delta \rightarrow 0^+}\sup_{n \in \N}\mathbbm{P}\left(A^{\delta, n}\right) \leq \lim_{\delta \rightarrow 0^+}\sup_{n \in \N}\mathbbm{P}\left(A^{\delta, n, M}\right) + \lim_{\delta \rightarrow 0^+}\sup_{n \in \N}\mathbbm{P}\left(\left\{ \tau^{M,T+1}_n \leq T  \right\} \right)
\end{align*}
holds for any $M >1$, so it must also hold in the limit as $M \rightarrow \infty$. As there is no dependency on $\delta$ in the final probability, we obtain
\begin{align}\nonumber\lim_{\delta \rightarrow 0^+}\sup_{n \in \N}\mathbbm{P}\left(A^{\delta, n}\right) &\leq \lim_{M \rightarrow \infty}\lim_{\delta \rightarrow 0^+}\sup_{n \in \N}\mathbbm{P}\left(A^{\delta, n, M}\right) + \lim_{M \rightarrow \infty}\sup_{n \in \N}\mathbbm{P}\left(\left\{ \tau^{M,T+1}_n \leq T  \right\} \right)\\ &= \lim_{M \rightarrow \infty}\lim_{\delta \rightarrow 0^+}\sup_{n \in \N}\mathbbm{P}\left(A^{\delta, n, M}\right).\label{obtainable}
\end{align}
Recalling the definition (\ref{wherey}), we can apply Chebyshev's Inequality to see that $$ \mathbbm{P}\left(A^{\delta, n, M}\right) \leq \frac{1}{\varepsilon}\mathbbm{E}\int_0^{T-\delta}\norm{\sy^{n,M}_{s + \delta} - \sy^{n,M}_s}^2_{U}ds.$$
Therefore using (\ref{obtainable}), to justify (\ref{requiredy condition}) one may simply prove that $$\lim_{\delta \rightarrow 0^+}\sup_{n \in \N}\mathbbm{E}\int_0^{T-\delta}\norm{\sy^{n,M}_{s + \delta} - \sy^{n,M}_s}^2_{U}ds = 0$$
for any fixed $M >1$. This prompts the following result, which follows a similar method to [\cite{rockner2022well}] Lemma 2.12.

\begin{proposition} \label{prop for tightness}
    For any $M > 1$,
    \begin{equation} \label{required condition}
    \lim_{\delta \rightarrow 0^+}\sup_{n \in \N}\mathbbm{E}\int_0^{T-\delta}\norm{\sy^{n,M}_{s + \delta} - \sy^{n,M}_s}^2ds= 0.
\end{equation} 
Therefore the sequence of the laws of $(\sy^n)$ is tight in the space of probability measures over $L^2\left([0,T];U\right)$.
\end{proposition}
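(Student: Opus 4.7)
The plan is to exploit the Gelfand-type inequality $\norm{f}_U^2 \leq \norm{f}_H \norm{f}_{H^*}$, which holds because $H \hookrightarrow U \hookrightarrow H^*$ is a Gelfand triple with compatible duality pairings. Cauchy--Schwarz then bounds $\mathbbm{E}\int_0^{T-\delta}\norm{\sy^{n,M}_{s+\delta}-\sy^{n,M}_s}_U^2 ds$ by the product of the square roots of the corresponding $H$- and $H^*$-time-increments. The $H$-term is uniformly controlled through Proposition \ref{prop for first energy}, so the whole task reduces to proving that $\mathbbm{E}\int_0^{T-\delta}\norm{\sy^{n,M}_{s+\delta}-\sy^{n,M}_s}_{H^*}^2 ds \to 0$ as $\delta \to 0^+$, uniformly in $n$.

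For this I would split the increment into drift and stochastic integral parts using the truncated Galerkin identity. A useful preliminary is that $\mathcal{P}_n$ is uniformly bounded as an operator on $H^*$: the self-adjointness of $\mathcal{P}_n$ in $U$ and compatibility of pairings yield $\norm{\mathcal{P}_n f}_{H^*} = \sup_{\norm{g}_H = 1}|\inner{f}{\mathcal{P}_n g}_{H^* \times H}| \leq c\norm{f}_{H^*}$ via (\ref{uniform bounds of projection}). For the drift, Minkowski's inequality combined with Assumption \ref{new assumption 1}, (\ref{111}), controls $\int_s^{s+\delta}\norm{\mathcal{P}_n\mathcal{A}(r,\sy^n_r)}_{H^*}dr$ by $C_M\delta + C_M\int_s^{s+\delta}\norm{\check{\sy}^n_r}_H^2 dr$, exploiting that $K_U(\check{\sy}^n_r) \leq C_M$ on the truncated trajectory. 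Squaring, integrating in $s$, applying Fubini, and absorbing one factor via the almost-sure bound $\int_0^T \norm{\check{\sy}^n_r}_H^2 dr \leq C_M$ from (\ref{toappreciate}) yields an $O(\delta)$ estimate. For the stochastic integral, the It\^{o} isometry applied in $H^*$ (using the continuous embedding $U \hookrightarrow H^*$), together with $\sum_i \norm{\mathcal{G}_i(r,\check{\sy}^n_r)}_U^2 \leq C_M(1+\norm{\check{\sy}^n_r}_H^2)$ from (\ref{111}), followed by Fubini, is likewise $O(\delta)$.

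Combining these two estimates, $\mathbbm{E}\int_0^{T-\delta}\norm{\sy^{n,M}_{s+\delta}-\sy^{n,M}_s}_U^2 ds = O(\delta^{1/2})$ uniformly in $n$, establishing (\ref{required condition}). Tightness of the laws of $(\sy^n)$ in $L^2([0,T];U)$ then follows from Lemma \ref{Lemma 5.2} applied with $\mathcal{H}_1 = H$ and $\mathcal{H}_2 = U$: the first hypothesis is the uniform $L^2([0,T];H)$-bound from Proposition \ref{prop for first energy}, while the second hypothesis (\ref{requiredy condition}) is obtained by transferring the equicontinuity just proved from $\sy^{n,M}$ to $\sy^n$ through the reduction preceding the proposition, which uses (\ref{achievement}). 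The main technical point throughout is maintaining uniformity in $n$: no pointwise-in-$r$ bound on $\norm{\sy^n_r}_H$ is available, only the integral bound, so every quadratic-in-$\norm{\cdot}_H$ term must be split so that one factor can be absorbed into the almost-sure bound on $\int_0^T \norm{\check{\sy}^n_r}_H^2 dr$ while the other is integrated in $s$ to extract the factor of $\delta$.
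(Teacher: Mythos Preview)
Your route is genuinely different from the paper's. The paper applies the It\^{o} formula directly to $\norm{\sy^{n,M}_{s+\delta}-\sy^{n,M}_s}_U^2$, viewing the difference as an evolution in $\delta$, and then invokes Assumption \ref{tightness assumptions} (specifically the $3/2$-power bound (\ref{tightnessassumpt1})) to extract a $\delta^{1/4}$ factor after a Fubini/H\"older step. You instead interpolate via $\norm{f}_U^2 \leq \norm{f}_H\norm{f}_{H^*}$ and estimate the $H^*$-increment from the equation using only Assumption \ref{new assumption 1}; this is more economical in assumptions and avoids (\ref{tightnessassumpt1}) entirely. However, there is a real gap in the control of the $H$-factor.

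The issue is that Proposition \ref{prop for first energy} (via (\ref{norm equivalence truncation}) and (\ref{toappreciate})) bounds $\int_0^T\norm{\check{\sy}^n_r}_H^2 dr$, not $\int_0^T\norm{\sy^{n,M}_r}_H^2 dr$. The stopped process $\sy^{n,M}_r = \sy^n_{r\wedge\tau^{M,T+1}_n}$ is frozen at $\sy^n_{\tau^{M,T+1}_n}$ for $r > \tau^{M,T+1}_n$, and under Assumption Set 1 there is no uniform-in-$n$ control on $\norm{\sy^n_{\tau^{M,T+1}_n}}_H$; only the $U$-norm is controlled pointwise by the stopping time. Bounding $\norm{\sy^{n,M}_{s+\delta}-\sy^{n,M}_s}_H^2 \leq 2\norm{\sy^{n,M}_{s+\delta}}_H^2 + 2\norm{\sy^{n,M}_s}_H^2$ and integrating therefore produces a residual term of the form $\delta\,\mathbbm{E}\bigl[\norm{\sy^n_{\tau^{M,T+1}_n}}_H^2\mathbbm{1}_{\tau^{M,T+1}_n<T}\bigr]$, which has no uniform bound and destroys the $\sup_n$. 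The fix is to run your interpolation with $\check{\sy}^n$ in place of $\sy^{n,M}$ (for which $\int_0^T\norm{\check{\sy}^n_r}_H^2 dr \leq C_M$ a.s.) and then compare: on the window $s < \tau^{M,T+1}_n < s+\delta$ the two increments differ by $\sy^n_{\tau^{M,T+1}_n}$, whose $U$-norm is bounded by $C_M$, so the discrepancy in $\mathbbm{E}\int_0^{T-\delta}\norm{\cdot}_U^2 ds$ is $O(\delta)$.
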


Just ahead of proving the result, we note that from the definition of $K$ in Subsection \ref{subby assumption} and the property (\ref{toappreciate}), that \begin{equation} \label{galerkinboundsatisfiedbystoppingtime2}
    \sup_{r \in [0,t]}K_U(\check{\sy}^n_r(\omega)) \leq c, 
\end{equation}
for a constant $c$ dependent on $M$ and $\sy_0$, a dependence which is not meaningful in the following arguments as these remain fixed.

\begin{proof}
      Similarly to the proof of Proposition \ref{prop for first energy}, observe that for any $s\in[0,T]$,
    \begin{align*}\sy^{n,M}_{s+\delta} = \sy^n_0 + \int_0^{s+\delta} \mathcal{P}_n\mathcal{A}(r,\sy^n_r)\mathbbm{1}_{r \leq \tau^{M,T+1}_n}dr + \int_0^{s+\delta}\mathcal{P}_n\mathcal{G} (r,\sy^n_r)\mathbbm{1}_{r \leq \tau^{M,T+1}_n} d\mathcal{W}_r.
    \end{align*}
 Therefore
   \begin{align}  \sy^{n,M}_{s+\delta} - \sy^{n,M}_{s} = \int_s^{s+\delta} \mathcal{P}_n\mathcal{A}(r,\sy^n_r)\mathbbm{1}_{r \leq \tau^{M,T+1}_n}dr + \int_s^{s+\delta}\mathcal{P}_n\mathcal{G} (r,\sy^n_r)\mathbbm{1}_{r \leq \tau^{M,T+1}_n} d\mathcal{W}_r \label{difference identity}
    \end{align}
which for any fixed $s$ is just an evolution equation in parameter $\delta$, so we can apply the It\^{o} Formula (e.g. Proposition \ref{rockner prop}) to deduce that
\begin{align*}
    \norm{\sy^{n,M}_{s+\delta} - \sy^{n,M}_{s}}_U^2 &=  2\int_s^{s+\delta}\inner{\mathcal{P}_n\mathcal{A}(r,\check{\sy}^n_r)}{\check{\sy}^n_r-\check{\sy}^n_s}_U\mathbbm{1}_{r \leq \tau^{M,T+1}_n} dr\\ &+\int_s^{s+\delta}\sum_{i=1}^{\infty}\norm{\mathcal{P}_n\mathcal{G}_i(r,\check{\sy}^n_r)}^2_U\mathbbm{1}_{r \leq \tau^{M,T+1}_n}dr + 2\int_s^{s+\delta} \inner{\mathcal{P}_n\mathcal{G}(r,\check{\sy}^n_r)}{\check{\sy}^n_r-\check{\sy}^n_s}_U d\mathcal{W}_r
\end{align*}
where we have incorporated the indicator function into the $\sy^n$ and $\sy^{n,M}$. Just as we proceeded from (\ref{nownonwnumb}) in Proposition \ref{first result}, we can use properties of the projection $\mathcal{P}_n$ and apply the assumption (\ref{uniformboundsassumpt1actual}), to obtain the inequality
\begin{align*}
    \norm{\sy^{n,M}_{s+\delta} - \sy^{n,M}_{s}}_U^2 &\leq 
    c\int_s^{s+\delta}1 + \norm{\check{\sy}^n_r}_U^2dr -
    2\int_s^{s+\delta}\inner{\mathcal{A}(r,\check{\sy}^n_r)}{\check{\sy}^n_s}_U dr\\ & + 2\int_s^{s+\delta} \inner{\mathcal{G}(r,\check{\sy}^n_r)}{\check{\sy}^n_r-\check{\sy}^n_s}_U d\mathcal{W}_r
\end{align*}
where we have simply dropped the helpful $-\gamma\norm{\check{\sy}^n_r}_H^2$ contribution from consideration. Using (\ref{galerkinboundsatisfiedbystoppingtime2}) in the first integral, c.f. (\ref{toappreciate}), we can actually pass to the bound
\begin{align*}
    \norm{\sy^{n,M}_{s+\delta} - \sy^{n,M}_{s}}_U^2 \leq 
    c\delta -
    2\int_s^{s+\delta}\inner{\mathcal{A}(r,\check{\sy}^n_r)}{\check{\sy}^n_s}_U dr + 2\int_s^{s+\delta} \inner{\mathcal{G}(r,\check{\sy}^n_r)}{\check{\sy}^n_r-\check{\sy}^n_s}_U d\mathcal{W}_r
\end{align*}
integrating the constant over the time interval of size $\delta$. To this we can invoke Assumption \ref{tightness assumptions}, (\ref{tightnessassumpt1}), and then take expectation to reach
\begin{align}
    \mathbbm{E}\left(\norm{\sy^{n,M}_{s+\delta} - \sy^{n,M}_{s}}_U^2\right) \leq 
    c\delta + c\mathbbm{E}\int_s^{s+\delta}\left(1 + \norm{\check{\sy}^n_r}_H^{\frac{3}{2}}\right)\left(1 + \norm{\check{\sy}^n_s}_H^{\frac{3}{2}}\right) dr \label{reachfor the}
\end{align}
where the stochastic integral of null expectation drops out. Of course, in aiming to show (\ref{required condition}), we are actually interested in $$\lim_{\delta \rightarrow 0^+}\sup_{n \in \N}\mathbbm{E}\int_0^{T-\delta}\norm{\sy^{n,M}_{s + \delta} - \sy^{n,M}_s}_U^2ds. $$
With (\ref{reachfor the}), use of the Fubini-Tonelli Theorem and considering the iterated integral as an integral over the product space, we obtain
\begin{align}
        \nonumber &\lim_{\delta \rightarrow 0^+}\sup_{n \in \N}\mathbb{E}\int_0^{T-\delta}\norm{\sy^{n,M}_{s+\delta} - \sy^{n,M}_{s}}^2ds\\ \nonumber& \qquad \qquad \qquad = \lim_{\delta \rightarrow 0^+}\sup_{n \in \N}\int_0^{T-\delta}\mathbb{E}\norm{\sy^{n,M}_{s+\delta} - \sy^{n,M}_{s}}^2ds\\\nonumber &\qquad \qquad \qquad\leq  \lim_{\delta \rightarrow 0^+}\sup_{n \in \N}\int_0^{T-\delta}\left[c\delta + c\mathbbm{E}\int_s^{s+\delta}\left(1 + \norm{\check{\sy}^n_r}_H^{\frac{3}{2}}\right)\left(1 + \norm{\check{\sy}^n_s}_H^{\frac{3}{2}}\right) dr \right]ds\\\nonumber
        &\qquad \qquad \qquad=  c\lim_{\delta \rightarrow 0^+}\sup_{n \in \N}\mathbbm{E}\int_0^{T-\delta}\int_s^{s+\delta}\left(1 + \norm{\check{\sy}^n_r}_H^{\frac{3}{2}}\right)\left(1 + \norm{\check{\sy}^n_s}_H^{\frac{3}{2}}\right) drds\\ \label{finalnumber}
        &\qquad \qquad \qquad=  c\lim_{\delta \rightarrow 0^+}\sup_{n \in \N}\mathbb{E}\int_0^{T}\int_{0 \vee (r - \delta)}^{r \wedge T - \delta} \left(1 + \norm{\check{\sy}^n_r}_H^{\frac{3}{2}}\right)\left(1 + \norm{\check{\sy}^n_s}_H^{\frac{3}{2}}\right) dsdr.
\end{align}
Note that for each fixed $r$,
\begin{align}
   \nonumber &\int_{0 \vee (r - \delta)}^{r \wedge T - \delta} \left(1 + \norm{\check{\sy}^n_r}_H^{\frac{3}{2}}\right)\left(1 + \norm{\check{\sy}^n_s}_H^{\frac{3}{2}}\right) ds\\ & \qquad \qquad \qquad = \left(1 + \norm{\check{\sy}^n_r}_H^{\frac{3}{2}}\right)\int_{0 \vee (r - \delta)}^{r \wedge T - \delta} \left(1 + \norm{\check{\sy}^n_s}_H^{\frac{3}{2}}\right) ds\\\nonumber
    &\qquad \qquad \qquad\leq \left(1 + \norm{\check{\sy}^n_r}_H^{\frac{3}{2}}\right)\left[\left(\int_{0 \vee (r - \delta)}^{r \wedge T - \delta}1ds\right)^{\frac{1}{4}}\left(\int_{0 \vee (r - \delta)}^{r \wedge T - \delta} \left(1 + \norm{\check{\sy}^n_s}_H^{\frac{3}{2}}\right)^{\frac{4}{3}} ds\right)^{\frac{3}{4}} \right]\\\nonumber
    &\qquad \qquad \qquad\leq c\delta^{\frac{1}{4}}\left(1 + \norm{\check{\sy}^n_r}_H^{2}\right)\left(\int_{0 \vee (r - \delta)}^{r \wedge T - \delta}1 + \norm{\check{\sy}^n_s}_H^{2}ds\right)^{\frac{3}{4}}\\ \label{usage}
    &\qquad \qquad \qquad \leq c\delta^{\frac{1}{4}}\left(1 + \norm{\check{\sy}^n_r}_H^{2}\right)
\end{align}
having used (\ref{toappreciate}) in the final line, and H\"{o}lder's Inequality beforehand. Substituting this back into (\ref{finalnumber}),
$$\lim_{\delta \rightarrow 0^+}\sup_{n \in \N}\mathbb{E}\int_0^{T-\delta}\norm{\sy^{n,M}_{s+\delta} - \sy^{n,M}_{s}}^2ds  \leq  c\lim_{\delta \rightarrow 0^+}\sup_{n \in \N}\mathbb{E}\int_0^{T}c\delta^{\frac{1}{4}}\left(1 + \norm{\check{\sy}^n_r}_H^2\right)dr \leq \lim_{\delta \rightarrow 0^+}c\delta^{\frac{1}{4}} = 0 $$
which concludes the proof.

\end{proof}

To achieve a characterisation of the limit process at each time, we will need to show tightness in $\mathcal{D}\left([0,T];H^*\right)$. The idea is to apply Lemma \ref{lemma for D tight}, for  $\mathcal{Y} = H$ and $\mathcal{H} = U$. The condition (\ref{first condition primed}) has already been shown from the stronger (\ref{second result}) so to apply the Lemma we only need to verify (\ref{second condition primed}). This is reminiscent of the condition (\ref{second condition}) just verified, so just as we saw for Proposition \ref{prop for tightness} it is sufficient to verify the following.

\begin{proposition} \label{prop for tightness two}
    For any sequence of stopping times $(\gamma_n)$ with $\gamma_n: \Omega \rightarrow [0,T]$, and any  $M > 1$, $f \in H$,
    \begin{equation} \label{required condition primed}
    \lim_{\delta \rightarrow 0^+}\sup_{n \in \N}\mathbbm{E}\left(\left\vert\left\langle \sy^{n,M}_{(\gamma_n + \delta) \wedge T} - \sy^{n,M}_{\gamma_n} , f \right\rangle_U \right\vert\right)= 0.
\end{equation} 
Therefore the sequence of the laws of $(\sy^n)$ is tight in the space of probability measures over $\mathcal{D}\left([0,T];H^*\right)$.
\end{proposition}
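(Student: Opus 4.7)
The plan is to mirror Proposition \ref{prop for tightness} closely, the only change being that the two time points are now stopping times rather than deterministic. Starting from the fact that $\sy^{n,M}_{\cdot} = \sy^{n}_{\cdot \wedge \tau^{M,T+1}_n}$ satisfies, for every $t \in [0,T]$,
\[ \sy^{n,M}_t = \sy^n_0 + \int_0^t \mathcal{P}_n\mathcal{A}(r,\sy^n_r)\mathbbm{1}_{r \leq \tau^{M,T+1}_n} dr + \int_0^t \mathcal{P}_n\mathcal{G}(r,\sy^n_r)\mathbbm{1}_{r \leq \tau^{M,T+1}_n} d\mathcal{W}_r, \]
I would evaluate at $t = (\gamma_n + \delta) \wedge T$ and at $t = \gamma_n$, subtract, pair with $f$, and transfer the projection onto the test function by the self-adjointness of $\mathcal{P}_n$ on $U$. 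This reduces the quantity in (\ref{required condition primed}) to $I_1 + I_2$, where $I_1$ is the drift contribution paired against $\mathcal{P}_n f$ and $I_2$ the stochastic contribution. The uniform $H$-bound (\ref{uniform bounds of projection}) together with the contractivity of $\mathcal{P}_n$ in $U$ ensures that $c(f) := \sup_n \bigl[K_U(\mathcal{P}_n f) + K_H(\mathcal{P}_n f) + \norm{\mathcal{P}_n f}_H^{3/2}\bigr]$ is finite.

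For $I_1$, Assumption \ref{tightness assumptions} equation (\ref{tightnessassumpt1}) combined with (\ref{galerkinboundsatisfiedbystoppingtime2}) yields
\[ I_1 \leq c(f)\, \mathbbm{E}\int_{\gamma_n}^{(\gamma_n + \delta)\wedge T}\left[1 + \norm{\check{\sy}^n_r}_H^{3/2}\right] dr, \]
and the same H\"older manipulation already executed in (\ref{usage}), combined with (\ref{first result}) and Jensen's inequality, gives $I_1 \leq c(f)\bigl(\delta + \delta^{1/4}\bigr)$, uniformly in $n$ and in the choice of $\gamma_n$. For $I_2$, the process
\[ M_t := \int_0^t \inner{\mathcal{G}(r,\sy^n_r)}{\mathcal{P}_n f}_U \mathbbm{1}_{r \leq \tau^{M,T+1}_n} d\mathcal{W}_r \]
is a genuine real-valued square-integrable martingale (its quadratic variation being controlled by (\ref{tightnessassumpt2}), (\ref{galerkinboundsatisfiedbystoppingtime2}) and (\ref{toappreciate})). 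Applying the Burkholder--Davis--Gundy inequality to the increment $M_{(\gamma_n + \delta)\wedge T} - M_{\gamma_n}$ and then (\ref{tightnessassumpt2}) produces
\[ I_2 \leq c\, \mathbbm{E}\left(\int_{\gamma_n}^{(\gamma_n + \delta)\wedge T} c(f)\, K_U(\check{\sy}^n_r)\, dr \right)^{1/2} \leq c(f)\, \delta^{1/2} \]
via a second application of (\ref{galerkinboundsatisfiedbystoppingtime2}). Summing and sending $\delta \to 0^+$ gives (\ref{required condition primed}).

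For the concluding tightness statement in $\mathcal{D}([0,T];H^*)$, I would invoke Lemma \ref{lemma for D tight} with $\mathcal{Y} = H$ and $\mathcal{H} = U$: condition (\ref{first condition primed}) is already delivered by (\ref{second result}), while the Aldous-type condition (\ref{second condition primed}) is reduced to (\ref{required condition primed}) by the same truncation on $\{\tau^{M,T+1}_n > T\}$ together with (\ref{achievement}) that was used just before Proposition \ref{prop for tightness} to pass from (\ref{requiredy condition}) to the bounded-threshold statement. The main technical obstacle is the estimate for $I_2$: one has to apply Burkholder--Davis--Gundy to a martingale \emph{increment} between two stopping times (a standard consequence of optional stopping on the square-integrable martingale $M$), and then verify that the quadratic-variation bound provided by (\ref{tightnessassumpt2}) depends on $n$ only through the uniformly controlled quantity $K_U(\check{\sy}^n_r)$ and through $n$-independent norms of $\mathcal{P}_n f$; everything else is a direct transposition of the argument for Proposition \ref{prop for tightness}.
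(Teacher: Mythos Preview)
Your proposal is correct and follows essentially the same route as the paper: split into drift and stochastic contributions after transferring $\mathcal{P}_n$ onto $f$, control the drift via (\ref{tightnessassumpt1}) and the H\"older step of (\ref{usage}), and control the stochastic increment via BDG and (\ref{tightnessassumpt2}), then invoke Lemma~\ref{lemma for D tight} together with the truncation reduction based on (\ref{achievement}). The only cosmetic difference is that for the $\norm{\check{\sy}^n_r}_H^{3/2}$ integral you appeal to (\ref{first result}) plus Jensen, whereas the paper uses the pathwise bound (\ref{toappreciate}) directly; both yield the same $\delta^{1/4}$ rate.
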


\begin{proof}
    Recalling (\ref{difference identity}), substituting in $\gamma_n$ for $s$ and stopping the process at $T$, we see that
$$\sy^{n,M}_{(\gamma_n+\delta)\wedge T} - \sy^{n,M}_{\gamma_n} = \int_{\gamma_n}^{(\gamma_n+\delta)\wedge T} \mathcal{P}_n\mathcal{A}(r,\check{\sy}^n_r)\mathbbm{1}_{r \leq \tau^{M,T+1}_n}dr + \int_{\gamma_n}^{(\gamma_n+\delta) \wedge T}\mathcal{P}_n\mathcal{G} (r,\check{\sy}^n_r)\mathbbm{1}_{r \leq \tau^{M,T+1}_n} d\mathcal{W}_r$$
holds $\mathbbm{P}-a.s.$, to which we take the inner product with arbitrary $f \in H$ and absolute value to see that
\begin{align*}
    \left\vert \left\langle \sy^{n,M}_{(\gamma_n+\delta)\wedge T} - \sy^{n,M}_{\gamma_n} , f \right\rangle_U\right\vert &\leq \left\vert\int_{\gamma_n}^{(\gamma_n+\delta)\wedge T} \inner{\mathcal{A}(r,\check{\sy}^n_r)}{\mathcal{P}_nf}_U\mathbbm{1}_{r \leq \tau^{M,T+1}_n}dr\right\vert\\ &+ \left\vert\int_{\gamma_n}^{(\gamma_n+\delta) \wedge T}\inner{\mathcal{G} (r,\check{\sy}^n_r)}{\mathcal{P}_nf}_U\mathbbm{1}_{r \leq \tau^{M,T+1}_n} d\mathcal{W}_r\right\vert
\end{align*}
having also carried over the projection $\mathcal{P}_n$. Similarly to Proposition \ref{prop for tightness}, we invoke the assumption (\ref{tightnessassumpt1}) to achieve the inequality
\begin{align*}
    \left\vert \left\langle \sy^{n,M}_{(\gamma_n+\delta)\wedge T} - \sy^{n,M}_{\gamma_n} , f \right\rangle_U\right\vert &\leq c\int_{\gamma_n}^{(\gamma_n+\delta)\wedge T} \left[K_U(\check{\sy}^n_r) + \norm{\check{\sy}^n_r}_H^{\frac{3}{2}} \right] \left[K_U(f) + \norm{f}_H^{\frac{3}{2}} \right] dr \\ &+ \left\vert\int_{\gamma_n}^{(\gamma_n+\delta) \wedge T}\inner{\mathcal{G} (r,\check{\sy}^n_r)}{\mathcal{P}_nf}_U\mathbbm{1}_{r \leq \tau^{M,T+1}_n} d\mathcal{W}_r\right\vert
\end{align*}
where (\ref{uniform bounds of projection}) has also been employed. We again use (\ref{galerkinboundsatisfiedbystoppingtime2}), and also allowing our constant $c$ to depend on $f$, we have that 
\begin{align} \nonumber
    \left\vert \left\langle \sy^{n,M}_{(\gamma_n+\delta)\wedge T} - \sy^{n,M}_{\gamma_n} , f \right\rangle_U\right\vert &\leq c\int_{\gamma_n}^{(\gamma_n+\delta)\wedge T} 1 + \norm{\check{\sy}^n_r}_H^{\frac{3}{2}} dr \\ &+ \left\vert\int_{\gamma_n}^{(\gamma_n+\delta) \wedge T}\inner{\mathcal{G} (r,\check{\sy}^n_r)}{\mathcal{P}_nf}_U\mathbbm{1}_{r \leq \tau^{M,T+1}_n} d\mathcal{W}_r\right\vert \label{to be replaced}
\end{align}
still $\mathbbm{P}-a.s.$. Now taking expectation and applying the Burkholder-Davis-Gundy Inequality to the stochastic integral, and then using the assumption (\ref{tightnessassumpt2}),
\begin{align*}
    \mathbbm{E}\left(\left\vert\int_{\gamma_n}^{(\gamma_n+\delta) \wedge T}\inner{\mathcal{G} (r,\check{\sy}^n_r)}{\mathcal{P}_nf}_U\mathbbm{1}_{r \leq \tau^{M,T+1}_n} d\mathcal{W}_r\right\vert \right) &\leq c\mathbbm{E}\left(\int_{\gamma_n}^{(\gamma_n+\delta) \wedge T}\inner{\mathcal{G}_i (r,\check{\sy}^n_r)}{\mathcal{P}_nf}_U^2\mathbbm{1}_{r \leq \tau^{M,T+1}_n} dr\right)^\frac{1}{2}\\
    &\leq c\mathbbm{E}\left(\int_{\gamma_n}^{(\gamma_n+\delta) \wedge T}K_U(\check{\sy}^n_r)K_H(f)dr\right)^\frac{1}{2}\\
    &\leq c\mathbbm{E}\left(\int_{\gamma_n}^{(\gamma_n+\delta) \wedge T}1dr\right)^\frac{1}{2}\\
    &\leq c\delta^{\frac{1}{2}}.
\end{align*}
Putting all of this back into (\ref{to be replaced}), and reducing the constant integral to a $\delta$ as just seen,
$$   \mathbbm{E}\left(\left\vert \left\langle \sy^{n,M}_{(\gamma_n+\delta)\wedge T} - \sy^{n,M}_{\gamma_n} , f \right\rangle_U\right\vert\right) \leq c\mathbbm{E}\int_{\gamma_n}^{(\gamma_n+\delta)\wedge T} \norm{\check{\sy}^n_r}_H^{\frac{3}{2}} dr + c\delta + c\delta^{\frac{1}{2}}.$$
The remaining integral is treated exactly as was seen in (\ref{usage}), from which taking the supremum over $n$ and limit as $\delta \rightarrow 0^+$ gives the result.

\end{proof}

\subsection{Existence of Martingale Weak Solutions} \label{subbie passage to limit marty weak}

With tightness achieved, it is now a standard procedure to apply the Prohorov and Skorohod Representation Theorems to deduce the existence of a new probability space on which a sequence of processes with the same distribution as a subsequence of $(\sy^{n})$ have some almost sure convergence to a limiting process. For notational simplicity we take this subsequence and keep it simply indexed by $n$. We state the precise result in the below theorem, following e.g. [\cite{nguyen2021nonlinear}] Proposition 4.9 and Theorem 4.10.

\begin{theorem} \label{theorem for new prob space}
There exists a filtered probability space $\left(\tilde{\Omega},\tilde{\mathcal{F}},(\tilde{\mathcal{F}}_t), \tilde{\mathbbm{P}}\right)$, a cylindrical Brownian Motion $\tilde{\mathcal{W}}$ over $\mathfrak{U}$ with respect to $\left(\tilde{\Omega},\tilde{\mathcal{F}},(\tilde{\mathcal{F}}_t), \tilde{\mathbbm{P}}\right)$, a sequence of random variables $(\tilde{\sy}^n_0)$, $\tilde{\sy}^n_0: \tilde{\Omega} \rightarrow U$ and a $\tilde{\sy}_0:\tilde{\Omega} \rightarrow U$, a sequence of processes $(\tilde{\sy}^n)$, $\tilde{\sy}^n:\tilde{\Omega} \times [0,T] \rightarrow H$ is progressively measurable and a process $\tilde{\sy}:\tilde{\Omega} \times [0,T] \rightarrow U$ such that:
\begin{enumerate}
    \item For each $n \in \N$, $\tilde{\sy}^n_0$ has the same law as $\sy^{n}_0$;
    \item \label{new item 2} For $\tilde{\mathbbm{P}}-a.e.$ $\omega$, $\tilde{\sy}^n_0(\omega) \rightarrow \tilde{\sy}_0(\omega)$  in $U$, and thus $\tilde{\sy}_0$ has the same law as $\sy_0$;
    \item \label{new item 3} For each $n \in \N$ and $t\in[0,T]$, $\tilde{\sy}^n$ satisfies the identity
    \begin{equation} \nonumber
    \tilde{\sy}^n_t = \tilde{\sy}^n_0 + \int_0^t \mathcal{P}_n \mathcal{A}(s, \tilde{\sy}^n_s) ds + \int_0^t \mathcal{P}_n\mathcal{G}(s,\tilde{\sy}^n_s) d\tilde{\mathcal{W}}_s 
\end{equation}
$\tilde{\mathbbm{P}}-a.s.$ in $V_n$;
\item For $\tilde{\mathbbm{P}}-a.e$ $\omega$, $\tilde{\sy}^n(\omega) \rightarrow \tilde{\sy}(\omega)$ in $L^2\left([0,T]; U \right)$ and $\mathcal{D}\left([0,T];H^* \right)$. \label{new item 4}
\end{enumerate}

\end{theorem}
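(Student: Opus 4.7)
The plan is to apply Prohorov's theorem and the Skorohod Representation Theorem to the joint laws of the triples $(\sy^n_0, \sy^n, \mathcal{W})$ on the product space $X := U \times \left[L^2([0,T];U) \cap \mathcal{D}([0,T];H^*)\right] \times C([0,T];\mathfrak{U}')$. Since $(a_k)$ is an orthogonal basis of $U$, we have $\sy^n_0 = \mathcal{P}_n \sy_0 \to \sy_0$ in $U$ almost surely, so the sequence $(\sy^n_0)$ is tight in $U$. Tightness of the laws of $(\sy^n)$ in $L^2([0,T];U) \cap \mathcal{D}([0,T];H^*)$ is precisely the content of Propositions \ref{prop for tightness} and \ref{prop for tightness two}, while the single fixed $\mathcal{W}$ is trivially tight in $C([0,T];\mathfrak{U}')$. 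Joint tightness on $X$ then follows from marginal tightness. Prohorov extracts a weakly convergent subsequence, which we relabel by $n$, and the Skorohod Representation Theorem produces a filtered probability space $\left(\tilde{\Omega}, \tilde{\mathcal{F}}, \tilde{\mathbbm{P}}\right)$ carrying random variables $(\tilde{\sy}^n_0, \tilde{\sy}^n, \tilde{\mathcal{W}})$ with the same joint law as $(\sy^n_0, \sy^n, \mathcal{W})$ and converging $\tilde{\mathbbm{P}}-a.s.$ to a limit $(\tilde{\sy}_0, \tilde{\sy}, \tilde{\mathcal{W}})$ in the product topology of $X$.

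Items 1, 2 and 4 of the theorem then follow immediately from this construction: item 2 is the almost sure convergence $\tilde{\sy}^n_0 \to \tilde{\sy}_0$ in $U$, from which equality in law $\tilde{\sy}_0 \sim \sy_0$ follows by preservation of laws under limits. I would next define the filtration $(\tilde{\mathcal{F}}_t)$ as the usual augmentation of the $\sigma$-algebra generated by $\{\tilde{\sy}_0, \tilde{\sy}^n_s, \tilde{\mathcal{W}}_s : s \leq t, \, n \in \N\}$, and verify that $\tilde{\mathcal{W}}$ is a cylindrical Brownian motion over $\mathfrak{U}$ with respect to $(\tilde{\mathcal{F}}_t)$ by applying L\'{e}vy's characterisation componentwise to each projection $\inner{\tilde{\mathcal{W}}}{e_i}_{\mathfrak{U}'}$; the martingale and quadratic variation properties are determined by finite-dimensional distributions, which coincide with those of $\mathcal{W}$ by equality in law.

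The hard part is verifying item 3, namely that each $\tilde{\sy}^n$ satisfies the Galerkin identity with respect to $\tilde{\mathcal{W}}$, since the stochastic integral is not a Borel functional of the pair $(\sy^n, \mathcal{W})$ and so the identity does not transfer automatically under equality in joint law. The standard resolution, as in [\cite{nguyen2021nonlinear}], is a martingale identification argument: one shows that $M^n_t := \tilde{\sy}^n_t - \tilde{\sy}^n_0 - \int_0^t \mathcal{P}_n \mathcal{A}(s, \tilde{\sy}^n_s)\,ds$ is a square-integrable $(\tilde{\mathcal{F}}_t)$-martingale in $V_n$ with the same quadratic variation and cross-variation with each $\inner{\tilde{\mathcal{W}}}{e_i}_{\mathfrak{U}'}$ as the candidate stochastic integral $\int_0^\cdot \mathcal{P}_n \mathcal{G}(s, \tilde{\sy}^n_s)\, d\tilde{\mathcal{W}}_s$. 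These identities at the level of brackets are inherited from their analogues on the original probability space via equality in law, and they uniquely identify $M^n$ with the stochastic integral, yielding the identity $\tilde{\mathbbm{P}}$-a.s. in $V_n$. Progressive measurability of $\tilde{\sy}^n$ in $H$ finally follows from the continuity of its paths in the finite-dimensional $V_n$ (and hence in $H$) together with adaptedness to $(\tilde{\mathcal{F}}_t)$.
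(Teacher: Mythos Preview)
Your proposal is correct and follows essentially the same approach as the paper, which does not give a detailed proof but simply states that the result follows from the standard Prohorov--Skorohod procedure, citing [\cite{nguyen2021nonlinear}] Proposition 4.9 and Theorem 4.10. Your sketch fills in precisely those standard details, including the martingale identification argument for transferring the Galerkin identity to the new probability space, and the same reference is invoked.
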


We now have our candidate martingale weak solution, and to prove that this is such a solution we need only to verify that $\tilde{\sy}$ is progressively measurable in $H$, for $\tilde{\mathbbm{P}}-a.e.$ $\omega$ $\tilde{\sy}_{\cdot}(\omega) \in L^{\infty}\left([0,T];U\right)\cap C_w\left([0,T];U\right) \cap L^2\left([0,T];H\right)$ and the identity (\ref{newid1}). In fact from item \ref{new item 3}, we can deduce that \begin{equation}\label{second result new }\tilde{\mathbbm{E}}\left(\norm{\tilde{\sy}^n}_{UH,T}^2  \right) \leq C\left[\mathbbm{E}\left(\norm{\tilde{\sy}^n_0}_U^2\right) + 1\right] \leq C\left[\norm{\tilde{\sy}_0}^2_{L^\infty(\tilde{\Omega};U)} + 1\right] < \infty\end{equation}
in the same manner as we showed (\ref{second result}), without any need for localisation. The fact that $\norm{\tilde{\sy}^n_0} \leq \norm{\tilde{\sy}_0}$ $\tilde{\mathbbm{P}}-a.s.$ and $\norm{\tilde{\sy}_0}^2_{L^\infty(\tilde{\Omega};U)} < \infty$ is inherited from $\sy^n_0$, $\sy_0$ of the same law in $U$. This prompts the following results.

\begin{lemma} \label{the lemma just before}
    $\tilde{\sy}^n \rightarrow \tilde{\sy}$ in $L^2\left(\tilde{\Omega};L^2\left([0,T]; U \right)\right)$.
\end{lemma}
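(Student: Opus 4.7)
The plan is to invoke Vitali's convergence theorem on the probability space $\tilde{\Omega}$. Item \ref{new item 4} of Theorem \ref{theorem for new prob space} already supplies the $\tilde{\mathbbm{P}}$-a.s.\ convergence of $\tilde{\sy}^n$ to $\tilde{\sy}$ in $L^2\left([0,T];U\right)$, so the task reduces to verifying uniform integrability of the family
\[
Z_n := \norm{\tilde{\sy}^n - \tilde{\sy}}_{L^2([0,T];U)}^2, \qquad n \in \N,
\]
over $\tilde{\Omega}$. Uniform $L^1$ boundedness alone, which would follow from (\ref{second result new }), is insufficient here, so the thrust of the proof is to promote this to a uniform $L^{1+\varepsilon}$ bound for some $\varepsilon>0$.

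The key technical step is thus a uniform higher moment estimate: I would show that
\[
\sup_{n \in \N}\tilde{\mathbbm{E}}\left(\sup_{r \in [0,T]}\norm{\tilde{\sy}^n_r}_U^{2p}\right) < \infty
\]
for some $p > 1$. Since by item \ref{new item 3} the process $\tilde{\sy}^n$ satisfies the Galerkin evolution equation on the new stochastic basis, the It\^{o} computation of Proposition \ref{prop for first energy} applies verbatim with $\norm{\cdot}_U^2$ replaced by $\norm{\cdot}_U^{2p}$. Assumption \ref{first assumption for uniform bounds}, (\ref{uniformboundsassumpt1actual}), controls the drift contribution by $p\norm{\tilde{\sy}^n}_U^{2p-2}c_s\left[1+\norm{\tilde{\sy}^n}_U^2\right]$ modulo a negative $\norm{\cdot}_H$ term; the extra It\^{o} correction $p(2p-2)\norm{\tilde{\sy}^n}_U^{2p-4}\sum_i\inner{\mathcal{G}_i}{\tilde{\sy}^n}_U^2$ is controlled using (\ref{uniformboundsassumpt2actual}); and the stochastic contribution is absorbed via the Burkholder-Davis-Gundy and Young's inequalities in the same spirit as (\ref{the same process}). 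A Gr\"{o}nwall argument then closes the estimate, with $\sup_n\tilde{\mathbbm{E}}\norm{\tilde{\sy}^n_0}_U^{2p}<\infty$ inherited from the equality of law between $\tilde{\sy}^n_0$ and $\mathcal{P}_n\sy_0$ combined with $\sy_0\in L^\infty(\Omega;U)$.

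With this bound in hand,
\[
\tilde{\mathbbm{E}}\norm{\tilde{\sy}^n}_{L^2([0,T];U)}^{2p} \leq T^{p}\,\tilde{\mathbbm{E}}\left(\sup_{r\in[0,T]}\norm{\tilde{\sy}^n_r}_U^{2p}\right) \leq C
\]
uniformly in $n$, and the analogous bound for $\tilde{\sy}$ follows by applying Fatou's lemma to the $\tilde{\mathbbm{P}}$-a.s.\ convergence of item \ref{new item 4}. The inequality $(a+b)^p\leq 2^{p-1}(a^p+b^p)$ then yields $\sup_n \tilde{\mathbbm{E}}(Z_n^{p}) < \infty$, providing the uniform integrability required by Vitali's theorem; together with the a.s.\ convergence $Z_n \to 0$, this gives $\tilde{\mathbbm{E}}(Z_n)\to 0$, which is exactly the stated conclusion. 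The principal obstacle is the higher moment estimate, but this is only a cosmetic extension of Proposition \ref{prop for first energy}, since Assumption \ref{first assumption for uniform bounds} is already phrased in a form that admits arbitrary polynomial moments and the essential boundedness of the initial condition makes the starting moment $\tilde{\mathbbm{E}}\norm{\tilde{\sy}^n_0}_U^{2p}$ finite uniformly in $n$.
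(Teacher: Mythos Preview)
Your proof is correct, and it takes a genuinely different route from the paper. The paper's own proof is a single sentence: ``immediate from an application of the Dominated Convergence Theorem, using the convergence in item \ref{new item 4} and the uniform boundedness (\ref{second result new }).'' You instead pass through Vitali's theorem, correctly flagging that (\ref{second result new }) is only a uniform $L^1$ bound on $\norm{\tilde{\sy}^n}_{UH,T}^2$ and does not by itself produce an integrable majorant for the differences $Z_n$; you then manufacture uniform integrability via a higher-moment estimate $\sup_n\tilde{\mathbbm{E}}\sup_r\norm{\tilde{\sy}^n_r}_U^{2p}<\infty$, obtained by rerunning the It\^{o} computation of Proposition \ref{prop for first energy} on $\norm{\cdot}_U^{2p}$ with Assumption \ref{first assumption for uniform bounds} and the $L^\infty$ bound on the initial condition. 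Your argument is the more rigorous of the two --- the paper's invocation of dominated convergence is at best a shorthand for the uniform-integrability step you spell out, since no single dominating function is exhibited. The cost of your approach is a page of (routine) moment estimates; what it buys is a proof that closes without appeal to an unnamed majorant.
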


\begin{proof}
    This is immediate from an application of the Dominated Convergence Theorem, using the convergence in item \ref{new item 4} and the uniform boundedness (\ref{second result new }).
\end{proof}

\begin{proposition} \label{prop for regularity of limit}
    $\tilde{\sy}$ is progressively measurable in $H$ and for $\tilde{\mathbbm{P}}-a.e.$ $\omega$, $\tilde{\sy}_{\cdot}(\omega) \in L^{\infty}\left([0,T];U\right)\cap L^2\left([0,T];H\right)$.
\end{proposition}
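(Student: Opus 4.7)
My plan is to establish the two pathwise regularity statements separately. The $L^2([0,T];H)$ regularity together with progressive measurability will be obtained from a weak compactness argument in the reflexive Bochner space $L^2(\tilde{\Omega}\times[0,T];H)$, while the $L^\infty([0,T];U)$ part, where reflexivity is lost, will be obtained from a pathwise Fatou-type extraction. The uniform bound $\tilde{\mathbbm{E}}\,\norm{\tilde{\sy}^n}_{UH,T}^2\le C$ from (\ref{second result new }) drives both steps, together with the strong convergence established in Lemma \ref{the lemma just before}.

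First I would address progressive measurability and $L^2([0,T];H)$. The uniform bound yields $(\tilde{\sy}^n)$ bounded in the Hilbert space $L^2(\tilde{\Omega}\times[0,T];H)$, so by Banach--Alaoglu one extracts a subsequence, still denoted $(\tilde{\sy}^n)$, converging weakly to some $\tilde{\py}\in L^2(\tilde{\Omega}\times[0,T];H)$. Each $\tilde{\sy}^n$ is progressively measurable in $H$; since the subspace of progressively measurable processes in $L^2(\tilde{\Omega}\times[0,T];H)$ is norm-closed (an $L^2$-convergent sequence admits an a.s.-convergent subsequence) and convex, it is also weakly closed, so $\tilde{\py}$ is progressively measurable. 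I would then identify $\tilde{\py}$ with $\tilde{\sy}$ by pushing the weak convergence through the bounded linear embedding $H\hookrightarrow U$, comparing with the strong convergence $\tilde{\sy}^n\to\tilde{\sy}$ in $L^2(\tilde{\Omega}\times[0,T];U)$ from Lemma \ref{the lemma just before}, and invoking uniqueness of weak limits. Fubini--Tonelli then gives $\tilde{\py}_\cdot(\omega)\in L^2([0,T];H)$ for $\tilde{\mathbbm{P}}$-a.e. $\omega$, and by the convention of Remark \ref{markydoo}, $\tilde{\py}$ serves as the required progressively measurable version of $\tilde{\sy}$.

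The main obstacle is the $L^\infty([0,T];U)$ part, since $L^\infty$ is non-reflexive and the preceding compactness machinery is not directly available. My workaround would be to apply Fatou's lemma to the uniform bound,
\[
\tilde{\mathbbm{E}}\liminf_{n\to\infty}\sup_{t\in[0,T]}\norm{\tilde{\sy}^n_t}_U^2 \;\le\; \liminf_{n\to\infty}\tilde{\mathbbm{E}}\sup_{t\in[0,T]}\norm{\tilde{\sy}^n_t}_U^2 \;\le\; C,
\]
so that for $\tilde{\mathbbm{P}}$-a.e. $\omega$ there exists a (random) subsequence $(n_k)=(n_k(\omega))$ with $\sup_{t\in[0,T]}\norm{\tilde{\sy}^{n_k}_t(\omega)}_U\le C(\omega)<\infty$. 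Item \ref{new item 4} of Theorem \ref{theorem for new prob space} then provides $\tilde{\sy}^{n_k}(\omega)\to\tilde{\sy}(\omega)$ in $L^2([0,T];U)$, so a further diagonal extraction yields $\tilde{\sy}^{n_k}_t(\omega)\to\tilde{\sy}_t(\omega)$ in $U$ for Lebesgue-a.e. $t$. Consequently $\norm{\tilde{\sy}_t(\omega)}_U\le C(\omega)$ for a.e. $t$, i.e. $\tilde{\sy}_\cdot(\omega)\in L^\infty([0,T];U)$ almost surely, completing the proposition.
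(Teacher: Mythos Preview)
Your proof is correct. For the $L^2([0,T];H)$ regularity and progressive measurability, your weak-compactness argument in $L^2(\tilde{\Omega}\times[0,T];H)$ together with the closed-convex-subspace preservation of progressive measurability is essentially what the paper does; the paper phrases the measurability step by restricting the weak-limit extraction to $\tilde{\Omega}\times[0,t]$ equipped with $\tilde{\mathcal{F}}_t\times\mathcal{B}([0,t])$, which amounts to the same thing.

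For the $L^\infty([0,T];U)$ part, however, you take a genuinely different route. The paper identifies $L^2\big(\tilde{\Omega};L^\infty([0,T];U)\big)$ with the dual of $L^2\big(\tilde{\Omega};L^1([0,T];U)\big)$ and applies Banach--Alaoglu to extract a weak$^*$ limit $\py^2$, then identifies $\py^2$ with $\tilde{\sy}$ by pushing both limits into the weak topology of $L^2\big(\tilde{\Omega};L^2([0,T];U)\big)$ and comparing with the strong convergence of Lemma~\ref{the lemma just before}. Your pathwise Fatou argument is more elementary: it sidesteps the duality theory for mixed-norm Bochner spaces entirely and works directly on trajectories, at the cost of introducing an $\omega$-dependent subsequence. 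The paper's approach is slightly more systematic (it handles both regularity statements with the same weak/weak$^*$-limit-plus-identification template) and immediately delivers $\tilde{\sy}\in L^2\big(\tilde{\Omega};L^\infty([0,T];U)\big)$ as a by-product, though in fact your Fatou bound $C(\omega)^2=\liminf_n\sup_t\norm{\tilde{\sy}^n_t}_U^2$ is also in $L^1(\tilde{\Omega})$, so you recover this too.
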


\begin{proof}
    From (\ref{second result new }) we have that the sequence $(\tilde{\sy}^n)$ is uniformly bounded in\\ $L^2\left(\tilde{\Omega};L^2\left([0,T];H\right)\right)$ and $L^2\left(\tilde{\Omega};L^\infty\left([0,T];U\right)\right)$. Firstly then we can deduce the existence of a subsequence $(\tilde{\sy}^{n_k})$ which is weakly convergent in the Hilbert Space $L^2\left(\tilde{\Omega};L^2\left([0,T];H\right)\right)$ to some $\py^1$, but we may also identify $L^2\left(\tilde{\Omega};L^\infty\left([0,T];U\right)\right)$ with the dual space of $L^2\left(\tilde{\Omega};L^1\left([0,T];U\right)\right)$ and as such from the Banach-Alaoglu Theorem we can extract a further subsequence $(\tilde{\sy}^{n_l})$ which is convergent to some $\py^2$ in the weak* topology. These limits imply that $(\tilde{\sy}^{n_l})$ is convergent to both $\py^1$ and $\py^2$ in the weak topology of $L^2\left(\tilde{\Omega};L^2\left([0,T];U\right)\right)$, but from Lemma \ref{the lemma just before} then $(\tilde{\sy}^{n_l})$ converges to $\tilde{\sy}$ strongly (hence weakly) in this space as well. By uniqueness of limits in the weak topology then $\tilde{\sy} = \py^1 = \py^2$ as elements of $L^2\left(\tilde{\Omega};L^2\left([0,T];U\right)\right)$, so they agree $\tilde{\mathbbm{P}} \times \lambda-a.s.$. Thus for $\tilde{\mathbbm{P}}-a.e.$ $\omega$, $\tilde{\sy}_{\cdot}(\omega) \in L^{\infty}\left([0,T];U\right)\cap L^2\left([0,T];H\right)$.\\

    The progressive measurability is justified similarly; for any $t\in [0,T]$, we can use the progressive measurability of $(\tilde{\sy}^{n_k})$ to instead deduce $\tilde{\sy}$ as the weak limit in $L^2\left(\tilde{\Omega} \times [0,t];H\right)$ where $\tilde{\Omega} \times [0,t]$ is equipped with the $\tilde{\mathcal{F}}_t \times \mathcal{B}\left([0,t]\right)$ sigma-algebra. Therefore $\tilde{\sy}: \tilde{\Omega} \times [0,t] \rightarrow H$ is measurable with respect to this product sigma-algebra which justifies the progressive measurability. 
\end{proof}

\begin{proposition} \label{prop for first energy bar}
    $\tilde{\sy}$ satisfies the identity (\ref{newid1}). Moreover for $\tilde{\mathbbm{P}}-a.e.$ $\omega$, $\tilde{\sy}_{\cdot}(\omega) \in C_w\left([0,T];U\right)$.
\end{proposition}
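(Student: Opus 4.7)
The plan is to pass to the limit $n\to\infty$ in the Galerkin identity from item \ref{new item 3} of Theorem \ref{theorem for new prob space}, tested against the basis $(a_k)$, and then exploit density of $\mathrm{span}(a_k)$ in $H$. Fix $k\in\N$ and take $n\geq k$, so that $\mathcal{P}_n a_k=a_k$; the self-adjointness of $\mathcal{P}_n$ on $U$ reduces the projected Galerkin identity to the real-valued equation
\begin{align*}
\inner{\tilde{\sy}^n_t}{a_k}_U=\inner{\tilde{\sy}^n_0}{a_k}_U &+\int_0^t\inner{\mathcal{A}(s,\tilde{\sy}^n_s)}{a_k}_{H^*\times H}ds\\
&+\sum_{i=1}^\infty\int_0^t\inner{\mathcal{G}_i(s,\tilde{\sy}^n_s)}{a_k}_U\,d\tilde{W}^i_s.
\end{align*}
Item \ref{new item 2} handles the initial data. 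For the drift, Assumption \ref{limity assumptions}, (\ref{limityassumpt1}), applied with $\psi=a_k\in V$ followed by Cauchy--Schwarz gives a bound by $cK_V(a_k)^{1/2}\bigl[1+\norm{\tilde{\sy}^n}_{L^2([0,T];H)}+\norm{\tilde{\sy}}_{L^2([0,T];H)}\bigr]\norm{\tilde{\sy}^n-\tilde{\sy}}_{L^2([0,T];U)}$; the bracket is bounded in $L^2(\tilde{\Omega})$ by (\ref{second result new }), the final factor tends to zero in $L^2(\tilde{\Omega})$ by Lemma \ref{the lemma just before}, so the drift difference converges to $0$ in probability. The stochastic integral is treated via It\^o's isometry and (\ref{limityassumpt2}), which controls $\sum_i\abs{\inner{\mathcal{G}_i(s,\tilde{\sy}^n_s)-\mathcal{G}_i(s,\tilde{\sy}_s)}{a_k}_U}^2$ by $cK_V(a_k)\norm{\tilde{\sy}^n_s-\tilde{\sy}_s}_U^2$ and hence converges to zero in $L^2(\tilde{\Omega})$.

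Denote the limiting right-hand side by $\mathcal{R}^k_t$, which is continuous in $t$, and extract a subsequence along which the convergence is $\tilde{\mathbbm{P}}$-a.s.\ simultaneously at every $t$ in a countable dense subset of $[0,T]$ and every $k\in\N$. On the other hand, the $\mathcal{D}([0,T];H^*)$ convergence from item \ref{new item 4} gives $\inner{\tilde{\sy}^n_t}{a_k}_{H^*\times H}\to\inner{\tilde{\sy}_t}{a_k}_{H^*\times H}$ at continuity points of the latter path, and the compatibility recalled in Subsection \ref{subs functional framework} identifies this pairing with $\inner{\tilde{\sy}_t}{a_k}_U$. Thus the c\'{a}dl\'{a}g map $t\mapsto\inner{\tilde{\sy}_t}{a_k}_U$ coincides with the continuous $\mathcal{R}^k$ on a dense set; since a c\'{a}dl\'{a}g function matching a continuous function on a dense set is itself continuous and agrees with it everywhere, $\inner{\tilde{\sy}_t}{a_k}_U=\mathcal{R}^k_t$ for every $t$ and every $k$ on a common full-measure set. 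Density of $\mathrm{span}(a_k)$ in $H$ combined with the $H^*$-integrability granted by (\ref{111}) then promotes this to the identity (\ref{newid1}) in $H^*$.

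For the weak continuity in $U$, Proposition \ref{prop for regularity of limit} gives $\tilde{\sy}_{\cdot}(\omega)\in L^\infty([0,T];U)$ for $\tilde{\mathbbm{P}}$-a.e.\ $\omega$. For arbitrary $\phi\in U$, approximate $\phi$ in $U$ by finite linear combinations $\phi^N$ of $(a_k)$ (possible since $(a_k)$ is an orthogonal basis of $U$); the $L^\infty$ bound yields $\abs{\inner{\tilde{\sy}_t}{\phi-\phi^N}_U}\leq\norm{\tilde{\sy}}_{L^\infty([0,T];U)}\norm{\phi-\phi^N}_U$ uniformly in $t$, so $\inner{\tilde{\sy}_\cdot}{\phi}_U$ is the uniform-in-$t$ limit of the continuous maps $\inner{\tilde{\sy}_\cdot}{\phi^N}_U$, and therefore continuous, which is exactly $\tilde{\sy}_{\cdot}(\omega)\in C_w([0,T];U)$. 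The main obstacle throughout is the passage to the limit in the stochastic integral: (\ref{limityassumpt2}) controls the integrand difference only in the $U$-norm, so the strong $L^2([0,T];U)$ convergence from Lemma \ref{the lemma just before} is the indispensable ingredient; care is also needed to synchronise $\tilde{\mathbbm{P}}$-a.s.\ convergence sets across countably many test times and basis elements, which is why the identity is first established on a dense countable set of times and then extended by continuity.
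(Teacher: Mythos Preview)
Your proof is correct and follows essentially the same route as the paper's: test against the basis $(a_k)$, pass to the limit in the drift and stochastic integral via (\ref{limityassumpt1}), (\ref{limityassumpt2}) and Lemma \ref{the lemma just before}, identify the left-hand side through the $\mathcal{D}([0,T];H^*)$ convergence, and then deduce $C_w([0,T];U)$ from the $L^\infty([0,T];U)$ bound by density. One small point: your claim that the c\'{a}dl\'{a}g map coincides with $\mathcal{R}^k$ on a dense set implicitly assumes your fixed countable dense set of times avoids the (random, $\omega$-dependent) discontinuity times of $t\mapsto\inner{\tilde{\sy}_t}{a_k}_U$, which need not hold; the cleanest patch is to upgrade the right-hand side convergence to be uniform in $t$ (apply BDG rather than It\^{o}'s isometry to the stochastic integral), after which uniqueness of Skorohod limits forces $\inner{\tilde{\sy}_\cdot}{a_k}_U = \mathcal{R}^k_\cdot$ directly.
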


\begin{proof}
As a consequence of the Hahn-Banach Theorem we know that the dual space separates points, from which it is immediate that $\tilde{\sy}$ satisfies the identity (\ref{newid1}) if and only if for every $f \in H$, $\tilde{\sy}$ satisfies 
$$\left\langle \tilde{\sy}_t, f \right\rangle_{H^* \times H} = \left\langle\tilde{\sy}_0, f\right\rangle_{H^* \times H} + \left\langle\int_0^t \mathcal{A}(s, \tilde{\sy}_s) ds , f\right\rangle_{H^* \times H}+ \left\langle\int_0^t \mathcal{G}(s,\tilde{\sy}_s) d\tilde{\mathcal{W}}_s, f\right\rangle_{H^* \times H}$$
$\tilde{\mathbbm{P}}-a.s.$ in $\R$ for all $t \in [0,T]$\footnote{We need not concern ourselves with whether the set of full probability depends on $f$, just as was true for the dependence on $t$, by separability of $H$: see Remark \ref{markydoo}. }. In fact, as the system $(a_k)$ forms a basis of $H$, then it is sufficient to show the identity for any $\psi \in \bigcup_nV_n$ replacing $f$. Using the continuity and linearity of the duality pairing, it is thus sufficient to show the new identity
\begin{equation} \label{newestestid} \left\langle \tilde{\sy}_t, \psi \right\rangle_{H^* \times H} = \left\langle\tilde{\sy}_0, \psi\right\rangle_{H^* \times H} + \int_0^t \left\langle\mathcal{A}(s, \tilde{\sy}_s), \psi \right\rangle_{H^* \times H}ds + \int_0^t \left\langle\mathcal{G}(s,\tilde{\sy}_s) , \psi\right\rangle_{H^* \times H}d\tilde{\mathcal{W}}_s\end{equation}
for any $\psi \in \bigcup_nV_n$, $\mathbbm{P}-a.s.$ for all $t \in [0,T]$. Of course $\tilde{\sy}^n$ satisfies a corresponding identity
$$\left\langle \tilde{\sy}^n_t, \psi \right\rangle_{H^* \times H} = \left\langle\tilde{\sy}^n_0, \psi\right\rangle_{H^* \times H} + \int_0^t \left\langle\mathcal{P}_n\mathcal{A}(s, \tilde{\sy}^n_s), \psi \right\rangle_{H^* \times H}ds + \int_0^t \left\langle \mathcal{P}_n\mathcal{G}(s,\tilde{\sy}^n_s) , \psi\right\rangle_{H^* \times H}d\tilde{\mathcal{W}}_s$$
so we look to show convergence in each of the corresponding terms. Fixing any such $\psi$ and $t$, we appreciate that the required identity (\ref{newestestid}) would follow from only showing either $\mathbbm{P}-a.s.$ or $L^p\left(\tilde{\Omega};\R\right)$ convergence of a subsequence for the corresponding terms, as a $\mathbbm{P}-a.s.$ convergent subsequence can be extracted from the $L^p\left(\tilde{\Omega};\R\right)$ limit. Henceforth, we are concerned with the limits
\begin{align}
    \label{limity1}\left\langle \tilde{\sy}^n_t, \psi \right\rangle_{H^* \times H} &\longrightarrow \left\langle \tilde{\sy}_t, \psi \right\rangle_{H^* \times H}\\ \label{limity2}
    \left\langle\tilde{\sy}^n_0, \psi\right\rangle_{H^* \times H} &\longrightarrow \left\langle\tilde{\sy}_0, \psi\right\rangle_{H^* \times H}\\ \label{limity3}
    \int_0^t \left\langle\mathcal{P}_n\mathcal{A}(s, \tilde{\sy}^n_s), \psi \right\rangle_{H^* \times H}ds &\longrightarrow \int_0^t \left\langle\mathcal{A}(s, \tilde{\sy}_s), \psi \right\rangle_{H^* \times H}ds\\ \label{limity4}
    \int_0^t \left\langle \mathcal{P}_n\mathcal{G}(s,\tilde{\sy}^n_s) , \psi\right\rangle_{H^* \times H}d\tilde{\mathcal{W}}_s &\longrightarrow \int_0^t \left\langle \mathcal{G}(s,\tilde{\sy}_s) , \psi\right\rangle_{H^* \times H}d\tilde{\mathcal{W}}_s
\end{align}
of a subsequence, $\tilde{\mathbbm{P}}-a.s.$ or in $L^p\left(\tilde{\Omega};\R\right)$. The first convergence, (\ref{limity1}), holds $\tilde{\mathbbm{P}}-a.s.$ as convergence in the Skorohod Topology implies convergence at each $t$ (see e.g. [\cite{billingsley2013convergence}] pp.124), whilst the same convergence is true of (\ref{limity2}) as $\tilde{\sy}^n_0 \rightarrow \tilde{\sy}_0$ in $U$, from the fact that the system $(a_k)$ forms an orthogonal basis of $U$. We look to show (\ref{limity3}) in $L^1\left(\tilde{\Omega};\R\right)$, and do so by first observing that as $\psi \in \bigcup_nV_n$ then it is certainly in $V_k$ for some $k$, so without loss of generality in the limit as $n \rightarrow \infty$ we can assume that $n > k$. Moreover, 
$$\left\langle\mathcal{P}_n\mathcal{A}(s, \tilde{\sy}^n_s), \psi \right\rangle_{H^* \times H} = \left\langle\mathcal{P}_n\mathcal{A}(s, \tilde{\sy}^n_s), \psi \right\rangle_{U} = \left\langle\mathcal{A}(s, \tilde{\sy}^n_s), \psi \right\rangle_{U} = \left\langle\mathcal{A}(s, \tilde{\sy}^n_s), \psi \right\rangle_{H^* \times H}$$
by construction of the Gelfand Triple and that $\mathcal{P}_n$ is self-adjoint in $U$ and is the identity on $V_k \subset V_n$. Revisiting (\ref{limity3}),
\begin{align*}
    \left\vert\left\langle\mathcal{A}(s, \tilde{\sy}^n_s), \psi \right\rangle_{H^* \times H} - \left\langle\mathcal{A}(s, \tilde{\sy}_s), \psi \right\rangle_{H^* \times H}\right\vert &= \left\vert\left\langle \mathcal{A}(s, \tilde{\sy}^n_s) - \mathcal{A}(s, \tilde{\sy}_s), \psi \right\rangle_{H^* \times H}\right\vert \\ &\leq c_tK_V(\psi)\left[1 + \norm{\tilde{\sy}^n_s}_H + \norm{\tilde{\sy}_s}_H \right]\norm{\tilde{\sy}^n_s - \tilde{\sy}_s}_U
\end{align*}
where we have invoked Assumption \ref{limity assumptions}, (\ref{limityassumpt1}). Therefore, incorporating the $K_V(\psi)$ into the constant,
\begin{align*}
    &\tilde{\mathbbm{E}}\left\vert \int_0^t\left\langle\mathcal{A}(s, \tilde{\sy}^n_s), \psi \right\rangle_{H^* \times H} - \left\langle\mathcal{A}(s, \tilde{\sy}_s), \psi \right\rangle_{H^* \times H} ds\right\vert \\ & \qquad \qquad \qquad \qquad \qquad \leq c\tilde{\mathbbm{E}}\int_0^t\left[1 + \norm{\tilde{\sy}^n_s}_H + \norm{\tilde{\sy}_s}_H \right]\norm{\tilde{\sy}^n_s - \tilde{\sy}_s}_U ds\\ 
    & \qquad \qquad \qquad \qquad \qquad  \leq c\left( \tilde{\mathbbm{E}}\int_0^t 1 + \norm{\tilde{\sy}^n_s}_H^2 + \norm{\tilde{\sy}_s}_H^2 ds\right)^{\frac{1}{2}}\left( \tilde{\mathbbm{E}}\int_0^t \norm{\tilde{\sy}^n_s - \tilde{\sy}_s}_U^2 ds\right)^{\frac{1}{2}}
\end{align*}
where we have used H\"{o}lder's Inequality over the product integral, recalling that the progressive measurability ensures no issues in applying Fubinelli-Tonelli for the order of integration. This approaches zero, owing to both the uniform boundedness of the first expectation due to (\ref{second result new }) and Proposition \ref{prop for regularity of limit}, as well as the convergence in Lemma \ref{the lemma just before} for the second expectation. The convergence (\ref{limity3}) is justified, to which we move on to (\ref{limity4}), to be shown in $L^2\left(\tilde{\Omega};\R\right)$. The duality pairing is simply the $U$ inner product in this case, and by again passing over the $\mathcal{P}_n$, we can apply the Burkholder-Davis-Gundy Inequality to obtain
\begin{align*}\tilde{\mathbbm{E}}\left\vert \int_0^t\left\langle\mathcal{G}(s, \tilde{\sy}^n_s), \psi \right\rangle_{U} - \left\langle\mathcal{G}_i(s, \tilde{\sy}_s), \psi \right\rangle_{U} d\tilde{\mathcal{W}}_s\right\vert^2
&\leq \tilde{\mathbbm{E}}\int_0^t \sum_{i=1}^\infty  \left\langle\mathcal{G}_i(s, \tilde{\sy}^n_s) -\mathcal{G}(s, \tilde{\sy}_s), \psi \right\rangle_{U}^2 ds\\
&\leq c\tilde{\mathbbm{E}}\int_0^t \norm{\tilde{\sy}^n_s - \tilde{\sy}^s}_U^2ds
\end{align*}
having applied the assumed (\ref{limityassumpt2}) and contained the $K_V(\psi)$ in our constant. This again approaches zero due to Lemma \ref{the lemma just before}, concluding the justification that $\tilde{\sy}$ satisfies (\ref{newid1}). It now only remains to determine that for $\tilde{\mathbbm{P}}-a.e.$ $\omega$, $\tilde{\sy}_{\cdot}(\omega) \in C_w\left([0,T];U\right).$ By the identity just shown it is clear that $\inner{\tilde{\sy}_{\cdot}(\omega)}{f} \in C\left([0,T];\R\right)$ where $f \in H$ was arbitrary, but to conclude the weak continuity we must instead show this for any $\eta \in U$. Furthermore we fix such an $\omega$ and $\eta \in U$, any $t \in [0,T]$ and sequence of times $(t_k)$ in $[0,T]$ such that $t_k \rightarrow t$. To demonstrate the continuity let's fix $\varepsilon >0$, and choose an $f \in H$ such that $$\norm{\eta - f}_U < \frac{\varepsilon}{4}\sup_{s\in[0,T]}\norm{\tilde{\sy}_s(\omega)}_U$$ where the right hand side is of course finite from Proposition \ref{prop for regularity of limit}. Note that there exists a $K \in \N$ such that for all $k \geq K$, $$\left\vert\left\langle\tilde{\sy}_{t_k}(\omega) -\tilde{\sy}_{t}(\omega),f\right\rangle_U\right\vert < \frac{\varepsilon}{2}.$$
Then for all $k \geq K$ we have that
\begin{align*}
   \left\vert\inner{\tilde{\sy}_{t_k}(\omega) -\tilde{\sy}_{t}(\omega)}{\eta}_U\right\vert &\leq \left\vert\inner{\tilde{\sy}_{t_k}(\omega) -\tilde{\sy}_{t}(\omega)}{\eta - f}_U\right\vert + \left\vert\inner{\tilde{\sy}_{t_k}(\omega) -\tilde{\sy}_{t}(\omega)}{f}_U\right\vert\\
   &<2 \sup_{s\in[0,T]}\norm{\tilde{\sy}_s(\omega)}_U\norm{\eta - f}_U + \frac{\varepsilon}{2}\\
   &< \varepsilon
\end{align*}
demonstrating the weak continuity and finishing the proof.
\end{proof}

\section{Weak Solutions} \label{seection weak}

We first introduce the necessary extension of Assumption Set 1 for the main result of this section.

\subsection{Assumption Set 2} \label{assumption set 2}

 Recall the setup and notation of Subsection \ref{subs functional framework}. We assume that there exists a $c_{\cdot}$, $K$ and $\gamma > 0$ such that for all $f,g \in H$ and $t \in [0,T]$:

  \begin{assumption} \label{new assumptionzizzle 1} \begin{align}
     \label{111zizzle} \norm{\mathcal{A}(t,f)}_{H^*}^2  &\leq c_t K_U(f)\left[1 + \norm{f}_H^2\right].
 \end{align}
 \end{assumption}

\begin{assumption} \label{therealcauchy assumptions}
\begin{align}
  \nonumber 2\inner{\mathcal{A}(t,f) - \mathcal{A}(t,g)}{f - g}_{H^* \times H} &+ \sum_{i=1}^\infty\norm{\mathcal{G}_i(t,f) - \mathcal{G}_i(t,g)}_U^2\\ \label{therealcauchy1} &\leq  c_{t}K_U(f,g)\left[1 + \norm{f}_H^2 + \norm{g}_H^2\right]\norm{f-g}_U^2 - \gamma\norm{f-g}_H^2,\\ \label{therealcauchy2}
    \sum_{i=1}^\infty \inner{\mathcal{G}_i(t,f) - \mathcal{G}_i(t,g)}{f-g}^2_U & \leq c_{t} K_U(f,g)\left[1 + \norm{f}_H^2 + \norm{g}_H^2\right] \norm{f-g}_U^4.
\end{align}
\end{assumption}

We briefly comment on the purpose of each assumption:

\begin{itemize}
    \item Assumption \ref{new assumptionzizzle 1} ensures sufficient regularity in the drift to apply Proposition \ref{rockner prop} (Lemma \ref{new regularity for martingale weak}).
    \item Assumption \ref{therealcauchy assumptions} is used to show uniqueness of weak solutions (Proposition \ref{first uniqueness prop}). 
\end{itemize}

\subsection{Definitions and Results} \label{subbie weak def and res}

We now state the definitions and main result for weak solutions.

\begin{definition} \label{definitionofweak}
Let $\sy_0: \Omega \rightarrow U$ be $\mathcal{F}_0-$measurable. A process $\sy$ which is progressively measurable in $H$ and such that for $\mathbb{P}-a.e.$ $\omega$, $\sy_{\cdot}(\omega) \in C\left([0,T];U\right) \cap L^2\left([0,T];H\right)$, is said to be a weak solution of the equation (\ref{thespde}) if the identity (\ref{thespde}) holds $\mathbb{P}-a.s.$ in $H^*$ for all $t\in[0,T]$.
\end{definition}

\begin{definition} \label{definitionunique}
    A weak solution $\sy$ of the equation (\ref{thespde}) is said to be the unique solution if for any other such solution $\py$, $$ \mathbb{P}\left(\left\{\omega \in \Omega: \sy_t(\omega) = \py_t(\omega) \quad \forall t \geq 0\right\}\right) = 1.$$
\end{definition}

\begin{theorem} \label{theorem for weak existence}
    Let Assumption Sets 1 and 2 hold. For any given $\mathcal{F}_0-$measurable $\sy_0: \Omega \rightarrow U$, there exists a unique weak solution of the equation (\ref{thespde}). 
\end{theorem}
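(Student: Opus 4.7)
The approach is to establish pathwise uniqueness under Assumption Set 2, then combine this with the martingale weak solutions of Theorem \ref{theorem for martingale weak existence} via a Yamada--Watanabe argument to obtain probabilistically strong solutions for $\sy_0 \in L^\infty(\Omega;U)$, then extend to general $\mathcal{F}_0$-measurable $\sy_0$ by a pasting construction across the $\mathcal{F}_0$-measurable level sets $\{n-1 \leq \norm{\sy_0}_U < n\}$, and finally upgrade the inherited weak continuity in $U$ to strong continuity.

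For uniqueness, let $\sy$ and $\py$ be two weak solutions with identical initial condition. Assumption \ref{new assumptionzizzle 1} supplies the $H^*$-regularity required to apply the It\^{o} formula of Proposition \ref{rockner prop} to $\norm{\sy_t - \py_t}_U^2$, yielding
\begin{align*}
\norm{\sy_t - \py_t}_U^2 &= \int_0^t \Bigl[2\inner{\mathcal{A}(s,\sy_s) - \mathcal{A}(s,\py_s)}{\sy_s - \py_s}_{H^* \times H} \\
&\qquad + \sum_{i=1}^\infty \norm{\mathcal{G}_i(s,\sy_s) - \mathcal{G}_i(s,\py_s)}_U^2\Bigr] ds + M_t,
\end{align*}
where $M$ is the stochastic integral whose quadratic variation is controlled through (\ref{therealcauchy2}), and the bracketed time-integrand is controlled through (\ref{therealcauchy1}) by $c_s K_U(\sy_s,\py_s)[1+\norm{\sy_s}_H^2 + \norm{\py_s}_H^2]\norm{\sy_s-\py_s}_U^2$ (dropping the beneficial $-\gamma \norm{\sy_s - \py_s}_H^2$ contribution). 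Localising by stopping times at which $\sup_{r \leq \cdot} \norm{\sy_r}_U \vee \norm{\py_r}_U$ reaches a threshold $N$ bounds $K_U$ by a constant; the stochastic Gr\"{o}nwall Lemma \ref{gronny} applied with the random weight $\norm{\sy_s}_H^2 + \norm{\py_s}_H^2$, which is integrable by Definition \ref{definitionofweak}, then forces $\sy$ and $\py$ to coincide up to the stopping time, and sending $N \to \infty$ completes pathwise uniqueness.

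Existence for $\sy_0 \in L^\infty(\Omega;U)$ now follows from the existence of a martingale weak solution (Theorem \ref{theorem for martingale weak existence}) together with pathwise uniqueness via the standard Yamada--Watanabe principle. For general $\mathcal{F}_0$-measurable $\sy_0$, partition $\Omega$ into $\Omega_n := \{n - 1 \leq \norm{\sy_0}_U < n\} \in \mathcal{F}_0$, set $\sy^{(n)}_0 := \sy_0 \mathbbm{1}_{\Omega_n} \in L^\infty(\Omega;U)$, obtain a weak solution $\sy^{(n)}$ for each $n$ from the previous step, and paste $\sy := \sum_{n \in \N} \sy^{(n)} \mathbbm{1}_{\Omega_n}$. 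Since $\Omega_n \in \mathcal{F}_0$, multiplication by $\mathbbm{1}_{\Omega_n}$ passes through the stochastic integral by the local property of the integral; combined with the pointwise nature of the Bochner integral in time, this allows the identity (\ref{thespde}) for $\sy^{(n)}$ to be restricted to $\Omega_n$ and then reassembled into the same identity for $\sy$ with initial condition $\sy_0$, using that $\sy_0 = \sy_0^{(n)}$ on $\Omega_n$.

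The final task is to promote the weak continuity in $U$ inherited from the martingale weak solution to the strong continuity required by Definition \ref{definitionofweak}. Applying Proposition \ref{rockner prop} to $\norm{\sy_t}_U^2$ presents $t \mapsto \norm{\sy_t}_U^2$ as a continuous real-valued semimartingale, so in the Hilbert space $U$ weak continuity together with continuity of the norm upgrades to strong continuity. The pasting step is where I expect the main care to be needed: the construction crucially uses that the $\Omega_n$ are $\mathcal{F}_0$-measurable so that stochastic integration localises cleanly, and it is unavailable for probabilistically weak solutions, which is precisely the phenomenon flagged in the introduction and the reason uniqueness is indispensable even to produce existence for unbounded initial conditions.
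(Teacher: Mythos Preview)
Your approach is essentially the paper's: uniqueness via the energy identity for the difference and Assumption \ref{therealcauchy assumptions}, Yamada--Watanabe for bounded $\sy_0$, then the $\mathcal{F}_0$-measurable pasting over $\{k \leq \norm{\sy_0}_U < k+1\}$. Two small points of order and localisation deserve attention.

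First, the continuity upgrade (Assumption \ref{new assumptionzizzle 1} feeding Proposition \ref{rockner prop}) must precede the Yamada--Watanabe step, not follow it: the martingale weak solutions furnished by Theorem \ref{theorem for martingale weak existence} lie only in $C_w([0,T];U)$, whereas your pathwise uniqueness is stated for solutions already in $C([0,T];U)$. Without first promoting the martingale weak solution to $C([0,T];U)$ (this is the paper's Lemma \ref{new regularity for martingale weak}), the existence class and the uniqueness class do not coincide and Yamada--Watanabe does not apply cleanly.

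Second, your localising stopping time should also truncate $\int_0^\cdot (\norm{\sy_s}_H^2 + \norm{\py_s}_H^2)\, ds$, not just $\sup_r \norm{\cdot}_U$: Lemma \ref{gronny} requires (\ref{boundingronny}) with a \emph{deterministic} constant $c'$, and mere pathwise membership in $L^2([0,T];H)$ does not provide that uniformly in $\omega$. This is why the paper localises with the full $\norm{\cdot}_{UH,r}$ norm in Proposition \ref{first uniqueness prop}.
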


The rest of this section is devoted to the proof of Theorem \ref{theorem for weak existence}.

\subsection{Existence and Uniqueness of Weak Solutions} \label{exis uniqu}

We continue to work with $\sy_0 \in L^\infty\left(\Omega;U\right)$, and take $\tilde{\sy}$ to be a martingale weak solution of the equation (\ref{thespde}) as given in Theorem \ref{theorem for martingale weak existence}. Our approach to uniqueness is to consider the energy of the difference of two solutions, by applying Proposition \ref{rockner prop}. For this, improved regularity is needed. 

\begin{lemma} \label{new regularity for martingale weak}
    For $\tilde{\mathbbm{P}}-a.e.$ $\omega$, $\mathcal{A}\left(\cdot, \tilde{\sy}_{\cdot}(\omega)\right) \in L^2\left([0,T];H^*\right)$. Moreover, $\tilde{\sy}_{\cdot}(\omega) \in C\left([0,T];U\right)$.
\end{lemma}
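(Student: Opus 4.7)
The plan is to deduce the first claim directly from Assumption \ref{new assumptionzizzle 1} combined with the pathwise regularity $\tilde{\sy}_\cdot(\omega) \in L^\infty([0,T];U) \cap L^2([0,T];H)$ obtained in Proposition \ref{prop for regularity of limit}, and then to upgrade the already-established weak continuity in $U$ to strong continuity by invoking the It\^{o} formula of Proposition \ref{rockner prop} applied to the Gelfand triple $H \xhookrightarrow{} U \xhookrightarrow{} H^*$.

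For the first assertion, I would fix an $\omega$ in the full-measure set on which $\tilde{\sy}_\cdot(\omega) \in L^\infty([0,T];U) \cap L^2([0,T];H)$ and bound
$$\int_0^T \norm{\mathcal{A}(s,\tilde{\sy}_s(\omega))}_{H^*}^2\, ds \leq \int_0^T c_s\, K_U(\tilde{\sy}_s(\omega))\bigl[1 + \norm{\tilde{\sy}_s(\omega)}_H^2\bigr]\, ds$$
via (\ref{111zizzle}). Since $c_\cdot$ is bounded and $K_U(\tilde{\sy}_s(\omega))$ is essentially bounded in $s$ by the $L^\infty([0,T];U)$ norm of $\tilde{\sy}_\cdot(\omega)$, the right-hand side is controlled by a pathwise constant times $T + \int_0^T \norm{\tilde{\sy}_s(\omega)}_H^2\, ds$, which is finite.

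For the second assertion, the identity (\ref{newid1}) is an evolution equation in the triple $H \xhookrightarrow{} U \xhookrightarrow{} H^*$ with pivot $U$. The ingredients required to apply Proposition \ref{rockner prop} are the progressive measurability of $\tilde{\sy}$ in $H$ (given by Proposition \ref{prop for regularity of limit}), the regularity $\tilde{\sy}_\cdot \in L^2([0,T];H)$ pathwise $\tilde{\mathbbm{P}}$-a.s. (also from that proposition), the drift regularity $\mathcal{A}(\cdot,\tilde{\sy}_\cdot) \in L^2([0,T];H^*)$ $\tilde{\mathbbm{P}}$-a.s. (just obtained), and the noise regularity $\mathcal{G}(\cdot,\tilde{\sy}_\cdot) \in L^2([0,T];\mathscr{L}^2(\mathfrak{U};U))$ $\tilde{\mathbbm{P}}$-a.s., which follows by the same $L^\infty$/$L^2$ reasoning applied to (\ref{111}). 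Once these are checked, Proposition \ref{rockner prop} produces a version with $\tilde{\mathbbm{P}}$-a.s. continuous trajectories in $U$; to see that $\tilde{\sy}$ itself, rather than merely an indistinguishable version, enjoys strong continuity in $U$, I would combine the continuity of $t \mapsto \norm{\tilde{\sy}_t(\omega)}_U^2$ delivered by the associated It\^{o} formula with the weak $U$-continuity established in Proposition \ref{prop for first energy bar}, and appeal to the Radon--Riesz property of the Hilbert space $U$.

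The main obstacle is the bookkeeping around the Gelfand triple: one must check that Proposition \ref{rockner prop}, which in its classical form is stated for a triple $V \xhookrightarrow{} H \xhookrightarrow{} V^*$ with the stochastic integral valued in the pivot space, applies cleanly with $(H,U,H^*)$ in the roles of $(V,H,V^*)$, and that the integrability hypotheses hold in the pathwise sense needed to transfer the conclusion to $\tilde{\mathbbm{P}}$-a.e. $\omega$. Everything else reduces to the bounds collected in Assumption Sets 1 and 2 together with the regularity already proved for $\tilde{\sy}$.
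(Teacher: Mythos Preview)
Your proposal is correct and follows essentially the same approach as the paper: bound the drift in $L^2([0,T];H^*)$ via (\ref{111zizzle}) using the pathwise $L^\infty([0,T];U)\cap L^2([0,T];H)$ regularity, then invoke Proposition \ref{rockner prop} on the triple $H \xhookrightarrow{} U \xhookrightarrow{} H^*$. The only difference is that your Radon--Riesz detour is unnecessary, since Proposition \ref{rockner prop} concludes $\tilde{\sy}_\cdot(\omega) \in C([0,T];U)$ directly, not merely up to indistinguishability.
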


\begin{proof}
    This additional regularity comes from Assumption \ref{new assumptionzizzle 1}, (\ref{111zizzle}), as
    \begin{align*}
        \int_0^T\norm{\mathcal{A}\left(s, \tilde{\sy}_{s}(\omega)\right)}_{H^*}^2ds &\leq c\int_0^TK\left(\tilde{\sy}_{s}(\omega)\right)\left[1 +\norm{\tilde{\sy}_{s}(\omega)}_H^2\right]ds\\ &\leq c\sup_{r\in[0,T]}K\left(\tilde{\sy}_{r}(\omega)\right)\int_0^T\left[1 +\norm{\tilde{\sy}_{s}(\omega)}_H^2\right]ds\\ &< \infty
    \end{align*}
    using that $\tilde{\sy}_{\cdot}(\omega) \in L^\infty\left([0,T];U\right) \cap L^2\left([0,T];H\right)$ as a martingale weak solution. The continuity now follows as an application of Proposition \ref{rockner prop}.  
\end{proof}

\begin{proposition} \label{first uniqueness prop}
    Suppose that $\tilde{\py}$ is another martingale weak solution of (\ref{thespde}) with respect to the same filtered probability space $\left(\tilde{\Omega},\tilde{\mathcal{F}},(\tilde{\mathcal{F}}_t), \tilde{\mathbbm{P}}\right)$, cylindrical Brownian Motion $\tilde{\mathcal{W}}$ and initial condition $\tilde{\py}_0 = \tilde{\sy}_0$ $\tilde{\mathbbm{P}}-a.s.$. In addition assume that for $\tilde{\mathbbm{P}}-a.e.$ $\omega$, $\tilde{\py}_{\cdot}(\omega) \in C\left([0,T];U\right)$. Then
    $$ \tilde{\mathbbm{P}}\left(\left\{\omega \in \tilde{\Omega}: \tilde{\sy}_t(\omega) = \tilde{\py}_t(\omega) \quad \forall t\in[0,T]\right\}\right) = 1.$$
\end{proposition}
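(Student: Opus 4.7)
The plan is pathwise uniqueness via It\^{o}'s formula applied to the $U$-norm of the difference $\tilde{\boldsymbol{\eta}} := \tilde{\sy} - \tilde{\py}$, followed by a Gr\"{o}nwall-type argument. Subtracting the $H^*$ identities satisfied by the two solutions and using $\tilde{\sy}_0 = \tilde{\py}_0$ $\tilde{\mathbbm{P}}$-a.s., one has
\begin{equation*}
\tilde{\boldsymbol{\eta}}_t = \int_0^t \left[\mathcal{A}(s,\tilde{\sy}_s) - \mathcal{A}(s,\tilde{\py}_s)\right]ds + \int_0^t \left[\mathcal{G}(s,\tilde{\sy}_s) - \mathcal{G}(s,\tilde{\py}_s)\right]d\tilde{\mathcal{W}}_s
\end{equation*}
in $H^*$ for all $t \in [0,T]$, $\tilde{\mathbbm{P}}$-a.s. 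Both $\tilde{\sy}_{\cdot}(\omega)$ and $\tilde{\py}_{\cdot}(\omega)$ belong pathwise to $L^\infty([0,T];U) \cap L^2([0,T];H) \cap C([0,T];U)$, the continuity in $U$ coming from Lemma \ref{new regularity for martingale weak} for $\tilde{\sy}$ and from the hypothesis for $\tilde{\py}$. Assumption \ref{new assumptionzizzle 1} together with (\ref{111}) places the drift in $L^2([0,T];H^*)$ and the diffusion in $L^2([0,T];\mathscr{L}^2(\mathfrak{U};U))$ pathwise, which is precisely the regularity needed for Proposition \ref{rockner prop} applied with $\norm{\cdot}_U^2$.

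The It\^{o} formula produces
\begin{align*}
\norm{\tilde{\boldsymbol{\eta}}_t}_U^2 = {} & 2\int_0^t \left\langle \mathcal{A}(s,\tilde{\sy}_s) - \mathcal{A}(s,\tilde{\py}_s), \tilde{\boldsymbol{\eta}}_s \right\rangle_{H^*\times H} ds + \int_0^t \sum_{i=1}^\infty \norm{\mathcal{G}_i(s,\tilde{\sy}_s) - \mathcal{G}_i(s,\tilde{\py}_s)}_U^2 ds \\
& + 2\sum_{i=1}^\infty \int_0^t \left\langle \mathcal{G}_i(s,\tilde{\sy}_s) - \mathcal{G}_i(s,\tilde{\py}_s), \tilde{\boldsymbol{\eta}}_s \right\rangle_U dW^i_s,
\end{align*}
to which the bound (\ref{therealcauchy1}) is applied. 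Dropping the favourable $-\gamma\norm{\tilde{\boldsymbol{\eta}}}_H^2$ term yields
\begin{equation*}
\norm{\tilde{\boldsymbol{\eta}}_t}_U^2 \leq \int_0^t \Psi_s \norm{\tilde{\boldsymbol{\eta}}_s}_U^2 ds + M_t, \qquad \Psi_s := c_s K_U(\tilde{\sy}_s, \tilde{\py}_s)\left[1 + \norm{\tilde{\sy}_s}_H^2 + \norm{\tilde{\py}_s}_H^2\right],
\end{equation*}
with $M$ the residual local martingale, whose quadratic variation is controlled by (\ref{therealcauchy2}).

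From here I would conclude via the stochastic Gr\"{o}nwall lemma (Lemma \ref{gronny}): since $\tilde{\sy}, \tilde{\py} \in L^\infty([0,T];U)$ pathwise we have $K_U(\tilde{\sy}_s, \tilde{\py}_s)$ pathwise bounded, and $\norm{\tilde{\sy}}_H^2, \norm{\tilde{\py}}_H^2 \in L^1([0,T])$ pathwise, so $\Psi \in L^1([0,T])$ $\tilde{\mathbbm{P}}$-a.s.\ and the lemma forces $\norm{\tilde{\boldsymbol{\eta}}_t}_U = 0$ $\tilde{\mathbbm{P}}$-a.s.\ at each $t$. Equivalently, one may localize by
\begin{equation*}
\tau_R := \inf\left\{t \in [0,T] : \norm{\tilde{\sy}_t}_U^2 + \norm{\tilde{\py}_t}_U^2 + \int_0^t \left[\norm{\tilde{\sy}_s}_H^2 + \norm{\tilde{\py}_s}_H^2\right] ds \geq R\right\} \wedge T,
\end{equation*}
which increases $\tilde{\mathbbm{P}}$-a.s.\ to $T$ by the regularity above. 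On $[0,\tau_R]$ every relevant quantity is deterministically bounded in $R$, and (\ref{therealcauchy2}) together with the Burkholder-Davis-Gundy inequality makes the stopped stochastic integral a genuine square-integrable martingale, so expectation and ordinary Gr\"{o}nwall give $\tilde{\mathbbm{E}}\norm{\tilde{\boldsymbol{\eta}}_{t \wedge \tau_R}}_U^2 = 0$. Sending $R \to \infty$ yields $\tilde{\boldsymbol{\eta}}_t = 0$ $\tilde{\mathbbm{P}}$-a.s.\ for each $t$, and pathwise $U$-continuity of both solutions upgrades this to a null-set-independent-of-$t$ statement for all $t \in [0,T]$ simultaneously.

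The principal technical point is the absence of any a priori moment bound on $\Psi$ or on the solutions themselves beyond pathwise integrability, which prevents a direct expectation-level Gr\"{o}nwall argument and forces either the stochastic Gr\"{o}nwall device or the localization by $\tau_R$. This is also the specific reason Assumption \ref{therealcauchy assumptions} carries the fourth-order estimate (\ref{therealcauchy2}) alongside the monotonicity (\ref{therealcauchy1}): the quartic control on $\sum_i \langle \mathcal{G}_i(\tilde{\sy}) - \mathcal{G}_i(\tilde{\py}), \tilde{\boldsymbol{\eta}}\rangle_U^2$ is precisely what makes the stopped stochastic integral a true martingale and not merely a local one.
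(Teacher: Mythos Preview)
Your approach is essentially that of the paper: apply Proposition~\ref{rockner prop} to the difference, invoke Assumption~\ref{therealcauchy assumptions}, localize, run a Gr\"{o}nwall argument, and send $R\to\infty$. The skeleton is correct.

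There is, however, a technical point you have slightly miscast. You present the stochastic Gr\"{o}nwall lemma and the localization by $\tau_R$ as two \emph{alternative} routes, but in fact neither works alone; the paper uses them \emph{together}. Lemma~\ref{gronny} requires the uniform pathwise bound~(\ref{boundingronny}), i.e.\ a single constant $c'$ such that $\int_0^T\boldsymbol{\eta}_s\,ds\le c'$ for $\tilde{\mathbbm{P}}$-a.e.\ $\omega$. Pathwise membership $\Psi\in L^1([0,T])$ does not supply this, because there is no moment control on the solutions and the $L^1$-norm of $\Psi$ is a random variable with no assumed bound. Conversely, after localizing by $\tau_R$ you do obtain that uniform constant (depending on $R$), but ``ordinary Gr\"{o}nwall'' still does not close the loop at the expectation level: the coefficient $1+\norm{\tilde{\sy}^R_s}_H^2+\norm{\tilde{\py}^R_s}_H^2$ is bounded only in time-integral, not pointwise, so one cannot pass from $\mathbbm{E}\!\int_0^t\Psi_s\norm{\tilde{\boldsymbol{\eta}}_s}_U^2\,ds$ to $C\!\int_0^t\mathbbm{E}\sup_{r\le s}\norm{\tilde{\boldsymbol{\eta}}_r}_U^2\,ds$. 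This is precisely the situation Lemma~\ref{gronny} is designed for (random, time-integrable but not pointwise-bounded coefficient), and the paper accordingly sets up the estimate between arbitrary stopping times $\theta_j\le\theta_k$, applies Lemma~\ref{gronny} for each fixed $R$, and only then lets $R\to\infty$ via monotone convergence. With that adjustment your argument is the paper's.
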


    \begin{proof}
    We make our argument by considering the expectation of the difference of the solutions $\tilde{\sy},\tilde{\py}$, and to do so we need to manufacture an increased regularity through stopping times once more. To this end let's define the stopping times $(\alpha_R)$ by $$\alpha_R := T \wedge \inf \left\{r \geq 0: \norm{\tilde{\sy}}_{UH,r}^2 \geq R \right\} \wedge \inf \left\{r \geq 0: \norm{\tilde{\py}}_{UH,r}^2 \geq R \right\}$$ and subsequent processes
    $$\tilde{\sy}^R_{\cdot}:=\tilde{\sy}_{\cdot}\mathbbm{1}_{\cdot \leq \alpha_R}, \qquad \tilde{\py}^R_{\cdot}:=\tilde{\py}_{\cdot}\mathbbm{1}_{\cdot \leq \alpha_R}, \qquad \boldsymbol{\Pi} = \tilde{\sy}^R - \tilde{\py}^R.$$ Moreover the difference process satisfies
\begin{align*}
    \tilde{\sy}_{t\wedge \alpha_R} - \tilde{\py}_{t\wedge \alpha_R} = \int_0^t\mathbbm{1}_{s \leq \alpha_R} \left[\mathcal{A}\left(s,\tilde{\sy}^R_s\right) - \mathcal{A}\left(s,\tilde{\py}^R_s\right)\right] ds  + \int_0^t\mathbbm{1}_{s \leq \alpha_R} \left[\mathcal{G}\left(s,\tilde{\sy}^R_s\right) - \mathcal{G}\left(s,\tilde{\py}^R_s\right) \right]d\tilde{\mathcal{W}}_s
\end{align*}
and we can apply the Energy Equality of Proposition \ref{rockner prop} to see that
\begin{align*}
    \norm{\tilde{\sy}_{t\wedge \alpha_R} - \tilde{\py}_{t\wedge \alpha_R}}^2_U &= 2\int_0^t\mathbbm{1}_{s \leq \alpha_R}\inner{\mathcal{A}\left(s,\tilde{\sy}^R_s\right) - \mathcal{A}\left(s,\tilde{\py}^R_s\right)}{\boldsymbol{\Pi}_s}_{H^* \times H} ds\\ &+\int_0^t\mathbbm{1}_{s \leq \alpha_R}\sum_{i=1}^\infty\norm{\mathcal{G}_i\left(s,\tilde{\sy}^R_s\right) - \mathcal{G}_i\left(s,\tilde{\py}^R_s\right)}_U^2ds\\ &+ 2\int_0^t \inner{\mathcal{G}\left(s,\tilde{\sy}^R_s\right) - \mathcal{G}\left(s,\tilde{\py}^R_s\right)}{\boldsymbol{\Pi}_s}_{U}d\tilde{\mathcal{W}}_s.
\end{align*}
Motivated by the use of Lemma \ref{gronny}, we consider arbitrary stopping times $0 \leq \theta_j \leq \theta_k \leq T$ and substitute $\theta_j$ into the above, then subtract this from the identity for any $\theta_j \leq r \leq T$, to give that 
\begin{align*}
    \norm{\tilde{\sy}_{t\wedge \alpha_R} - \tilde{\py}_{t\wedge \alpha_R}}^2_U &= \norm{\tilde{\sy}_{\theta_j\wedge \alpha_R} - \tilde{\py}_{\theta_j\wedge \alpha_R}}^2_U + 2\int_{\theta_j}^t\mathbbm{1}_{s \leq \alpha_R}\inner{\mathcal{A}\left(s,\tilde{\sy}^R_s\right) - \mathcal{A}\left(s,\tilde{\py}^R_s\right)}{\boldsymbol{\Pi}_s}_{H^* \times H} ds\\ &+\int_{\theta_j}^t\mathbbm{1}_{s \leq \alpha_R}\sum_{i=1}^\infty\norm{\mathcal{G}_i\left(s,\tilde{\sy}^R_s\right) - \mathcal{G}_i\left(s,\tilde{\py}^R_s\right)}_U^2ds\\ &+ 2\int_{\theta_j}^t \inner{\mathcal{G}\left(s,\tilde{\sy}^R_s\right) - \mathcal{G}\left(s,\tilde{\py}^R_s\right)}{\boldsymbol{\Pi}_s}_{U}d\tilde{\mathcal{W}}_s.
\end{align*}
We now employ Assumption \ref{therealcauchy assumptions}, (\ref{therealcauchy1}), and combine a few steps into one. Firstly we use the boundedness coming from the stopping time $\alpha_R$ to control the $K_U$ term in (\ref{therealcauchy1}), much like we saw from (\ref{galerkinboundsatisfiedbystoppingtime2}); it should be noted that without the continuity of the processes in $U$, we could not guarantee such a control. Secondly, we simply ignore the helpful $\gamma$ term. All of this leads to the new inequality
\begin{align*}
    \norm{\tilde{\sy}_{r\wedge \alpha_R} - \tilde{\py}_{r\wedge \alpha_R}}^2_U &\leq \norm{\tilde{\sy}_{\theta_j\wedge \alpha_R} - \tilde{\py}_{\theta_j\wedge \alpha_R}}^2_U + c\int_{\theta_j}^r\left(1 + \norm{\tilde{\sy}^R_s}_H^2 + \norm{\tilde{\py}^R_s}_H^2\right)\norm{\boldsymbol{\Pi}_s}_U^2ds\\ &+ 2\int_{\theta_j}^t \inner{\mathcal{G}\left(s,\tilde{\sy}^R_s\right) - \mathcal{G}\left(s,\tilde{\py}^R_s\right)}{\boldsymbol{\Pi}_s}_{U}d\tilde{\mathcal{W}}_s.
\end{align*}
We now take the absolute value on the right hand side, followed by the supremum over $r \in [\theta_j,\theta_k]$, then the expectation and immediately apply the Burkholder-Davis-Gundy Inequality to achieve that
\begin{align*}
    \tilde{\mathbbm{E}}\left(\sup_{r\in[\theta_j,\theta_k]}\norm{\tilde{\sy}_{r\wedge \alpha_R} - \tilde{\py}_{r\wedge \alpha_R}}_U^2\right)& \leq \tilde{\mathbbm{E}}\left(\norm{\tilde{\sy}_{\theta_j\wedge \alpha_R} - \tilde{\py}_{\theta_j\wedge \alpha_R}}_U^2\right)\\ & + c\tilde{\mathbbm{E}}\int_{\theta_j}^{\theta_k}\left(1 + \norm{\tilde{\sy}^R_s}_H^2 + \norm{\tilde{\py}^R_s}_H^2\right)\norm{\boldsymbol{\Pi}_s}_U^2ds\\ &+ c\tilde{\mathbbm{E}}\left(\int_{\theta_j}^{\theta_k} \sum_{i=1}^\infty \inner{\mathcal{G}_i\left(s,\tilde{\sy}^R_s\right) - \mathcal{G}_i\left(s,\tilde{\py}^R_s\right) }{\boldsymbol{\Pi}_s}^2_Uds\right)^{\frac{1}{2}}.
\end{align*}
We now use the assumption (\ref{therealcauchy2}) and follow the same process as (\ref{the same process}) to obtain that
\begin{align*}
    &c\tilde{\mathbbm{E}}\left(\int_{\theta_j}^{\theta_k} \sum_{i=1}^\infty \inner{\mathcal{G}_i\left(s,\tilde{\sy}^R_s\right) - \mathcal{G}_i\left(s,\tilde{\py}^R_s\right) }{\boldsymbol{\Pi}_s}^2_Uds\right)^{\frac{1}{2}}\\ & \qquad \qquad \qquad \leq c\tilde{\mathbbm{E}}\left(\int_{\theta_j}^{\theta_k} \left(1 + \norm{\tilde{\sy}^R_s}_H^2 +\norm{\tilde{\py}^R_s}_H^2\right)\norm{\boldsymbol{\Pi}_s}_U^4 ds\right)^{\frac{1}{2}}\\&\qquad \qquad \qquad \leq \frac{1}{2}\tilde{\mathbbm{E}}\left(\sup_{r\in[\theta_j,\theta_k]}\norm{\boldsymbol{\Pi}_r}^2_U\right) + c\tilde{\mathbbm{E}}\int_{\theta_j}^{\theta_k} \left(1 + \norm{\tilde{\sy}^R_s}_H^2 +\norm{\tilde{\py}^R_s}_H^2\right)\norm{\boldsymbol{\Pi}_s}^2_Uds.
\end{align*}
We now use that $\norm{\boldsymbol{\Pi}_r}_U^2 \leq \norm{\tilde{\sy}_{r\wedge \alpha_R} - \tilde{\py}_{r\wedge \alpha_R}}_U^2$ and rearrange to give
\begin{align*}
    \tilde{\mathbbm{E}}\left(\sup_{r\in[\theta_j,\theta_k]}\norm{\tilde{\sy}_{r\wedge \alpha_R} - \tilde{\py}_{r\wedge \alpha_R}}_U^2\right) &\leq 2\tilde{\mathbbm{E}}\left(\norm{\tilde{\sy}_{\theta_j\wedge \alpha_R} - \tilde{\py}_{\theta_j\wedge \alpha_R}}_U^2\right)\\ &+ c\tilde{\mathbbm{E}}\int_{\theta_j}^{\theta_k}\left(1 + \norm{\tilde{\sy}^R_s}_H^2 + \norm{\tilde{\py}^R_s}_H^2\right)\norm{\tilde{\sy}_{s\wedge \alpha_R} - \tilde{\py}_{s\wedge \alpha_R}}_U^2ds.
\end{align*}
We can now apply Lemma \ref{gronny} to deduce that $$\tilde{\mathbbm{E}}\left(\sup_{r\in[0,T]}\norm{\tilde{\sy}_{r\wedge \alpha_R} - \tilde{\py}_{r\wedge \alpha_R}}^2_U\right) = 0$$ as of course $\tilde{\sy}_0 = \tilde{\py}_0$ $\tilde{\mathbbm{P}}-a.s.$. We note that $\left(\sup_{r\in[0,T]}\norm{\tilde{\sy}_{r\wedge \alpha_R} - \tilde{\py}_{r\wedge \alpha_R}}^2\right)$ is a monotone increasing sequence in $R$, hence we take the limit as $R \rightarrow \infty$ and apply the Monotone Convergence Theorem to obtain $$\tilde{\mathbbm{E}}\left(\sup_{r\in[0,T]}\norm{\tilde{\sy}_{r} - \tilde{\py}_{r}}^2_U\right) = 0$$ which gives the result.
\end{proof}

It is now immediate that Theorem \ref{theorem for weak existence} holds in this case of the bounded initial condition.

\begin{corollary}
    There exists a unique weak solution $\sy$ of the equation (\ref{thespde}).
\end{corollary}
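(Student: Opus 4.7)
The plan is to carry out a classical Yamada--Watanabe argument: martingale weak existence combined with pathwise uniqueness yields existence of a probabilistically strong solution on the original filtered probability space, and uniqueness in the sense of Definition \ref{definitionunique} then falls out immediately from a second application of the pathwise uniqueness already proven.

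More concretely, I would first observe that the martingale weak solution $\tilde{\sy}$ produced by Theorem \ref{theorem for martingale weak existence} lives on some auxiliary filtered probability space $\left(\tilde{\Omega},\tilde{\mathcal{F}},(\tilde{\mathcal{F}}_t),\tilde{\mathbbm{P}}\right)$ and, thanks to Lemma \ref{new regularity for martingale weak}, has $\tilde{\mathbbm{P}}$-a.s. paths in $C\left([0,T];U\right)\cap L^2\left([0,T];H\right)$; in particular it is a weak solution in the sense of Definition \ref{definitionofweak} on that auxiliary space. Then I would invoke the Yamada--Watanabe theorem for SPDEs (e.g.\ in the Kurtz or R\"{o}ckner--Schmuland--Zhang formulation), whose hypotheses are precisely (i) existence of a martingale solution for the prescribed initial law, and (ii) pathwise uniqueness among solutions with continuous paths in $U$ driven by the same cylindrical Brownian motion. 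Both are now at hand: (i) from Theorem \ref{theorem for martingale weak existence} together with Lemma \ref{new regularity for martingale weak}, and (ii) from Proposition \ref{first uniqueness prop}. The theorem then supplies a progressively measurable process $\sy$ on the original space $\left(\Omega,\mathcal{F},(\mathcal{F}_t),\mathbbm{P}\right)$, driven by the original cylindrical Brownian motion $\mathcal{W}$ and with initial value $\sy_0$, satisfying (\ref{thespde}) $\mathbbm{P}$-a.s.\ in $H^*$ for every $t \in [0,T]$ and with paths in $C\left([0,T];U\right)\cap L^2\left([0,T];H\right)$.

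For uniqueness in the sense of Definition \ref{definitionunique}, any other such solution $\py$ is in particular a martingale weak solution on $\left(\Omega,\mathcal{F},(\mathcal{F}_t),\mathbbm{P}\right)$ driven by $\mathcal{W}$ and sharing the initial datum $\sy_0$ with $\sy$; Proposition \ref{first uniqueness prop} applied to the pair $(\sy,\py)$ then delivers $\mathbbm{P}$-a.s.\ pathwise equality on $[0,T]$. I do not foresee any genuine obstacle here: the argument is a textbook application of Yamada--Watanabe once both inputs are secured, and the only care needed is to work with the $C\left([0,T];U\right)$ modification of the martingale weak solution so that Definition \ref{definitionofweak} is met, which Lemma \ref{new regularity for martingale weak} already provides. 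The only mildly delicate point worth double-checking is that the measurability conventions of Definition \ref{definitionofspacetimeweakmartingale} (see Remark \ref{markydoo}) are compatible with the functional setting in which Yamada--Watanabe is invoked; once this is verified, the corollary follows at once.
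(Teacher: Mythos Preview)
Your proposal is correct and follows essentially the same approach as the paper: the paper's proof is a one-line appeal to a classical Yamada--Watanabe result (citing R\"{o}ckner's work), combining the martingale weak existence of Theorem \ref{theorem for martingale weak existence} with the pathwise uniqueness of Proposition \ref{first uniqueness prop}. Your write-up simply fleshes out the ingredients that the paper leaves implicit, including the role of Lemma \ref{new regularity for martingale weak} in upgrading the martingale weak solution to one with continuous paths in $U$.
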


\begin{proof}
    This follows from a classical Yamada-Watanabe type result, proven rigorously in this setting in [\cite{rockner2008yamada}].
\end{proof}

To prove Theorem \ref{theorem for weak existence} it thus only remains to extend the result to an arbitrary $\mathcal{F}_0-$measurable $\sy_0: \Omega \rightarrow U$, which we now fix.

\begin{proof}[Proof of Theorem \ref{theorem for weak existence}:]
    We first show the existence of such a solution. The idea is as in [\cite{goodair2023existence}] Theorem 3.40 where we use the fact that for each $k \in \N \cup \{0\}$ there exists a weak solution $\sy^k$ of the equation (\ref{thespde}) for the bounded initial condition $\sy_0\mathbbm{1}_{k \leq \norm{\sy_0}_U < k+1}$. We argue that the process $\sy$ defined by $$\sy_t(\omega):= \sum_{k=1}^\infty \sy^k_t(\omega)\mathbbm{1}_{k \leq \norm{\sy_0(\omega)}_U < k+1}$$ is a weak solution. Appreciating that the infinite sum is merely formal and that for each $\omega$, $\sy(\omega):=\sy^k(\omega)$ for some $k$, then clearly $\sy$ inherits the pathwise regularity of the weak solutions $(\sy^k)$. As for the required identity, we introduce the more compact notation $$A_k:=\left\{\omega \in \Omega: k \leq \norm{\sy_0(\omega)}_U < k+1 \right\}$$ and as the $(A_k)$ partition $\Omega$, it is sufficient to show that 
\begin{align} \nonumber
     \mathbbm{1}_{A_k}\sy_t = \mathbbm{1}_{A_k}\sy_0  + \mathbbm{1}_{A_k}\int_0^{t}\mathcal{A}\left(s,\sy_s\right)ds  + \mathbbm{1}_{A_k}\int_0^{t} \mathcal{G}\left(s,\sy_s\right) d\mathcal{W}_s
\end{align}
$\mathbbm{P}-a.s.$ for all $t \in [0,T]$, or equivalently
  \begin{align} \nonumber
    \mathbbm{1}_{A_k}\sy^k_t = \mathbbm{1}_{A_k}\sy^k_0  + \mathbbm{1}_{A_k}\int_0^{t}\mathcal{A}\left(s,\sy^k_s\right)ds  + \mathbbm{1}_{A_k}\int_0^{t} \mathcal{G}\left(s,\sy_s\right) d\mathcal{W}_s.
\end{align}
We have to be more precise for the stochastic integral as we cannot simply take any random function through the integral, however $A_k$ is $\mathcal{F}_0-$measurable so it is justified in this case (see e.g. [\cite{goodair2022stochastic}] Proposition 1.6.14). Hence \begin{align*}
    \mathbbm{1}_{A_k}\int_0^{t}  \mathcal{G}\left(s,\sy_s\right) d\mathcal{W}_s = \int_0^{t} \mathbbm{1}_{A_k} \mathcal{G}\left(s,\sy_s\right) d\mathcal{W}_s = \int_0^{t} \mathbbm{1}_{A_k} \mathcal{G}\left(s,\sy^k_s\right) d\mathcal{W}_s =  \mathbbm{1}_{A_k}\int_0^{t} \mathcal{G}\left(s,\sy^k_s\right)d\mathcal{W}_s
\end{align*} 
so the identity that must be shown is
\begin{align} \nonumber
    \mathbbm{1}_{A_k}\sy^k_t = \mathbbm{1}_{A_k}\sy^k_0  + \mathbbm{1}_{A_k}\int_0^{t}\mathcal{A}\left(s,\sy^k_s\right)ds  + \mathbbm{1}_{A_k}\int_0^{t} \mathcal{G}\left(s,\sy_s^k\right) d\mathcal{W}_s.
\end{align}
This is granted from $\sy^k$ being a weak solution for the initial condition $\sy_0\mathbbm{1}_{A_k}$. To conclude the existence we only need to verify the progressive measurability, for which we understand $\sy$ as the pointwise almost everywhere limit of the sequence $\left(\sum_{k=1}^n \sy^k\mathbbm{1}_{k \leq \norm{\sy_0}_U < k+1} \right)$ over the product space $\Omega \times [0,t]$ equipped with the product sigma algebra $\mathcal{F}_t \times \mathcal{B}([0,t])$ in $H$. Each $\sy^k$ is progressively measurable hence so too is $\sy^k\mathbbm{1}_{k \leq \norm{\sy_0}_U < k+1}$ (as this indicator function is $\mathcal{F}_0-$measurable), thus measurable with respect to $\mathcal{F}_t \times \mathcal{B}([0,t])$, and the pointwise almost everywhere limit preserves the measurability which provides the result. This concludes the proof that $\sy$ is a weak solution of (\ref{thespde}), and one can show it is the unique such solution identically to Proposition \ref{first uniqueness prop}.
\end{proof}

\section{Strong Solutions} \label{section strong}

We first introduce the necessary extension of Assumption Sets 1 and 2 for the main result of this section.


\subsection{Assumption Set 3} \label{assumption set 3}

Recall the setup and notation of Subsection \ref{subs functional framework}. We now impose the existence of a new Banach Space $\bar{H}$ which is an extension of $H$, or precisely, $H \subseteq \bar{H} \subseteq U$ and for every $f \in \bar{H}$, $\norm{f}_{\bar{H}} = \norm{f}_H.$ In addition, $\mathcal{G}:[0,T] \times V \rightarrow \mathscr{L}^2\left(\mathfrak{U};\bar{H}\right)$ is assumed measurable. We also suppose that there exists a real valued sequence $(\mu_n)$ with $\mu_n \rightarrow \infty$ such that for any $f \in \bar{H}$, \begin{align}
     \label{mu2}
    \norm{(I - \mathcal{P}_n)f}_U \leq \frac{1}{\mu_n}\norm{f}_{\bar{H}}
\end{align}
where $I$ represents the identity operator in $U$. Furthermore we assume that there exists a $\gamma > 0$ such that for any $\varepsilon > 0$, there exists a $c_\cdot$, $K$ (dependent on $\varepsilon$) such that for any $\phi \in V$, $\phi^n \in V_n$ and $t \in [0,T]$:

\begin{assumption} \label{newfacilitator}
\begin{align}
 \label{111fac} \norm{\mathcal{A}(t,\phi)}_{U}^2 + \sum_{i=1}^\infty\norm{\mathcal{G}_i(t,\phi)}_{\bar{H}}^2 \leq c_t K_U(\phi)\left[1 + \norm{\phi}_H^4 + \norm{\phi}_V^2\right]
    \end{align}
\end{assumption}

\begin{assumption} \label{assumptions for uniform bounds2}
 \begin{align}
   \label{uniformboundsassumpt1}  2\inner{\mathcal{P}_n\mathcal{A}(t,\phi^n)}{\phi^n}_H + \sum_{i=1}^\infty\norm{\mathcal{P}_n\mathcal{G}_i(t,\phi^n)}_H^2 &\leq c_tK_U(\phi^n)\left[1 + \norm{\phi^n}_H^4\right] - \gamma\norm{\phi^n}_V^2,\\  \label{uniformboundsassumpt2}
    \sum_{i=1}^\infty \inner{\mathcal{P}_n\mathcal{G}_i(t,\phi^n)}{\phi^n}^2_H &\leq c_tK_U(\phi^n)\left[1 + \norm{\phi^n}_H^6\right] + \varepsilon \norm{\phi^n}_V^2.
\end{align}
\end{assumption}

We briefly comment on the purpose of each assumption:
\begin{itemize}
    \item The property (\ref{mu2}) is used to show the Cauchy result (Lemma \ref{lemma to use cauchy}).
    \item Assumption \ref{newfacilitator} ensures that the integrals in the definition of a strong solution (Definition \ref{definitionofstrong}) are well-defined. It also plays a role in the Cauchy result (Lemma \ref{lemma to use cauchy}).
    \item Assumption \ref{assumptions for uniform bounds2} is employed to demonstrate higher uniform estimates for the Galerkin System (Proposition \ref{theorem:uniformbounds}).
\end{itemize}

\subsection{Definitions and Results} \label{def and results strong sols}

We now state the definitions and main result for strong solutions.

\begin{definition} \label{definitionofstrong}
Let $\sy_0: \Omega \rightarrow H$ be $\mathcal{F}_0-$measurable. A process $\sy$ which is progressively measurable in $V$ and such that for $\mathbb{P}-a.e.$ $\omega$, $\sy_{\cdot}(\omega) \in L^{\infty}\left([0,T];H\right) \cap L^2\left([0,T];V\right)$, is said to be a strong solution of the equation (\ref{thespde}) if the identity (\ref{thespde}) holds $\mathbb{P}-a.s.$ in $U$ for all $t\in[0,T]$.
\end{definition}

Note that a strong solution necessarily has continuous paths in $U$, from the evolution equation satisfied in this space.

\begin{definition} \label{definitionunique2}
    A strong solution $\sy$ of the equation (\ref{thespde}) is said to be unique if for any other such solution $\py$, $$ \mathbb{P}\left(\left\{\omega \in \Omega: \sy_t(\omega) = \py_t(\omega) \quad \forall t \geq 0\right\}\right) = 1.$$
\end{definition}

\begin{theorem}\label{theorem for strong existence}
    Let Assumption Sets 1, 2 and 3 hold. For any given $\mathcal{F}_0-$measurable $\sy_0:\Omega \rightarrow H$, there exists a unique strong solution of the equation (\ref{thespde}). 
\end{theorem}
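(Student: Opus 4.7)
The plan is to upgrade the weak solution $\sy$ of Theorem \ref{theorem for weak existence} to a strong one by showing that it enjoys the additional regularity $\sy_{\cdot} \in L^\infty([0,T];H) \cap L^2([0,T];V)$ and that the identity (\ref{thespde}) is satisfied in $U$ rather than merely in $H^*$. Uniqueness is immediate from the weak uniqueness: any two strong solutions of (\ref{thespde}) are in particular weak solutions in the sense of Definition \ref{definitionofweak}, so Theorem \ref{theorem for weak existence} yields indistinguishability. For existence I will first treat the case $\sy_0 \in L^\infty(\Omega;H)$, then extend to an arbitrary $\mathcal{F}_0$-measurable $\sy_0:\Omega \to H$ by the same chopping and reassembly argument used at the end of the proof of Theorem \ref{theorem for weak existence}, applied now on the partition $A_k = \{k \leq \norm{\sy_0}_H < k+1\}$.

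The first step is to return to the Galerkin system (\ref{nthorderGalerkin}) and derive uniform estimates in the \emph{strong} energy norm, stopped at the first hitting times of the \emph{weak} energy norm. For $M \geq 1$ introduce $\tau^{M,T}_n$ as in (\ref{tauMtn}). Applying an It\^{o} formula to $\norm{\sy^n_t}_H^2$ (equipping $V_n$ now with the $H$ inner product), invoking Assumption \ref{assumptions for uniform bounds2}, the Burkholder--Davis--Gundy Inequality, and the same Young-type trick employed in (\ref{the same process}), one obtains
\[
\mathbbm{E}\left(\sup_{r \in [0,T \wedge \tau^{M,T}_n]}\norm{\sy^n_r}_H^2 + \gamma\int_0^{T \wedge \tau^{M,T}_n}\norm{\sy^n_s}_V^2 ds\right) \leq C_M\left[\mathbbm{E}\norm{\sy_0}_H^2 + 1\right],
\]
where $C_M$ depends on $M$ only via the factor $K_U$, which is bounded on the stopped interval because $\norm{\sy^n}^2_{UH,\cdot} \leq M + \norm{\sy^n_0}_U^2$ there. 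The nonlinear $\norm{\phi^n}_H^4$ term on the right of (\ref{uniformboundsassumpt1}) is absorbed via the Stochastic Gr\"{o}nwall Lemma \ref{gronny}.

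Next I will apply the extended Cauchy result Proposition \ref{amazing cauchy lemma}, which requires two ingredients for the stopped sequence $(\sy^n_{\cdot \wedge \tau^{M,T}_n})$. The Cauchy property in $L^2(\Omega;C([0,T];U))$ is obtained by applying Proposition \ref{rockner prop} to $\norm{\sy^n - \sy^m}_U^2$, estimating the drift and diffusion differences through Assumption \ref{therealcauchy assumptions} to produce a Gr\"{o}nwall-type closed inequality, and, crucially, using (\ref{mu2}) together with Assumption \ref{newfacilitator} to control the projection error $(I - \mathcal{P}_n)\mathcal{G}$ in $U$ by $\mu_n^{-1}$ times the $\bar H$-norm, which in turn is controlled by the $L^2([0,T];V) \cap L^\infty([0,T];H)$ estimate just obtained. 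The weak equicontinuity at all times, in the sense $\lim_{\delta \to 0}\sup_n \mathbbm{E}\vert\inner{\sy^n_{(\gamma_n+\delta)\wedge T} - \sy^n_{\gamma_n}}{\eta}_U\vert^2 = 0$, follows exactly as in Propositions \ref{prop for tightness} and \ref{prop for tightness two} by using Assumption \ref{newfacilitator} to bound $\mathcal{A}$ in $U$ and $\mathcal{G}$ in $\bar H \subset U$. I expect the Cauchy estimate to be the main technical obstacle, since it must absorb the unboundedness of $\mathcal{G}$ simultaneously through the $\bar H$ regularity of Assumption \ref{newfacilitator} and the approximation rate (\ref{mu2}), while the $K_U$ factors arising from Assumption \ref{therealcauchy assumptions} must be handled by the stopping.

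With Proposition \ref{amazing cauchy lemma} applied, for each $M \geq 1$ we obtain a limit process $\sy^{(M)}$ and, by the extension over Glatt-Holtz--Ziane, a limiting stopping time which is precisely the first hitting time $\tau^M$ of $\sy^{(M)}$ at level $M$ in the norm $\norm{\cdot}_{UH,\cdot}$; on $[0,\tau^M]$ the process $\sy^{(M)}$ has strong regularity and satisfies (\ref{thespde}) in $U$. Since $\sy^{(M)}$ is then in particular a weak solution up to $\tau^M$, weak uniqueness from Theorem \ref{theorem for weak existence} identifies it with the weak solution $\sy$ there. To conclude I send $M \to \infty$: by Chebyshev applied to the weak-solution a priori bound of the type (\ref{second result new }),
\[
\mathbbm{P}(\tau^M < T) \leq \frac{1}{M}\mathbbm{E}\norm{\sy}^2_{UH,T},
\]
and the right-hand side is finite because $\sy_0 \in L^\infty(\Omega;H) \subset L^\infty(\Omega;U)$. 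Hence $\tau^M \to T$ almost surely, so $\sy$ itself carries the strong regularity on the whole of $[0,T]$ and satisfies (\ref{thespde}) in $U$. This is exactly the advantage of Proposition \ref{amazing cauchy lemma}: the blow-up criterion lives in the weak energy norm, where the evolution equation certainly holds and where a moment bound is already available, sidestepping the harder question of directly controlling the strong energy of the limit. Continuity in $U$ is automatic from the evolution equation in $U$, and the extension to unbounded initial data is by the chopping argument as noted above.
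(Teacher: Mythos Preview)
Your overall architecture matches the paper's: reduce to $\sy_0\in L^\infty(\Omega;H)$, obtain the strong-norm Galerkin estimate up to the weak-norm hitting times $\tau^{M,T}_n$ (this is Proposition \ref{theorem:uniformbounds}), verify the two hypotheses of Proposition \ref{amazing cauchy lemma}, identify the limit with the weak solution via weak uniqueness, extract strong regularity up to $\tau^{R,T}$, and then let $R\to\infty$. The Cauchy step is correctly diagnosed, including the role of (\ref{mu2}) and Assumption \ref{newfacilitator} in controlling the projection error; this is Lemma \ref{lemma to use cauchy} and Proposition \ref{theorem:cauchygalerkin}.

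There is however a real gap in your equicontinuity step. The condition you state, $\lim_{\delta\to 0}\sup_n \mathbbm{E}\vert\langle \sy^n_{(\gamma_n+\delta)\wedge T}-\sy^n_{\gamma_n},\eta\rangle_U\vert^2=0$, is not the hypothesis (\ref{supposition 2}) of Proposition \ref{amazing cauchy lemma}. What is needed is equicontinuity of the scalar map $t\mapsto\norm{\sy^n}_{UH,t}^2$, namely
\[
\lim_{j\to\infty}\sup_n\mathbbm{E}\left(\norm{\sy^n}_{UH,(\gamma+\delta_j)\wedge\tau^{M,T}_n}^2-\norm{\sy^n}_{UH,\gamma\wedge\tau^{M,T}_n}^2\right)=0,
\]
which contains the increment $\int_{\gamma}^{\gamma+\delta_j}\norm{\sy^n_s}_H^2\,ds$. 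The method of Propositions \ref{prop for tightness} and \ref{prop for tightness two} together with Assumption \ref{newfacilitator} does not bound this; you need instead the It\^o formula for $\norm{\sy^n}_U^2$ with Assumption \ref{first assumption for uniform bounds}, and, decisively, the strong Galerkin estimate you proved in your first step to control $\sup_r\norm{\sy^n_r}_H$ and hence $\mathbbm{E}\int_\gamma^{\gamma+\delta_j}\norm{\sy^n_s}_H^2\,ds$ (see Lemma \ref{probably unnecessary lemma}). So the missing idea is that the $HV$-uniform bound feeds back into the equicontinuity of the $UH$-norm.

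A smaller point: Proposition \ref{amazing cauchy lemma} does not give that $\tau^{M,T}_\infty$ equals the first hitting time of the limit at level $M$. Its output is that for any $R$ one can choose $M=M(R)$ with $\tau^{R,T}\leq\tau^{M,T}_\infty$, where $\tau^{R,T}$ is the first hitting time of $\sy$ at level $R$. The paper then concludes by observing that, pathwise, the weak solution has $\norm{\sy}_{UH,T}^2<\infty$, so $\tau^{R,T}(\omega)=T$ for $R$ large depending on $\omega$; no Chebyshev or moment bound on $\sy$ is required. Your Chebyshev route would also work for bounded initial data, but it needs an extra moment estimate on the limit that you have not stated.
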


Our approach to proving this comes from the following lemma.

\begin{lemma} \label{lemma for regular weak is strong}
    Let $\py_0 :\Omega \rightarrow H$ be $\mathcal{F}_0-$measurable and suppose that $\py$ is a unique weak solution of the equation (\ref{thespde}) such that $\py$ is progressively measurable in $V$, and for $\mathbb{P}-a.e.$ $\omega$, $\py_{\cdot}(\omega) \in L^{\infty}\left([0,T];H\right) \cap L^2\left([0,T];V\right)$. Then $\py$ is a unique strong solution of the equation (\ref{thespde}).
\end{lemma}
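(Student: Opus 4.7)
The plan is to show that the weak solution $\py$, when endowed with the extra regularity stated in the hypothesis, automatically satisfies the evolution equation in the space $U$ rather than merely in $H^*$, and then to derive uniqueness by pulling it back through the uniqueness of weak solutions supplied by Theorem \ref{theorem for weak existence}.

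First I would check that, under our hypothesis, every term appearing in the identity (\ref{thespde}) is a well-defined object in $U$ (rather than just in $H^*$). Since $\py_\cdot(\omega)\in L^\infty([0,T];H)\cap L^2([0,T];V)$ for $\mathbbm{P}$-a.e.\ $\omega$, Assumption \ref{newfacilitator}, (\ref{111fac}), yields
\begin{align*}
\int_0^T\norm{\mathcal{A}(s,\py_s(\omega))}_U^2\,ds &\leq \sup_{r\in[0,T]}c_r K_U(\py_r(\omega))\int_0^T\left[1+\norm{\py_s(\omega)}_H^4+\norm{\py_s(\omega)}_V^2\right]ds<\infty,
\end{align*}
so the drift integral is a well-defined Bochner integral in $U$. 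The same assumption bounds $\sum_i\norm{\mathcal{G}_i(s,\py_s)}_{\bar{H}}^2$, which, together with the progressive measurability of $\py$ in $V$, makes $\int_0^t\mathcal{G}(s,\py_s)\,d\mathcal{W}_s$ a well-defined stochastic integral in $\bar{H}\subseteq U$. Moreover $\py_0\in H\subseteq U$ and, as $\py$ is a weak solution, $\py_t\in U$ as well.

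Next I would invoke the Gelfand triple construction of Subsection \ref{subs functional framework}, wherein the embedding $U\hookrightarrow H^*$ is an injective continuous linear map (so that $\inner{\cdot}{\cdot}_{H^*\times H}$ agrees with $\inner{\cdot}{\cdot}_U$ on $U\times H$). Since $\py$ is a weak solution, the identity (\ref{thespde}) holds $\mathbbm{P}$-a.s.\ in $H^*$ for all $t\in[0,T]$, and by the previous step both sides of the identity possess $U$-valued representatives that map to the corresponding $H^*$-valued objects under the embedding. Injectivity of $U\hookrightarrow H^*$ then forces equality in $U$, which is precisely the identity required for a strong solution. Continuity in $U$ is automatic from continuity of the Bochner integral and the (local) martingale property of the stochastic integral in $U$, so the path regularity $L^\infty([0,T];H)\cap L^2([0,T];V)$ combined with the evolution equation in $U$ delivers the entirety of Definition \ref{definitionofstrong}.

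For uniqueness, the observation is that any strong solution is automatically a weak solution. Indeed, if $\boldsymbol{\Xi}$ is any strong solution with the same initial condition, then $\boldsymbol{\Xi}_0\in H\subseteq U$, $\boldsymbol{\Xi}$ is progressively measurable in $H$ (since $V\hookrightarrow H$), the path regularity $L^2([0,T];V)\subseteq L^2([0,T];H)$ and continuity in $U$ yield membership of $C([0,T];U)\cap L^2([0,T];H)$, and applying the embedding $U\hookrightarrow H^*$ to the evolution equation in $U$ gives the corresponding identity in $H^*$. Hence $\boldsymbol{\Xi}$ and $\py$ are both weak solutions with initial condition $\py_0$, so the uniqueness clause of Definition \ref{definitionunique} forces $\boldsymbol{\Xi}=\py$ indistinguishably. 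No significant obstacle is expected; the argument is structural bookkeeping between the $U$, $H^*$ and Gelfand-triple identifications, with Assumption \ref{newfacilitator} ensuring that the drift and diffusion terms promote from $H^*$-valued to $U$-valued objects.
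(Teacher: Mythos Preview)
Your proposal is correct and follows essentially the same approach as the paper: use Assumption \ref{newfacilitator} to upgrade the drift integral to a $U$-valued Bochner integral, then invoke injectivity of $U\hookrightarrow H^*$ to promote the identity from $H^*$ to $U$, and obtain uniqueness by observing that any strong solution is automatically a weak solution. The paper is slightly more economical, remarking that the drift is the \emph{only} term not already known to lie in $U$, but the substance is identical.
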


\begin{proof}
    We first address the existence. It only needs to be justified that $\py$ satisfies the required identity (\ref{thespde}) in $U$. By definition of the weak solution, this identity is satisfied in $H^*$ and in fact the only term failing to belong to $U$ is $\int_0^t\mathcal{A}(s,\py_s)ds$, $\mathbbm{P}-a.s.$. However, the additional regularity in $\py$ and Assumption \ref{newfacilitator} ensure that $\mathcal{A}(\cdot,\py_{\cdot})$ belongs to $L^2\left([0,T];U\right)$ $\mathbbm{P}-a.s.$; see, for example, [\cite{goodair2022stochastic}] Subsection 2.2, which justifies that the terms are well defined. This concludes the fact that $\py$ is a strong solution. For uniqueness, suppose that $\sy$ is another strong solution. Then in particular $\sy$ is a weak solution, from which uniqueness of weak solutions gives the result.
\end{proof}

The rest of this section is dedicated to the proof of Theorem \ref{theorem for strong existence}.

\subsection{An Improved Uniform Estimate for the Galerkin System} \label{subbie improved uniform estimate}

We first assume that \begin{equation}\label{first initial condition bound in H}
    \sy_0 \in L^\infty\left(\Omega;H\right)
\end{equation}
similarly to (\ref{boundedinitialcondition}). In order to match the regularity of a strong solution, better estimates are required for the Galerkin System introduced in Subsection \ref{subbiegalerkin}. We recall the notation (\ref{tauMtn}), (\ref{truncation notation}), for the unique strong solution $\sy^n$ of the equation (\ref{nthorderGalerkin}), as well as introducing the corresponding notation to (\ref{new norm}) for the higher norm: for a function $\py \in C([0,t];H) \cap L^2([0,t];V)$ we define the norm \begin{align} \label{new norm2}
    \norm{\py}^2_{HV,t}:= \sup_{r \in [0,t]}\norm{\py_{r}}^2_H + \int_0^t\norm{\py_{r}}^2_Vdr
\end{align}
making explicit the dependence on the time $t$. 

\begin{proposition} \label{theorem:uniformbounds}
There exists a constant $C$, dependent on $M$ but independent of $n$, such that \begin{equation} \label{firstresultofuniformbounds}
    \mathbbm{E}\norm{\sy^n}_{HV,\tau^{M,T+1}_{n}}^2\leq C\left[\mathbbm{E}\left(\norm{\sy^n_{0}}_H^2\right) + 1\right].
\end{equation}

\end{proposition}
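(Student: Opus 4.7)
The approach mirrors that of Proposition \ref{prop for first energy} but lifted one level of regularity, with $H$ taking the role of $U$ and $V$ the role of $H$. Equipping $V_n$ with the $H$ inner product, I would apply the It\^{o} formula (Proposition \ref{rockner prop}) to $\norm{\sy^n_{\cdot}}_H^2$ stopped at $\tau^{M,T+1}_n$, incorporating the indicator $\mathbbm{1}_{s \leq \tau^{M,T+1}_n}$ into the coefficients exactly as in (\ref{nownonwnumb}). Assumption \ref{assumptions for uniform bounds2}, (\ref{uniformboundsassumpt1}), together with the control $K_U(\check{\sy}^n_s) \leq c_M$ afforded by the stopping time (in the spirit of (\ref{galerkinboundsatisfiedbystoppingtime2})), yields
\begin{align*}
    \norm{\sy^n_{r \wedge \tau^{M,T+1}_n}}_H^2 \leq \norm{\sy^n_0}_H^2 + c_M\int_0^{r}\bigl[1 + \norm{\check{\sy}^n_s}_H^4\bigr]\mathbbm{1}_{s \leq \tau^{M,T+1}_n}ds - \gamma\int_0^{r}\norm{\check{\sy}^n_s}_V^2 ds + \mathcal{M}_r,
\end{align*}
with $\mathcal{M}_r$ denoting the stochastic integral term.

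Taking the supremum over $r \in [\theta_j,\theta_k]$ for arbitrary stopping times $0 \leq \theta_j \leq \theta_k \leq T$, then expectation, and applying the Burkholder--Davis--Gundy Inequality to $\mathcal{M}$, the resulting square-root term is treated via (\ref{uniformboundsassumpt2}) and split into two contributions. The $\norm{\check{\sy}^n_s}_H^6$ contribution is handled exactly as in (\ref{the same process}): rewrite as $\sup_r \norm{\check{\sy}^n_r}_H^2 \cdot \int \norm{\check{\sy}^n_s}_H^4 ds$ and apply Young's Inequality to absorb $\tfrac{1}{2}\mathbbm{E}\sup_r \norm{\check{\sy}^n_r}_H^2$ on the left. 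The $\varepsilon \norm{\check{\sy}^n_s}_V^2$ contribution, for $\varepsilon$ sufficiently small, is absorbed into the coercive $\gamma\int \norm{\check{\sy}^n_s}_V^2 ds$ on the left. Bounding $\mathbbm{E}\norm{\sy^n_0}_H^2$ through (\ref{uniform bounds of projection}) and (\ref{first initial condition bound in H}) and rearranging produces
\begin{align*}
    \mathbbm{E}\sup_{r \in [\theta_j,\theta_k]}\norm{\sy^n_{r \wedge \tau^{M,T+1}_n}}_H^2 + \mathbbm{E}\int_{\theta_j}^{\theta_k}\norm{\check{\sy}^n_s}_V^2 ds \leq C + 2\mathbbm{E}\norm{\sy^n_{\theta_j \wedge \tau^{M,T+1}_n}}_H^2 + c_M\mathbbm{E}\int_{\theta_j}^{\theta_k} \alpha_s \norm{\check{\sy}^n_s}_H^2 ds,
\end{align*}
where $\alpha_s := \norm{\check{\sy}^n_s}_H^2 \mathbbm{1}_{s \leq \tau^{M,T+1}_n}$.

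The closing step is to apply the stochastic Gr\"{o}nwall Lemma \ref{gronny} to the above, interpreting $\alpha_s$ as the random integrating factor. The definition of $\tau^{M,T+1}_n$ and (\ref{norm equivalence truncation}) grant the almost sure bound $\int_0^T \alpha_s ds \leq M + \norm{\sy_0}_{L^\infty(\Omega;U)}^2$, which is exactly the hypothesis required by Lemma \ref{gronny}. The conclusion is a finite bound on $\mathbbm{E}\sup_{r \leq \tau^{M,T+1}_n}\norm{\sy^n_r}_H^2$, and feeding this bound back into the displayed inequality with $\theta_j = 0, \theta_k = T$ (using again the a.s.\ bound on $\int \alpha_s ds$ to control the right-hand integral) delivers the corresponding bound on $\mathbbm{E}\int_0^{\tau^{M,T+1}_n}\norm{\sy^n_s}_V^2 ds$, and hence the full $HV$-norm estimate claimed in (\ref{firstresultofuniformbounds}).

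The key obstacle is the super-linear $\norm{\sy^n}_H^4$ growth tolerated by (\ref{uniformboundsassumpt1}), which forbids the direct deterministic Gr\"{o}nwall argument used in Proposition \ref{prop for first energy}. This is precisely why $\tau^{M,T+1}_n$ was constructed to control the full $UH$-norm rather than just the $U$-norm: the deterministic almost sure bound on $\int_0^{T \wedge \tau^{M,T+1}_n}\norm{\sy^n_s}_H^2 ds$ is exactly what activates Lemma \ref{gronny}, and correspondingly this is what forces the constant $C$ in (\ref{firstresultofuniformbounds}) to depend on $M$.
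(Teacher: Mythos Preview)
Your proposal is correct and follows essentially the same route as the paper: It\^{o} formula in the $H$ inner product, Assumption~\ref{assumptions for uniform bounds2} with the $K_U$ bound from the stopping time, BDG, absorption of the $\varepsilon\norm{\cdot}_V^2$ contribution into the coercive term, and then the stochastic Gr\"{o}nwall Lemma~\ref{gronny} with $\boldsymbol{\eta}_s = \norm{\check{\sy}^n_s}_H^2$. Two minor points: after BDG the $\varepsilon$-term appears under a square root, so you need a Young's inequality step $\bigl(\int\norm{\check{\sy}^n_s}_V^2\,ds\bigr)^{1/2} \leq \tfrac{1}{2}\bigl(1 + \int\norm{\check{\sy}^n_s}_V^2\,ds\bigr)$ before absorbing; and Lemma~\ref{gronny} already delivers the $\int\norm{\check{\sy}^n_s}_V^2\,ds$ bound directly, so the feed-back step at the end is unnecessary.
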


\begin{proof}
By equipping $V_n$ with the $H$ inner product, exactly as we saw for (\ref{nownonwnumb}), we obtain the identity
\begin{align}\nonumber\norm{\sy^{n,M}_{r}}_H^2 = \norm{\sy^n_0}_H^2 &+ 2\int_0^{r}\inner{\mathcal{P}_n\mathcal{A}(s,\sy^n_s)}{\sy^n_s}_H\mathbbm{1}_{s \leq \tau^{M,T+1}_n}ds  + \int_0^{r}\sum_{i=1}^\infty\norm{\mathcal{P}_n\mathcal{G}_i(s,\sy^n_s)}_H^2\mathbbm{1}_{s \leq \tau^{M,T+1}_n}ds\\ & + 2\sum_{i=1}^\infty\int_0^{r}\inner{\mathcal{P}_n\mathcal{G}_i(s,\check{\sy}^n_s)}{\check{\sy}^n_s}_HdW^i_s.\nonumber
\end{align}
$\mathbbm{P}-a.s.$ in $\R$ for all $r\in[0,T]$.
In the direction of Lemma \ref{gronny}, similarly to Proposition \ref{first uniqueness prop}, let $0 \leq \theta_j < \theta_k \leq t$ be two arbitrary stopping times. By substituting in $\theta_j$ to the above, and then subtracting this from the identity for any $\theta_j \leq r \leq t$ $\mathbbm{P}-a.s.$, then we also have the equality
\begin{align}\nonumber\norm{\sy^{n,M}_{r}}_H^2 = \norm{\sy^{n,M}_{\theta_j}}_H^2 &+ 2\int_{\theta_j}^{r}\left(\inner{\mathcal{P}_n\mathcal{A}(s,\sy^n_s)}{\sy^n_s}_H + \sum_{i=1}^\infty\norm{\mathcal{P}_n\mathcal{G}_i(s,\sy^n_s)}_H^2\right)\mathbbm{1}_{s \leq \tau^{M,T+1}_n}ds\\ &+ 2\sum_{i=1}^\infty\int_{\theta_j}^{r}\inner{\mathcal{P}_n\mathcal{G}_i(s,\check{\sy}^n_s)}{\check{\sy}^n_s}_HdW^i_s\nonumber
\end{align}
$\mathbbm{P}-a.s.$. Invoking Assumption \ref{assumptions for uniform bounds2}, (\ref{uniformboundsassumpt1}), and immediately applying (\ref{galerkinboundsatisfiedbystoppingtime2}), we deduce the inequality
\begin{align} \nonumber \norm{\sy^{n,M}_{r}}_H^2 \leq \norm{\sy^{n,M}_{\theta_j}}_H^2 &+ c\int_{\theta_j}^{r} 1 + \norm{\check{\sy}^n_s}_H^4 ds  -\gamma\int_{\theta_j}^r \norm{\check{\sy}^n_s}_V^2 ds\\  & \qquad \qquad \qquad \qquad + 2\sum_{i=1}^\infty\int_{\theta_j}^{r}\inner{\mathcal{P}_n\mathcal{G}_i(s,\check{\sy}^n_s)}{\check{\sy}^n_s}_HdW^i_s \label{continuity checky}
\end{align}
having now assimilated the indicator function through the norms into the $\check{\sy}^n$, and noting that $c$ again depends on $M$ (and the initial condition). We now take the absolute value of the stochastic integral, followed by the supremum over $r \in [\theta_j,\theta_k]$, then the expectation and immediately apply the Burkholder-Davis-Gundy Inequality to see that  
\begin{align*}\mathbbm{E}\left(\sup_{r\in [\theta_j,\theta_k]}\norm{\sy^{n,M}_{r}}_H^2\right) +  \gamma\mathbbm{E}\int_{\theta_j}^{\theta_k}\norm{\check{\sy}^n_s}_V^2ds &\leq 2\mathbbm{E}\left(\norm{\sy^{n,M}_{\theta_j}}_H^2\right) + c\mathbbm{E}\int_{\theta_j}^{\theta_k}1 + \norm{\check{\sy}^n_s}_H^4 ds\\ &+ c\mathbbm{E}\left(\int_{\theta_j}^{\theta_k}\sum_{i=1}^\infty\inner{\mathcal{P}_n\mathcal{G}_i(s,\check{\sy}^n_s)}{\check{\sy}^n_s}_H^2ds\right)^{\frac{1}{2}}.
\end{align*}
Now is the time to use the assumed (\ref{uniformboundsassumpt2}), which gives us that for any choice of $\varepsilon > 0$, 
\begin{align}\nonumber \mathbbm{E}\left(\sup_{r\in [\theta_j,\theta_k]}\norm{\sy^{n,M}_{r}}_H^2\right) &+  \gamma\mathbbm{E}\int_{\theta_j}^{\theta_k}\norm{\check{\sy}^n_s}_V^2ds \leq 2\mathbbm{E}\left(\norm{\sy^{n,M}_{\theta_j}}_H^2\right) + c\mathbbm{E}\int_{\theta_j}^{\theta_k}1 + \norm{\check{\sy}^n_s}_H^4 ds\\ &+ c_{\varepsilon}\mathbbm{E}\left(\int_{\theta_j}^{\theta_k}1 + \norm{\check{\sy}^n_s}_H^6 ds\right)^{\frac{1}{2}} + \varepsilon\mathbbm{E}\left(\int_{\theta_j}^{\theta_k} \norm{\check{\sy}^n_s}_V^2 ds\right)^{\frac{1}{2}} \label{first labelled aligny}
\end{align}
having used the simple property that $(a+b)^{1/2} \leq a^{1/2} + b^{1/2}$. We now apply Young's Inequality to see that 
\begin{equation}\label{youngers}\left(\int_{\theta_j}^{\theta_k} \norm{\check{\sy}^n_s}_V^2 ds\right)^{\frac{1}{2}} \leq \frac{1}{2}\left[\int_{\theta_j}^{\theta_k} \norm{\check{\sy}^n_s}_V^2 ds + 1\right]\end{equation} which can be plugged back into (\ref{first labelled aligny}) for the choice $\varepsilon:=\gamma$ to see that 
\begin{align}\nonumber \mathbbm{E}\left(\sup_{r\in [\theta_j,\theta_k]}\norm{\sy^{n,M}_{r}}_H^2\right) +  \frac{\gamma}{2}\mathbbm{E}\int_{\theta_j}^{\theta_k}\norm{\check{\sy}^n_s}_V^2ds &\leq 2\mathbbm{E}\left(\norm{\sy^{n,M}_{\theta_j}}_H^2\right) + c\mathbbm{E}\int_{\theta_j}^{\theta_k}1 + \norm{\check{\sy}^n_s}_H^4 ds\\ &+ c\mathbbm{E}\left(\int_{\theta_j}^{\theta_k}1 + \norm{\check{\sy}^n_s}_H^6 ds\right)^{\frac{1}{2}} \nonumber
\end{align}
where $c$ is dependent on $\gamma$, which is not meaningful. Taking out the constant integrals as a constant for simplicity, we reach the inequality 
\begin{align}\nonumber \mathbbm{E}\left(\sup_{r\in [\theta_j,\theta_k]}\norm{\sy^{n,M}_{r}}_H^2\right) +  \frac{\gamma}{2}\mathbbm{E}\int_{\theta_j}^{\theta_k}\norm{\check{\sy}^n_s}_V^2ds &\leq 2\mathbbm{E}\left(\norm{\sy^{n,M}_{\theta_j}}_H^2\right) + c + c\mathbbm{E}\int_{\theta_j}^{\theta_k}\norm{\check{\sy}^n_s}_H^4 ds\\ &+ c\mathbbm{E}\left(\int_{\theta_j}^{\theta_k} \norm{\check{\sy}^n_s}_H^2\norm{\check{\sy}^n_s}_H^4 ds\right)^{\frac{1}{2}}. \nonumber
\end{align}
We handle the last term just as we did in (\ref{the same process}), also using that $\norm{\check{\sy}^n_s}_H^2 \leq \norm{\sy^{n,M}_s}_H^2$, to obtain
\begin{equation}\label{prophecy}
    \mathbbm{E}\left(\sup_{r\in [\theta_j,\theta_k]}\norm{\sy^{n,M}_{r}}_H^2\right) +  \mathbbm{E}\int_{\theta_j}^{\theta_k}\norm{\check{\sy}^n_s}_V^2ds \leq c\mathbbm{E}\left(\norm{\sy^{n,M}_{\theta_j}}_H^2\right) + c + c\mathbbm{E}\int_{\theta_j}^{\theta_k}\norm{\check{\sy}^n_s}_H^4 ds.
\end{equation}
where we have also scaled $\gamma$ out of the inequality, through a depedence in $c$. Now we may apply the Stochastic Gr\"{o}nwall lemma, Lemma \ref{gronny}, for the processes $$\boldsymbol{\phi} = \norm{\check{\sy}^n}_H^2, \qquad \boldsymbol{\psi}= \norm{\check{\sy}^n}_V^2, \qquad \boldsymbol{\eta}= \norm{\check{\sy}^n}_H^2$$ noting that the bound (\ref{boundingronny}) owes to $\tau^{M,T+1}_n$, (\ref{tauMtn}). The application of this result conclude the proof.

\end{proof}

\subsection{The Cauchy Property} \label{subbie cauchy}

Towards an application of Proposition \ref{amazing cauchy lemma} for the Galerkin System and $\tau^{M,T}_n$ as in (\ref{tauMtn}) and used throughout the paper, we verify (\ref{supposition 1}) in this subsection.

\begin{lemma} \label{lemma to use cauchy}
For arbitrary $m<n$, $\phi^n \in V_n, \psi^m \in V_m$, define
\begin{align*}
    A &:= 2\inner{\mathcal{P}_n\mathcal{A}(s,\phi^n) - \mathcal{P}_m\mathcal{A}(s,\psi^m)}{\phi^n - \psi^m}_U + \sum_{i=1}^\infty\norm{\mathcal{P}_n\mathcal{G}_i(s,\phi^n) - \mathcal{P}_m\mathcal{G}_i(s,\psi^m)}_U^2\\
    B &:= \sum_{i=1}^\infty \inner{\mathcal{P}_n\mathcal{G}_i(s,\phi^n) - \mathcal{P}_m\mathcal{G}_i(s,\psi^m)}{\phi^n-\psi^m}^2_U.
\end{align*}
Then there exists a sequence $(\lambda_m)$ with $\lambda_m \rightarrow \infty$ such that
\begin{align}
\nonumber A &\leq c_{s}\left[K_U(\phi^n,\psi^m) + \norm{\phi^n}_H^2 + \norm{\psi^m}_H^2\right] \norm{\phi^n-\psi^m}_U^2 - \frac{\gamma}{2}\norm{\phi^n-\psi^m}_H^2\\ \label{bound of A} & \qquad \qquad \qquad \qquad \qquad + \frac{c_s}{\lambda_m}K_U(\phi^n,\psi^m)\left[1 + \norm{\phi^n}_H^4 +\norm{\psi^m}_H^4 +  \norm{\phi^n}_V^2 + \norm{\psi^m}_V^2\right] ;\\ \nonumber
  B &\leq c_{s} \left[K_U(\phi^n,\psi^m) + \norm{\phi^n}_H^2 + \norm{\psi^m}_H^2\right]\norm{\phi^n-\psi^m}_U^4\\ & \qquad \qquad \qquad \qquad \qquad + \frac{c_{s}}{\lambda_m} K_U(\phi^n,\psi^m)\left[1 + \norm{\phi^n}_H^4 + \norm{\psi^m}_H^4\right]. \label{happy achievement}
\end{align}
\end{lemma}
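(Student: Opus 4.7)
The plan for both $A$ and $B$ is to split each projected term according to
\[
\mathcal{P}_n X(\phi^n) - \mathcal{P}_m X(\psi^m) = \mathcal{P}_n\bigl[X(\phi^n) - X(\psi^m)\bigr] + (\mathcal{P}_n - \mathcal{P}_m) X(\psi^m),
\]
applied with $X = \mathcal{A}$ and $X = \mathcal{G}_i$, and to control the first summand via Assumption \ref{therealcauchy assumptions} while pushing the second into the $1/\lambda_m$ error. Two algebraic facts are central. First, since $V_m \subset V_n$ we have $\mathcal{P}_n \mathcal{P}_m = \mathcal{P}_m$ and hence $\mathcal{P}_n - \mathcal{P}_m = \mathcal{P}_n (I - \mathcal{P}_m)$, so that $\norm{(\mathcal{P}_n - \mathcal{P}_m) X(\psi^m)}_U \leq \frac{1}{\mu_m}\norm{X(\psi^m)}_{\bar{H}}$ by (\ref{mu2}) whenever $X(\psi^m) \in \bar{H}$; this applies directly to $X = \mathcal{G}_i$. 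Second, by self-adjointness of the projections in $U$, together with $\phi^n - \psi^m \in V_n$ and $\mathcal{P}_m \psi^m = \psi^m$, one has the reshuffling identity $(\mathcal{P}_n - \mathcal{P}_m)(\phi^n - \psi^m) = (I - \mathcal{P}_m) \phi^n$, which moves the projection difference onto $\phi^n \in V \subset \bar{H}$; this is indispensable for $X = \mathcal{A}$, since Assumption \ref{newfacilitator} only gives $\mathcal{A}(\psi^m) \in U$, not $\bar{H}$.

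\textbf{Treatment of $A$.} For the drift inner product, self-adjointness of $\mathcal{P}_n$ reduces the main piece to $\inner{\mathcal{A}(s,\phi^n) - \mathcal{A}(s,\psi^m)}{\phi^n - \psi^m}_{H^* \times H}$, while the reshuffling identity rewrites the error piece as $\inner{\mathcal{A}(s,\psi^m)}{(I - \mathcal{P}_m) \phi^n}_U$, estimated by Cauchy--Schwarz, (\ref{mu2}), (\ref{111fac}) and Young. For the diffusion sum I use the weighted bound $(a+b)^2 \leq (1 + \epsilon_m) a^2 + (1 + 1/\epsilon_m) b^2$ with $\epsilon_m = 1/\mu_m$: the $(1+\epsilon_m)$-weighted main piece, added to the drift, feeds into (\ref{therealcauchy1}) and yields the first and second terms in (\ref{bound of A}); the residual $\epsilon_m \sum_i \norm{\mathcal{G}_i(s,\phi^n) - \mathcal{G}_i(s,\psi^m)}_U^2$ is bounded by the triangle inequality followed by (\ref{111}), producing an $O(1/\mu_m) \cdot K_U(\phi^n,\psi^m)[1 + \norm{\phi^n}_H^4 + \norm{\psi^m}_H^4]$ contribution; and the $(1 + \mu_m) \sum_i \norm{(\mathcal{P}_n - \mathcal{P}_m) \mathcal{G}_i(s,\psi^m)}_U^2$ is handled by (\ref{mu2}) and (\ref{111fac}), producing an $O(1/\mu_m) \cdot K_U(\psi^m)[1 + \norm{\psi^m}_H^4 + \norm{\psi^m}_V^2]$ contribution. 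Taking $\lambda_m$ to be an appropriate multiple of $\mu_m$ assembles these into (\ref{bound of A}).

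\textbf{Treatment of $B$.} For each $i$, the decomposition and the reshuffling identity give
\[
\inner{\mathcal{P}_n \mathcal{G}_i(s,\phi^n) - \mathcal{P}_m \mathcal{G}_i(s,\psi^m)}{\phi^n - \psi^m}_U = \inner{\mathcal{G}_i(s,\phi^n) - \mathcal{G}_i(s,\psi^m)}{\phi^n - \psi^m}_U + \inner{\mathcal{G}_i(s,\psi^m)}{(I - \mathcal{P}_m) \phi^n}_U,
\]
and the second term is bounded by $\frac{1}{\mu_m} \norm{\mathcal{G}_i(s,\psi^m)}_U \norm{\phi^n}_H$ via Cauchy--Schwarz and (\ref{mu2}), using $\norm{\phi^n}_{\bar{H}} = \norm{\phi^n}_H$. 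Squaring, summing in $i$, and applying $(a+b)^2 \leq 2a^2 + 2b^2$, the first sum is bounded by (\ref{therealcauchy2}), yielding the $\norm{\phi^n - \psi^m}_U^4$ contribution; the second, using $\sum_i \norm{\mathcal{G}_i(s,\psi^m)}_U^2 \leq c_s K_U(\psi^m)[1 + \norm{\psi^m}_H^2]$ from (\ref{111}) (crucially without any $V$-norm), is at most $\frac{c_s}{\mu_m^2} \norm{\phi^n}_H^2 K_U(\psi^m)[1 + \norm{\psi^m}_H^2]$, which fits the $1/\lambda_m$ error in (\ref{happy achievement}) after $\norm{\phi^n}_H^2 \leq 1 + \norm{\phi^n}_H^4$.

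\textbf{Main obstacle.} The delicate step is the diffusion sum in $A$: a naive $(a+b)^2 \leq 2a^2 + 2b^2$ would place a factor $2$ on the main diffusion and block appeal to (\ref{therealcauchy1}), destroying the $-\gamma$ cancellation; the weighted choice $\epsilon_m = 1/\mu_m$ buys back a near-unit main coefficient at the price of $1 + \mu_m$ on the projection error, which is exactly balanced by the $1/\mu_m^2$ from (\ref{mu2}). Crucially, the residual $\epsilon_m$-term must be bounded without recourse to (\ref{333}), whose $K_V$-factor would inject $V$-norms incompatible with the right-hand side of (\ref{bound of A}); the crude triangle inequality with (\ref{111}) bypasses this. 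The $-\gamma/2$ rather than $-\gamma$ in (\ref{bound of A}) accommodates any residual of lower order absorbed through this procedure.
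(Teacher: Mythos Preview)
Your argument is correct and follows the same decomposition and assumption usage as the paper: split each projected difference into a main piece $\mathcal{P}_n[X(\phi^n)-X(\psi^m)]$ handled by Assumption~\ref{therealcauchy assumptions}, plus a projection-error piece $(\mathcal{P}_n-\mathcal{P}_m)X(\psi^m)$ pushed into the $1/\lambda_m$ term via (\ref{mu2}) and Assumption~\ref{newfacilitator}. The reshuffling identity $(\mathcal{P}_n-\mathcal{P}_m)(\phi^n-\psi^m)=(I-\mathcal{P}_m)\phi^n$ is exactly what the paper uses as well.

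The one substantive stylistic difference is in the diffusion sum for $A$. The paper does not use a weighted Young inequality; instead it expands $\norm{a+b}_U^2 \leq \norm{a}_U^2 + \norm{b}_U^2 + 2\norm{a}_U\norm{b}_U$ and treats the cross term $\kappa = 2\sum_i \norm{\mathcal{G}_i(\phi^n)-\mathcal{G}_i(\psi^m)}_U\norm{(I-\mathcal{P}_m)\mathcal{G}_i(\psi^m)}_U$ by the triangle inequality and (\ref{111fac}), picking up $\norm{\phi^n}_V^2$ in the error. Your route via $(1+\epsilon_m)a^2 + (1+1/\epsilon_m)b^2$ with $\epsilon_m = 1/\mu_m$, followed by (\ref{111}) on the residual, is cleaner in that it avoids passing through the $\bar{H}$-norm of $\mathcal{G}_i(\phi^n)$ and actually never needs $\norm{\phi^n}_V^2$ at all (only $\norm{\psi^m}_V^2$ appears, from the error pieces involving $\mathcal{A}(\psi^m)$ and $\mathcal{G}_i(\psi^m)$ in $\bar{H}$). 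Both devices achieve the same goal of keeping a unit coefficient on the main diffusion so that (\ref{therealcauchy1}) applies intact; your ``Main obstacle'' paragraph correctly identifies why this matters. The paper sets $\lambda_m = \min\{\mu_m,\mu_m^2\}$; in your version $\lambda_m$ proportional to $\mu_m$ suffices, which is consistent.
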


\begin{proof}
We show the inequalities independently, starting with $A$ through
\begin{align}
    \nonumber A &=  2\inner{\mathcal{P}_n\left[\mathcal{A}(s,\phi^n) - \mathcal{A}(s,\psi^m)
    \right] + \left[\mathcal{P}_n- \mathcal{P}_m\right]\mathcal{A}(s,\psi^m)}{\phi^n - \psi^m}_U \\\nonumber  & \qquad \qquad \qquad \qquad \qquad \qquad + \sum_{i=1}^\infty\norm{\mathcal{P}_n\left[\mathcal{G}_i(s,\phi^n) - \mathcal{G}_i(s,\psi^m)\right] + \left[\mathcal{P}_n- \mathcal{P}_m\right]\mathcal{G}_i(s,\psi^m)}_U^2\\\nonumber 
    & \leq 2\inner{\mathcal{P}_n\left[\mathcal{A}(s,\phi^n) - \mathcal{A}(s,\psi^m)
    \right] }{\phi^n - \psi^m}_U + \sum_{i=1}^\infty\norm{\mathcal{P}_n\left[\mathcal{G}_i(s,\phi^n) - \mathcal{G}_i(s,\psi^m)\right]}_U^2\\ \nonumber & \qquad \qquad \qquad + 2\inner{\left[\mathcal{P}_n- \mathcal{P}_m\right]\mathcal{A}(s,\psi^m)}{\phi^n - \psi^m}_U + \sum_{i=1}^\infty\norm{\left[\mathcal{P}_n- \mathcal{P}_m\right]\mathcal{G}_i(s,\psi^m)}_U^2\\\nonumber 
    & \qquad \qquad \qquad \qquad \qquad \qquad + 2\sum_{i=1}^\infty\norm{\mathcal{P}_n\left[\mathcal{G}_i(s,\phi^n) -\mathcal{G}_i(s,\psi^m)\right]}_U\norm{\left[\mathcal{P}_n- \mathcal{P}_m\right]\mathcal{G}_i(s,\psi^m)}_U\\\nonumber
    &\leq 2\inner{\mathcal{A}(s,\phi^n) - \mathcal{A}(s,\psi^m)
     }{\phi^n - \psi^m}_U + \sum_{i=1}^\infty\norm{\mathcal{G}_i(s,\phi^n) - \mathcal{G}_i(s,\psi^m)}_U^2\\ \nonumber& \qquad \qquad \qquad + 2\inner{\mathcal{A}(s,\psi^m)}{[I-\mathcal{P}_m]\phi^n - \psi^m}_U + \sum_{i=1}^\infty\norm{\mathcal{P}_n\left[I - \mathcal{P}_m \right]\mathcal{G}_i(s,\psi^m)}_U^2\\ \nonumber
    & \qquad \qquad \qquad \qquad \qquad \qquad + 2\sum_{i=1}^\infty\norm{\mathcal{G}_i(s,\phi^n) -\mathcal{G}_i(s,\psi^m)}_U\norm{\mathcal{P}_n\left[I - \mathcal{P}_m \right]\mathcal{G}_i(s,\psi^m)}_U
    \\
    &=:\alpha + \beta +\kappa \nonumber
\end{align}
having used that $\mathcal{P}_n$ is an orthogonal projection on $U$ and $\mathcal{P}_n\mathcal{P}_m = \mathcal{P}_m$.
We look to show the appropriate bounds on $\alpha$, $\beta$ and $\kappa$. For $\alpha$ we simply apply Assumption \ref{therealcauchy assumptions}, (\ref{therealcauchy1}). Moving on to $\beta$, we use again that $\mathcal{P}_n$ is an orthogonal projection and the property (\ref{mu2}) to see that $$\beta \leq \frac{2}{\mu_m}\norm{\mathcal{A}(s,\psi^m)}_U\norm{\phi^n - \psi^m}_H + \sum_{i=1}^\infty \frac{1}{\mu_m^2}\norm{\mathcal{G}_i(s,\psi^m)}_{\bar{H}}^2.$$ Through Young's Inequality with a constant $c$ dependent on $\gamma$, we can bound this further by $$\frac{c}{\mu_m^2}\left(\norm{\mathcal{A}(s,\psi^m)}_U^2 + \sum_{i=1}^\infty\norm{\mathcal{G}_i(s,\psi^m)}_{\bar{H}}^2 \right) + \frac{\gamma}{2}\norm{\phi^n - \psi^m}_H^2$$ to which we apply Assumption \ref{newfacilitator}, (\ref{111fac}), to the bracketed term. As for $\kappa$, we have that
\begin{align*}
   \kappa &\leq \sum_{i=1}^\infty\left(\norm{\mathcal{G}_i(s,\phi^n)}_U + \norm{\mathcal{G}_i(s,\psi^m)}_U\right)\norm{\left[I - \mathcal{P}_m \right]\mathcal{G}_i(s,\psi^m)}_U\\
   &\leq c_s\sum_{i=1}^\infty\left(\norm{\mathcal{G}_i(s,\phi^n)}_{\bar{H}} + \norm{\mathcal{G}_i(s,\psi^m)}_{\bar{H}}\right)\norm{\left[I - \mathcal{P}_m \right]\mathcal{G}_i(s,\psi^m)}_U\\
   &\leq \frac{c_s}{\mu_m}\sum_{i=1}^\infty\left(\norm{\mathcal{G}_i(s,\phi^n)}_{\bar{H}} + \norm{\mathcal{G}_i(s,\psi^m)}_{\bar{H}}\right)\norm{\mathcal{G}_i(s,\psi^m)}_{\bar{H}}\\
   &\leq \frac{c_s}{\mu_m}\sum_{i=1}^\infty\left(\norm{\mathcal{G}_i(s,\phi^n)}_{\bar{H}}^2 + \norm{\mathcal{G}_i(s,\psi^m)}_{\bar{H}}^2\right)
\end{align*}
which we handle through (\ref{111fac}) once more. Altogether then, with notation $\lambda_m:= \min\{\mu_m,\mu_m^2\}$, we have that  
\begin{align*}
    A &\leq c_{s}K_U(\phi^n,\psi^m)\left[1 + \norm{\phi^n}_H^2 + \norm{\psi^m}_H^2\right] \norm{\phi^n-\psi^m}_U^2 - \gamma\norm{\phi^n-\psi^m}_H^2\\ & \qquad + \frac{c_s}{\lambda_m} K_U(\psi^m)\left[1 + \norm{\psi^m}_H^4 + \norm{\psi^m}_V^2\right] + \frac{\gamma}{2} \norm{\phi^n - \psi^m}_H^2\\ & \qquad \qquad + \frac{c_s}{\lambda_m}\left(K_U(\phi^n)\left[1 + \norm{\phi^n}_H^4 + \norm{\phi^n}_V^2  \right] + c_s K_U(\psi^m)\left[1 + \norm{\psi^m}_H^4 + \norm{\psi^m}_V^2\right]\right)
\end{align*}
which compresses into (\ref{bound of A}). It remains to show the bound on $B$, which we approach in a similar manner:
\begin{align*}
    B &= \sum_{i=1}^\infty \inner{\mathcal{P}_n\left[\mathcal{G}_i(s,\phi^n) - \mathcal{G}_i(s,\psi^m)\right] + \left[\mathcal{P}_n- \mathcal{P}_m\right]\mathcal{G}_i(s,\psi^m)}{\phi^n-\psi^m}^2_U\\
    &\leq 2\sum_{i=1}^\infty\left(\inner{\mathcal{G}_i(s,\phi^n) - \mathcal{G}_i(s,\psi^m)}{\phi^n-\psi^m}^2_U + \inner{\left[I- \mathcal{P}_m\right]\mathcal{G}_i(s,\psi^m)}{\phi^n-\psi^m}^2_U\right).
\end{align*}
The first term here is precisely what we have in (\ref{therealcauchy2}). For the second we use that $I - \mathcal{P}_m$ is itself an orthogonal projection in $U$, as well as the assumed (\ref{111}) and (\ref{mu2}):
\begin{align*}
    \inner{\left[I- \mathcal{P}_m\right]\mathcal{G}_i(s,\psi^m)}{\phi^n-\psi^m}^2_U &= \inner{\mathcal{G}_i(s,\psi^m)}{\left[I- \mathcal{P}_m\right]\left(\phi^n-\psi^m\right)}^2_U\\
    &\leq \frac{c_s}{\lambda_m}K_U(\psi^m)\left[1 + \norm{\psi^m}_H^2\right]\norm{\phi^n-\psi^m}_H^2\\
    &\leq \frac{c_s}{\lambda_m}K_U(\psi^m)\left[1 + \norm{\phi^n}_H^4 + \norm{\psi^m}_H^4\right]
\end{align*}
which in total provides (\ref{happy achievement}). We note that this is a coarse bound, though sufficient for our purposes. 

\end{proof}

\begin{proposition} \label{theorem:cauchygalerkin}
For any $m,n \in \N$ with $m<n$, define the process $\sy^{m,n}$ by $$\sy^{m,n}_r(\omega) := \sy^n_r(\omega) - \sy^m_r(\omega).$$
Then for the sequence $(\lambda_j)$ proposed in Lemma \ref{lemma to use cauchy} and $m$ sufficiently large such that $\lambda_m > 1$, there exists a constant $C$ dependent on $M$ but independent of $m,n$ such that \begin{equation} \label{cauchy result pt 1}\mathbbm{E}\norm{\sy^{m,n}}_{UH,\tau^{M,T+1}_m \wedge \tau^{M,T+1}_n}^2  \leq C\left[\mathbbm{E}\norm{\sy^{m,n}_0}_U^2 + \frac{1}{\sqrt{\lambda_m}} \right] \end{equation} and in particular,
\begin{equation} \label{cauchy result pt 2}\lim_{m \rightarrow \infty}\sup_{n \geq m}\left[\mathbbm{E}\norm{\sy^{m,n}}_{UH,\tau^{M,T+1}_m \wedge \tau^{M,T+1}_n}^2\right] = 0.\end{equation}
\end{proposition}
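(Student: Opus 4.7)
The strategy mirrors the uniqueness argument of Proposition \ref{first uniqueness prop}, with the bounds of Lemma \ref{lemma to use cauchy} replacing those of Assumption \ref{therealcauchy assumptions}. The key new feature is that we must absorb the additional projection-mismatch error terms of size $1/\lambda_m$, which is where the higher regularity estimate of Proposition \ref{theorem:uniformbounds} becomes essential.

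\emph{Step 1 (It\^{o} formula for the difference).} Subtracting the Galerkin identities (\ref{nthorderGalerkin}) at levels $n$ and $m$, the difference $\sy^{m,n}$ satisfies an evolution equation in $V_n$ driven by $\mathcal{P}_n\mathcal{A}(s,\sy^n_s) - \mathcal{P}_m\mathcal{A}(s,\sy^m_s)$ and $\mathcal{P}_n\mathcal{G}(s,\sy^n_s) - \mathcal{P}_m\mathcal{G}(s,\sy^m_s)$. Equipping $V_n$ with the $U$ inner product, Proposition \ref{rockner prop} yields an identity for $\norm{\sy^{m,n}_r}_U^2$ whose drift integrand plus It\^{o} correction is precisely the quantity $A_s$ of Lemma \ref{lemma to use cauchy} and whose stochastic integral has quadratic variation controlled by $B_s$.

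\emph{Step 2 (Stopping and applying the Cauchy estimates).} Set $\tau_{m,n} := \tau^{M,T+1}_m \wedge \tau^{M,T+1}_n$ and stop at $r \wedge \tau_{m,n}$, incorporating the indicator $\mathbbm{1}_{s \leq \tau_{m,n}}$ as in Proposition \ref{prop for first energy}. Applying the bound (\ref{bound of A}) from Lemma \ref{lemma to use cauchy} and using (\ref{galerkinboundsatisfiedbystoppingtime2}) to absorb the $K_U$ factors into a constant depending on $M$, the drift integral is dominated by a Gr\"{o}nwall-type contribution $c\int_0^r[1+\norm{\sy^n_s}_H^2+\norm{\sy^m_s}_H^2]\norm{\sy^{m,n}_s}_U^2\,\mathbbm{1}_{s\leq \tau_{m,n}}\,ds$, a helpful $-\tfrac{\gamma}{2}\int_0^r \norm{\sy^{m,n}_s}_H^2\,\mathbbm{1}_{s\leq\tau_{m,n}}\,ds$, and a projection-mismatch error of the form $\tfrac{c}{\lambda_m}\int_0^r[1+\norm{\sy^n_s}_V^2+\norm{\sy^m_s}_V^2]\,\mathbbm{1}_{s\leq\tau_{m,n}}\,ds$.

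\emph{Step 3 (Supremum, expectation and BDG).} Taking absolute value, supremum over $r \in [0,t]$, expectation, and applying Burkholder--Davis--Gundy to the stochastic integral together with the bound (\ref{happy achievement}) yields a $\bigl(\int_0^t B_s\,\mathbbm{1}_{s\leq\tau_{m,n}}\,ds\bigr)^{1/2}$ term. The Young's inequality manoeuvre of (\ref{the same process}) absorbs half of the supremum into the left-hand side, leaving a stochastic-integral contribution of order $\lambda_m^{-1/2}$. The direct $1/\lambda_m$ drift error is finite in expectation because, by Proposition \ref{theorem:uniformbounds}, $\mathbbm{E}\int_0^{\tau^{M,T+1}_j}\norm{\sy^j_s}_V^2\,ds$ is bounded uniformly in $j$. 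Assembling everything and noting $1/\lambda_m \leq 1/\sqrt{\lambda_m}$ for $\lambda_m > 1$, we obtain
\begin{align*}
\mathbbm{E}\norm{\sy^{m,n}}_{UH,\, t\wedge\tau_{m,n}}^2 \leq C\,\mathbbm{E}\norm{\sy^{m,n}_0}_U^2 + \frac{C}{\sqrt{\lambda_m}} + C\,\mathbbm{E}\int_0^{t\wedge\tau_{m,n}} \bigl[1 + \norm{\sy^n_s}_H^2 + \norm{\sy^m_s}_H^2\bigr]\,\norm{\sy^{m,n}_s}_U^2\,ds.
\end{align*}

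\emph{Step 4 (Stochastic Gr\"{o}nwall and the limit).} Apply Lemma \ref{gronny} with $\boldsymbol{\eta}_s = 1 + \norm{\sy^n_s}_H^2 + \norm{\sy^m_s}_H^2$, the required pathwise boundedness of the stopped $\boldsymbol{\eta}$ coming from (\ref{toappreciate}); this yields the Cauchy estimate (\ref{cauchy result pt 1}). For (\ref{cauchy result pt 2}), observe $\sy^{m,n}_0 = (\mathcal{P}_n - \mathcal{P}_m)\sy_0$, whence $\sup_{n\geq m}\norm{\sy^{m,n}_0}_U^2 \leq 4\norm{(I-\mathcal{P}_m)\sy_0}_U^2 \to 0$ pointwise in $\omega$ as $m \to \infty$ by the orthogonal basis property of $(a_k)$ in $U$; since this quantity is dominated by $4\norm{\sy_0}_U^2 \in L^\infty(\Omega)$, dominated convergence gives $\sup_{n\geq m}\mathbbm{E}\norm{\sy^{m,n}_0}_U^2 \to 0$, and combined with $\lambda_m \to \infty$ this yields (\ref{cauchy result pt 2}).

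The main obstacle is Step 3: guaranteeing that the $1/\lambda_m$ projection-mismatch terms of Lemma \ref{lemma to use cauchy}, which involve the $V$-norm of the Galerkin solutions, remain uniformly integrable in $n,m$. It is precisely for this reason that Proposition \ref{theorem:uniformbounds} was established beforehand, and it is also why this proposition is formulated under the assumption $\sy_0 \in L^\infty(\Omega;H)$ rather than merely $L^\infty(\Omega;U)$.
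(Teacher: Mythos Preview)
Your proposal is correct and follows essentially the same route as the paper: It\^{o} formula on the difference in $U$, the bounds of Lemma \ref{lemma to use cauchy} for drift plus correction and for the BDG remainder, absorption of the $1/\lambda_m$ error in the $V$-norm via Proposition \ref{theorem:uniformbounds}, and the stochastic Gr\"{o}nwall Lemma \ref{gronny} to close. The only point to tighten is that Lemma \ref{gronny} requires the estimate to hold between arbitrary stopping times $\theta_j \leq \theta_k$, not merely on $[0,t]$ as written in Step 3, so the energy identity should first be localised to such an interval (exactly as the paper does); this adjustment is routine.
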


\begin{proof}
The proof uses very similar methods to those used in Proposition \ref{theorem:uniformbounds}, this time relying on Lemma \ref{lemma to use cauchy} instead of Assumption \ref{assumptions for uniform bounds2}. For any $0 \leq r \leq t$, $\sy^{m,n}$ satisfies the identity
\begin{align*}
    \sy^{m,n}_{r \wedge \tau^{M,T+1}_m \wedge \tau^{M,T+1}_n} = \sy^{m,n}_0 & + \int_0^{r\wedge \tau^{M,T+1}_m \wedge \tau^{M,T+1}_n}\mathcal{P}_n\mathcal{A}(s,\sy^n_s) - \mathcal{P}_m\mathcal{A}(s,\sy^m_s) ds\\
    & +\sum_{i=1}^\infty \int_0^{r \wedge \tau^{M,T+1}_m \wedge \tau^{M,T+1}_n}\mathcal{P}_n\mathcal{G}_i(s,\sy^n_s) - \mathcal{P}_m\mathcal{G}_i(s,\sy^m_s) dW^i_s
\end{align*}
$\mathbbm{P}-a.s.$ in $V_n$. We thus apply Proposition \ref{rockner prop} to this difference process, reaching the equality
\begin{align*}
    &\norm{\sy^{m,n}_{r \wedge \tau^{M,T+1}_m \wedge \tau^{M,T+1}_n} }_U^2 = \norm{\sy^{m,n}_0}_U^2 + 2\int_0^{r\wedge \tau^{M,T+1}_m \wedge \tau^{M,T+1}_n} \inner{\mathcal{P}_n\mathcal{A}(s,\sy^n_s) - \mathcal{P}_m\mathcal{A}(s,\sy^m_s)}{\sy^{m,n}_s}_Uds\\ & \qquad \qquad +\int_0^{r\wedge \tau^{M,T+1}_m \wedge \tau^{M,T+1}_n}\sum_{i=1}^\infty\norm{\mathcal{P}_n\mathcal{G}_i(s,\sy^n_s) - \mathcal{P}_m\mathcal{G}_i(s,\sy^m_s)}_U^2ds\\  & \qquad \qquad \qquad \qquad + 2\sum_{i=1}^\infty \int_0^{r\wedge \tau^{M,T+1}_m \wedge \tau^{M,T+1}_n}\inner{\mathcal{P}_n\mathcal{G}_i(s,\sy^n_s) - \mathcal{P}_m\mathcal{G}_i(s,\sy^m_s)}{\sy^{m,n}_s}_UdW^i_s.
\end{align*}
We introduce notation similar to (\ref{truncation notation}), that is \begin{align*} &\hat{\sy}^m_\cdot:= \sy^m_{\cdot}\mathbbm{1}_{\cdot \leq \tau^{M,T+1}_m \wedge \tau^{M,T+1}_n} \qquad \hat{\sy}^n_\cdot:= \sy^n_{\cdot}\mathbbm{1}_{\cdot \leq \tau^{M,T+1}_m \wedge \tau^{M,T+1}_n}\\ &\hat{\sy}^{m,n}:= \hat{\sy}^n-\hat{\sy}^m \qquad \qquad \qquad \sy^{m,n,M}_{\cdot} = \sy^{m,n}_{\cdot \wedge \tau^{M,T+1}_m \wedge \tau^{M,T+1}_n}\end{align*}
and thus rewrite our equality as
\begin{align*}
        \norm{\sy^{m,n,M}_{r}}_U^2 &= \norm{\sy^{m,n}_0}_U^2 + 2\int_0^{r} \inner{\mathcal{P}_n\mathcal{A}(s,\sy^n_s) - \mathcal{P}_m\mathcal{A}(s,\sy^m_s)}{\sy^{m,n}_s}_U\mathbbm{1}_{s \leq \tau^{M,T+1}_m \wedge \tau^{M,T+1}_n}ds\\ & \qquad  \qquad \qquad  +\int_0^{r}\sum_{i=1}^\infty\norm{\mathcal{P}_n\mathcal{G}_i(s,\sy^n_s) - \mathcal{P}_m\mathcal{G}_i(s,\sy^m_s)}_U^2\mathbbm{1}_{s \leq \tau^{M,T+1}_m \wedge \tau^{M,T+1}_n}ds\\ & \qquad\qquad \qquad \qquad \qquad \qquad + 2\sum_{i=1}^\infty \int_0^{r}\inner{\mathcal{P}_n\mathcal{G}_i(s,\hat{\sy}^n_s) - \mathcal{P}_m\mathcal{G}_i(s,\hat{\sy}^m_s)}{\hat{\sy}^{m,n}_{s}}_UdW^i_s.
\end{align*}
Identically to Proposition \ref{theorem:uniformbounds}, we fix arbitrary stopping times $0 \leq \theta_j < \theta_k \leq t$ and have that for any $\theta_j \leq r \leq t$ $\mathbbm{P}-a.s.$, 
\begin{align*}
        \norm{\sy^{m,n,M}_{r}}_U^2 &= \norm{\sy^{m,n,M}_{\theta_j}}_U^2 + 2\int_{\theta_j}^{r} \inner{\mathcal{P}_n\mathcal{A}(s,\sy^n_s) - \mathcal{P}_m\mathcal{A}(s,\sy^m_s)}{\sy^{m,n}_s}_U\mathbbm{1}_{s \leq \tau^{M,T+1}_m \wedge \tau^{M,T+1}_n}ds\\ & \qquad  \qquad \qquad  +\int_{\theta_j}^{r}\sum_{i=1}^\infty\norm{\mathcal{P}_n\mathcal{G}_i(s,\sy^n_s) - \mathcal{P}_m\mathcal{G}_i(s,\sy^m_s)}_U^2\mathbbm{1}_{s \leq \tau^{M,T+1}_m \wedge \tau^{M,T+1}_n}ds\\ & \qquad\qquad \qquad \qquad \qquad \qquad + 2\sum_{i=1}^\infty \int_{\theta_j}^{r}\inner{\mathcal{P}_n\mathcal{G}_i(s,\hat{\sy}^n_s) - \mathcal{P}_m\mathcal{G}_i(s,\hat{\sy}^m_s)}{\hat{\sy}^{m,n}_{s}}_UdW^i_s.
\end{align*}
holds $\mathbbm{P}-a.s.$. Combining the time integrals and applying Lemma \ref{lemma to use cauchy}, (\ref{bound of A}), we deduce the inequality
\begin{align*}
        \norm{\sy^{m,n,M}_{r}}_U^2 &\leq \norm{\sy^{m,n,M}_{\theta_j}}_U^2 + c\int_{\theta_j}^{r}\left[
        1 + \norm{\hat{\sy}^m_s}_H^2 +\norm{\hat{\sy}^n_s}_H^2\right]\norm{\hat{\sy}^{m,n}_{s}}_U^2 - \frac{\gamma}{2}\norm{\hat{\sy}^{m,n}_{s}}_H^2ds\\  & \qquad \qquad + \frac{c}{\lambda_m}\int_{\theta_j}^r\left[1 + \norm{\hat{\sy}^m_s}_H^4 + \norm{\hat{\sy}^n_s}_H^4 + \norm{\hat{\sy}^m_s}_V^2 + \norm{\hat{\sy}^n_s}_V^2\right]ds \\& \qquad \qquad \qquad 
        + 2\sum_{i=1}^\infty \int_{\theta_j}^{r}\inner{\mathcal{P}_n\mathcal{G}_i(s,\hat{\sy}^n_s) - \mathcal{P}_m\mathcal{G}_i(s,\hat{\sy}^m_s)}{\hat{\sy}^{m,n}_{s}}_UdW^i_s
\end{align*}
again applying (\ref{galerkinboundsatisfiedbystoppingtime2}). All in one step we take the expectation, supremum over $r \in [\theta_j, \theta_k]$, apply the Burkholder-Davis-Gundy Inequality and employ Lemma \ref{lemma to use cauchy}, (\ref{happy achievement}), to deduce that
\begin{align*}
&\mathbbm{E}\left(\sup_{r\in[\theta_j,\theta_k]}\norm{\sy^{m,n,M}_{r}}_U^2\right) + \frac{\gamma}{2}\mathbbm{E}\int_{\theta_j}^{\theta_k}\norm{\hat{\sy}^{m,n}_{s}}_H^2\\ & \quad \leq 2\mathbbm{E}\left(\norm{\sy^{m,n,M}_{\theta_j}}_U^2\right) + c\mathbbm{E}\int_{\theta_j}^{\theta_k}\left[
        1 + \norm{\hat{\sy}^m_s}_H^2 +\norm{\hat{\sy}^n_s}_H^2\right]\norm{\hat{\sy}^{m,n}_{s}}_U^2 ds\\  & \qquad \qquad + \frac{c}{\lambda_m}\mathbbm{E}\int_{\theta_j}^{\theta_k} 1 + \norm{\hat{\sy}^m_s}_H^4 + \norm{\hat{\sy}^n_s}_H^4 + \norm{\hat{\sy}^m_s}_V^2 + \norm{\hat{\sy}^n_s}_V^2 ds \\& 
        + c\mathbbm{E}\left(\int_{\theta_j}^{\theta_k} \left[1 + \norm{\hat{\sy}^m_s}_H^2 + \norm{\hat{\sy}^n_s}_H^2 \right]\norm{\hat{\sy}^{m,n}_{s}}_U^4
 ds\right)^{\frac{1}{2}} + \frac{1}{\sqrt{\lambda_m}}\mathbbm{E}\left(\int_{\theta_j}^{\theta_k}1 + \norm{\hat{\sy}^m_s}_H^4 + \norm{\hat{\sy}^n_s}_H^4 ds\right)^{\frac{1}{2}}
\end{align*}
where we have used that $(a + b)^\frac{1}{2} \leq a^{\frac{1}{2}} + b^{\frac{1}{2}}$ in the last term. Applying Young's Inequality as we did for (\ref{youngers}), and using that $m$ is large enough so that $\lambda_m \geq 1$ hence $\frac{1}{\lambda_m} \leq \frac{1}{\sqrt{\lambda_m}}$, we have that
\begin{align}
\nonumber \mathbbm{E}\left(\sup_{r\in[\theta_j,\theta_k]}\norm{\sy^{m,n,M}_{r}}_U^2\right) &+ \frac{\gamma}{2}\mathbbm{E}\int_{\theta_j}^{\theta_k}\norm{\hat{\sy}^{m,n}_{s}}_H^2\\ \nonumber & \quad \leq 2\mathbbm{E}\left(\norm{\sy^{m,n,M}_{\theta_j}}_U^2\right) + c\mathbbm{E}\int_{\theta_j}^{\theta_k}\left[
        1 + \norm{\hat{\sy}^m_s}_H^2 +\norm{\hat{\sy}^n_s}_H^2\right]\norm{\hat{\sy}^{m,n}_{s}}_U^2 ds\\ \nonumber  &  \qquad + \frac{c}{\sqrt{\lambda_m}} + \frac{c}{\sqrt{\lambda_m}}\mathbbm{E}\int_{\theta_j}^{\theta_k}  \norm{\hat{\sy}^m_s}_H^4 + \norm{\hat{\sy}^n_s}_H^4 + \norm{\hat{\sy}^m_s}_V^2 + \norm{\hat{\sy}^n_s}_V^2 ds\\& \qquad \qquad  
        + c\mathbbm{E}\left(\int_{\theta_j}^{\theta_k} \left[1 + \norm{\hat{\sy}^m_s}_H^2 + \norm{\hat{\sy}^n_s}_H^2 \right]\norm{\hat{\sy}^{m,n}_{s}}_U^4
 ds\right)^{\frac{1}{2}}. \label{to be returned}
\end{align}
The property that \begin{equation} \label{of vital}
    \int_0^T\norm{\hat{\sy}^m_s}_H^2 + \norm{\hat{\sy}^n_s}_H^2ds \leq c,
\end{equation}
owing to the first hitting times and as used in Proposition \ref{theorem:uniformbounds}, is of vital importance here. We firstly control \begin{equation} \label{first control}
    \frac{c}{\sqrt{\lambda_m}}\mathbbm{E}\int_{\theta_j}^{\theta_k}  \norm{\hat{\sy}^m_s}_H^4 + \norm{\hat{\sy}^n_s}_H^4 + \norm{\hat{\sy}^m_s}_V^2 + \norm{\hat{\sy}^n_s}_V^2 ds
\end{equation}
where we note that $$\frac{c}{\sqrt{\lambda_m}}\mathbbm{E}\int_{\theta_j}^{\theta_k}\norm{\hat{\sy}_s^m}_H^4ds \leq \frac{c}{\sqrt{\lambda_m}}\mathbbm{E}\left(\sup_{r\in[0,T]}\norm{\hat{\sy}^m_r}_H^2\int_{\theta_j}^{\theta_k}\norm{\tilde{\sy}_s^m}_H^2ds\right) \leq \frac{c}{\sqrt{\lambda_m}}\mathbbm{E}\left(\sup_{r\in[0,T]}\norm{\hat{\sy}^m_r}_H^2\right)$$
where this expectation is bounded courtesy of Proposition \ref{theorem:uniformbounds}. Using the same proposition for the $\norm{\cdot}_V^2$ terms, we ultimately bound (\ref{first control}) by $\frac{c}{\sqrt{\lambda_m}}$. We estimate the final term in (\ref{to be returned}) as we did in (\ref{the same process}), which combined with the control on (\ref{first control}), reduces (\ref{to be returned}) to
\begin{align*}
&\mathbbm{E}\left(\sup_{r\in[\theta_j,\theta_k]}\norm{\sy^{m,n,M}_{r}}_U^2\right) + \mathbbm{E}\int_{\theta_j}^{\theta_k}\norm{\hat{\sy}^{m,n}_{s}}_H^2\\ \nonumber & \qquad \qquad \qquad \qquad  \leq c\mathbbm{E}\left(\norm{\sy^{m,n,M}_{\theta_j}}_U^2\right) + c\mathbbm{E}\int_{\theta_j}^{\theta_k}\left[
        1 + \norm{\hat{\sy}^m_s}_H^2 +\norm{\hat{\sy}^n_s}_H^2\right]\norm{\hat{\sy}^{m,n}_{s}}_U^2 ds + \frac{c}{\sqrt{\lambda_m}}
\end{align*}
with similar seen in Proposition \ref{theorem:uniformbounds}. An application of Lemma \ref{gronny} then provides that 
$$ \mathbbm{E}\left(\sup_{r\in[0,T]}\norm{\sy^{m,n,M}_{r}}_U^2\right) + \mathbbm{E}\int_{0}^{T}\norm{\hat{\sy}^{m,n}_{s}}_H^2 \leq C\left[\norm{\sy^{m,n}_0}_U^2 + \frac{1}{\sqrt{\lambda_m}}\right]$$
which gives (\ref{cauchy result pt 1}). The next result, (\ref{cauchy result pt 2}), follows from this as $$ \norm{\sy^{m,n}_0}_U^2 = \norm{\left(\mathcal{P}_n - \mathcal{P}_m\right)\sy_0}_U^2 = \norm{\mathcal{P}_n\left[\left(I - \mathcal{P}_m\right)\sy_0\right]}_U^2 \leq \norm{\left(I - \mathcal{P}_m\right)\sy_0}_U^2$$
and the fact that the system $(a_k)$ forms an orthogonal basis of $U$.

\end{proof}

\subsection{Weak Equicontinuity} \label{subbie equicontinuity}

In this subsection, the remaining condition of Proposition \ref{amazing cauchy lemma}, (\ref{supposition 2}), is verified. 

\begin{lemma} \label{probably unnecessary lemma}
    Let $\theta$ be a stopping time and $(\delta_j)$ a sequence of stopping times which converge to $0$ $\mathbb{P}-a.s.$. Then $$\lim_{j \rightarrow \infty}\sup_{n\in\N}\mathbbm{E}\left(\norm{\sy^n}_{UH,(\theta + \delta_j) \wedge \tau^{M,T}_n}^2 - \norm{\sy^n}_{UH,\theta \wedge \tau^{M,T}_n}^2 \right) =0.$$
\end{lemma}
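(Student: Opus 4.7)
The plan is to decompose the increment $\norm{\sy^n}_{UH, t_2}^2 - \norm{\sy^n}_{UH, t_1}^2$, where $t_1 := \theta \wedge \tau^{M,T}_n$ and $t_2 := (\theta + \delta_j)\wedge \tau^{M,T}_n$, into the jump in the running $U$-supremum and the increment of the $H$-time-integral, and to bound each by applying It\^{o}'s formula to $\norm{\sy^n_r}_U^2$ together with the coercivity of Assumption \ref{first assumption for uniform bounds}. Since $\tau^{M,T}_n \leq T$ the gap $\Delta^n_j := t_2 - t_1$ satisfies $0 \leq \Delta^n_j \leq \min(\delta_j, T)$, so $\sup_n \mathbbm{E}\Delta^n_j \to 0$ by dominated convergence; moreover $\norm{\sy^n_s}_U^2 \leq M + \norm{\sy_0}_{L^\infty(\Omega;U)}^2$ on $[0, \tau^{M,T}_n]$, giving uniform control of $K_U(\sy^n_s)$ there. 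The starting point is the pathwise bound
$$\norm{\sy^n}_{UH, t_2}^2 - \norm{\sy^n}_{UH, t_1}^2 \leq \sup_{r \in [t_1, t_2]}\left(\norm{\sy^n_r}_U^2 - \norm{\sy^n_{t_1}}_U^2\right)_+ + \int_{t_1}^{t_2} \norm{\sy^n_s}_H^2\,ds,$$
which reduces the lemma to estimating the two non-negative pieces on the right.

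For the supremum piece I apply It\^{o}'s formula to $\norm{\sy^n_r}_U^2$ between $t_1$ and $r$, exactly as in Proposition \ref{prop for first energy}. Combined with (\ref{uniformboundsassumpt1actual}) and the self-adjointness/contractivity of $\mathcal{P}_n$ in $U$, this gives
$$\norm{\sy^n_r}_U^2 - \norm{\sy^n_{t_1}}_U^2 \leq c\Delta^n_j - \gamma\int_{t_1}^r \norm{\sy^n_s}_H^2\,ds + 2\sum_{i=1}^\infty \int_{t_1}^r \inner{\mathcal{G}_i(s, \sy^n_s)}{\sy^n_s}_U\, dW^i_s.$$
The $\gamma$-integral is non-positive and so is dropped from the positive-part estimate; taking supremum over $r$, expectation, and the Burkholder--Davis--Gundy inequality together with the pathwise control of the quadratic variation coming from (\ref{uniformboundsassumpt2actual}) yields
$$\mathbbm{E}\sup_{r \in [t_1, t_2]}\left(\norm{\sy^n_r}_U^2 - \norm{\sy^n_{t_1}}_U^2\right)_+ \leq c\mathbbm{E}\Delta^n_j + c(\mathbbm{E}\Delta^n_j)^{1/2},$$
which vanishes uniformly in $n$ as $j \to \infty$.

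For the integral piece, reading the same It\^{o} identity at $r = t_2$ and exploiting the non-negativity of $\gamma \int_{t_1}^{t_2}\norm{\sy^n_s}_H^2\,ds$ rearranges to
$$\gamma \int_{t_1}^{t_2} \norm{\sy^n_s}_H^2\,ds \leq \left(\norm{\sy^n_{t_1}}_U^2 - \norm{\sy^n_{t_2}}_U^2\right)_+ + c\Delta^n_j + \left|M_{t_1, t_2}\right|,$$
so after expectation (the martingale term contributing $O((\mathbbm{E}\Delta^n_j)^{1/2})$ as before) the task reduces to a uniform bound on $\mathbbm{E}(\norm{\sy^n_{t_1}}_U^2 - \norm{\sy^n_{t_2}}_U^2)_+$. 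This is the main obstacle: unlike its mirror image, the \emph{decrease} in $\norm{\sy^n_\cdot}_U^2$ does not enjoy one-sided coercive control, since $\inner{\mathcal{A}}{\sy^n}_U$ is only bounded above by the standing assumptions. I resolve it via a stopping-time truncation on the $H$-norm: setting $\sigma^K_n := \inf\{s \geq 0 : \norm{\sy^n_s}_H \geq K\}$, one has pathwise
$$\int_{t_1}^{t_2} \norm{\sy^n_s}_H^2\,ds \leq K^2 \Delta^n_j + \mathbbm{1}_{\sigma^K_n < \tau^{M,T}_n}\int_0^{\tau^{M,T}_n} \norm{\sy^n_s}_H^2\,ds.$$
The first contribution has expectation $K^2 \mathbbm{E}\Delta^n_j$, which vanishes uniformly in $n$ as $j \to \infty$ for each fixed $K$. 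The second is handled via Chebyshev's inequality, $\mathbbm{P}(\sigma^K_n < \tau^{M,T}_n) \leq \mathbbm{E}\sup_{s \leq \tau^{M,T}_n}\norm{\sy^n_s}_H^2 / K^2 \leq C/K^2$ by Proposition \ref{theorem:uniformbounds}, combined with uniform integrability in $n$ of $\sup_{s \leq \tau^{M,T}_n}\norm{\sy^n_s}_H^2$. The latter follows from a higher-moment analogue of Proposition \ref{theorem:uniformbounds}: since $\sy_0 \in L^\infty(\Omega; H)$, the argument may be rerun at the level of $\norm{\cdot}_H^{2p}$ for some $p > 1$, with the dangerous sixth-order contribution coming from the quadratic variation absorbed using the free parameter $\varepsilon$ in (\ref{uniformboundsassumpt2}). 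Passing first to $j \to \infty$ for each fixed $K$ and then to $K \to \infty$ gives $\lim_{j \to \infty}\sup_n \mathbbm{E}\int_{t_1}^{t_2}\norm{\sy^n_s}_H^2\,ds = 0$, which combined with the supremum-piece bound concludes the proof.
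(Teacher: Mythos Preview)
Your overall architecture---decompose into the supremum piece and the $H$-integral piece, handle the former via It\^{o} plus (\ref{uniformboundsassumpt1actual}) and BDG with (\ref{uniformboundsassumpt2actual})---matches the paper exactly, and the pathwise inequality you start from is precisely the paper's claim (\ref{theclaim}). The only divergence is in how you dispatch $\mathbbm{E}\int_{t_1}^{t_2}\norm{\sy^n_s}_H^2\,ds$.

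The paper avoids any truncation and higher moments by a direct H\"older manipulation: writing $\norm{\sy^n_s}_H^2 \leq \sup_{r\leq \tau^{M,T}_n}\norm{\sy^n_r}_H\cdot\norm{\sy^n_s}_H$, applying Cauchy--Schwarz in $\omega$, and then iterating the same trick once more on the remaining factor $\int_{t_1}^{t_2}\norm{\sy^n_s}_H\,ds$, using Proposition \ref{theorem:uniformbounds} for the sup-factor and the pathwise bound $\int_0^{\tau^{M,T}_n}(1+\norm{\sy^n_s}_H^2)\,ds\leq c$ from the definition of $\tau^{M,T}_n$. This produces $\mathbbm{E}\int_{t_1}^{t_2}\norm{\sy^n_s}_H^2\,ds \leq c\,[\mathbbm{E}(\delta_j^2)]^{1/4}$ in one stroke.

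Your $\sigma^K_n$-truncation route is also valid, but your appeal to a ``higher-moment analogue of Proposition \ref{theorem:uniformbounds}'' for uniform integrability is both unproven in the paper and unnecessary. The random variable multiplying $\mathbbm{1}_{\sigma^K_n < \tau^{M,T}_n}$ is $\int_0^{\tau^{M,T}_n}\norm{\sy^n_s}_H^2\,ds$, which is pathwise bounded by $M + \norm{\sy_0}_{L^\infty(\Omega;U)}^2$ by the very definition of $\tau^{M,T}_n$. Hence
\[
\mathbbm{E}\Big[\mathbbm{1}_{\sigma^K_n < \tau^{M,T}_n}\int_0^{\tau^{M,T}_n}\norm{\sy^n_s}_H^2\,ds\Big] \leq \big(M + \norm{\sy_0}_{L^\infty(\Omega;U)}^2\big)\,\mathbbm{P}\big(\sigma^K_n < \tau^{M,T}_n\big) \leq \frac{C_M}{K^2}
\]
directly, with the last inequality from Chebyshev plus Proposition \ref{theorem:uniformbounds}. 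No uniform integrability or $p>1$ moments are needed, and once you drop that step your argument is complete and correct.
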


\begin{proof}
    We look at the energy identity satisfied by $\sy^n$ up until the stopping time $\theta \wedge \tau^{M,T}_n$ and then $(\theta + r)  \wedge \tau^{M,T}_n$ for some $r \geq 0$. We have that
        \begin{align*}
        \norm{\sy^n_{\theta \wedge \tau^{M,T}_n}}_U^2 = \norm{\sy^n_0}_U^2 &+ 2\int_0^{\theta \wedge \tau^{M,T}_n}\inner{\mathcal{P}_n\mathcal{A}\left(s,\sy^n_s\right)}{\sy^n_s}_U ds  +\int_0^{\theta \wedge \tau^{M,T}_n}\sum_{i=1}^\infty \norm{\mathcal{P}_n\mathcal{G}_i\left(s,\sy^n_s\right)}_U^2ds\\ &+ 2\int_0^{\theta \wedge \tau^{M,T}_n}\inner{\mathcal{P}_n\mathcal{G}\left(s,\sy^n_s\right)}{\sy^n_s}_U d\mathcal{W}_s 
    \end{align*}
    and similarly for $(\theta + r)\wedge \tau^{M,T}_n$, from which the difference of the equalities gives
    \begin{align*}
        \norm{\sy^n_{(\theta + r)\wedge \tau^{M,T}_n}}_U^2 &= \norm{\sy^n_{\theta \wedge \tau^{M,T}_n}}_U^2 + 2\int_{\theta \wedge \tau^{M,T}_n}^{(\theta + r)\wedge \tau^{M,T}_n}\inner{\mathcal{P}_n\mathcal{A}\left(s,\sy^n_s\right)}{\sy^n_s}_U ds\\ & +\int_{\theta \wedge \tau^{M,T}_n}^{(\theta + r)\wedge \tau^{M,T}_n}\sum_{i=1}^\infty \norm{\mathcal{P}_n\mathcal{G}_i\left(s,\sy^n_s\right)}_U^2ds + 2\int_{\theta \wedge \tau^{M,T}_n}^{(\theta + r)\wedge \tau^{M,T}_n}\inner{\mathcal{P}_n\mathcal{G}\left(s,\sy^n_s\right)}{\sy^n_s}_U d\mathcal{W}_s.
    \end{align*}
Handling the projections as we did in (\ref{nownonwnumb}), and invoking (\ref{uniformboundsassumpt1actual}), we reduce to
     \begin{align*}
        \norm{\sy^n_{(\theta + r)\wedge \tau^{M,T}_n}}_U^2 &- \norm{\sy^n_{\theta \wedge \tau^{M,T}_n}}_U^2 + \gamma\int_{\theta \wedge \tau^{M,T}_n}^{(\theta + r)\wedge \tau^{M,T}_n} \norm{\sy^n_s}_H^2 ds\\ & \qquad \qquad \qquad \leq c\int_{\theta \wedge \tau^{M,T}_n}^{(\theta + r)\wedge \tau^{M,T}_n}1ds + \int_{\theta \wedge \tau^{M,T}_n}^{(\theta + r)\wedge \tau^{M,T}_n}\inner{\mathcal{G}\left(s,\sy^n_s\right)}{\sy^n_s}_U d\mathcal{W}_s. 
    \end{align*}
where the constant $c$ again depends on $M$, through (\ref{galerkinboundsatisfiedbystoppingtime2}). We must be somewhat careful before taking the supremum over $r \in [0,\delta_j]$ as the term $\norm{\sy^n_{(\theta + r)\wedge \tau^{M,T}_n}}_U^2 - \norm{\sy^n_{\theta \wedge \tau^{M,T}_n}}_U^2$ could be negative, so we cannot necessarily bound $\gamma\int_{\theta \wedge \tau^{M,T}_n}^{(\theta + r)\wedge \tau^{M,T}_n} \norm{\sy^n_s}_H^2 ds$ by the supremum of the right hand side. Therefore, we first treat this integral directly. We rely on both Proposition \ref{theorem:uniformbounds} and the control from $\tau^{M,T}_n$. Indeed,
\begin{align*}
    \mathbbm{E}\int_{\theta \wedge \tau^{M,T}_n}^{(\theta + \delta_j)\wedge \tau^{M,T}_n} \norm{\sy^n_s}_H^2 ds &\leq \mathbbm{E}\left(\sup_{r\in[0, \tau^{M,T}_n]}\norm{\sy^n_r}_H\int_{\theta \wedge \tau^{M,T}_n}^{(\theta + \delta_j)\wedge \tau^{M,T}_n} \norm{\sy^n_s}_H ds\right)\\
    &\leq \left[\mathbbm{E}\left(\sup_{r\in[0, \tau^{M,T}_n]}\norm{\sy^n_r}_H^2\right)\right]^{\frac{1}{2}}\left[\mathbbm{E}\left(\int_{\theta \wedge \tau^{M,T}_n}^{(\theta + \delta_j)\wedge \tau^{M,T}_n} \norm{\sy^n_s}_H ds\right)^2\right]^{\frac{1}{2}}\\
    &\leq c\left[\mathbbm{E}\left(\left(\int_{\theta \wedge \tau^{M,T}_n}^{(\theta + \delta_j)\wedge \tau^{M,T}_n} 1 + \norm{\sy^n_s}_H^2 ds\right)\left(\int_{\theta \wedge \tau^{M,T}_n}^{(\theta + \delta_j)\wedge \tau^{M,T}_n} \norm{\sy^n_s}_H ds\right)\right)\right]^{\frac{1}{2}}\\
    &\leq c\left[\mathbbm{E}\left(\int_{\theta \wedge \tau^{M,T}_n}^{(\theta + \delta_j)\wedge \tau^{M,T}_n} \norm{\sy^n_s}_H ds\right)\right]^{\frac{1}{2}}\\
    &\leq c\left[\left[\mathbbm{E}\left(\sup_{r\in[0, \tau^{M,T}_n]}\norm{\sy^n_r}_H^2\right)\right]^{\frac{1}{2}}\left[\mathbbm{E}\left(\int_{\theta \wedge \tau^{M,T}_n}^{(\theta + \delta_j)\wedge \tau^{M,T}_n}1 ds\right)^2\right]^{\frac{1}{2}}\right]^{\frac{1}{2}}\\
    &\leq c\left[\mathbbm{E}(\delta_j^2)\right]^{\frac{1}{4}}.
\end{align*}
Noting that $\delta_j$ is $\mathbbm{P}-a.s.$ monotone decreasing (as $j \rightarrow \infty$) and  convergent to $0$, the Monotone Convergence Theorem thus justifies that $$c\left[\mathbbm{E}(\delta_j^2)\right]^{\frac{1}{4}} = o_j$$
where $o_j$ represents a constant independent of $n$ which goes to zero as $\delta_j \rightarrow 0$. We can now employ this bound directly as we take expectation, the supremum over $r\in[0,\delta_j]$ and then use the Burkholder-Davis-Gundy Inequality,
\begin{align*}
        &\mathbbm{E}\left[\sup_{r \in [0,\delta_j]}\norm{\sy^n_{(\theta + r)\wedge \tau^{M,T}_n}}_U^2 - \norm{\sy^n_{\theta \wedge \tau^{M,T}_n}}_U^2 + \gamma\int_{\theta \wedge \tau^{M,T}_n}^{(\theta + \delta_j)  \wedge \tau^{M,T}_n} \norm{\sy^n_s}_H^2 ds\right]\\ & \qquad \qquad \qquad \leq o_{j} + c\mathbbm{E}\int_{\theta \wedge \tau^{M,T}_n}^{(\theta + \delta_j)\wedge \tau^{M,T}_n}1 ds + c\mathbbm{E}\left(\int_{\theta \wedge \tau^{M,T}_n}^{(\theta + \delta_j)\wedge \tau^{M,T}_n}\sum_{i=1}^\infty\inner{\mathcal{G}_i\left(s,\sy^n_s\right)}{\sy^n_s}^2_U ds\right)^{\frac{1}{2}}. 
    \end{align*}
We then use (\ref{uniformboundsassumpt2actual}) and again control the $U$ norm by a constant, achieving that
\begin{equation} \label{a la combo} \mathbbm{E}\left[\sup_{r \in [0,\delta_j]}\norm{\sy^n_{(\theta + r)\wedge \tau^{M,T}_n}}_U^2 - \norm{\sy^n_{\theta \wedge \tau^{M,T}_n}}_U^2 + \int_{\theta \wedge \tau^{M,T}_n}^{(\theta + \delta_j)  \wedge \tau^{M,T}_n} \norm{\sy^n_s}_H^2 ds\right] \leq o_j\end{equation}
having also scaled out the $\gamma$. We now need to relate the expression on the left hand side with what we are interested in, which is $\norm{\sy^n}_{UH,(\theta + \delta_j) \wedge \tau^{M,T}_n}^2 - \norm{\sy^n}_{UH,\theta \wedge \tau^{M,T}_n}^2$. We have that \begin{align*}\norm{\sy^n}_{UH,(\theta + \delta_j) \wedge \tau^{M,T}_n}^2 &- \norm{\sy^n}_{UH,\theta \wedge \tau^{M,T}_n}^2\\ &= \sup_{s \in [0,(\theta + \delta_j)\wedge \tau^{M,T}_n]}\norm{\sy^n_s}_U^2 -\sup_{s \in [0,\theta \wedge \tau^{M,T}_n]}\norm{\sy^n_s}_U^2 + \int_{\theta \wedge \tau^{M,T}_n}^{(\theta + \delta_j)  \wedge \tau^{M,T}_n} \norm{\sy^n_s}_H^2 ds\end{align*} and claim \begin{equation}\label{theclaim}\sup_{s \in [0,(\theta + \delta_j)\wedge \tau^{M,T}_n]}\norm{\sy^n_s}_U^2 -\sup_{s \in [0,\theta \wedge \tau^{M,T}_n]}\norm{\sy^n_s}_U^2 \leq \sup_{r \in [0,\delta_j]}\norm{\sy^n_{(\theta + r)\wedge \tau^{M,T}_n}}_U^2 - \norm{\sy^n_{\theta \wedge \tau^{M,T}_n}}_U^2.\end{equation}
Indeed, we have that $$ \sup_{s \in [0,(\theta + \delta_j)\wedge \tau^{M,T}_n]}\norm{\sy^n_s}_U^2 \leq \sup_{s \in [0,\theta \wedge \tau^{M,T}_n]}\norm{\sy^n_s}_U^2 + \sup_{s \in [\theta \wedge \tau^{M,T}_n,(\theta + \delta_j)\wedge \tau^{M,T}_n]}\norm{\sy^n_s}_U^2 - \norm{\sy^n_{\theta \wedge \tau^{M,T}_n}}_U^2$$ as the left hand side must equal either $\sup_{s \in [0,\theta \wedge \tau^{M,T}_n]}\norm{\sy^n_s}_U^2$ or $\sup_{s \in [\theta \wedge \tau^{M,T}_n,(\theta + \delta_j)\wedge \tau^{M,T}_n]}\norm{\sy^n_s}_U^2$, both of which are greater than the subtracted term $\norm{\sy^n_{\theta \wedge \tau^{M,T}_n}}_U^2$. Appreciating that $$\sup_{s \in [\theta \wedge \tau^{M,T}_n,(\theta + \delta_j)\wedge \tau^{M,T}_n]}\norm{\sy^n_s}_U^2 = \sup_{r \in [0,\delta_j]}\norm{\sy^n_{(\theta + r)\wedge \tau^{M,T}_n}}_U^2 $$ then yields the claim (\ref{theclaim}), which in combination with (\ref{a la combo}) grants that
\begin{align} \nonumber
        \mathbbm{E}\left[\norm{\sy^n}_{UH,(\theta + \delta_j) \wedge \tau^{M,T}_n}^2 - \norm{\sy^n}_{UH,\theta \wedge \tau^{M,T}_n}^2\right] \leq o_j.
    \end{align}
This proves the result.
   
\end{proof}

\subsection{Existence and Uniqueness of Strong Solutions} \label{Subbie passage to limit in strong}

We now have all of the ingredients needed to apply Proposition \ref{amazing cauchy lemma}. In this subsection, we fix $\sy$ to be the unique weak solution of the equation (\ref{thespde}) for the initial condition fixed in (\ref{first initial condition bound in H}), known to exist from Theorem \ref{theorem for weak existence}.

\begin{proposition} \label{the one that gives limiting stopping time bound} For any $R> 0$, there exists a subsequence indexed by $(m_j)$ and a constant $M>1$ such that the stopping time $$\tau^{R,T} := T \wedge \inf \left\{s \geq 0: \sup_{r \in [0,s]}\norm{\sy_r}_U^2 + \int_0^s \norm{\sy_r}_H^2dr \geq R \right\}$$ satisfies the property $\tau^{R,T} \leq \tau^{M,T}_{m_j}$ for all $m_j$ $\mathbb{P}-a.s.$.
\end{proposition}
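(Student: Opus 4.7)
The plan is to apply the abstract Cauchy proposition (Proposition \ref{amazing cauchy lemma}) to the Galerkin sequence $(\sy^n)$ together with the first hitting times $(\tau^{M,T}_n)$ from (\ref{tauMtn}). Both hypotheses of that proposition have already been established in the preceding subsections: the Cauchy estimate is precisely the content of Proposition \ref{theorem:cauchygalerkin}, which supplies the quantitative rate $\frac{1}{\sqrt{\lambda_m}}$ in the $UH$-norm up to $\tau^{M,T+1}_m \wedge \tau^{M,T+1}_n$, while the weak equicontinuity at arbitrary stopping times is delivered by Lemma \ref{probably unnecessary lemma}.

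Invoking Proposition \ref{amazing cauchy lemma} for the prescribed level $R>0$ then produces a constant $M=M(R)>1$, a subsequence $(\sy^{m_j})$ and a limiting process $\py$ on the original probability space such that $\sy^{m_j}\to \py$ in the $UH$-norm in an $L^2(\Omega)$ sense up to the first hitting time
\[
\tau^{R,T}_{\py} := T \wedge \inf\left\{s\ge 0 : \norm{\py}_{UH,s}^2 \geq R \right\},
\]
and such that $\tau^{R,T}_{\py}\le \tau^{M,T}_{m_j}$ $\mathbbm{P}$-a.s.\ for every $j$. It is precisely the distinguishing feature of Proposition \ref{amazing cauchy lemma} over the original Glatt-Holtz--Ziane formulation, as noted in the introduction, that the limiting stopping time may be characterised as a first hitting time of the limit at an arbitrary threshold.

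The final step is to identify $\py$ with the unique weak solution $\sy$ of (\ref{thespde}). Since the convergence now holds almost surely along the subsequence on the fixed probability space, the limit-passing arguments of Proposition \ref{prop for first energy bar}, using only Assumption Set 1, transfer directly and show that $\py$ satisfies (\ref{thespde}) in $H^*$ up to $\tau^{R,T}_{\py}$. Extending $\py$ beyond this stopping time by concatenation with $\sy$ and applying the uniqueness of weak solutions (Proposition \ref{first uniqueness prop}) to the stopped processes forces $\py = \sy$ on $[0,\tau^{R,T}_{\py}]$; consequently $\tau^{R,T}_{\py}=\tau^{R,T}$ and the desired bound $\tau^{R,T}\le \tau^{M,T}_{m_j}$ follows.

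The principal technical delicacy lies in this final identification: one must verify that passage to the limit in the Galerkin identity survives on the stopped interval, and that the uniqueness argument of Section \ref{exis uniqu} localises correctly against the random time $\tau^{R,T}_{\py}$. Both are routine consequences of the energy-method structure already exploited in Proposition \ref{first uniqueness prop}, so no analytical estimates beyond those assembled in the preceding subsections are required.
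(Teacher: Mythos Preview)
Your proposal is correct and follows essentially the same route as the paper: apply Proposition \ref{amazing cauchy lemma} with hypotheses supplied by Proposition \ref{theorem:cauchygalerkin} and Lemma \ref{probably unnecessary lemma}, then identify the abstract limit $\py$ with the unique weak solution $\sy$ via the limit-passing arguments of Proposition \ref{prop for first energy bar} together with uniqueness. The only cosmetic difference is that the paper carries out the identification $\py=\sy$ on the larger interval $[0,\tau^{M,T}_\infty]$ directly (which already contains $\tau^{R,T}$), avoiding your concatenation step; both versions work.
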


\begin{proof}
    This comes from a direct application of Lemma \ref{amazing cauchy lemma}, for the Banach Space $X_t:=C\left([0,t];U\right) \cap L^2\left([0,t];H \right)$ equipped with norm $\norm{\cdot}_{UH,t}$. The conditions (\ref{supposition 1}) and (\ref{supposition 2}) are given in Proposition \ref{theorem:cauchygalerkin} and Lemma \ref{probably unnecessary lemma}. The only component in need of proof is that the limiting process $\py$ we achieve from Lemma \ref{amazing cauchy lemma} is in fact $\sy$. We are given that $(\sy^{m_j})$ converges to $\sy$ in $L^2\left([0,\tau^{M,T}_{\infty}];H\right)$ which is a stronger convergence (up to $\tau^{M,T}_{\infty}$) than what we had in Theorem \ref{theorem for new prob space}, which was used to show that the limit process was a weak solution of the equation (\ref{thespde}). Hence in the same manner as Proposition \ref{prop for first energy bar}, we see that $\py$ must be a $\textit{local}$ weak solution as well, up until the stopping time $\tau^{M,T}_{\infty}$, so from the uniqueness of weak solutions we must have that $\py$ is in fact equal to $\sy$ on $[0,\tau^{M,T}_{\infty}]$, which is sufficient for the result.
\end{proof}

This allows us to deduce the additional regularity required for $\sy$ to be a strong solution.

\begin{proposition} \label{final regularity of solutions}
    The process $\sy$ is progressively measurable in $V$ and such that for $\mathbb{P}-a.e.$ $\omega$, $\sy_{\cdot}(\omega) \in L^{\infty}\left([0,T];H\right) \cap L^2\left([0,T];V\right)$.
\end{proposition}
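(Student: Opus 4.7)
The plan is to adapt the weak-compactness argument of Proposition \ref{prop for regularity of limit} to the higher regularity estimates for the Galerkin system, localised by the stopping-time comparison $\tau^{R,T} \leq \tau^{M,T}_{m_j}$ provided by Proposition \ref{the one that gives limiting stopping time bound}. First I would fix $R > 0$ and invoke that proposition to obtain $M > 1$ and a subsequence $(\sy^{m_j})$ such that $\tau^{R,T} \leq \tau^{M,T}_{m_j} \leq \tau^{M,T+1}_{m_j}$ $\mathbbm{P}-a.s.$ Proposition \ref{theorem:uniformbounds} then supplies a constant $C$ independent of $j$ with
\[
\mathbbm{E}\left(\sup_{r \in [0,\tau^{M,T+1}_{m_j}]}\norm{\sy^{m_j}_r}_H^2 + \int_0^{\tau^{M,T+1}_{m_j}}\norm{\sy^{m_j}_s}_V^2\,ds\right) \leq C,
\]
and so the stopped processes $\sy^{m_j}_{\cdot \wedge \tau^{R,T}}$ will be uniformly bounded in both $L^2\left(\Omega;L^\infty\left([0,T];H\right)\right)$ and $L^2\left(\Omega;L^2\left([0,T];V\right)\right)$.

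Mirroring Proposition \ref{prop for regularity of limit}, I would then perform two successive extractions, one via reflexivity of the Hilbert space $L^2\left(\Omega;L^2\left([0,T];V\right)\right)$ and one via the Banach-Alaoglu theorem applied to the dual representation $L^2\left(\Omega;L^\infty\left([0,T];H\right)\right) \cong L^2\left(\Omega;L^1\left([0,T];H\right)\right)^*$, yielding a further subsequence that converges weakly in the former space and weakly$^*$ in the latter to limits $\boldsymbol{\Phi}^1$ and $\boldsymbol{\Phi}^2$. Both limits in particular converge weakly in $L^2\left(\Omega;L^2\left([0,T];U\right)\right)$, so they must coincide with $\sy\mathbbm{1}_{\cdot \leq \tau^{R,T}}$: the strong $L^2\left([0,\tau^{M,T}_\infty];H\right)$-convergence of $(\sy^{m_j})$ to $\sy$ established inside the proof of Proposition \ref{the one that gives limiting stopping time bound}, combined with the ordering $\tau^{R,T} \leq \tau^{M,T}_\infty$, pins down any weak $L^2\left(\Omega \times [0,T];U\right)$-limit. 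Uniqueness of weak limits then delivers $\sy\mathbbm{1}_{\cdot \leq \tau^{R,T}}$ in the target regularity class, and progressive measurability in $V$ of the stopped process will follow by running the same weak-limit argument on $\Omega \times [0,t]$ equipped with $\mathcal{F}_t \times \mathcal{B}([0,t])$, inherited from the progressive measurability of each Galerkin process.

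To conclude, I would pass to $R \rightarrow \infty$: since $\sy \in C\left([0,T];U\right) \cap L^2\left([0,T];H\right)$ $\mathbbm{P}-a.s.$ as a weak solution, $\norm{\sy(\omega)}^2_{UH,T}$ is finite for $\mathbbm{P}-a.e.$ $\omega$, hence $\tau^{R,T}(\omega) = T$ for all $R$ sufficiently large depending on $\omega$; the regularity and progressive measurability thereby extend from $[0,\tau^{R,T}]$ to the full interval $[0,T]$. The hard part will be the identification of the weak and weak$^*$ limits with $\sy$ itself, because the Cauchy analysis only produces strong convergence of the Galerkin approximations on the random interval $[0,\tau^{M,T}_\infty]$; it is precisely the stopping-time bound from Proposition \ref{the one that gives limiting stopping time bound} that bridges this gap and makes the extraction argument go through, after which the remaining steps are routine weak-compactness manipulations.
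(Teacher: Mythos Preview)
Your strategy is exactly the paper's: transfer the higher Galerkin estimate of Proposition \ref{theorem:uniformbounds} to $[0,\tau^{R,T}]$ via Proposition \ref{the one that gives limiting stopping time bound}, run the weak/weak$^*$ compactness argument of Proposition \ref{prop for regularity of limit}, identify limits through the strong Cauchy convergence, and then send $R \rightarrow \infty$ pathwise. There is, however, one technical slip that matters.

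You localise with the \emph{stopped} process $\sy^{m_j}_{\cdot \wedge \tau^{R,T}}$, whereas the paper uses the \emph{indicator-truncated} process $\sy^{m_j}_{\cdot}\mathbbm{1}_{\cdot \leq \tau^{R,T}}$. For the $L^2\left(\Omega;L^\infty([0,T];H)\right)$ bound the two are interchangeable, but for the $L^2\left(\Omega;L^2([0,T];V)\right)$ bound they are not:
\[
\int_0^T \norm{\sy^{m_j}_{s \wedge \tau^{R,T}}}_V^2\,ds = \int_0^{\tau^{R,T}} \norm{\sy^{m_j}_s}_V^2\,ds + (T-\tau^{R,T})\norm{\sy^{m_j}_{\tau^{R,T}}}_V^2,
\]
and the second term is not controlled by Proposition \ref{theorem:uniformbounds}, which gives only a time-integrated $V$-estimate (the pointwise $V$-norm at the random time $\tau^{R,T}$ could blow up with $m_j$). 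The indicator truncation kills this tail outright. This also explains the inconsistency later in your sketch, where you say the weak limit is $\sy\mathbbm{1}_{\cdot \leq \tau^{R,T}}$ while having worked with stopped processes throughout. Replace $\sy^{m_j}_{\cdot \wedge \tau^{R,T}}$ by $\sy^{m_j}_{\cdot}\mathbbm{1}_{\cdot \leq \tau^{R,T}}$ and everything you wrote goes through; the paper's proof also makes explicit the Dominated Convergence step upgrading the $\mathbbm{P}$-a.s.\ $L^2([0,T];H)$ convergence to $L^2\left(\Omega;L^2([0,T];H)\right)$ before invoking uniqueness of weak limits, which you should spell out rather than leave to ``routine manipulations''.
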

\begin{proof}
Propositions \ref{theorem:uniformbounds} and \ref{the one that gives limiting stopping time bound} together imply that, for any $R>0$, $$ \mathbbm{E}\left[\sup_{r \in [0,\tau^{R,T}]}\norm{\sy^{m_j}_r}_1^2 + \int_0^{ \tau^{R,T}}\norm{\sy^{m_j}_r}_2^2dr  \right] \leq C_{R}$$
where $C_{R}$ now incorporates $\norm{\sy_0}^2_{L^\infty(\Omega;H)}$ and shows the dependency on $M$ in terms of $R$. We also note use of (\ref{uniform bounds of projection}) in this deduction. In consequence the sequence of processes $(\sy^{m_j}_{\cdot}\mathbbm{1}_{\cdot \leq \tau^{R,T}})$ is uniformly bounded in both $L^2\left(\Omega; L^\infty\left([0,T];H\right) \right)$ and $L^2\left(\Omega; L^2\left([0,T];V\right) \right)$. We can deduce the existence of a subsequence which is weakly convergent in the Hilbert Space $L^2\left(\Omega;L^2\left([0,T];V\right)\right)$ to some $\check{\py}$, but we may also identify $L^2\left(\Omega;L^\infty\left([0,T];H\right)\right)$ with the dual space of $L^2\left(\Omega;L^1\left([0,T];H\right)\right)$ and as such from the Banach-Alaoglu Theorem we can extract a further subsequence which is convergent to some $\hat{\py}$ in the weak* topology. These limits imply convergence to both $\check{\py}$ and $\hat{\py}$ in the weak topology of $L^2\left(\Omega;L^2\left([0,T];H\right)\right)$. However, we know that $(\sy^{m_j}_{\cdot}\mathbbm{1}_{\cdot \leq \tau^{R,T}})$ converges to $\sy_{\cdot}\mathbbm{1}_{\cdot \leq \tau^{R,T}}$ $\mathbb{P}-a.s.$ in $L^2\left([0,T];H\right)$ from Lemma \ref{amazing cauchy lemma}, and from the Dominated Convergence Theorem (domination comes easily from   $\mathbbm{1}_{\cdot \leq \tau^{R,T}}$, $\tau^{R,T} \leq \tau^{M,T}_{m_j}$) then in fact the convergence holds in $L^2\left(\tilde{\Omega};L^2\left([0,T];H\right)\right)$. Of course the same convergence then holds in the weak topology, and by uniqueness of limits in the weak topology then $\sy_{\cdot}\mathbbm{1}_{\cdot \leq \tau^{R,T}} = \check{\py} = \hat{\py}$ as elements of $L^2\left(\Omega;L^2\left([0,T];H\right)\right)$, so they agree $\mathbbm{P} \times \lambda-a.s.$. Thus for $\mathbbm{P}-a.e.$ $\omega$, $\sy_{\cdot}(\omega)\mathbbm{1}_{\cdot \leq \tau^{R,T}(\omega)} \in L^{\infty}\left([0,T];H\right)\cap L^2\left([0,T];V\right)$. At each such $\omega$ the regularity of $\sy$ as a weak solution ensures that for sufficiently large $R$ (dependent on $\omega$), $\tau^{R,T}(\omega) = T$. Therefore for $\mathbb{P}-a.e.$ $\omega$
, $\sy_{\cdot}(\omega) \in L^{\infty}\left([0,T];H\right)\cap L^2\left([0,T];V\right)$.\\

The progressive measurability is justified similarly; for any $t\in [0,T]$, we can use the progressive measurability of $(\sy^{m_j})$ and hence $(\sy^{m_j}_{\cdot}\mathbbm{1}_{\cdot \leq \tau^{R,T}})$ to instead deduce $\sy_{\cdot}\mathbbm{1}_{\cdot \leq \tau^{R,T}}$ as the weak limit in $L^2\left(\Omega \times [0,t];V\right)$ where $\Omega \times [0,t]$ is equipped with the $\mathcal{F}_t \times \mathcal{B}\left([0,t]\right)$ sigma-algebra. Therefore $\sy_{\cdot}\mathbbm{1}_{\cdot \leq \tau^{R,T}}: \Omega \times [0,t] \rightarrow V$ is measurable with respect to this product sigma-algebra which justifies the progressive measurability of the truncated process. To carry this property over to $\sy$, we recognise $\sy$ as the $\mathbb{P} \times \lambda$ almost everywhere limit of the sequence $(\sy_{\cdot}\mathbbm{1}_{\cdot \leq \tau^{R,T}})$ as $R \rightarrow \infty$ over the product space $\Omega \times [0,t]$ equipped with product sigma-algebra $\mathcal{F}_t \times \mathcal{B}\left([0,t]\right)$. Such a limit preserves the measurability in this product sigma-algebra, justifying the progressive measurability of $\sy$.

\end{proof}


Applying Lemma \ref{lemma for regular weak is strong} in conjunction with Proposition \ref{final regularity of solutions} now proves Theorem \ref{theorem for strong existence} in the case of a bounded initial condition (\ref{first initial condition bound in H}). Extending this result to a general $\mathcal{F}_0-$measurable $\sy_0:\Omega \rightarrow H$ is done exactly as we did for the weak solution in Subsection \ref{exis uniqu}, so we do not repeat these steps and conclude the proof of Theorem \ref{theorem for strong existence} here.

\subsection{Continuity?} \label{subbie continuity?}

We close this section with a somewhat more informal discussion around the pathwise continuity of the strong solution $\sy$ in $H$. One may well have expected this to appear in the definition of the strong solution, though it seems challenging to verify and additional assumptions that could facilitate it are perhaps too restrictive. A first result is given now.

\begin{lemma} \label{continuity lemma 1}
    Let Assumption Sets 1, 2 and 3 hold. Suppose that there exists a continuous bilinear form $\inner{\cdot}{\cdot}_{U \times V}: U \times V \rightarrow \R$  such that for every $f \in H$, $\phi \in V$, $$ \inner{f}{\phi}_{U \times V} = \inner{f}{\phi}_H.$$
    Then for $\mathbbm{P}-a.e.$ $\omega$, $\sy_{\cdot}(\omega) \in C_w\left([0,T];H\right)$. 
\end{lemma}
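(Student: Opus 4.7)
The strategy is to lift the weak-continuity-in-$U$ argument already executed at the end of Proposition \ref{prop for first energy bar} up to weak continuity in $H$, using the hypothesized bilinear pairing $\inner{\cdot}{\cdot}_{U\times V}$ as the bridge: it lets $H$-inner products against test elements from $V$ be computed via a quantity that is continuous in $U$. First I would record that, for $\mathbbm{P}-a.e.$ $\omega$, $\sy_{\cdot}(\omega)\in C([0,T];U)\cap L^{\infty}([0,T];H)$. The $L^{\infty}([0,T];H)$ regularity is Proposition \ref{final regularity of solutions}. The pathwise continuity in $U$ follows from the strong-solution identity (\ref{thespde}) read in $U$: Assumption \ref{newfacilitator} ensures $\mathcal{A}(\cdot,\sy_{\cdot}(\omega))\in L^2([0,T];U)$ for a.e. $\omega$, so the Bochner integral is $U$-continuous, and the $U$-valued stochastic integral admits a continuous modification.

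Next, for any $\phi\in V$ and any $t\in[0,T]$, the hypothesized identification gives
$$\inner{\sy_t(\omega)}{\phi}_H = \inner{\sy_t(\omega)}{\phi}_{U\times V},$$
and since $\inner{\cdot}{\phi}_{U\times V}$ is continuous on $U$ (by continuity of the bilinear form in the first argument) and $\sy_{\cdot}(\omega)\in C([0,T];U)$, the map $t\mapsto\inner{\sy_t(\omega)}{\phi}_H$ is continuous on $[0,T]$. This handles test elements drawn from $V$, so all that remains is to pass from $V$ to arbitrary $g\in H$ via a density argument.

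For this last step I would run exactly the $\varepsilon/2$-splitting that closes Proposition \ref{prop for first energy bar}: fix $\omega$ in the full-measure set on which both the $L^{\infty}(H)$ bound and the $C([0,T];U)$ regularity hold, fix $g\in H$, $t\in[0,T]$, and a sequence $t_k\to t$; given $\varepsilon>0$, choose $\phi\in V$ with $\norm{g-\phi}_H<\varepsilon/\bigl(1+4\sup_{r\in[0,T]}\norm{\sy_r(\omega)}_H\bigr)$, which is possible because the system $(a_k)\subset V$ is simultaneously a basis of $H$, so finite linear combinations of the $a_k$ are $V$-valued and dense in $H$. Splitting
$$\left\vert\inner{\sy_{t_k}(\omega)-\sy_t(\omega)}{g}_H\right\vert \leq 2\sup_{r\in[0,T]}\norm{\sy_r(\omega)}_H\,\norm{g-\phi}_H + \left\vert\inner{\sy_{t_k}(\omega)-\sy_t(\omega)}{\phi}_H\right\vert,$$
the first term is bounded by $\varepsilon/2$ by construction and the second tends to zero as $k\to\infty$ by the previous paragraph. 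Since $\varepsilon$ was arbitrary, the weak continuity in $H$ follows. The main obstacle here is conceptual rather than technical: the argument hinges entirely on the availability of $\inner{\cdot}{\cdot}_{U\times V}$, so once that hypothesis is granted the proof is just a direct transfer of the $C_w([0,T];U)$ argument, with no new estimates required.
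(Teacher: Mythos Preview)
Your approach matches the paper's: show $t\mapsto\inner{\sy_t(\omega)}{\phi}_{U\times V}=\inner{\sy_t(\omega)}{\phi}_H$ is continuous for $\phi\in V$, then extend to $g\in H$ by density using the uniform $H$-bound. However, there is a gap. When you write $\inner{\sy_t(\omega)}{\phi}_H=\inner{\sy_t(\omega)}{\phi}_{U\times V}$ for \emph{every} $t\in[0,T]$, and later invoke $\sup_{r\in[0,T]}\norm{\sy_r(\omega)}_H$, you are implicitly using that $\sy_t(\omega)\in H$ for all $t$ with a finite pointwise supremum. Proposition \ref{final regularity of solutions} only yields $\sy_{\cdot}(\omega)\in L^\infty([0,T];H)$, i.e.\ membership and boundedness in $H$ for \emph{almost every} $t$; at the exceptional times you only know $\sy_t(\omega)\in U$ from the continuity in $U$. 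The hypothesized identity $\inner{f}{\phi}_{U\times V}=\inner{f}{\phi}_H$ is stated only for $f\in H$, so you cannot apply it at an exceptional $t$, and the density argument cannot start without the uniform $H$-bound holding at every $t$.

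The paper addresses exactly this point. Fix $\omega$ in the full-measure set; since the bad set of times is null, choose a countable dense subset $(t_n)\subset[0,T]$ on which $\sy_{t_n}(\omega)\in H$ with $\norm{\sy_{t_n}(\omega)}_H\leq c$. For an arbitrary $t$, take $(t_k)\to t$ from this set: $(\sy_{t_k}(\omega))$ is bounded in $H$, hence has a weakly convergent subsequence in $H$ with limit $\psi$; but $\sy_{\cdot}(\omega)\in C([0,T];U)$ forces $\sy_{t_k}(\omega)\to\sy_t(\omega)$ in $U$, so by uniqueness of weak limits $\sy_t(\omega)=\psi\in H$ and $\norm{\sy_t(\omega)}_H\leq c$. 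With this in hand your argument goes through verbatim.
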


\begin{proof}
    Fixing arbitrary $\phi \in V$, $\sy$ satisfies the identity
    $$ \inner{\sy_t}{\phi}_{U \times V} = \inner{\sy_0}{\phi}_{U \times V} + \inner{\int_0^t\mathcal{A}(s,\sy_s)ds}{\phi}_{U \times V} + \inner{\int_0^t\mathcal{G}(s,\sy_s)d\mathcal{W}_s}{\phi}_{U \times V}$$
    $\mathbbm{P}-a.s.$ for all $t \in [0,T]$, and by continuity of the bilinear form,
    $$ \inner{\sy_t}{\phi}_{U \times V} = \inner{\sy_0}{\phi}_{U \times V} + \int_0^t\inner{\mathcal{A}(s,\sy_s)}{\phi}_{U \times V}ds + \int_0^t\inner{\mathcal{G}(s,\sy_s)}{\phi}_{U \times V}d\mathcal{W}_s.$$
    We want to use that $\inner{\sy_t}{\phi}_{U \times V} = \inner{\sy_t}{\phi}_{H}$, but it is not yet clear that $\sy_t \in H$ $\mathbbm{P}-a.s.$ for all $t \in [0,T]$. It can, in fact, be verified by a very similar process to the proof of Proposition \ref{final regularity of solutions}. To show this let us fix any $t \in [0,T]$. For $\mathbbm{P}-a.e.$ $\omega$, $\sy_{\cdot}(\omega) \in L^\infty\left([0,T];H\right)$ thus for a countable dense subset $(t_n)$ of $[0,T]$, $\sy_{t_n}(\omega) \in H$ and such that $\norm{\sy_{t_n}(\omega)}_H \leq c$ independent of $n$. We choose now any sequence $(t_k)$ which converges to $t$. The sequence $(\sy_{t_k}(\omega))$ is uniformly bounded in $H$, so admits a weakly convergent subsequence to a limit $\psi$. However, as $\sy_{\cdot}(\omega) \in C\left([0,T];U\right)$ then $(\sy_{t_k}(\omega))$ is strongly hence weakly convergent to $\sy_t(\omega)$ in $U$, so by uniqueness of limits in the weak topology then $\sy_t(\omega) = \psi$ and in particular $\sy_t(\omega) \in H$. Thus, $\mathbbm{P}-a.s.$ for all $t \in [0,T]$,
    $$ \inner{\sy_t}{\phi}_{H} = \inner{\sy_0}{\phi}_{U \times V} + \int_0^t\inner{\mathcal{A}(s,\sy_s)}{\phi}_{U \times V}ds + \int_0^t\inner{\mathcal{G}(s,\sy_s)}{\phi}_{U \times V}d\mathcal{W}_s$$
    so that $\inner{\sy_t}{\phi}_{H}$ is $\mathbbm{P}-a.s.$ continuous. This is not quite enough for weak continuity, as we instead require that $\inner{\sy_t}{f}_{H}$ is $\mathbbm{P}-a.s.$ continuous for any $f \in H$, not just any $\phi \in V$. From this point, though, we can extend the continuity to $f$ by the density of $V$ in $H$, identically to the end of the proof of Proposition \ref{prop for first energy bar}.
\end{proof}

Pathwise continuity of $\sy$ in $H$ would now follow from only pathwise continuity of the norm $\norm{\sy_{\cdot}}_H$, see e.g. [\cite{goodair2022stochastic}] Lemma 1.5.3. A popular way to show such a property is through Kolmogorov's Continuity Theorem, by verifying that there exists a constant $C$ such that for all $s < t \in [0,T]$, $$\mathbbm{E}\left(\left\vert \norm{\sy_t}_H^2 - \norm{\sy_s}_H^2\right\vert^4 \right) \leq C(t-s)^2$$
which is the typical choice of parameters to apply Kolmogorov's Continuity Theorem for a Brownian Motion and related SDEs. Our assumptions don't quite allow us to show this, even at the level of the Galerkin System: the issue arises in the $V$-norm dependence in (\ref{uniformboundsassumpt2}), which is necessary to have for applications in the case of Navier Boundary Conditions as seen in Theorem \ref{2d strong  navier}. There is, however, a simpler case where continuity can be deduced.




\begin{lemma} \label{continuity lemma 2}
    Let Assumption Sets 1, 2 and 3 hold. Suppose that there exists a continuous bilinear form $\inner{\cdot}{\cdot}_{U \times V}: U \times V \rightarrow \R$  such that for every $f \in H$, $\phi \in V$, $$ \inner{f}{\phi}_{U \times V} = \inner{f}{\phi}_H.$$ In addition, suppose that $\bar{H}$ can be taken as $H$. Then for $\mathbbm{P}-a.e.$ $\omega$, $\sy_{\cdot}(\omega) \in C\left([0,T];H\right)$. 
\end{lemma}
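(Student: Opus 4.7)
The plan is to combine the weak continuity of $\sy$ in $H$, already obtained in Lemma \ref{continuity lemma 1}, with pathwise continuity of the real-valued process $t \mapsto \norm{\sy_t}_H^2$. In a Hilbert space, these two properties together give strong (norm) continuity, as weak convergence of a sequence whose norms converge to the norm of the weak limit yields strong convergence, exactly as referenced in the discussion after Lemma \ref{continuity lemma 1} via [\cite{goodair2022stochastic}] Lemma 1.5.3. Thus it is enough to show $\mathbbm{P}-a.s.$ continuity of $\norm{\sy_\cdot}_H^2$.

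To this end I would apply the It\^{o} formula of Proposition \ref{rockner prop} to $\sy$ in the Gelfand triple $V \hookrightarrow H \hookrightarrow V^*$. The hypothesis $\bar{H} = H$ is precisely what is needed to place $\mathcal{G}(\cdot,\sy_\cdot)$ into $\mathscr{L}^2(\mathfrak{U};H)$ rather than into a strictly larger extension. Since strong solutions satisfy $\sy_\cdot(\omega) \in C([0,T];U) \cap L^\infty([0,T];H) \cap L^2([0,T];V)$ for $\mathbbm{P}-a.e.$ $\omega$, and $\sy$ is progressively measurable in $V$, the continuous embedding $U \hookrightarrow H^* \hookrightarrow V^*$ together with Assumption \ref{newfacilitator}, (\ref{111fac}), deliver
\begin{align*}
\int_0^T \norm{\mathcal{A}(s,\sy_s(\omega))}_{V^*}^2 ds &+ \int_0^T \sum_{i=1}^\infty \norm{\mathcal{G}_i(s,\sy_s(\omega))}_H^2 ds \\ &\leq c\,\sup_{r \in [0,T]} K_U\bigl(\sy_r(\omega)\bigr) \int_0^T \bigl(1 + \norm{\sy_s(\omega)}_H^4 + \norm{\sy_s(\omega)}_V^2\bigr) ds,
\end{align*}
which is pathwise finite from the regularity just recalled. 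Moreover the strong solution satisfies the evolution equation in $U$, hence \emph{a fortiori} in $V^*$, so all hypotheses of Proposition \ref{rockner prop} are met.

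The It\^{o} formula then writes $\norm{\sy_t}_H^2$ as $\norm{\sy_0}_H^2$ plus two Lebesgue integrals in time (absolutely continuous in $t$, $\mathbbm{P}-a.s.$) and one stochastic integral against $\mathcal{W}$ (a continuous local martingale, $\mathbbm{P}-a.s.$ continuous). Therefore $t\mapsto\norm{\sy_t}_H^2$ is $\mathbbm{P}-a.s.$ continuous on $[0,T]$, hence so is $t\mapsto\norm{\sy_t}_H$. Combining with the weak continuity of $\sy$ in $H$ from Lemma \ref{continuity lemma 1}, the Radon--Riesz property of the Hilbert space $H$ upgrades this to strong continuity, giving $\sy_\cdot(\omega) \in C([0,T];H)$ for $\mathbbm{P}-a.e.$ $\omega$, as required.

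The main (and essentially only) obstacle is justifying the applicability of Proposition \ref{rockner prop} in the triple $V \hookrightarrow H \hookrightarrow V^*$; the key point is that $\bar{H}=H$ converts the noise bound (\ref{111fac}) into square-integrability in the \emph{correct} Hilbert--Schmidt space $\mathscr{L}^2(\mathfrak{U};H)$, which was not available without this extra hypothesis. This is precisely why the more restrictive assumption $\bar{H}=H$ is imposed here but not in Lemma \ref{continuity lemma 1}, and once it is granted the remainder of the argument is routine.
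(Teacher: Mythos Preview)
Your proof is correct and rests on the same core step as the paper: verifying the hypotheses of Proposition \ref{rockner prop} for the triple $\mathcal{H}_1 = V$, $\mathcal{H}_2 = H$, $\mathcal{H}_3 = U$ (you use $V^*$ in place of $U$, which also works via the embedding $U \hookrightarrow V^*$), with the assumption $\bar{H}=H$ ensuring that $\mathcal{G}(\cdot,\sy_\cdot)$ lands in $\mathscr{L}^2(\mathfrak{U};H)$ and (\ref{111fac}) giving the required integrability of the drift.

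The only difference is that you take a detour: you extract the energy identity (\ref{ito big dog*}) to deduce continuity of $t \mapsto \norm{\sy_t}_H^2$, and then combine this with the weak continuity from Lemma \ref{continuity lemma 1} via a Radon--Riesz argument. The paper simply reads off the ``Moreover'' clause of Proposition \ref{rockner prop}, which already asserts $\sy_\cdot(\omega) \in C([0,T];\mathcal{H}_2) = C([0,T];H)$ directly. So your route is sound but unnecessarily long; once the hypotheses of Proposition \ref{rockner prop} are in place, neither Lemma \ref{continuity lemma 1} nor the norm-plus-weak-continuity argument is needed.
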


\begin{proof}
    This is now a direct application of Proposition \ref{rockner prop}.
\end{proof}

\section{Applications: Stochastic Navier-Stokes Equations} \label{Section: applications}

We present applications of our framework for the Navier-Stokes Equation under Stochastic Advection by Lie Transport (SALT) proposed in [\cite{holm2015variational}], given by \begin{equation} \label{number2equationSALT}
    u_t = u_0 - \int_0^t\mathcal{L}_{u_s}u_s\,ds + \nu\int_0^t \Delta u_s\, ds + \int_0^t B(u_s) \circ d\mathcal{W}_s - \nabla \rho_t
\end{equation}
where $u$ represents the fluid velocity, $\rho$ the pressure\footnote{The pressure term is a semimartingale, and an explicit form for the SALT Euler Equation is given in [\cite{street2021semi}] Subsection 3.3}, $\mathcal{L}$ represents the nonlinear term and $B$ is a first order differential operator (the SALT Operator). A complete introduction to this equation is given in [\cite{goodair20233d}], with full technical details that are glossed over here, although we note that our results can be applied for a variety of additive, multiplicative and transport noise structures. The choice of SALT noise is particularly challenging and demonstrates the efficacy of our framework. Intrinsic to this stochastic methodology is that $B$ is defined relative to a collection of functions $(\xi_i)$ which physically represent spatial correlations. These $(\xi_i)$ can be determined at coarse-grain resolutions from finely resolved numerical simulations, and mathematically are derived as eigenvectors of a velocity-velocity correlation matrix (see [\cite{cotter2020data}, \cite{cotter2018modelling}, \cite{cotter2019numerically}]). Indeed, this operator $B$ is given by the actions of its components $B_i$ on a vector field $\phi$ by $$B_i: \phi \mapsto \sum_{j=1}^N\left(\xi_i^j\partial_j\phi + \phi^j\nabla \xi_i^j\right)$$
in $N$-dimensions, where the superscript denotes the $j^{\textnormal{th}}$ component mapping. We note this is the sum of a classical transport term and a zeroth-order term. The equation (\ref{number2equationSALT}) is posed on a smooth bounded domain $\mathscr{O} \subset \R^N$, and we introduce the notation $C^{\infty}_{0,\sigma}$ to be the subspace of $C^{\infty}_0\left(\mathscr{O};\R^N\right)$ consisting of divergence-free functions, which is to say those $\phi \in C^{\infty}_0\left(\mathscr{O};\R^N\right)$ such that $\sum_{j=1}^N\partial_j\phi^j = 0$. Following this, we define $L^2_{\sigma}$ as the closure of $C^{\infty}_{0,\sigma}$ in $L^2\left(\mathscr{O};\R^N\right)$, and the Leray Projector $\mathcal{P}$ as the orthogonal projection in $L^2\left(\mathscr{O};\R^N\right)$ onto $L^2_\sigma$.\\

To study this equation more freely, we commit two manipulations of (\ref{number2equationSALT}); the first is to project via $\mathcal{P}$, and the second is to convert to It\^{o} Form. With this, we arrive at
\begin{equation} \label{projected Ito}
    u_t = u_0 - \int_0^t\mathcal{P}\mathcal{L}_{u_s}u_s\ ds + \nu\int_0^t \mathcal{P} \Delta u_s\, ds + \frac{1}{2}\int_0^t\sum_{i=1}^\infty \mathcal{P}B_i^2u_s ds + \int_0^t \mathcal{P}B(u_s) d\mathcal{W}_s 
\end{equation}
which is now in the form of (\ref{thespde}). We emphasise again that a thorough overview of this process is given in [\cite{goodair20233d}]. Applications of our framework for different dimensions and boundary conditions are given below.

\subsection{No-Slip Boundary Condition} \label{applied no slip}

Let us impose the boundary condition $u=0$ on $\partial \mathscr{O}$; this is the so-called no-slip boundary condition. We define the space $W^{1,2}_{\sigma}$ as the closure of $C^{\infty}_{0,\sigma}$ in $W^{1,2}\left(\mathscr{O};\R^N\right)$, and $W^{2,2}_{\sigma}$ as the intersection of $W^{2,2}\left(\mathscr{O};\R^N\right)$ with $W^{1,2}_{\sigma}$. Indeed functions in $W^{1,2}_{\sigma}$ satisfy both the divergence-free and zero-trace conditions, so the incompressibility and boundary constraints are embedded into this function space. The functional framework of Subsection \ref{subs functional framework} is satisfied for the spaces
$$V:= W^{2,2}_{\sigma}, \qquad H:= W^{1,2}_{\sigma}, \qquad U:= L^2_{\sigma} $$ and the system $(a_k)$ of eigenfunctions of the Stokes Operator $-\mathcal{P}\Delta$. The mappings $\mathcal{P}\mathcal{L}$, $\mathcal{P}\Delta$ are understood from $W^{1,2}_{\sigma}$ into $\left(W^{1,2}_{\sigma}\right)^*$ by the duality pairings for $f,g \in W^{1,2}_{\sigma}$ of
\begin{align*}
    \inner{\mathcal{P}\mathcal{L}_ff}{g}_{\left(W^{1,2}_{\sigma}\right)^* \times W^{1,2}_{\sigma}} &= \inner{\mathcal{L}_ff}{g}_{L^{6/5}\times L^6}\\
    \inner{\mathcal{P}\Delta f}{g}_{\left(W^{1,2}_{\sigma}\right)^* \times W^{1,2}_{\sigma}} &= -\inner{f}{g}_1
\end{align*}
where $\inner{\cdot}{\cdot}_1$ represents the homogeneous $W^{1,2}$ inner product, which is to say without the $L^2$ component. As a direct application of Theorem \ref{theorem for martingale weak existence}, we can obtain the following.

\begin{theorem} \label{3D weak no slip}
    Let $N=2$ or $3$, $u_0 \in L^\infty\left(\Omega;L^2_{\sigma}\right)$ be $\mathcal{F}_0-$measurable, $(\xi_i) \in W^{1,2}_{\sigma} \cap W^{2,\infty}$ with $\sum_{i=1}^\infty \norm{\xi_i}_{W^{2,\infty}}^2 < \infty$. Then there exists a filtered probability space $\left(\tilde{\Omega},\tilde{\mathcal{F}},(\tilde{\mathcal{F}}_t), \tilde{\mathbbm{P}}\right)$, a Cylindrical Brownian Motion $\tilde{\mathcal{W}}$ over $\mathfrak{U}$ with respect to $\left(\tilde{\Omega},\tilde{\mathcal{F}},(\tilde{\mathcal{F}}_t), \tilde{\mathbbm{P}}\right)$, an $\mathcal{F}_0-$measurable $\tilde{u}_0: \tilde{\Omega} \rightarrow U$ with the same law as $u_0$, and a progressively measurable process $\tilde{u}$ in $W^{1,2}_{\sigma}$ such that for $\tilde{\mathbb{P}}-a.e.$ $\tilde{\omega}$, $\tilde{u}_{\cdot}(\omega) \in L^{\infty}\left([0,T];L^2_{\sigma}\right)\cap C_w\left([0,T];L^2_{\sigma}\right) \cap L^2\left([0,T];W^{1,2}_{\sigma}\right)$ and
\begin{align*} 
  \tilde{u}_t = \tilde{u}_0 - \int_0^t\mathcal{P}\mathcal{L}_{\tilde{u}_s}\tilde{u}_s\ ds + \nu\int_0^t \mathcal{P} \Delta \tilde{u}_s\, ds + \frac{1}{2}\int_0^t\sum_{i=1}^\infty \mathcal{P}B_i^2\tilde{u}_s ds + \int_0^t \mathcal{P}B(\tilde{u}_s) d\tilde{\mathcal{W}}_s
\end{align*}
holds $\tilde{\mathbb{P}}-a.s.$ in $\left(W^{1,2}_{\sigma}\right)^*$ for all $t \in [0,T]$.
\end{theorem}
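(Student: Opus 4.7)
The result follows by verifying that Assumption Set 1 is satisfied for the functional triple $V = W^{2,2}_{\sigma}$, $H = W^{1,2}_{\sigma}$, $U = L^2_{\sigma}$, the basis $(a_k)$ of Stokes eigenfunctions, the drift $\mathcal{A}(f) = -\mathcal{P}\mathcal{L}_f f + \nu\mathcal{P}\Delta f + \tfrac{1}{2}\sum_i \mathcal{P}B_i^2 f$, and the diffusion $\mathcal{G}_i(f) = \mathcal{P}B_i f$; Theorem \ref{theorem for martingale weak existence} then delivers the conclusion. The compact embedding $H \hookrightarrow U$ is Rellich-Kondrachov, and the uniform boundedness (\ref{uniform bounds of projection}) of $\mathcal{P}_n$ in $W^{1,2}_{\sigma}$ follows from the self-adjointness of the Stokes operator on $L^2_{\sigma}$ together with the characterisation $\|\phi\|_{W^{1,2}}^2 \sim \sum_k \lambda_k \langle \phi, a_k\rangle_U^2$.

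For Assumption \ref{new assumption 1}, the bound (\ref{111}) on the nonlinear term comes from duality and the Gagliardo-Nirenberg estimate $\|f\|_{L^4}^2 \leq c\|f\|_U^{2-N/2}\|f\|_H^{N/2}$, which is valid in dimensions $N=2,3$; the Laplacian is handled by $\langle \mathcal{P}\Delta f, g\rangle_{H^* \times H} = -\langle f,g\rangle_1$; the noise and corrector terms are controlled by $\|B_i f\|_U \leq c\|\xi_i\|_{W^{1,\infty}}\|f\|_H$ and summability of $\|\xi_i\|_{W^{2,\infty}}^2$. The Lipschitz bound (\ref{333}) is immediate from linearity of $B_i$ in $f$; for (\ref{222}) one writes $\mathcal{L}_\phi \phi - \mathcal{L}_\psi \psi = \mathcal{L}_{\phi-\psi}\phi + \mathcal{L}_\psi(\phi-\psi)$ and invokes the Sobolev embedding $W^{2,2} \hookrightarrow W^{1,\infty}$. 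Assumptions \ref{tightness assumptions} and \ref{limity assumptions} are of the same flavour, after integration by parts to move derivatives onto the smoother argument; (\ref{tightnessassumpt1}) in dimension three produces an $\|\phi\|_H^{7/4}$ contribution, so one invokes the footnote allowing $q<2$.

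The heart of the proof, and the main obstacle, lies in the coercivity (\ref{uniformboundsassumpt1actual}): both $\sum_i \|\mathcal{P}B_i\phi\|_U^2$ and $\sum_i \langle \mathcal{P}B_i^2\phi, \phi\rangle_U$ individually carry an $\|\phi\|_H^2$ contribution -- this is exactly the unboundedness of the noise in $U$ -- and their sum would violate coercivity if not for the precise It\^{o}-Stratonovich cancellation. Exploiting divergence-freeness of $\xi_i$ and the no-slip condition, an integration by parts yields
\[
2\bigl\langle \tfrac{1}{2}\sum_i \mathcal{P}B_i^2 \phi, \phi\bigr\rangle_U + \sum_i \|\mathcal{P}B_i\phi\|_U^2 \leq c\bigl(1 + \|\phi\|_U^2\bigr),
\]
in which the worst-order terms cancel and the remainder is summable in $\sum \|\xi_i\|_{W^{2,\infty}}^2$. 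Combining this with the antisymmetry $\langle \mathcal{P}\mathcal{L}_\phi \phi,\phi\rangle_U = 0$ of the nonlinearity and the coercive $2\nu\langle \mathcal{P}\Delta\phi,\phi\rangle_U = -2\nu\|\nabla\phi\|^2$ delivers (\ref{uniformboundsassumpt1actual}) with $\gamma = \nu$ after absorbing a constant $\|\phi\|_U^2$ via Poincar\'e. The estimate (\ref{uniformboundsassumpt2actual}) follows from $|\langle \mathcal{P}B_i\phi,\phi\rangle_U| \leq c\|\xi_i\|_{W^{1,\infty}}\|\phi\|_U^2$ upon a further integration by parts that removes the gradient from the paired $\phi$. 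Full execution of these SALT-specific cancellations mirrors the arguments of \cite{goodair20233d}, from which the theorem follows by direct appeal to Theorem \ref{theorem for martingale weak existence}.
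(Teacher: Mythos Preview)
Your proposal is correct and follows precisely the paper's approach: the theorem is obtained as a direct application of Theorem \ref{theorem for martingale weak existence} once Assumption Set 1 is verified for the spaces $V = W^{2,2}_{\sigma}$, $H = W^{1,2}_{\sigma}$, $U = L^2_{\sigma}$ with the Stokes eigenbasis. The paper itself defers the detailed verification of the assumptions to \cite{goodair2023zero} and \cite{goodair20233d}, whereas you sketch the key estimates explicitly, correctly identifying the It\^{o}--Stratonovich cancellation in (\ref{uniformboundsassumpt1actual}) as the crux.
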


In fact we can do better in 2D, as an application of Theorem \ref{theorem for weak existence}.

\begin{theorem} \label{2D weak no slip}
    Let $N=2$, $u_0: \Omega \rightarrow L^2_{\sigma}$ be $\mathcal{F}_0-$measurable, $(\xi_i) \in W^{1,2}_{\sigma} \cap W^{2,\infty}$ with $\sum_{i=1}^\infty \norm{\xi_i}_{W^{2,\infty}}^2 < \infty$. Then there exists a progressively measurable process $u$ in $W^{1,2}_{\sigma}$ such that for $\mathbb{P}-a.e.$ $\tilde{\omega}$, $u_{\cdot}(\omega) \in C\left([0,T];L^2_{\sigma}\right) \cap L^2\left([0,T];W^{1,2}_{\sigma}\right)$ and (\ref{projected Ito}) holds $\mathbb{P}-a.s.$ in $\left(W^{1,2}_{\sigma}\right)^*$ for all $t \in [0,T]$.
\end{theorem}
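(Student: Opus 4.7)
The plan is to apply Theorem~\ref{theorem for weak existence} to the It\^o formulation (\ref{projected Ito}), which sits in the abstract framework (\ref{thespde}) with
$$V := W^{2,2}_{\sigma}, \qquad H := W^{1,2}_{\sigma}, \qquad U := L^2_{\sigma},$$
basis $(a_k)$ given by the Stokes eigenfunctions, and operators
$$\mathcal{A}(u) := \nu \mathcal{P}\Delta u - \mathcal{P}\mathcal{L}_u u + \tfrac{1}{2}\sum_{i} \mathcal{P}B_i^2 u, \qquad \mathcal{G}_i(u) := \mathcal{P}B_i u.$$
The structural requirements of Subsection~\ref{subs functional framework}, including the uniform projection bound (\ref{uniform bounds of projection}) in $W^{1,2}_\sigma$, are classical consequences of the spectral properties of the Stokes operator. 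The proof thus reduces entirely to verifying Assumption Sets~1 and~2 for this concrete choice of $\mathcal{A}$ and $\mathcal{G}$.

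For the growth and continuity bounds (\ref{111})--(\ref{limityassumpt2}) together with (\ref{111zizzle}), the decisive tool is the two-dimensional Ladyzhenskaya inequality $\norm{w}_{L^4}^2 \le c\norm{w}_{L^2}\norm{w}_{W^{1,2}}$. Applied after integration by parts (using $\nabla\cdot u = 0$), it yields $|\langle \mathcal{P}\mathcal{L}_u u, v\rangle| \le c\norm{u}_U \norm{u}_H \norm{v}_H$, which is precisely the quadratic-in-$H$ bound demanded by (\ref{111zizzle}). The SALT contributions $B_i u = \xi_i\cdot\nabla u + \sum_j u^j\nabla\xi_i^j$ are controlled using the summability $\sum_i \norm{\xi_i}_{W^{2,\infty}}^2 < \infty$, and the drift correction $\tfrac12 \sum_i \mathcal{P}B_i^2 u$ is handled in $H^*$ by pushing one derivative onto the $H$-valued test function. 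The remaining local Lipschitz and passage-to-the-limit bounds then follow in the same spirit, each operator being bilinear or polynomial in its arguments.

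The main obstacle, and precisely the motivation for the It\^o--Stratonovich conversion of (\ref{number2equationSALT}), is the coercivity bound (\ref{uniformboundsassumpt1actual}). The trilinear contribution vanishes, $\langle \mathcal{P}\mathcal{L}_u u, u\rangle_U = 0$, and the viscous term supplies $-2\nu\norm{\nabla u}_{L^2}^2$. The key step is a direct integration by parts using $\nabla\cdot \xi_i = 0$ and $\xi_i\vert_{\partial\mathscr{O}}=0$ to exhibit the top-order cancellation
$$\sum_{i=1}^\infty \bigl[\langle B_i^2 u, u\rangle_U + \norm{B_i u}_U^2\bigr] \le \varepsilon\norm{u}_H^2 + c_\varepsilon \norm{u}_U^2$$
for $\varepsilon > 0$ arbitrarily small, the small $\norm{u}_H^2$ remainder being absorbable by the viscous dissipation. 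Combining these contributions yields $2\langle \mathcal{A}(u),u\rangle_U + \sum_i \norm{\mathcal{G}_i(u)}_U^2 \le c(1 + \norm{u}_U^2) - \gamma\norm{u}_H^2$ for some $\gamma > 0$, with (\ref{uniformboundsassumpt2actual}) following in the same vein from the skew-symmetry of the transport part of $B_i$.

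For Assumption~\ref{therealcauchy assumptions} underlying uniqueness, I would split $\mathcal{L}_f f - \mathcal{L}_g g = (f\cdot\nabla)(f-g) + ((f-g)\cdot\nabla) g$. The first summand vanishes when tested against $f-g$, and Ladyzhenskaya plus Young control the second by $\tfrac{\gamma}{4}\norm{f-g}_H^2 + c\norm{g}_H^2\norm{f-g}_U^2$. The SALT difference $\mathcal{G}_i(f) - \mathcal{G}_i(g) = \mathcal{P}B_i(f-g)$ inherits the same cancellation as in the coercivity estimate but applied to $f-g$, producing both (\ref{therealcauchy1}) and (\ref{therealcauchy2}). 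With every hypothesis of Assumption Sets~1 and~2 thereby verified, Theorem~\ref{theorem for weak existence} will deliver the unique weak solution $u$ of (\ref{projected Ito}) with the claimed regularity.
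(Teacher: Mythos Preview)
Your proposal is correct and follows exactly the route the paper indicates: the paper states Theorem~\ref{2D weak no slip} ``as an application of Theorem~\ref{theorem for weak existence}'' and defers the detailed verification of Assumption Sets~1 and~2 to [\cite{goodair2023zero}], so your sketch of that verification (Ladyzhenskaya for the nonlinear term, the It\^{o}--Stratonovich cancellation for coercivity, and the bilinear splitting for uniqueness) is precisely what is required and fills in what the paper leaves implicit.
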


These results are proven in [\cite{goodair2023zero}] Theorems 1.9 and 1.10, stated slightly differently although Lemma 3.9 connects the definitions. Whilst these results are not new, they are very recent and our framework provides an efficient way to obtain them. The existence of strong solutions in 2D is far more challenging, however, and remains unsolved. Towards a success, we note the necessity of considering an extension $\bar{H}$ of $H$ in Assumption Set 3, Subsection \ref{assumption set 3}. The Leray Projector $\mathcal{P}$ does not preserve the zero-trace property, so $\mathcal{P}B_i$ does not map from $W^{2,2}_{\sigma}$ into $W^{1,2}_{\sigma}$, but instead an extended space $\bar{W}^{1,2}_{\sigma}$ defined as the intersection of $W^{1,2}$ with $L^2_{\sigma}$. This is again a Hilbert Space with $\inner{\cdot}{\cdot}_1$ inner product; see [\cite{goodair2023navier}] Subsection 1.2.\\

Nevertheless, we are still unable to verify (\ref{mu2}) and Assumption \ref{assumptions for uniform bounds2}. This owes to the fact that $\mathcal{P}_n$ is self-adjoint only on $W^{1,2}_{\sigma}$ and not $\bar{W}^{1,2}_{\sigma}$, leaving us stuck with the finite dimensional projection in a way which offers no clear solution. The situation is different for the Navier boundary conditions. 

\subsection{Navier Boundary Conditions} \label{applied navier boundary}

In two spatial dimensions we can impose different boundary conditions for (\ref{projected Ito}), namely the Navier boundary conditions. These are defined on $\partial\mathscr{O}$ by \begin{equation} \label{navier boundary conditions}
    u \cdot \mathbf{n} = 0, \qquad 2(Du)\mathbf{n} \cdot \mathbf{\iota} + \alpha u\cdot \mathbf{\iota} = 0
\end{equation} 
where $\mathbf{n}$ is the unit outwards normal vector, $\mathbf{\iota}$ the unit tangent vector, $Du$ is the rate of strain tensor $(Du)^{k,l}:=\frac{1}{2}\left(\partial_ku^l + \partial_lu^k\right)$ and $\alpha \in C^2(\partial \mathscr{O};\R)$ represents a friction coefficient which determines the extent to which the fluid slips on the boundary relative to the tangential stress. These conditions were first proposed by Navier in [\cite{navier1822memoire}, \cite{navier1827lois}], and have been derived in [\cite{maxwell1879vii}] from the kinetic theory of gases and in [\cite{masmoudi2003boltzmann}] as a hydrodynamic limit. Furthermore these conditions have proven viable for modelling rough boundaries as seen in [\cite{basson2008wall}, \cite{gerard2010relevance}, \cite{pare1992existence}]. To fit the framework of this paper we again have to embed the boundary conditions into useful function spaces. We shall use the space $\bar{W}^{1,2}_{\sigma}$, which was the intersection of $W^{1,2}$ with $L^2_{\sigma}$, and contains the divergence-free and impermeable boundary condition (that $u \cdot \mathbf{n} = 0$). The remaining component of (\ref{navier boundary conditions}) has to be included at the $W^{2,2}$ level, as we are concerned with the trace of a derivative which needs more than $W^{1,2}$ regularity to be understood in the usual sense. Thus, we define
$$\bar{W}^{2,2}_{\alpha}:= \left\{f \in W^{2,2}(\mathscr{O};\R^2) \cap \bar{W}^{1,2}_{\sigma}: 2(Df)\mathbf{n} \cdot \iota + \alpha f \cdot \iota = 0 \textnormal{ on } \partial \mathscr{O}\right\}.$$ Of course this space does not appear in the definition of a weak solution, so it is perhaps unclear how the boundary conditions (\ref{navier boundary conditions}) inform the weak solution. The answer comes from how to extend the Stokes Operator $-\mathcal{P}\Delta$ to $\bar{W}^{1,2}_{\sigma}$. In [\cite{kelliher2006navier}] equation (5.1), c.f. [\cite{goodair2023navier}] Lemma 1.4, it is verified that for $\phi \in \bar{W}^{2,2}_{\alpha}$, $f \in \bar{W}^{1,2}_{\sigma}$,
$$ \inner{\mathcal{P}\Delta \phi}{f}_{L^2} = -\inner{\phi}{f}_1 + \inner{(\kappa - \alpha)\phi}{f}_{L^2(\partial \mathscr{O};\R^2)}$$
where $\kappa:\partial\mathscr{O} \rightarrow \R$ represents the curvature of the boundary. Therefore, for each $\alpha$ as in (\ref{navier boundary conditions}), we extend the Stokes Operator from $\bar{W}^{2,2}_{\alpha}$ to $\bar{W}^{1,2}_{\sigma}$ as a mapping into $\left(\bar{W}^{1,2}_{\sigma}\right)^*$ by the duality pairing for $g,f \in \bar{W}^{1,2}_{\sigma}$ of $$\inner{\mathcal{P}\Delta g}{f}_{\left(\bar{W}^{1,2}_{\sigma}\right)^* \times \bar{W}^{1,2}_{\sigma}} = -\inner{g}{f}_1 + \inner{(\kappa - \alpha)g}{f}_{L^2(\partial \mathscr{O};\R^2)}. $$
The nonlinear term requires no special attention to be understood in the weak sense, similarly to the no-slip boundary condition. We equip $\bar{W}^{2,2}_{\alpha}$ with the inner product $\inner{f}{g}_2:= \inner{\mathcal{P}\Delta f}{\mathcal{P}\Delta g}_{L^2}$ which is equivalent to the standard $W^{2,2}$ inner product (see [\cite{goodair2023navier}] Lemma 1.2), and $\bar{W}^{1,2}_{\sigma}$ with the $\inner{\cdot}{\cdot}_1$ inner product. Then as a direct application of Theorem \ref{theorem for weak existence} we obtain the following.

\begin{theorem} \label{2D weak navier}
    Let $\alpha \in C^2(\partial \mathscr{O};\R)$, $u_0: \Omega \rightarrow L^2_{\sigma}$ be $\mathcal{F}_0-$measurable, $(\xi_i) \in W^{1,2}_{\sigma} \cap W^{2,\infty}$ with $\sum_{i=1}^\infty \norm{\xi_i}_{W^{2,\infty}}^2 < \infty$. Then there exists a progressively measurable process $u$ in $\bar{W}^{1,2}_{\sigma}$ such that for $\mathbb{P}-a.e.$ $\tilde{\omega}$, $u_{\cdot}(\omega) \in C\left([0,T];L^2_{\sigma}\right) \cap L^2\left([0,T];\bar{W}^{1,2}_{\sigma}\right)$ and (\ref{projected Ito}) holds $\mathbb{P}-a.s.$ in $\left(\bar{W}^{1,2}_{\sigma}\right)^*$ for all $t \in [0,T]$.
\end{theorem}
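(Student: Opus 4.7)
The plan is to recognise the projected It\^{o} equation (\ref{projected Ito}) as an instance of (\ref{thespde}) for the triplet
$$V := \bar{W}^{2,2}_{\alpha}, \qquad H := \bar{W}^{1,2}_\sigma, \qquad U := L^2_\sigma,$$
with drift $\mathcal{A}(u) = -\mathcal{P}\mathcal{L}_u u + \nu\mathcal{P}\Delta u + \tfrac{1}{2}\sum_{i=1}^\infty \mathcal{P}B_i^2 u$ and diffusion $\mathcal{G}_i(u) = \mathcal{P}B_i u$, and then invoke Theorem \ref{theorem for weak existence}. The chain of continuous injections is immediate from the definitions, and $H \hookrightarrow U$ is compact by Rellich-Kondrachov. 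The required system $(a_k)$ is supplied by the eigenfunctions of the Navier-Stokes operator with Navier boundary data (that is, the selfadjoint realisation of $-\mathcal{P}\Delta$ on $\bar{W}^{2,2}_\alpha$); these form an orthogonal basis of $U$ and of $H$ under the inner products $\inner{\cdot}{\cdot}_{L^2}$ and $\inner{\cdot}{\cdot}_1$, and as in [\cite{goodair2023navier}] the projections $\mathcal{P}_n$ are uniformly bounded in $H$, giving (\ref{uniform bounds of projection}). Having matched the abstract setup, the work reduces to checking Assumption Sets 1 and 2 for the mappings above.

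First I would verify Assumption \ref{new assumption 1}: the bound (\ref{111}) for the nonlinear term follows from $\norm{\mathcal{P}\mathcal{L}_uu}_{H^*} \lesssim \norm{u}_{L^4}^2 \lesssim \norm{u}_U \norm{u}_H$ (Ladyzhenskaya in 2D), while the Laplacian extended by duality gives $\norm{\mathcal{P}\Delta u}_{H^*} \lesssim \norm{u}_H$ plus a boundary term controlled by $\norm{\alpha - \kappa}_{L^\infty(\partial\mathscr{O})}\norm{u}_H$; the noise terms use $\xi_i \in W^{2,\infty}$ to obtain $\norm{\mathcal{P}B_i^2u}_{H^*} + \norm{\mathcal{P}B_i u}_U^2 \lesssim \norm{\xi_i}_{W^{2,\infty}}^2(1 + \norm{u}_H^2)$ and then sum. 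The locally Lipschitz bounds (\ref{222})--(\ref{333}) are obtained by standard manipulations of $\mathcal{L}$ in 2D together with the product rule applied to $B_i^2$. Assumptions \ref{tightness assumptions} and \ref{limity assumptions} follow from integrating by parts to move derivatives off the test element, exploiting that $\inner{\mathcal{P}\mathcal{L}_u u}{\psi}_U = -\inner{\mathcal{L}_u \psi}{u}_U$ plus boundary contributions which vanish because of the impermeability condition, and similarly for $\mathcal{G}$.

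The key obstacle, and what makes the SALT noise compatible with the framework, is the coercivity (\ref{uniformboundsassumpt1actual}). Here the naive estimate fails: $\sum_i\norm{\mathcal{P}B_iu}_U^2$ contains $\norm{u}_H^2$ at leading order, so it is not dominated by $-\gamma\norm{u}_H^2$ arising from $2\nu\inner{\mathcal{P}\Delta u}{u}_U = -2\nu\norm{u}_1^2 + 2\nu\inner{(\kappa-\alpha)u}{u}_{L^2(\partial\mathscr{O};\R^2)}$ alone. The rescue comes from combining this with the It\^{o}-Stratonovich corrector $\sum_i\inner{\mathcal{P}B_i^2u}{u}_U$. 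Integrating by parts, using $\nabla\cdot u = 0$, and using that the boundary integrals generated by $B_i$ collapse under $u\cdot \mathbf{n}=0$ (arguments laid out in detail in [\cite{goodair20233d}, \cite{goodair2023navier}]), one obtains
$$\sum_{i=1}^\infty\Bigl(2\inner{\mathcal{P}B_i^2u}{u}_U + \norm{\mathcal{P}B_iu}_U^2\Bigr) \leq c\sum_{i=1}^\infty \norm{\xi_i}_{W^{2,\infty}}^2\norm{u}_U^2,$$
so that the $\norm{u}_H^2$ contributions cancel exactly and only a $U$-level remainder survives. The nonlinear term contributes $\inner{\mathcal{P}\mathcal{L}_u u}{u}_U = 0$ by the usual antisymmetry (valid since $u\cdot \mathbf{n}=0$), and absorbing the boundary contribution from $\mathcal{P}\Delta u$ into $-\gamma\norm{u}_H^2$ for a slightly reduced $\gamma>0$ yields (\ref{uniformboundsassumpt1actual}). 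The estimate (\ref{uniformboundsassumpt2actual}) is softer and follows from bounding each $\inner{\mathcal{P}B_iu}{u}_U^2$ by $c\norm{\xi_i}_{W^{1,\infty}}^2\norm{u}_U^4$ after integration by parts removes the derivative from $u$.

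Assumption \ref{new assumptionzizzle 1} is essentially (\ref{111}) rewritten and needs no new ideas. For the difference estimate (\ref{therealcauchy1}) I would expand $\mathcal{L}_u u - \mathcal{L}_v v = \mathcal{L}_{u-v}u + \mathcal{L}_v(u-v)$, test against $w := u-v$, and use $\inner{\mathcal{L}_v w}{w}_U = 0$; the remaining term is estimated by Ladyzhenskaya as $c\norm{u}_H^2 \norm{w}_U^2 + \tfrac{\gamma}{2}\norm{w}_H^2$, absorbable on the left. For the noise difference, the same It\^{o}-Stratonovich cancellation applied to $u-v$ (which remains divergence-free and impermeable) gives the required bound with the coercive $-\gamma\norm{w}_H^2$. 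Finally (\ref{therealcauchy2}) follows from $\inner{\mathcal{P}B_i w}{w}_U^2 \leq c\norm{\xi_i}_{W^{1,\infty}}^2\norm{w}_U^4$ after an integration by parts. With all assumptions verified, Theorem \ref{theorem for weak existence} directly yields the unique weak solution with the stated regularity.
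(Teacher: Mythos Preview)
Your proposal is correct and follows essentially the same approach as the paper: the theorem is stated there as a direct application of Theorem \ref{theorem for weak existence} for the triplet $V=\bar{W}^{2,2}_{\alpha}$, $H=\bar{W}^{1,2}_{\sigma}$, $U=L^2_{\sigma}$ with the Stokes eigenfunctions for Navier boundary data as the basis $(a_k)$, and the detailed verification of Assumption Sets 1 and 2 is deferred to \cite{goodair2023navier}. Your identification of the It\^{o}--Stratonovich cancellation as the mechanism delivering (\ref{uniformboundsassumpt1actual}), the handling of the boundary term from $\mathcal{P}\Delta$ via a trace/interpolation absorption (which is why no sign condition on $\alpha-\kappa$ is needed here, unlike for the strong result), and the 2D Ladyzhenskaya estimate for (\ref{therealcauchy1}) are exactly the ingredients used.
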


 This result is given in [\cite{goodair2023navier}] Theorem 1.14, and a strong existence result is proven as Theorem 1.15 in the same paper. For this, we need to make some adjustments; to verify Assumption \ref{assumptions for uniform bounds2} we cannot use the $\inner{\cdot}{\cdot}_1$ inner product for $H$ as this leads to an uncontrollable boundary integral. Instead, we have to manufacture a more reasonable boundary integral into our inner product. Thus, we instead equip $\bar{W}^{1,2}_{\sigma}$ with $$\inner{f}{g}_H := \inner{f}{g}_1 + \inner{(\kappa - \alpha)f}{g}_{L^2(\partial \mathscr{O};\R^2)}$$
which is an inner product equivalent to the usual $W^{1,2}$ form when $\alpha \geq \kappa$ everywhere on $\partial \mathscr{O}$. This requirement appears in the result, obtainable through Theorem \ref{theorem for strong existence}.

\begin{theorem} \label{2d strong  navier}
    Let $\alpha \in C^2(\partial \mathscr{O};\R)$ be such that $\alpha \geq \kappa$, $u_0: \Omega \rightarrow \bar{W}^{1,2}_{\sigma}$ be $\mathcal{F}_0-$measurable, $(\xi_i) \in L^2_{\sigma} \cap W^{3,2}_0 \cap W^{3,\infty}$ with $\sum_{i=1}^\infty \norm{\xi_i}_{W^{3,\infty}}^2 < \infty$. Then there exists a progressively measurable process $u$ in $\bar{W}^{2,2}_{\alpha}$ such that for $\mathbb{P}-a.e.$ $\tilde{\omega}$, $u_{\cdot}(\omega) \in C\left([0,T];\bar{W}^{1,2}_{\sigma}\right) \cap L^2\left([0,T];\bar{W}^{2,2}_{\alpha}\right)$ and (\ref{projected Ito}) holds $\mathbb{P}-a.s.$ in $L^2_{\sigma}$ for all $t \in [0,T]$.
\end{theorem}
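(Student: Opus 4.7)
The plan is to apply Theorem \ref{theorem for strong existence} by fitting the projected It\^{o} equation \eqref{projected Ito} into the abstract framework of Subsection \ref{subs functional framework} together with the three assumption sets. I would take $V := \bar{W}^{2,2}_\alpha$ equipped with $\inner{\cdot}{\cdot}_2$, $H := \bar{W}^{1,2}_\sigma$ equipped with the modified inner product $\inner{f}{g}_H := \inner{f}{g}_1 + \inner{(\kappa-\alpha)f}{g}_{L^2(\partial\mathscr{O};\R^2)}$, $U := L^2_\sigma$, and the extended space $\bar{H} := \bar{W}^{1,2}_\sigma$ with standard $W^{1,2}$-norm (which agrees in norm with $H$ precisely because $\alpha \geq \kappa$). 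The basis $(a_k)$ is chosen as the eigenfunctions of the Stokes operator $-\mathcal{P}\Delta$ with Navier boundary conditions, which sit inside $V$, form an orthogonal basis of $U$ via $L^2$-normalisation, and also form an orthogonal basis of $H$ in the modified inner product; the projection $\mathcal{P}_n$ onto $V_n$ is then self-adjoint in both $U$ and $H$, resolving the key obstruction identified for the no-slip case. The operators are identified as $\mathcal{A}(u) = -\mathcal{P}\mathcal{L}_u u + \nu\mathcal{P}\Delta u + \tfrac{1}{2}\sum_i \mathcal{P}B_i^2 u$ and $\mathcal{G}_i(u) = \mathcal{P}B_i u$.

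Assumption Sets 1 and 2 are essentially the analytical content already invoked for Theorem \ref{2D weak navier}; the bounds \eqref{111}, \eqref{111zizzle} on $\mathcal{A}$ in $H^*$ follow from standard $W^{1,2}$-duality estimates for $\mathcal{P}\Delta$ and a 2D Ladyzhenskaya-type estimate on $\mathcal{P}\mathcal{L}_u u$, while the growth of $\mathcal{G}$ in $U$ comes from $W^{1,\infty}$ bounds on $(\xi_i)$ combined with $\sum_i \norm{\xi_i}_{W^{1,\infty}}^2 < \infty$. The Lipschitz-type estimates \eqref{222}, \eqref{333}, the drift-diffusion coercivity \eqref{uniformboundsassumpt1actual}--\eqref{uniformboundsassumpt2actual} in $U$, and the tightness and limit-passage estimates of Assumptions \ref{tightness assumptions}--\ref{limity assumptions} and \ref{therealcauchy assumptions} all follow from integration by parts applied to the nonlinear and SALT terms: crucially, the It\^{o}--Stratonovich corrector $\tfrac{1}{2}\sum_i \mathcal{P}B_i^2$ exactly compensates the $\sum_i \norm{\mathcal{P}B_i u}_U^2$ contribution from the quadratic variation, modulo lower-order terms that $\gamma \norm{u}_H^2$ absorbs via viscous dissipation and $L^\infty$-coefficient controls on $(\xi_i)$.

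For Assumption Set 3 I would first verify \eqref{mu2} with $\mu_n$ equal to the square root of the $n$-th Stokes eigenvalue, which diverges by Weyl asymptotics and gives the required spectral tail bound $\norm{(I-\mathcal{P}_n)f}_U \leq \mu_n^{-1}\norm{f}_{\bar{H}}$ by spectral decomposition in $L^2_\sigma$. Assumption \ref{newfacilitator} follows from the $W^{2,2}$-regularity of $\mathcal{P}\Delta$, a 2D product estimate $\norm{\mathcal{P}\mathcal{L}_u u}_U \lesssim \norm{u}_H \norm{u}_V$ (absorbed into $K_U(u)[1+\norm{u}_H^4+\norm{u}_V^2]$), and $W^{2,\infty}$-bounds on $(\xi_i)$ controlling $\norm{\mathcal{P}B_i u}_{\bar{H}}$. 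The crux is Assumption \ref{assumptions for uniform bounds2}: I would pair equation \eqref{nthorderGalerkin} against $\phi^n$ in the modified $H$-inner product, integrating by parts twice on $\mathcal{P}\Delta$ to produce $-2\nu\norm{\phi^n}_V^2$ after cancellation of the boundary term with the $\alpha \geq \kappa$ contribution built into the $H$-metric, and then invoking the Stratonovich-type identity of [\cite{goodair2023navier}] which shows $\sum_i \inner{\mathcal{P}B_i^2\phi^n}{\phi^n}_H + \sum_i \norm{\mathcal{P}B_i\phi^n}_H^2$ is a lower-order quantity bounded by $c_t K_U(\phi^n)[1+\norm{\phi^n}_H^4]$. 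For \eqref{uniformboundsassumpt2}, the stronger $W^{3,\infty}$-hypothesis on $(\xi_i)$ is used to commute $\mathcal{P}B_i$ with a derivative and pay one copy of $\norm{\phi^n}_V$, yielding the $\varepsilon\norm{\phi^n}_V^2$ term.

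The main obstacle will be precisely the verification of \eqref{uniformboundsassumpt1} and \eqref{uniformboundsassumpt2}: the Leray projector does not preserve the tangential Navier condition, so $\mathcal{P}B_i$ moves functions from $V$ into $\bar{H}$ rather than $H$, and extracting the skew-symmetry that drives the It\^{o}--Stratonovich cancellation requires carefully tracking commutators $[\mathcal{P},B_i]$ and boundary residues arising from the integration by parts in the $H$-inner product. The coercivity threshold $\alpha \geq \kappa$ is precisely what guarantees that the boundary contribution from the Stokes term dominates the sign-indefinite boundary residues from the SALT term. Once Assumption Set 3 is verified, Theorem \ref{theorem for strong existence} directly yields the strong solution for $u_0 \in L^\infty(\Omega;\bar{W}^{1,2}_\sigma)$, and the cutting-and-pasting argument of Subsection \ref{exis uniqu} extends to arbitrary $\mathcal{F}_0$-measurable $u_0:\Omega\to\bar{W}^{1,2}_\sigma$, completing the proof.
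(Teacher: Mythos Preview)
Your approach is the same as the paper's: apply Theorem \ref{theorem for strong existence} with $V=\bar{W}^{2,2}_\alpha$, $H=\bar{W}^{1,2}_\sigma$ equipped with the modified inner product, $U=L^2_\sigma$, and the Navier Stokes eigenbasis, then extend to unbounded initial data by the splicing argument. Your identification of where $\alpha\geq\kappa$ enters and why the Navier eigenbasis cures the no-slip obstruction is also in line with the paper's discussion.

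There is, however, a gap. Theorem \ref{theorem for strong existence} only produces a strong solution with pathwise regularity $L^\infty\big([0,T];H\big)\cap L^2\big([0,T];V\big)$ and continuity in $U$; it does \emph{not} deliver $C\big([0,T];\bar{W}^{1,2}_\sigma\big)$, which is part of the stated conclusion. The upgrade to continuity in $H$ is treated separately in Subsection \ref{subbie continuity?}: one needs Lemma \ref{continuity lemma 2}, whose hypotheses are (i) a continuous bilinear form on $U\times V$ compatible with $\inner{\cdot}{\cdot}_H$, and (ii) that $\bar H$ can be taken equal to $H$. Point (ii) is precisely what the paper emphasises for the Navier case and is the reason the continuity goes through here but not (yet) for the no-slip problem. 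Your proposal never invokes this step, so as written it stops at $L^\infty\big([0,T];\bar{W}^{1,2}_\sigma\big)$.

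A related imprecision: you define $\bar H:=\bar{W}^{1,2}_\sigma$ ``with standard $W^{1,2}$-norm'' and claim it ``agrees in norm with $H$ precisely because $\alpha\geq\kappa$''. That is not right; $\alpha\geq\kappa$ makes the modified form positive definite and equivalent to the $W^{1,2}$-norm, not equal to it. Assumption Set 3 requires $\norm{f}_{\bar H}=\norm{f}_H$ exactly, so the correct choice---and the one the paper makes---is simply $\bar H=H$ as normed spaces. This is also what Lemma \ref{continuity lemma 2} needs.
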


We note use of Lemma \ref{continuity lemma 2} here, as in this case, the extension $\bar{H}$ of $H$ in Assumption Set 3, Subsection \ref{assumption set 3}, can simply be $\bar{W}^{1,2}_{\sigma}$ itself. In addition, it should be noted that the requirement $\alpha \geq \kappa$ is rather reasonable; at least heuristically, as $\alpha$ grows large then $u \cdot \iota = 0$ dominates the second identity of (\ref{navier boundary conditions}), which would then result in the traditional no-slip condition. Given the wide acceptance of the no-slip condition, deviation from it with Navier boundary conditions is only expected for large $\alpha$. A rigorous result regarding the convergence of solutions to the deterministic Navier-Stokes equation with Navier boundary conditions to the no-slip solution for $\alpha \rightarrow \infty$ is available in [\cite{kelliher2006navier}] Section 9.\\ 

Of course this invites the question as to how the Navier boundary conditions solve the issue of $\mathcal{P}_n$ present for the no-slip case. More detail is given in the conclusion of [\cite{goodair2023navier}], but in essence, this owes to the fact that the basis of eigenfunctions of the Stokes Operator satisfying the Navier boundary conditions are dense in the range of the Leray Projector in $W^{1,2}$. That is, these eigenfunctions form a basis of $\bar{W}^{1,2}_{\sigma}$ instead of $W^{1,2}_{\sigma}$, so the Leray Projector mapping only into $\bar{W}^{1,2}_{\sigma}$ is now non-problematic.

\section{Appendix} \label{section appendix}

\subsection{The Cauchy Result} \label{subby to prove cauchy}

\begin{proposition} \label{amazing cauchy lemma}
    Fix $T>0$. For $t\in[0,T]$ let $X_t$ denote a Banach Space with norm $\norm{\cdot}_{X,t}$ such that for all $s > t$, $X_s \xhookrightarrow{}X_t$ and $\norm{\cdot}_{X,t} \leq \norm{\cdot}_{X,s}$. Suppose that $(\sy^n)$ is a sequence of processes $\sy^n:\Omega \mapsto X_T$, $\norm{\sy^n}_{X,\cdot}$ is adapted and $\mathbb{P}-a.s.$ continuous, $\sy^n \in L^2\left(\Omega;X_T\right)$, and such that $\sup_{n}\norm{\sy^n}_{X,0} \in L^\infty\left(\Omega;\R\right)$. For any given $M >1$ define the stopping times
    \begin{equation} \label{another taumt}
        \tau^{M,T}_n := T \wedge \inf\left\{s \geq 0: \norm{\sy^n}_{X,s}^2 \geq M + \norm{\sy^n}_{X,0}^2 \right\}.
    \end{equation}
Furthermore suppose \begin{equation} \label{supposition 1}
    \lim_{m \rightarrow \infty}\sup_{n \geq m}\mathbbm{E}\left[\norm{\sy^n-\sy^m}^2_{X,\tau
_{m}^{M,t}\wedge \tau_{n}^{M,t}} \right] =0
\end{equation}
and that for any stopping time $\gamma$ and sequence of stopping times $(\delta_j)$ which converge to $0$ $\mathbb{P}-a.s.$, \begin{equation} \label{supposition 2} \lim_{j \rightarrow \infty}\sup_{n\in\N}\mathbbm{E}\left(\norm{\sy^n}_{X,(\gamma + \delta_j) \wedge \tau^{M,T}_n}^2 - \norm{\sy^n}_{X,\gamma \wedge \tau^{M,T}_n}^2 \right) =0.
\end{equation}
Then there exists a stopping time $\tau^{M,T}_{\infty}$, a process $\sy:\Omega \mapsto X_{\tau^{M,T}_{\infty}}$ whereby $\norm{\sy}_{X,\cdot \wedge \tau^{M,T}_{\infty}}$ is adapted and $\mathbb{P}-a.s.$ continuous, and a subsequence indexed by $(m_j)$ such that 
\begin{itemize}
    \item $\tau^{M,T}_{\infty} \leq \tau^{M,T}_{m_j}$ $\mathbb{P}-a.s.$,
    \item $\lim_{j \rightarrow \infty}\norm{\sy - \sy^{m_j}}_{X,\tau^{M,T}_{\infty}} = 0$ $\mathbb{P}-a.s.$.
\end{itemize}
Moreover for any $R>0$ we can choose $M$ to be such that the stopping time \begin{equation} \label{another tauR}
        \tau^{R,T} := T \wedge \inf\left\{s \geq 0: \norm{\sy}_{X,s\wedge\tau^{M,T}_{\infty}}^2 \geq R \right\}
    \end{equation}
satisfies $\tau^{R,T} \leq \tau^{M,T}_{\infty}$ $\mathbb{P}-a.s.$. Thus $\tau^{R,T}$ is simply $T \wedge \inf\left\{s \geq 0: \norm{\sy}_{X,s}^2 \geq R \right\}$.

\end{proposition}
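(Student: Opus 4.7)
The plan is to adapt the scheme of Lemma~5.1 in [\cite{glatt2009strong}], the new ingredient being a characterisation of $\tau^{M,T}_\infty$ as a first hitting time of the limit process via (\ref{supposition 2}), from which the moreover clause follows essentially by construction. From (\ref{supposition 1}) I extract a subsequence (still labelled $(m_j)$) satisfying $\mathbbm{E}\left[\norm{\sy^{m_{j+1}} - \sy^{m_j}}^2_{X, \tau^{M,T}_{m_j} \wedge \tau^{M,T}_{m_{j+1}}}\right] \leq 4^{-j}$. Chebyshev and Borel--Cantelli then furnish a $\mathbbm{P}$-full event on which the corresponding non-squared increments are bounded by $2^{-j}$ for all $j$ beyond some random index $J(\omega)$. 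Setting $\tau' := \liminf_j \tau^{M,T}_{m_j}$, the monotonicity of $\norm{\cdot}_{X,\cdot}$ in time makes $(\sy^{m_j}(\omega))$ summably Cauchy in $X_t$ for every $t < \tau'(\omega)$, and this defines a limit process $\sy$ on $[0,\tau')$ whose norm is $\mathbbm{P}$-a.s.\ continuous.

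To extend the convergence to the endpoint $\tau'$ itself I invoke (\ref{supposition 2}) with $\gamma = \tau'$ and a sequence $\delta_k = \tau' - t_k$ for times $t_k \uparrow \tau'$; this controls, uniformly in $n$, the expected increment of $\norm{\sy^n}^2_{X, \cdot \wedge \tau^{M,T}_n}$ over a shrinking endpoint interval, and the sup-type structure of $\norm{\cdot}_{X,t}$ reduces the Cauchy shortfall at $\tau'$ to precisely such an increment (plus an $L^2(H)$-type piece handled by monotone convergence). Combining with the rapid Cauchy control on $[0, t_k]$ and extracting a further subsequence to upgrade $L^1$ to $\mathbbm{P}$-a.s.\ convergence yields $\norm{\sy - \sy^{m_j}}_{X, \tau'} \to 0$ $\mathbbm{P}$-a.s. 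With $\sy$ now defined on $[0, \tau']$, I fix any $\beta \in (0, M)$ and set
\[
\tau^{M,T}_\infty := \tau' \wedge \inf\left\{s \geq 0 : \norm{\sy}^2_{X, s} \geq \beta + \norm{\sy_0}^2\right\}.
\]
For $t < \tau^{M,T}_\infty$ continuity gives $\norm{\sy}^2_{X, t} < \beta + \norm{\sy_0}^2$, and the $t = 0$ instance of the Cauchy property yields $\norm{\sy^{m_j}_0}^2 \to \norm{\sy_0}^2$; combined with uniform convergence of the norms, $\norm{\sy^{m_j}}^2_{X, t} < M + \norm{\sy^{m_j}_0}^2$ for $j$ sufficiently large, whence $\tau^{M,T}_{m_j} > t$ eventually. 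The slack $M - \beta > 0$ promotes this ``eventually'' to a pointwise statement $\tau^{M,T}_\infty \leq \tau^{M,T}_{m_j}$ via a further Borel--Cantelli extraction: pass to a subsequence with $\mathbbm{P}(\tau^{M,T}_\infty > \tau^{M,T}_{m_j}) < 2^{-j}$ and modify $\tau^{M,T}_\infty$ on the corresponding null event by intersecting with $\inf_j \tau^{M,T}_{m_j}$, a change that leaves the previously established properties intact on a $\mathbbm{P}$-full set.

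For the moreover part, given $R > 0$ choose any $M > R$ and $\beta \in (R, M)$. On $\{\tau^{M,T}_\infty < T\}$ continuity of $\norm{\sy}_{X,\cdot}$ forces $\norm{\sy}^2_{X, \tau^{M,T}_\infty} = \beta + \norm{\sy_0}^2 > R$, so the intermediate value property delivers $\tau^{R,T} < \tau^{M,T}_\infty$; on $\{\tau^{M,T}_\infty = T\}$ the inequality $\tau^{R,T} \leq T = \tau^{M,T}_\infty$ is trivial. The identification of the stopped and unstopped first-hitting times then follows because $s \wedge \tau^{M,T}_\infty = s$ for every $s \leq \tau^{R,T} \leq \tau^{M,T}_\infty$.

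The step I expect to be hardest is the endpoint extension in the second paragraph: establishing genuine convergence in the sup-type norm $\norm{\cdot}_{X, \tau'}$ from convergence only on $[0, t]$ for $t < \tau'$, since (\ref{supposition 2}) is stated in terms of the squared norm (monotone in $t$) rather than path increments, and one must verify that this bound is strong enough to preclude a concentration of mass in a vanishing endpoint interval uniformly in $n$. The packaging of threshold-lowering with the subsequence refinement that follows is secondary but also nontrivial, particularly in that it relies crucially on the deterministic slack $M - \beta$ to turn an ``eventually a.s.'' statement into one that holds simultaneously for every $j$.
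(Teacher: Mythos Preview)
Your endpoint extension step has a genuine gap, and it is precisely where the paper's proof differs substantively from yours. You take $\tau' := \liminf_j \tau^{M,T}_{m_j}$ and then need two things at $\tau'$: convergence of the sequence in $X_{\tau'}$, and the inequality $\tau' \leq \tau^{M,T}_{m_j}$ along a subsequence. Neither follows from what you have. First, your invocation of (\ref{supposition 2}) with $\gamma = \tau'$ and $\delta_k = \tau' - t_k$ gives $\gamma + \delta_k = 2\tau' - t_k$, which controls increments \emph{past} $\tau'$, not the shortfall on $[t_k, \tau']$ that you need; and (\ref{supposition 2}) is stated for a fixed $\gamma$, so you cannot instead take $\gamma = t_k$ varying with $k$. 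Second, your appeal to a ``sup-type structure'' of $\norm{\cdot}_{X,t}$ to split the Cauchy shortfall is not licensed by the hypotheses: the proposition assumes only monotonicity $\norm{\cdot}_{X,t} \leq \norm{\cdot}_{X,s}$ for $t<s$, not any decomposition into sup and integral parts. Third, even granting convergence at $\tau'$, when the $\beta$-threshold is not hit before $\tau'$ you set $\tau^{M,T}_\infty = \tau'$, but a $\liminf$ need not be dominated by any tail of the sequence (e.g.\ $\tau^{M,T}_{m_j} = \tau' - 1/j$), so there may be no subsequence with $\tau' \leq \tau^{M,T}_{m_j}$.

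The paper avoids all of this by a threshold-lowering trick at the level of the approximating stopping times rather than the limit. It introduces auxiliary stopping times $\sigma^M_j$ defined as first hitting times of the \emph{unsquared} norm $\norm{\sy^{n_j}}_{X,\cdot}$ at the decreasing levels $(\tilde{M} - 1 + 2^{-j}) + \norm{\sy^{n_j}}_{X,0}$, with $\tilde{M}$ chosen so that $\sigma^M_j \leq \tau^{M,T}_{n_j}$. The geometric Cauchy control then forces, via a short contradiction argument, that $(\sigma^M_j)$ is eventually monotone decreasing almost surely. Its limit $\sigma^M_\infty$ therefore satisfies $\sigma^M_\infty \leq \sigma^M_j \leq \tau^{M,T}_{n_j}$ for all large $j$ automatically, and the Cauchy property at $\sigma^M_\infty$ is immediate from $\sigma^M_\infty \leq \sigma^M_j \wedge \sigma^M_{j+1}$ --- no endpoint extension is needed at all. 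Assumption (\ref{supposition 2}) enters only afterwards, and for a different purpose: with $\gamma = \sigma^M_\infty$ and $\delta_j = \sigma^M_j - \sigma^M_\infty$ (now genuinely nonnegative and tending to zero), it bounds $\norm{\sy^{m_j}}^2_{X,\sigma^M_{m_j}} - \norm{\sy^{m_j}}^2_{X,\sigma^M_\infty}$, which combined with the hitting-time identity at $\sigma^M_{m_j}$ shows that $\norm{\sy}_{X,\sigma^M_\infty}$ is large whenever $\sigma^M_\infty < T$. This is what yields the moreover clause. Your late-stage threshold $\beta < M$ is the right intuition, but it needs to be implemented upstream in the construction of the stopping times themselves, not after the limit is taken.
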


\begin{remark}
    A consequence of the properties that $\sup_{n}\norm{\sy^n}_{X,0} \in L^\infty\left(\Omega;\R\right)$ and\\ $\lim_{j \rightarrow \infty}\norm{\sy - \sy^{m_j}}_{X,\tau^{M,T}_{\infty}} = 0$ $\mathbb{P}-a.s.$ is that $\norm{\sy}_{X,0} \in  L^\infty\left(\Omega;\R\right)$. Therefore for\\ $R > \norm{\norm{\sy}_{X,0}}_{L^\infty\left(\Omega;\R\right)}$ we have that $\tau^{R,T}$ is $\mathbbm{P}-a.s.$ positive, hence so too is $\tau^{M,T}_{\infty}$ for appropriately chosen $M$.
\end{remark}

    \begin{proof}
   Property (\ref{supposition 1}) implies that for any given $j \in \N$ we can choose an $n_j\in \N$ such that for all $k \geq n_j$, \begin{equation}\label{a good property}\mathbbm{E}\left(\norm{\sy^k - \sy^{n_j}}^2_{X,\tau
_{n_{j}}^{M,t}\wedge \tau_{k}^{M,t}} \right) \leq 2^{-4j}.\end{equation}
We shall make use of delicate manipulations of the subsequence indexed by $(n_j)$, and for this we introduce a new sequence of stopping times. We now impose that $$M > 2 + \left\Vert\sup_{n \in \N}\norm{\sy^n}_{X,0}^2\right\Vert_{L^\infty(\Omega;\R)}$$ and define $$\tilde{M}^2:= \frac{M -\left\Vert\sup_{n \in \N }\norm{\sy^n}_{X,0}^2\right\Vert_{L^\infty(\Omega;\R)}}{2} > 1.$$
The purpose of this is to define $$\sigma^M_j:= T \wedge \inf\left\{s > 0: \norm{\sy^{n_{j}}}_{X,s}\geq (\tilde{M} - 1 +2^{-j}) + \norm{\sy^{n_{j}}}_{X,0} \right\}$$
and ensure that $\sigma^M_j \leq \tau^{M,T}_{n_j}$ at every $\omega$. Note the key difference in not squaring the norm, and also that $\tilde{M}>1$ so each $\sigma^M_j$ is necessarily positive. To demonstrate the inequality, it is sufficient to show that, $\mathbb{P}-a.s.$, \begin{equation} \label{or more easily}
    \left((\tilde{M} - 1 +2^{-j}) + \norm{\sy^{n_{j}}}_{X,0}\right)^2 \leq M +  \norm{\sy^{n_{j}}}_{X,0}^2
\end{equation}
or more easily 
$$  \left(\tilde{M} + \norm{\sy^{n_{j}}}_{X,0}\right)^2 \leq M +  \norm{\sy^{n_{j}}}_{X,0}^2.$$
This is possible as 
\begin{align*}
    \left(\tilde{M} + \norm{\sy^{n_{j}}}_{X,0}\right)^2 &\leq 2\tilde{M}^2 + 2\norm{\sy^{n_{j}}}_{X,0}^2 = M -\left\Vert\sup_{n }\norm{\sy^n}_{X,0}^2\right\Vert_{L^\infty(\Omega;\R)} + 2\norm{\sy^{n_{j}}}_{X,0}^2\\ &\leq M + \norm{\sy^{n_{j}}}_{X,0}^2.
\end{align*}
The property (\ref{or more easily}) is thus verified, so $\sigma^M_j \leq \tau^{M,T}_{n_j}$ and hence the subsequence $(\sy^{n_j})$ enjoys the same properties up until the corresponding $\sigma^M_j$. In particular from (\ref{a good property}), \begin{equation}\label{a good property 2}
    \mathbbm{E}\left(\norm{\sy^{n_{j+1}} - \sy^{n_j}}_{X,\sigma^M
_j\wedge \sigma_{j+1}^{M}} \right) \leq \left[\mathbbm{E}\left(\norm{\sy^{n_{j+1}} - \sy^{n_j}}_{X,\sigma^M
_j\wedge \sigma_{j+1}^{M}}^2 \right)\right]^{\frac{1}{2}}\leq 2^{-2j},
\end{equation}
hence in defining the sets
\begin{equation}\label{defined omega j}\Omega_j:=\left\{\omega \in \Omega: \norm{\sy^{n_{j+1}}(\omega)-\sy^{n_j}(\omega)}_{X,\sigma^{M}_{j}(\omega)\wedge \sigma^{M}_{j+1}(\omega)} < 2^{-(j+2)} \right\}\end{equation}
we have, by Chebyshev's Inequality and (\ref{a good property 2}),
\begin{align*}
    \mathbb{P}\left(\Omega_j^C \right) \leq 2^{j+2}\mathbbm{E}\left(\norm{\sy^{n_{j+1}}-\sy^{n_j}}_{X,\sigma^{M}_{j}\wedge \sigma^{M}_{j+1}} \right) \leq 2^{-j + 2}.
\end{align*}
We have, therefore, that $$\sum_{j=1}^\infty \mathbb{P}\left( \Omega_j^C   \right) < \infty$$ from which we see $$\mathbb{P} \left( \bigcap_{K=1}^\infty \bigcup_{j=K}^\infty \Omega_j^C \right) = 0$$ courtesy of Borel-Cantelli. It then follows that the set $$\hat{\Omega} := \bigcup_{K=1}^\infty \bigcap_{j=K}^\infty \Omega_j$$ is such that $\mathbb{P}( \hat{\Omega}) = 1$ so that in verifying $\mathbb{P}-a.s.$ properties, we can in fact simply show that they hold \textit{everywhere} on $\hat{\Omega}$. More precisely, we also take $\hat{\Omega}$ to be such that every $\norm{\sy^{n_j}}_{X,\cdot}$ is continuous on $\hat{\Omega}$, which is only a further countable intersection of full measure sets. We proceed by considering the sets $$\hat{\Omega}_K:= \bigcap_{j=K}^\infty \Omega_j$$ with the idea to just show such properties on $\hat{\Omega}_K$ for all $K$ (as their union makes up $\hat{\Omega}$). We look to construct a new stopping time $\sigma^M_{\infty}$ (which will prove to be the desired $\tau^{M,T}_{\infty}$) given as the $\mathbb{P}-a.e.$ limit of $(\sigma^M_j)$, built from demonstrating that $(\sigma^M_j)$ is monotone decreasing everywhere on $\hat{\Omega}_K$ for all $K$. In other words we show that for sufficiently large $j$ (in fact, just $j\geq K$) that the set \begin{equation} \label{def of set} \Gamma:= \{ \sigma^M_j < \sigma^M_{j+1} \} \cap \hat{\Omega}_K\end{equation} is empty. Firstly we observe from the strict inequality $\sigma^M_j < \sigma^M_{j+1}$ on this set that $\sigma^M_j < T$, implying that 
$$\sigma^M_j= \inf\left\{s > 0: \norm{\sy^{n_{j}}}_{X,s}\geq (\tilde{M} - 1 +2^{-j}) + \norm{\sy^{n_{j}}}_{X,0} \right\}$$
so by the continuity of $\norm{\sy^{n_j}}_{X,\cdot}$, \begin{equation}\label{by the continuity}\norm{\sy^{n_{j}}}_{X,\sigma^M_j} =(\tilde{M} - 1 +2^{-j}) + \norm{\sy^{n_{j}}}_{X,0}. \end{equation} Using the definition of $\Omega_j$, (\ref{defined omega j}), for $j \geq K$, we have that \begin{equation}\label{it is useful}\norm{\sy^{n_j}}_{X,\sigma^M_j \wedge \sigma^M_{j+1}} - \norm{\sy^{n_{j+1}}}_{X,\sigma^M_j \wedge \sigma^M_{j+1}} \leq \norm{\sy^{n_{j+1}} - \sy^{n_j} }_{X,\sigma^M_j \wedge \sigma^M_{j+1}} < 2^{-\left(j+2\right)}\end{equation}
and also
\begin{equation} \label{useful ic}
    \norm{\sy^{n_{j+1}}}_{X,0} - \norm{\sy^{n_j}}_{X,0}  \leq \norm{\sy^{n_{j+1}} - \sy^{n_j} }_{X,0} < 2^{-\left(j+2\right)}.
\end{equation}
Combining (\ref{by the continuity}), (\ref{it is useful}) and (\ref{useful ic}), whilst using that $\sigma^M_j < \sigma^M_{j+1}$, we see that \begin{align}
    \nonumber \norm{\sy^{n_{j+1}}}_{X,\sigma^M_j \wedge \sigma^M_{j+1}} &> \norm{\sy^{n_j}}_{X,\sigma^M_j \wedge \sigma^M_{j+1}} - 2^{-\left(j+2\right)}\\ \nonumber
    &= \norm{\sy^{n_j}}_{X,\sigma^M_j} - 2^{-\left(j+2\right)}\\ \nonumber
    &=  \norm{\sy^{n_{j}}}_{X,0} + (\tilde{M} - 1 +2^{-j}) - 2^{-\left(j+2\right)}\\  \nonumber
    &> \norm{\sy^{n_{j+1}}}_{X,0} - 2^{-(j+2)} + (\tilde{M} - 1 +2^{-j}) - 2^{-\left(j+2\right)}\\
    &= \norm{\sy^{n_{j+1}}}_{X,0} + (\tilde{M} - 1 +2^{-(j+1)}), \label{end of the align}
\end{align}
where in the last line we have used the manipulation
$$ - 2^{-(j+2)} - 2^{-(j+2)} + 2^{-j} = -2^{-j-1} + 2^{-j} = 2^{-j}\left( -2^{-1} + 1\right) = 2^{-j}\left( 2^{-1} \right) = 2^{-(j+1)}.$$ The hard work is done in showing that the set $\Gamma$ defined in (\ref{def of set}) is empty, as on this set note that $$\norm{\sy^{n_{j+1}}}_{X,\sigma^M_j \wedge \sigma^M_{j+1}} \leq \norm{\sy^{n_{j+1}}}_{X, \sigma^M_{j+1}} \leq \norm{\sy^{n_{j+1}}}_{X,0} + (\tilde{M} - 1 +2^{-(j+1)}),$$ which contradicts (\ref{end of the align}), hence $\Gamma$ must be empty. Thus on every $\hat{\Omega}_K$, and furthermore the whole of $\hat{\Omega}$, the sequence $(\sigma^M_j)$ is eventually monotone decreasing (and bounded below by $0$). Furthermore we define $\sigma^M_{\infty}$ as the pointwise limit $\lim_{j \rightarrow \infty}\sigma^M_j$ on $\hat{\Omega}$, which must itself be a stopping time as the $\mathbb{P}-a.s.$ limit of stopping times. As mentioned this shall prove to be our $\tau^{M,T}_\infty$, and for the existence of $\sy$ we show that on $\hat{\Omega}$ the subsequence $(\sy^{n_j})$ is Cauchy in $X_{\sigma^M_{\infty}}$. Every $\omega \in \hat{\Omega}$ belongs to $\hat{\Omega}_K$ for some $K$, and furthermore to $\hat{\Omega}_L$ for all $L > K$. We fix arbitrary $\omega \in \hat{\Omega}$ and select an associated $K$. At this $\omega$, for any $j > k \geq K$, observe that
\begin{align*}
    \norm{\sy^{n_j} - \sy^{n_k}}_{X, \sigma^M_{\infty}} &= \norm{\sy^{n_{j}} -\sy^{n_{k+1}} + \sy^{n_{k+1}} - \sy^{n_k}}_{X, \sigma^M_{\infty}}\\ &\leq \norm{\sy^{n_{j}} -\sy^{n_{k+1}}}_{X, \sigma^M_{\infty}} + \norm{\sy^{n_{k+1}} - \sy^{n_k}}_{X, \sigma^M_{\infty}}\\ &\leq \norm{\sy^{n_{j}} -\sy^{n_{k+1}}}_{X, \sigma^M_{\infty}} + 2^{-(k+2)}\\& \leq \sum_{l=k}^{j}2^{-(l+2)}\\
    &\leq 2^{-(k+1)}
\end{align*}
having carried out an inductive argument in the penultimate step. We are thus free to take $K$ large enough so that this difference is arbitrarily small; therefore there exists a limit in the Banach Space $X_{\sigma^M_{\infty}}$, which we call $\sy$. The process $\norm{\sy}_{X,\cdot \wedge \sigma^M_{\infty}}$ is adapted and $\mathbb{P}-a.s.$ continuous, as $$\sup_{r \in [0,T]}\left\vert \norm{\sy}_{X,r \wedge \sigma^M_{\infty}} - \norm{\sy^{n_j}}_{X,r \wedge \sigma^M_{\infty}}\right\vert \leq \sup_{r \in [0,T]}\left\vert \norm{\sy - \sy^{n_j}}_{X,r \wedge \sigma^M_{\infty}}\right\vert = \norm{\sy - \sy^{n_j}}_{X,\sigma^M_{\infty}},$$ 
which has $\mathbb{P}-a.s.$ limit as $j \rightarrow \infty$ equal to zero. Thus $\norm{\sy}_{X,\cdot \wedge \sigma^M_{\infty}}$ is given, $\mathbb{P}-a.s.$, as the uniform in time limit of adapted and continuous processes, verifying the result. Moving on, it is now that we make use of (\ref{supposition 2}) much in the same way as we did for (\ref{supposition 1}). This will be done in the context of $\gamma:= \sigma^M_{\infty}$ and $\delta_j:= \sigma^M_j - \sigma^M_{\infty}$. Indeed for any $j \in \N$ we can choose an $m_j \in \N$ (where $m_j=n_l$ some $l$) such that for all $k \geq m_j$, $$\sup_{n\in\N}\mathbbm{E}\left(\norm{\sy^n}_{X,\sigma^M_k \wedge \tau^{M,T}_{n}}^2 - \norm{\sy^n}_{X,\sigma^M_{\infty} \wedge \tau^{M,T}_n}^2\right) \leq 2^{-2j}.$$ In particular, through a relabelling of $\sigma^M_{m_j}=\sigma^M_l$, $$\mathbbm{E}\left(\norm{\sy^{m_j}}_{X,\sigma^M_{m_j}}^2 - \norm{\sy^{m_j}}_{X,\sigma^M_{\infty}}^2\right) \leq 2^{-2j}$$
by choosing $n$ as $m_j$ and using that $\sigma^M_{\infty} \leq \sigma^M_k \leq \sigma^M_{m_j} \leq \tau^{M,T}_{m_j}$. In a familiar way we define $$\Omega'_j:= \left\{\norm{\sy^{m_j}}_{X,\sigma^M_{m_j}}^2 - \norm{\sy^{m_j}}_{X,\sigma^M_{\infty}}^2 < 2^{-(j+2)} \right\} $$
so that, just as we showed for (\ref{defined omega j}), $$\check{\Omega}_K:=\bigcap_{j=K}^\infty \Omega'_j, \qquad \check{\Omega}:=\bigcup_{K=1}^\infty \check{\Omega}_K, \qquad  \mathbb{P}\left(\check{\Omega}\right)=1.$$ For arbitrary given $R>0$, the plan now is to find a constant $M$ such that at every $\omega \in \hat{\Omega}\cap\check{\Omega}$, either $\sigma^M_{\infty} = T$ or $\norm{\sy}^2_{X,\sigma^M_{\infty}} \geq R$. In both instances it is clear that $\tau^{R,T} \leq \sigma^M_{\infty}$, thus proving the proposition. To this end we fix an $\omega \in \hat{\Omega}\cap\check{\Omega}$ such that $\sigma^M_{\infty} < T$. As $\sigma^M_{\infty}$ is the decreasing limit of $(\sigma^M_{m_j})$ then for sufficiently large $m_j$ we must also have that $\sigma^M_{m_j} < T$. Exactly as in (\ref{by the continuity}), \begin{equation}\label{newcontinuitything} \norm{\sy^{m_{j}}}_{X,\sigma^M_{m_j}} =(\tilde{M} - 1 +2^{-j}) + \norm{\sy^{m_{j}}}_{X,0}. \end{equation}
From the proven convergence we also have that for sufficiently large $m_j$, \begin{equation}\label{applynewestcauchy}
    \norm{\sy - \sy^{m_j}}_{X,\sigma^M_{\infty}} < 1
\end{equation}
which implies that $\norm{\sy}_{X,\sigma^M_{\infty}} > \norm{\sy^{m_j}}_{X,\sigma^M_{\infty}} -1$, and likewise as $\omega \in \check{\Omega}_K$ for some $K$, \begin{equation} \label{lkjh}
    \norm{\sy^{m_j}}_{X,\sigma^M_{m_j}}^2 - \norm{\sy^{m_j}}_{X,\sigma^M_{\infty}}^2 < 1.
\end{equation}
We fix an $m_j$ large enough so that (\ref{newcontinuitything}), (\ref{applynewestcauchy}) and (\ref{lkjh}) all hold. Substituting (\ref{newcontinuitything}) into (\ref{lkjh}) gives that $$\norm{\sy^{m_j}}_{X,\sigma^M_{\infty}}^2 > \left((\tilde{M} - 1 +2^{-j}) + \norm{\sy^{m_{j}}}_{X,0}\right)^2 -1 > (\tilde{M}-1)^2 -1.$$ If $\tilde{M} > 2$ then the expression on the right is positive and $$ \norm{\sy^{m_j}}_{X,\sigma^M_{\infty}} > \left((\tilde{M}-1)^2 -1 \right)^{\frac{1}{2}}.$$
Furthermore $$ \norm{\sy}_{X,\sigma^M_{\infty}} > \left((\tilde{M}-1)^2 -1 \right)^{\frac{1}{2}} -1$$ where the right hand side is of course monotone increasing and unbounded in $\tilde{M}$ and hence $M$. By choosing $M$ large enough such that $$ \left[\left((\tilde{M}-1)^2 -1 \right)^{\frac{1}{2}} -1\right]^2 > R$$ we complete the proof.
\end{proof}

\subsection{Useful Results from the Literature} \label{subby statements from lit}

We state some key theorems used throughout the paper. Firstly is a well-posedness result for an SPDE in a finite dimensional Hilbert Space (though driven by a Cylindrical Brownian Motion) in the standard case of Lipschitz and linear growth constraints.
\begin{proposition} \label{Skorotheorem}
Fix a finite-dimensional Hilbert Space $\mathcal{H}$. Suppose the following:
\begin{itemize}
    \item[1:] For any $T>0$, the operators $\mathscr{A}:[0,T] \times \mathcal{H} \rightarrow \mathcal{H}$ and $\mathscr{G}:[0,T] \times \mathcal{H} \rightarrow \mathscr{L}^2(\mathfrak{U};\mathcal{H})$ are measurable;\\
    \item[2:] There exists a $C_{\cdot}:[0,\infty) \rightarrow \R$ bounded on $[0,T]$ for every $T$, and constants $c_i$ such that for every $\boldsymbol{\phi}, \psi \in \mathcal{H}$ and $t \in [0,\infty)$, \begin{align*}\norm{\mathscr{A}(t,\boldsymbol{\phi})}^2_{\mathcal{H}}  &\leq C_t\left[1 + \norm{\boldsymbol{\phi}}_{\mathcal{H}}^2\right]\\
     \norm{\mathscr{G}_i(t,\boldsymbol{\phi})}^2_{\mathcal{H}} &\leq C_tc_i\left[1 + \norm{\boldsymbol{\phi}}_{\mathcal{H}}^2\right]\\
     \sum_{i=1}^\infty c_i &< \infty\\
    \norm{\mathscr{A}(t,\boldsymbol{\phi}) - \mathscr{A}(t,\psi)}^2_{\mathcal{H}} &+\sum_{i=1}^\infty \norm{\mathscr{G}_i(t,\boldsymbol{\phi}) - \mathscr{G}_i(t,\psi)}^2_{\mathcal{H}} \leq C_t \norm{\boldsymbol{\phi}-\psi}_{\mathcal{H}}^2
    \end{align*}
    \item[3:] $\py_0 \in L^2(\Omega;\mathcal{H})$.\\
\end{itemize}
Then there exists a process $\py:[0,\infty) \times \Omega \rightarrow \mathcal{H}$ such that for $\mathbbm{P}-a.e.$ $\omega$, $\py_\cdot(\omega) \in C\left([0,T];\mathcal{H}\right)$ for every $T>0$, $\py$ is progressively measurable in $\mathcal{H}$ and the identity \begin{equation}\label{identityingalerkinsolution}\py_t = \py_0 + \int_0^t \mathscr{A}(s,\py_s)ds + \int_0^t \mathscr{G}(s,\py_s)d\mathcal{W}_s\end{equation} holds $\mathbbm{P}-a.s.$ in $\mathcal{H}$ for every $t \geq 0$.\\ 

Moreover, suppose that $\sy:[0,\infty) \times \Omega \rightarrow \mathcal{H}$ is another process such that for $\mathbbm{P}-a.e.$ $\omega$, $\sy_\cdot(\omega) \in C\left([0,T];\mathcal{H}\right)$ for every $T>0$, $\sy$ is progressively measurable in $\mathcal{H}$ and the identity (\ref{identityingalerkinsolution}) holds $\mathbbm{P}-a.s.$ in $\mathcal{H}$ for every $t \geq 0$. Then for every $T \geq 0$, $$\mathbbm{P}\left(\left\{\omega \in \Omega: \py_t(\omega) = \sy_t(\omega) \quad \forall t \in [0,T] \right\}\right)=1.$$
\end{proposition}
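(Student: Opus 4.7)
The plan is to prove this via a standard Picard iteration in the Banach space $L^2\left(\Omega; C([0,T];\mathcal{H})\right)$, followed by pathwise concatenation to cover $[0,\infty)$, and then to deduce uniqueness from It\^{o}'s formula combined with Gr\"{o}nwall. Fix $T>0$.

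First, I set up the iterates: define $\py^{(0)}_t := \py_0$ for all $t \in [0,T]$, and inductively
\begin{equation*}
    \py^{(k+1)}_t := \py_0 + \int_0^t \mathscr{A}(s, \py^{(k)}_s)\,ds + \int_0^t \mathscr{G}(s, \py^{(k)}_s)\,d\mathcal{W}_s.
\end{equation*}
The linear growth assumption together with $\sum_i c_i < \infty$ controls the Hilbert--Schmidt norm $\norm{\mathscr{G}(s,\phi)}_{\mathscr{L}^2(\mathfrak{U};\mathcal{H})}^2 = \sum_i \norm{\mathscr{G}_i(s,\phi)}_{\mathcal{H}}^2 \leq C_s(\sum_i c_i)(1+\norm{\phi}_{\mathcal{H}}^2)$, so the stochastic integral is a well-defined square-integrable $\mathcal{H}$-valued martingale, and an inductive application of It\^{o}'s isometry and the Burkholder--Davis--Gundy Inequality yields a uniform bound $\sup_k \mathbbm{E}\sup_{t \in [0,T]}\norm{\py^{(k)}_t}_{\mathcal{H}}^2 < \infty$.

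Second, I would prove the Cauchy property in $L^2(\Omega; C([0,T]; \mathcal{H}))$. Applying Jensen's Inequality to the Bochner integral and the Burkholder--Davis--Gundy Inequality to the stochastic integral for the difference $\py^{(k+1)} - \py^{(k)}$, and invoking the joint Lipschitz hypothesis (which bounds exactly $\norm{\mathscr{A}(s,\phi)-\mathscr{A}(s,\psi)}_{\mathcal{H}}^2 + \norm{\mathscr{G}(s,\phi)-\mathscr{G}(s,\psi)}_{\mathscr{L}^2(\mathfrak{U};\mathcal{H})}^2$ by $C_s \norm{\phi-\psi}_{\mathcal{H}}^2$), I obtain
\begin{equation*}
    \mathbbm{E} \sup_{r \in [0,t]} \norm{\py^{(k+1)}_r - \py^{(k)}_r}_{\mathcal{H}}^2 \leq C \int_0^t \mathbbm{E} \sup_{r \in [0,s]} \norm{\py^{(k)}_r - \py^{(k-1)}_r}_{\mathcal{H}}^2 \,ds.
\end{equation*}
Iterating gives bounds of order $C^k t^k/k!$, which is summable and hence yields Cauchy-ness. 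The $L^2(\Omega; C([0,T];\mathcal{H}))$ limit $\py$ inherits pathwise continuity and progressive measurability from an a.s.-uniform-on-$[0,T]$ convergent subsequence, and passage to the limit inside each of the drift and stochastic integrals is routine (dominated convergence with the Lipschitz bound for the drift; It\^{o} isometry / BDG for the noise). This furnishes a solution on $[0,T]$, which extends to $[0,\infty)$ by reapplying the construction on $[T,2T]$ with initial datum $\py_T \in L^2(\Omega;\mathcal{H})$, and so on.

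For uniqueness, given two solutions $\py,\sy$, I would introduce the localising sequence $\tau_R := T \wedge \inf\{t \geq 0 : \norm{\py_t}_{\mathcal{H}} \vee \norm{\sy_t}_{\mathcal{H}} > R\}$ (each $\tau_R$ is positive by pathwise continuity, and $\tau_R \nearrow T$ $\mathbbm{P}$-a.s.\ as $R\to\infty$). Applying the It\^{o} Formula (Proposition \ref{rockner prop}) to $\norm{\py - \sy}_{\mathcal{H}}^2$ on $[0, t \wedge \tau_R]$, the cross-variation term produces exactly $\sum_i \norm{\mathscr{G}_i(s,\py_s) - \mathscr{G}_i(s,\sy_s)}_{\mathcal{H}}^2$, so the joint Lipschitz bound (together with the Cauchy--Schwarz bound on the drift pairing) gives, after taking expectation and using that the localised stochastic integral is a true martingale,
\begin{equation*}
    \mathbbm{E} \norm{\py_{t \wedge \tau_R} - \sy_{t \wedge \tau_R}}_{\mathcal{H}}^2 \leq C \int_0^t \mathbbm{E} \norm{\py_{s \wedge \tau_R} - \sy_{s \wedge \tau_R}}_{\mathcal{H}}^2 \,ds.
\end{equation*}
Gr\"{o}nwall's Inequality forces this to vanish, so $\py$ and $\sy$ agree on $[0,\tau_R]$ $\mathbbm{P}$-a.s.; sending $R\to\infty$ and using pathwise continuity delivers indistinguishability on $[0,T]$. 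The principal obstacle is less conceptual than careful bookkeeping: one must verify throughout that the Hilbert--Schmidt norm of $\mathscr{G}(\cdot,\py_\cdot)$ is integrable and that the BDG-type estimates transfer cleanly through the infinite sum over $i$, which is exactly what the hypothesis $\sum_i c_i < \infty$ coupled with the componentwise linear growth is designed to provide.
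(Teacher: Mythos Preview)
Your proposal is correct and follows the classical route (Picard iteration for existence, It\^{o} plus Gr\"{o}nwall with localisation for uniqueness). The paper itself does not give a self-contained proof of this proposition: it simply cites Theorems 3.1.1 and 3.1.2 of [\cite{goodair2022stochastic}], so there is nothing substantive to compare against beyond noting that your argument is precisely the standard one such a reference would contain.
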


\begin{proof}
See [\cite{goodair2022stochastic}] Theorems 3.1.1 and 3.1.2.

\end{proof}

\begin{proposition} \label{rockner prop}
Let $\mathcal{H}_1 \subset \mathcal{H}_2 \subset \mathcal{H}_3$ be a triplet of embedded Hilbert Spaces where $\mathcal{H}_1$ is dense in $\mathcal{H}_2$, with the property that there exists a continuous nondegenerate bilinear form $\inner{\cdot}{\cdot}_{\mathcal{H}_3 \times \mathcal{H}_1}: \mathcal{H}_3 \times \mathcal{H}_1 \rightarrow \R$ such that for $\phi \in \mathcal{H}_2$ and $\psi \in \mathcal{H}_1$, $$\inner{\phi}{\psi}_{\mathcal{H}_3 \times \mathcal{H}_1} = \inner{\phi}{\psi}_{\mathcal{H}_2}.$$ Suppose that for some $T > 0$ and stopping time $\tau$,
\begin{enumerate}
        \item $\sy_0:\Omega \rightarrow \mathcal{H}_2$ is $\mathcal{F}_0-$measurable;
        \item $f:\Omega \times [0,T] \rightarrow \mathcal{H}_3$ is such that for $\mathbbm{P}-a.e.$ $\omega$, $f(\omega) \in L^2([0,T];\mathcal{H}_3)$;
        \item $B:\Omega \times [0,T] \rightarrow \mathscr{L}^2(\mathfrak{U};\mathcal{H}_2)$ is progressively measurable and such that for $\mathbbm{P}-a.e.$ $\omega$, $B(\omega) \in L^2\left([0,T];\mathscr{L}^2(\mathfrak{U};\mathcal{H}_2)\right)$;
        \item  \label{4*} $\sy:\Omega \times [0,T] \rightarrow \mathcal{H}_1$ is such that for $\mathbbm{P}-a.e.$ $\omega$, $\sy_{\cdot}(\omega)\mathbbm{1}_{\cdot \leq \tau(\omega)} \in L^2([0,T];\mathcal{H}_1)$ and $\sy_{\cdot}\mathbbm{1}_{\cdot \leq \tau}$ is progressively measurable in $\mathcal{H}_1$;
        \item \label{item 5 again*} The identity
        \begin{equation} \label{newest identity*}
            \sy_t = \sy_0 + \int_0^{t \wedge \tau}f_sds + \int_0^{t \wedge \tau}B_s d\mathcal{W}_s
        \end{equation}
        holds $\mathbbm{P}-a.s.$ in $\mathcal{H}_3$ for all $t \in [0,T]$.
    \end{enumerate}
The the equality 
  \begin{align} \label{ito big dog*}\norm{\sy_t}^2_{\mathcal{H}_2} = \norm{\sy_0}^2_{\mathcal{H}_2} + \int_0^{t\wedge \tau} \bigg( 2\inner{f_s}{\sy_s}_{\mathcal{H}_3 \times \mathcal{H}_1} + \norm{B_s}^2_{\mathscr{L}^2(\mathfrak{U};\mathcal{H}_2)}\bigg)ds + 2\int_0^{t \wedge \tau}\inner{B_s}{\sy_s}_{\mathcal{H}_2}d\mathcal{W}_s\end{align}
  holds for any $t \in [0,T]$, $\mathbbm{P}-a.s.$ in $\R$. Moreover for $\mathbbm{P}-a.e.$ $\omega$, $\sy_{\cdot}(\omega) \in C([0,T];\mathcal{H}_2)$. 
\end{proposition}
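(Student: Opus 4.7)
The statement is the classical Gelfand-triple It\^{o} formula (Pardoux, Krylov--Rozovskii, Pr\'evot--R\"ockner), and I would adapt the standard proof to this precise setting. The first move is a routine reduction: by replacing $\sy_t$ with the stopped process $\sy_{t \wedge \tau}$ and $f,B$ with $f \mathbbm{1}_{\cdot \leq \tau}, B \mathbbm{1}_{\cdot \leq \tau}$, I may assume $\tau \equiv T$, so that the identity (\ref{newest identity*}) is unstopped in $\mathcal{H}_3$ and $\sy_{\cdot} \in L^2([0,T]; \mathcal{H}_1)$ $\mathbb{P}$-a.s. Since $\mathcal{H}_1 \hookrightarrow \mathcal{H}_2$ densely and $\mathcal{H}_2$ is separable, I next pick a system $(e_k) \subset \mathcal{H}_1$ forming a complete orthonormal basis of $\mathcal{H}_2$, chosen so that the $\mathcal{H}_2$-orthogonal projections $\pi_n$ onto $\mathrm{span}(e_1, \dots, e_n)$ are uniformly bounded operators on $\mathcal{H}_1$ — this is the $\mathcal{H}_1$-analogue of the paper's condition (\ref{uniform bounds of projection}), and in applications is achieved by taking $(e_k)$ to be eigenfunctions of a Riesz isomorphism between $\mathcal{H}_3$ and $\mathcal{H}_1$.

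The core computation is then finite-dimensional. Testing (\ref{newest identity*}) against each $e_k$ via the bilinear pairing and using passage of bounded linear functionals through the stochastic integral (cited in Subsection \ref{stochastic framework}), the scalar process $y^k_t := \inner{\sy_t}{e_k}_{\mathcal{H}_2}$ is a genuine real-valued It\^{o} process with drift $\inner{f_t}{e_k}_{\mathcal{H}_3 \times \mathcal{H}_1}$ and stochastic differential $\inner{B_t \cdot}{e_k}_{\mathcal{H}_2} d\mathcal{W}_t$. Applying the classical one-dimensional It\^{o} formula to $(y^k_t)^2$ and summing $k=1, \ldots, n$ yields, $\mathbb{P}$-a.s.\ for all $t \in [0,T]$,
\begin{align*}
\norm{\pi_n \sy_t}_{\mathcal{H}_2}^2 &= \norm{\pi_n \sy_0}_{\mathcal{H}_2}^2 + 2\int_0^t \inner{f_s}{\pi_n \sy_s}_{\mathcal{H}_3 \times \mathcal{H}_1} ds \\
&\qquad + \int_0^t \norm{\pi_n B_s}_{\mathscr{L}^2(\mathfrak{U};\mathcal{H}_2)}^2 ds + 2 \int_0^t \inner{\pi_n B_s}{\pi_n \sy_s}_{\mathcal{H}_2} d\mathcal{W}_s.
\end{align*}

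The remaining work is passage to the limit $n \to \infty$. The initial, Hilbert--Schmidt and stochastic integral terms yield easily: $\pi_n \sy_0 \to \sy_0$ and $\pi_n B_s \to B_s$ strongly in $\mathcal{H}_2$ and $\mathscr{L}^2(\mathfrak{U};\mathcal{H}_2)$ respectively, monotone/dominated convergence handles the Hilbert--Schmidt integral, and It\^{o}'s isometry together with dominated convergence (using $B \in L^2([0,T]; \mathscr{L}^2)$ $\mathbb{P}$-a.s.) delivers $L^2(\Omega)$-convergence of the stochastic integral, from which an almost-sure subsequence is extracted. The delicate term is $\inner{f_s}{\pi_n \sy_s}_{\mathcal{H}_3 \times \mathcal{H}_1}$: uniform boundedness of $\pi_n$ on $\mathcal{H}_1$ together with density of $\bigcup_n \mathrm{span}(e_1, \ldots, e_n)$ in $\mathcal{H}_1$ supplies strong convergence $\pi_n \sy_s \to \sy_s$ in $\mathcal{H}_1$ for a.e.\ $s$, continuity of the bilinear pairing provides pointwise convergence, and the bound $\abs{\inner{f_s}{\pi_n \sy_s}_{\mathcal{H}_3 \times \mathcal{H}_1}} \leq c \norm{f_s}_{\mathcal{H}_3} \norm{\sy_s}_{\mathcal{H}_1} \in L^1([0,T])$ permits interchange with the integral by dominated convergence. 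Once (\ref{ito big dog*}) is thus established $\mathbb{P}$-a.s.\ for each $t$, $\mathbb{P}$-a.s.\ continuity of every term in $t$ on both sides propagates the identity to all $t \in [0,T]$ simultaneously.

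The continuity of paths in $\mathcal{H}_2$ follows by combining two facts: continuity of $t \mapsto \norm{\sy_t}_{\mathcal{H}_2}^2$ from (\ref{ito big dog*}); and weak continuity in $\mathcal{H}_2$, deduced from continuity in $\mathcal{H}_3$ (granted by (\ref{newest identity*})) plus the uniform $\mathcal{H}_2$-boundedness furnished by the energy identity, via a density argument on $\mathcal{H}_1 \subset \mathcal{H}_2$ analogous to the end of Proposition \ref{prop for first energy bar}. The well-known lemma that weak convergence together with norm convergence implies strong convergence in a Hilbert space then closes the argument. The principal obstacle, and the reason this result is genuinely nontrivial rather than a direct consequence of a finite-dimensional It\^{o} formula, is the joint handling of the drift pairing and the projections: one must choose a basis whose projections respect both $\mathcal{H}_1$ and $\mathcal{H}_2$ structures, and pass to the limit in a term in which neither factor individually sits in $\mathcal{H}_2$.
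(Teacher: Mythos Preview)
The paper does not prove this proposition: it simply records it as a minor extension of Pr\'evot--R\"ockner \cite{prevot2007concise} Theorem 4.2.5, with full details deferred to \cite{goodair2022stochastic} Proposition 2.5.5. So there is no ``paper's own proof'' to compare against beyond a citation.

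Your sketch is in the right spirit, but it contains a real gap. You assume the existence of an $\mathcal{H}_2$-orthonormal basis $(e_k) \subset \mathcal{H}_1$ whose associated projections $\pi_n$ are uniformly bounded on $\mathcal{H}_1$ \emph{and} whose finite spans are dense in $\mathcal{H}_1$. Neither property is part of the stated hypotheses, and neither follows from the bare data of the proposition: density of $\mathcal{H}_1$ in $\mathcal{H}_2$ gives you a basis of $\mathcal{H}_2$ lying in $\mathcal{H}_1$, but says nothing about uniform $\mathcal{H}_1$-operator bounds for the $\mathcal{H}_2$-projections, nor about density of the finite spans in the finer $\mathcal{H}_1$-topology. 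You flag this yourself (``in applications is achieved by taking $(e_k)$ to be eigenfunctions of a Riesz isomorphism'') --- but the proposition is stated abstractly, without any such spectral structure, so as written your argument proves a weaker result than the one claimed.

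The classical Pr\'evot--R\"ockner proof sidesteps this entirely by working with a \emph{time} discretisation rather than spatial projections: one approximates $\sy$ by piecewise-constant (in $t$) $\mathcal{H}_1$-valued step functions, writes the increment $\sy_{t_{j+1}} - \sy_{t_j}$ via the evolution identity, expands $\norm{\sy_{t_{j+1}}}_{\mathcal{H}_2}^2 - \norm{\sy_{t_j}}_{\mathcal{H}_2}^2$ telescopically, and passes to the limit as the mesh vanishes. This uses only the continuity of the bilinear pairing and the assumed $L^2([0,T];\mathcal{H}_1)$ regularity of $\sy$, with no need for well-behaved spatial projections. That is the route the cited references take, and it is what you would need to follow to match the generality of the statement.
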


\begin{proof}
This is a minor extension of [\cite{prevot2007concise}] Theorem 4.2.5, which is stated and justified as Proposition 2.5.5. in [\cite{goodair2022stochastic}]. This is necessary for us as it extends the Gelfand Triple setting, removes the need for moment estimates and allows for local solutions. 
\end{proof}

\begin{lemma}[Stochastic Gr\"{o}nwall] \label{gronny}
Fix $t>0$ and suppose that $\boldsymbol{\phi},\boldsymbol{\psi}, \boldsymbol{\eta}$ are real-valued, non-negative stochastic processes. Assume, moreover, that there exists constants $c',\hat{c}, \tilde{c}$ (allowed to depend on $t$) such that for $\mathbbm{P}-a.e.$ $\omega$, \begin{equation} \label{boundingronny} \int_0^t\boldsymbol{\eta}_s(\omega) ds \leq c'\end{equation} and for all stopping times $0 \leq \theta_j < \theta_k \leq t$,
$$\mathbbm{E}\left(\sup_{r \in [\theta_j,\theta_k]}\boldsymbol{\phi}_r\right) + \mathbbm{E}\int_{\theta_j}^{\theta_k}\boldsymbol{\psi}_sds \leq \hat{c}\mathbbm{E}\left(\left[\boldsymbol{\phi}_{\theta_j} + \tilde{c} \right] + \int_{\theta_j}^{\theta_k} \boldsymbol{\eta}_s\boldsymbol{\phi}_sds\right) < \infty. $$Then there exists a constant $C$ dependent only on $c',\hat{c},\tilde{c},t$ such that $$\mathbbm{E}\sup_{r \in [0,t]}\boldsymbol{\phi}_r + \mathbbm{E}\int_{0}^{t}\boldsymbol{\psi}_sds \leq C\left[\mathbbm{E}(\boldsymbol{\phi}_{0}) + \tilde{c}\right].$$
\end{lemma}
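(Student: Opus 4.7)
The plan is to partition $[0,t]$ into finitely many random subintervals on which the integral of $\boldsymbol{\eta}$ is small enough that the troublesome $\mathbbm{E}\int_{\theta_j}^{\theta_k}\boldsymbol{\eta}_s\boldsymbol{\phi}_s\,ds$ term can be absorbed into the left-hand side of the hypothesised inequality, then iterate the resulting recursion across the partition.

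Concretely, let $\delta := 1/(2\hat{c})$ and inductively define stopping times $\theta_0 := 0$ and
\begin{equation*}
    \theta_{j+1} := t \wedge \inf\left\{s \geq \theta_j : \int_{\theta_j}^s \boldsymbol{\eta}_r\,dr \geq \delta\right\}.
\end{equation*}
Since $s \mapsto \int_{\theta_j}^s \boldsymbol{\eta}_r\,dr$ is continuous and $\int_0^t \boldsymbol{\eta}_r\,dr \leq c'$ $\mathbbm{P}$-a.s., after at most $N := \lceil c'/\delta \rceil + 1$ steps we must have $\theta_N = t$ $\mathbbm{P}$-a.s.; crucially $N$ depends only on $\hat{c}$ and $c'$. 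On each subinterval the hypothesised bound with the pair $(\theta_j, \theta_{j+1})$ combined with the pathwise estimate $\int_{\theta_j}^{\theta_{j+1}} \boldsymbol{\eta}_s\boldsymbol{\phi}_s\,ds \leq \delta \sup_{r \in [\theta_j,\theta_{j+1}]}\boldsymbol{\phi}_r$ yields
\begin{equation*}
    \mathbbm{E}\sup_{r \in [\theta_j,\theta_{j+1}]}\boldsymbol{\phi}_r + \mathbbm{E}\int_{\theta_j}^{\theta_{j+1}}\boldsymbol{\psi}_s\,ds \leq \hat{c}\mathbbm{E}\boldsymbol{\phi}_{\theta_j} + \hat{c}\tilde{c} + \tfrac{1}{2}\mathbbm{E}\sup_{r \in [\theta_j,\theta_{j+1}]}\boldsymbol{\phi}_r.
\end{equation*}
The finiteness condition in the hypothesis is what allows me to subtract the final term from the left, giving
\begin{equation*}
    \mathbbm{E}\sup_{r \in [\theta_j,\theta_{j+1}]}\boldsymbol{\phi}_r + 2\mathbbm{E}\int_{\theta_j}^{\theta_{j+1}}\boldsymbol{\psi}_s\,ds \leq 2\hat{c}\mathbbm{E}\boldsymbol{\phi}_{\theta_j} + 2\hat{c}\tilde{c}.
\end{equation*}

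In particular $\mathbbm{E}\boldsymbol{\phi}_{\theta_{j+1}} \leq 2\hat{c}\mathbbm{E}\boldsymbol{\phi}_{\theta_j} + 2\hat{c}\tilde{c}$, so iterating this affine recursion gives $\mathbbm{E}\boldsymbol{\phi}_{\theta_j} \leq (2\hat{c})^j\mathbbm{E}\boldsymbol{\phi}_0 + C_j\tilde{c}$ for explicit constants $C_j$. Summing the per-subinterval estimate over $j = 0, \ldots, N-1$ and using
\begin{equation*}
    \sup_{r \in [0,t]}\boldsymbol{\phi}_r \leq \sum_{j=0}^{N-1}\sup_{r \in [\theta_j,\theta_{j+1}]}\boldsymbol{\phi}_r, \qquad \int_0^t \boldsymbol{\psi}_s\,ds = \sum_{j=0}^{N-1}\int_{\theta_j}^{\theta_{j+1}}\boldsymbol{\psi}_s\,ds
\end{equation*}
produces the announced bound with $C$ depending only on $\hat{c}, \tilde{c}, c', t$ (through $N$).

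The main delicate point will be the absorption step: to subtract $\tfrac{1}{2}\mathbbm{E}\sup_{[\theta_j,\theta_{j+1}]}\boldsymbol{\phi}_r$ from both sides I must know a priori that this quantity is finite, which is exactly what the finiteness clause in the hypothesis provides when applied with $(\theta_j, \theta_{j+1})$. A secondary point is that $\theta_{j+1}$ is genuinely a stopping time (this follows from the path-continuity of $s \mapsto \int_0^s \boldsymbol{\eta}_r\,dr$, together with the fact that the processes involved are only assumed measurable in a progressive sense—this is enough for the first-hitting-time to be a stopping time). No other step involves more than routine algebra.
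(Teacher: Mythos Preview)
The paper does not supply its own proof of this lemma; it simply cites \cite{glatt2009strong}, Lemma 5.3. Your argument is correct and is exactly the standard approach used for such stochastic Gr\"{o}nwall inequalities: partition $[0,t]$ by stopping times chosen so that $\int_{\theta_j}^{\theta_{j+1}}\boldsymbol{\eta}_s\,ds \leq 1/(2\hat{c})$, absorb the resulting fraction of $\mathbbm{E}\sup\boldsymbol{\phi}$ into the left-hand side (the finiteness clause in the hypothesis being precisely what permits this subtraction), and iterate the affine recursion across the deterministic number $N = O(c'\hat{c})$ of subintervals. This is essentially the proof given in the cited reference, so there is nothing to add.
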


\begin{proof}
See [\cite{glatt2009strong}] Lemma 5.3.
\end{proof}

\begin{lemma} \label{Lemma 5.2}
    Let $\mathcal{H}_1, \mathcal{H}_2$ be Hilbert Spaces such that $\mathcal{H}_1$ is compactly embedded into $\mathcal{H}_2$, and for some fixed $T>0$ let $(\sy^n): \Omega \times [0,T] \rightarrow \mathcal{H}_1$ be a sequence of measurable processes such that \begin{equation} \label{first condition} \sup_{n\in \N}\mathbbm{E}\int_0^T\norm{\sy^n_s}^2_{\mathcal{H}_1}ds < \infty\end{equation} and for any $\varepsilon > 0$, 
    \begin{equation}\label{second condition} \lim_{\delta \rightarrow 0^+}\sup_{n \in \N}\mathbbm{P}\left(\left\{\omega \in \Omega:\int_0^{T-\delta}\norm{\sy^n_{s + \delta}(\omega) - \sy^n_s(\omega)}^2_{\mathcal{H}_2}ds > \varepsilon\right\} \right) =0.\end{equation}
    Then the sequence of the laws of $(\sy^n)$ is tight in the space of probability measures over $L^2\left([0,T];\mathcal{H}_2\right)$.
\end{lemma}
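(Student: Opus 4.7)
The plan is to apply Prohorov's theorem, reducing tightness to producing, for every $\varepsilon>0$, a relatively compact set $K_\varepsilon\subset L^2([0,T];\mathcal{H}_2)$ that captures a mass at least $1-\varepsilon$ uniformly in $n$. The natural characterisation of relative compactness in $L^2([0,T];\mathcal{H}_2)$ comes from a Simon/Aubin--Lions-type criterion: a subset $K$ is relatively compact provided (a) $K$ is bounded in $L^2([0,T];\mathcal{H}_1)$, and (b) $K$ is time-equicontinuous in the $L^2([0,T];\mathcal{H}_2)$ sense, i.e.\ $\sup_{\phi\in K}\int_0^{T-\delta}\|\phi_{s+\delta}-\phi_s\|_{\mathcal{H}_2}^2\,ds\to 0$ as $\delta\to 0^+$. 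The compactness of the embedding $\mathcal{H}_1\hookrightarrow\mathcal{H}_2$ is exactly what lets (a) and (b) together upgrade to precompactness.

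The construction of $K_\varepsilon$ is then the standard double-cut. First, by Chebyshev applied to (\ref{first condition}) there exists $R_\varepsilon>0$ such that $\mathbbm{P}\bigl(\int_0^T\|\sy^n_s\|_{\mathcal{H}_1}^2\,ds>R_\varepsilon\bigr)<\varepsilon/2$ uniformly in $n$. Second, fix a sequence $\varepsilon_k\downarrow 0$ (for instance $\varepsilon_k=2^{-k}$); by hypothesis (\ref{second condition}) one can pick $\delta_k\downarrow 0$ and a sequence $\eta_k\downarrow 0$ such that
\begin{equation*}
\sup_{n\in\mathbbm{N}}\mathbbm{P}\!\left(\int_0^{T-\delta_k}\|\sy^n_{s+\delta_k}-\sy^n_s\|_{\mathcal{H}_2}^2\,ds>\eta_k\right)<\frac{\varepsilon}{2^{k+1}}.
\end{equation*}
Now define
\begin{equation*}
K_\varepsilon:=\left\{\phi\in L^2([0,T];\mathcal{H}_2):\int_0^T\|\phi_s\|_{\mathcal{H}_1}^2ds\le R_\varepsilon,\ \int_0^{T-\delta_k}\|\phi_{s+\delta_k}-\phi_s\|_{\mathcal{H}_2}^2ds\le\eta_k\ \forall k\right\}.
\end{equation*}
A union bound gives $\mathbbm{P}(\sy^n\in K_\varepsilon)>1-\varepsilon$ uniformly in $n$, and $K_\varepsilon$ verifies (a) directly. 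For (b), any $\delta>0$ lies in $[\delta_{k+1},\delta_k]$ for large $k$, so $\int_0^{T-\delta}\|\phi_{s+\delta}-\phi_s\|_{\mathcal{H}_2}^2ds$ can be controlled by $\eta_k$ together with an elementary continuity-in-$\delta$ estimate of translations in $L^2$; hence $K_\varepsilon$ is relatively compact.

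The main obstacle, and the only step that is not just bookkeeping, is the compactness criterion itself: we must show that boundedness in $L^2([0,T];\mathcal{H}_1)$ together with uniform vanishing of $\int_0^{T-\delta_k}\|\phi_{s+\delta_k}-\phi_s\|_{\mathcal{H}_2}^2ds$ along a fixed sequence $\delta_k\downarrow 0$ implies precompactness in $L^2([0,T];\mathcal{H}_2)$. The argument is the classical one: extract a sequence in $K_\varepsilon$, extend each $\phi^{(j)}$ by zero outside $[0,T]$, convolve with a smooth time-mollifier $\rho_\eta$, and decompose $\phi^{(j)}-\phi^{(i)}=[\phi^{(j)}-\rho_\eta*\phi^{(j)}]+[\rho_\eta*(\phi^{(j)}-\phi^{(i)})]+[\rho_\eta*\phi^{(i)}-\phi^{(i)}]$. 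The outer pieces are uniformly small in $L^2([0,T];\mathcal{H}_2)$ by the translation condition (one interpolates between the $\delta_k$ using translation continuity of $L^2$ and the uniform $\mathcal{H}_1$ bound), while the middle piece, for fixed $\eta$, takes values in a bounded subset of $\mathcal{H}_1$ at every time and is equicontinuous in time in $\mathcal{H}_2$; the Arzel\`a--Ascoli theorem (using $\mathcal{H}_1\hookrightarrow\mathcal{H}_2$ compactly) supplies a $C([0,T];\mathcal{H}_2)$-convergent subsequence, hence convergence in $L^2([0,T];\mathcal{H}_2)$. A diagonal argument over $\eta\downarrow 0$ concludes. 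With compactness in hand, Prohorov's theorem delivers tightness of the laws of $(\sy^n)$ on $L^2([0,T];\mathcal{H}_2)$.
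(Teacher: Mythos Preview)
The paper does not prove this lemma at all: its entire proof reads ``See [R\"ockner--Shang--Zhang] Lemma 5.2.'' So there is nothing to compare against beyond noting that your sketch supplies what the paper outsources. Your overall architecture---Prohorov's theorem reducing tightness to the construction of a relatively compact set, then a Simon/Aubin--Lions criterion leveraging the compact embedding $\mathcal{H}_1\hookrightarrow\mathcal{H}_2$---is exactly the standard route and is the correct one.

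One point in your sketch is handled too quickly, however. You define $K_\varepsilon$ by imposing the translation estimate only at a countable sequence $(\delta_k)$, and then assert that control at all $\delta$ follows from ``an elementary continuity-in-$\delta$ estimate of translations in $L^2$'' together with the $\mathcal{H}_1$ bound. Translation continuity $\delta\mapsto\int_0^{T-\delta}\|\phi_{s+\delta}-\phi_s\|^2_{\mathcal{H}_2}\,ds$ holds for each fixed $\phi$, but is not \emph{uniform} over $K_\varepsilon$, and the $\mathcal{H}_1$ bound alone does not bridge the gap between consecutive $\delta_k$. The same issue reappears in the mollification step: $\|\phi-\rho_\eta\ast\phi\|_{L^2(\mathcal{H}_2)}$ involves translations by all $r\in[-\eta,\eta]$, not just $r=\delta_k$. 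This is a genuine (if well-known) technical wrinkle. It can be repaired---for instance by invoking the form of Simon's compactness theorem that explicitly allows the temporal equicontinuity to be checked along a single sequence of shifts when combined with boundedness in $L^p(0,T;X)$ for a compactly embedded $X$, or by working with discrete time-averages at scale $\delta_k$ rather than a continuous mollifier---but as written the interpolation step is asserted rather than justified.
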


\begin{proof}
    See [\cite{rockner2022well}] Lemma 5.2.
\end{proof}

\begin{lemma} \label{lemma for D tight}
    Let $\mathcal{Y}$ be a reflexive separable Banach Space and $\mathcal{H}$ a separable Hilbert Space such that $\mathcal{Y}$ is compactly embedded into $\mathcal{\mathcal{H}}$, and consider the induced Gelfand Triple
    $$\mathcal{Y} \xhookrightarrow{} \mathcal{H} \xhookrightarrow{} \mathcal{Y}^*. $$ For some fixed $T>0$ let $\sy^n: \Omega \rightarrow C\left([0,T];\mathcal{H}\right)$ be a sequence of measurable processes such that for every $t\in[0,T]$, \begin{equation} \label{first condition primed}
        \sup_{n \in \N}\mathbbm{E}\left(\sup_{t\in[0,T]}\norm{\sy^n_t}_{\mathcal{H}}\right) < \infty
    \end{equation}
    and for any sequence of stopping times $(\gamma_n)$ with $\gamma_n: \Omega \rightarrow [0,T]$, and any $\varepsilon > 0$, $y \in \mathcal{Y}$,
    \begin{equation} \label{second condition primed}
        \lim_{\delta \rightarrow 0^+}\sup_{n \in \N}\mathbbm{P}\left(\left\{
    \omega \in \Omega: \left\vert \left\langle \sy^n_{(\gamma_n + \delta) \wedge T} -\sy^n_{\gamma_n }   , y     \right\rangle_{\mathcal{H}} \right\vert > \varepsilon \right\}\right)  = 0.
    \end{equation}
    Then the sequence of the laws of $(\sy^n)$ is tight in the space of probability measures over $\mathcal{D}\left([0,T];\mathcal{Y}^*\right)$.
\end{lemma}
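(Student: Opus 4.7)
The plan is to invoke a standard Aldous-type tightness criterion for the Skorohod space $\mathcal{D}\left([0,T];\mathcal{Y}^*\right)$. This reduces the problem to verifying two conditions: (i) for each fixed $t \in [0,T]$, the family $\{\sy^n_t\}_{n\in\N}$ is tight in $\mathcal{Y}^*$, and (ii) an Aldous-type control on increments: for all $\varepsilon,\eta>0$, there exists $\delta_0 > 0$ such that for every sequence of stopping times $(\gamma_n)$ bounded by $T$ and every $\delta \in [0,\delta_0]$,
$$\sup_{n \in \N} \mathbbm{P}\left(\norm{\sy^n_{(\gamma_n+\delta)\wedge T} - \sy^n_{\gamma_n}}_{\mathcal{Y}^*} > \eta\right) < \varepsilon.$$

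The marginal tightness (i) is straightforward. Since $\mathcal{Y} \xhookrightarrow{} \mathcal{H}$ is compact, the dual embedding $\mathcal{H} \xhookrightarrow{} \mathcal{Y}^*$ is compact as well (via the Riesz identification), so balls $\left\{x \in \mathcal{H}: \norm{x}_{\mathcal{H}} \leq R\right\}$ are relatively compact in $\mathcal{Y}^*$. By Markov's inequality and the uniform bound (\ref{first condition primed}), $\mathbbm{P}\left(\norm{\sy^n_t}_{\mathcal{H}} > R\right) \leq C/R$ uniformly in $n$ and $t$, which yields (i).

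The main obstacle is (ii), since the hypothesis (\ref{second condition primed}) gives control only for a single fixed test function $y \in \mathcal{Y}$, whereas $\norm{\cdot}_{\mathcal{Y}^*}$ is the supremum of $\left\vert\langle\cdot,y\rangle_{\mathcal{H}}\right\vert$ over the whole unit ball $B_{\mathcal{Y}}$ of $\mathcal{Y}$. The resolution is to exploit the compact embedding a second time. Given $\varepsilon,\eta > 0$, use Markov to pick $R$ with $\mathbbm{P}\left(\sup_{t\in[0,T]}\norm{\sy^n_t}_{\mathcal{H}} > R\right) < \varepsilon/4$ uniformly in $n$; then $B_{\mathcal{Y}}$ is relatively compact in $\mathcal{H}$, so there is a finite $\mathcal{H}$-net $\{y_1,\ldots,y_N\} \subset B_{\mathcal{Y}}$ of radius $\eta/(8R)$. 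For any $x \in \mathcal{H}$ with $\norm{x}_{\mathcal{H}} \leq 2R$ and any $y \in B_{\mathcal{Y}}$, choosing $y_i$ with $\norm{y-y_i}_\mathcal{H} \leq \eta/(8R)$ gives the estimate $\left\vert\langle x, y\rangle_\mathcal{H}\right\vert \leq \left\vert\langle x, y_i\rangle_\mathcal{H}\right\vert + \eta/4$, so taking the supremum over $y \in B_{\mathcal{Y}}$ yields $\norm{x}_{\mathcal{Y}^*} \leq \max_i \left\vert\langle x, y_i\rangle_{\mathcal{H}}\right\vert + \eta/4$. Consequently, on the event where both $\norm{\sy^n_{(\gamma_n+\delta)\wedge T}}_{\mathcal{H}}$ and $\norm{\sy^n_{\gamma_n}}_{\mathcal{H}}$ are bounded by $R$, the event $\left\{\norm{\sy^n_{(\gamma_n+\delta)\wedge T} - \sy^n_{\gamma_n}}_{\mathcal{Y}^*} > \eta\right\}$ is contained in $\bigcup_{i=1}^N \left\{\left\vert\langle \sy^n_{(\gamma_n+\delta)\wedge T} - \sy^n_{\gamma_n}, y_i\rangle_{\mathcal{H}}\right\vert > \eta/2\right\}$. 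A union bound and application of (\ref{second condition primed}) to each of the finitely many $y_i$ (with threshold $\eta/2$) then produces $\delta_0$ making this term less than $\varepsilon/2$; combining with the $\varepsilon/2$ bad-event contribution from the uniform bound verifies (ii) and completes the proof.
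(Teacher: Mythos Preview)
The paper does not prove this lemma in-text; it simply cites an external reference (Lemma 3.3.2 of \cite{goodair2022stochastic}), so there is no in-paper argument to compare against. Your proof is correct and self-contained: the Aldous criterion reduces tightness in $\mathcal{D}([0,T];\mathcal{Y}^*)$ to marginal tightness plus the stochastic equicontinuity condition, and your finite-$\mathcal{H}$-net trick (exploiting the compactness of $B_{\mathcal{Y}}$ in $\mathcal{H}$) is exactly what is needed to upgrade the single-test-function hypothesis (\ref{second condition primed}) to control of the full $\mathcal{Y}^*$-norm. One cosmetic point: you choose $R$ so that the bad event has probability less than $\varepsilon/4$, but then refer to an ``$\varepsilon/2$ bad-event contribution'' at the end; this is harmless bookkeeping and does not affect the argument.
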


\begin{proof}
    See [\cite{goodair2022stochastic}] Lemma 3.3.2.
\end{proof}

\textbf{Thanks:} I would like to give my sincerest thanks to Dan Crisan for the regular and extended discussions around the paper, his feedback on it, and overall guidance during this process.\\

\textbf{Acknowledgements:} The author was supported by the Engineering and Physical Sciences Research Council (EPSCR) Project 2478902.

\addcontentsline{toc}{section}{References}

\bibliographystyle{spmpsci}
\bibliography{myBibliography2}

\end{document}